\DeclareMathAlphabet{\mathpzc}{OT1}{pzc}{m}{it}
\newtheorem{theorem}{Theorem}[section]
\newtheorem{proposition}[theorem]{Proposition}
\newtheorem{lemma}[theorem]{Lemma}
\newtheorem{corollary}[theorem]{Corollary}
\theoremstyle{definition}
\newtheorem{definition}{Definition}[section]
\newtheorem{fact}{Fact}[section]
\newtheorem{sect}{}
\theoremstyle{remark}
\newtheorem{remark}{Remark}
\newtheorem{example}{Example}[section]
\newcommand{\KK}{\mathbb{K}}
\newcommand{\NN}{\mathbb{N}}
\newcommand{\RR}{\mathbb{R}}
\newcommand{\CC}{\mathbb{C}}
\newcommand{\QQ}{\mathbb{Q}}
\newcommand{\TT}{\mathbb{T}}
\newcommand{\ZZ}{\mathbb{Z}}
\newcommand{\mbb}{\mathbb}
\newcommand{\R}{\mathbb{R}}
\newcommand{\Z}{\mathbb{Z}}
\newcommand{\C}{\mathbb{C}}
\newcommand{\Q}{\mathbb{Q}}
\newcommand{\N}{\mathbb{N}}
\newcommand{\T}{\mathbb{T}}
\newcommand{\floor}[1]{\left\lfloor #1 \right\rfloor}
\newcommand{\tth}{^{\mbox{\rm{\scriptsize{th}}}}}
\newcommand{\abs}[1]{\left| #1 \right|}
\newcommand{\of}{\circ}
\newcommand{\mbf}{\mathbf}
\newcommand{\ve}{\varepsilon}
\newcommand{\inner}[2]{\left\langle #1 , #2 \right\rangle}
\newcommand{\norm}[1]{\abs{\abs{#1}}}
\newcommand{\set}[1]{\left\lbrace #1 \right\rbrace}
\newcommand{\mc}{\mathcal}
\newcommand{\mf}{\mathfrak}
\newcommand{\seref}[1]{{\bf \ref{#1}.}}
\newcommand{\starref}{\hyperlink{star}{$(*)$}}
\def\id{\operatorname{id}}
\def\Diff{\operatorname{Diff}}
\def\Aut{\operatorname{Aut}}
\def\ad{\operatorname{ad}}
\def\Ad{\operatorname{Ad}}
\def\diag{\operatorname{diag}}
\def\Lie{\operatorname{Lie}}
\def\Hol{\operatorname{H\ddot{o}l}}
\begin{document}
\title[Local rigidity and the geometric method]
{Local Rigidity of Higher Rank Homogeneous Abelian Actions: a Complete Solution via the Geometric Method}
\author[Kurt Vinhage \and Zhenqi Jenny Wang]{Kurt Vinhage$^1$ and Zhenqi Jenny Wang$^2$}
\thanks{ $^1$ Based on research supported by NSF grant   DMS-1604796 \\
 $^2$ Based on research supported by NSF grant   DMS-1346876}

\subjclass[2013]{}
\address{Department of Mathematics, University of Chicago, Chicago, IL 60637, USA}

\email{kvinhage@uchicago.edu}

\address{Department of Mathematics, Michigan State University, East Lansing, MI 48824,USA}

\email{wangzq@math.msu.edu}
\begin{abstract}
We show local and cocycle rigidity for $\R^k \times \Z^l$ partially hyperbolic translation
actions on homogeneous spaces $\mc G/ \Lambda$. We consider a large class of actions whose geometric
properties are more complicated than previously treated cases. It is also the first time that partially hyperbolic twisted
symmetric space examples have been treated in the literature. The main new ingredient in the proof is a combination of geometric method and the theory of central extensions.

\end{abstract}

\maketitle

\section{Introduction}


Local rigidity is a phenomenon which generalizes and strengthens the notion of structural stability. Fix an acting group $\R^k \times \Z^l$. An action $\alpha : \R^k \times \Z^l \to \Diff^p(M)$ is {\it structurally stable} if whenever $\alpha'$ is an $\R^k \times \Z^l$-action sufficiently to $\alpha$ on a compact generating set in the $C^1$-topology, then there is a homeomorphism $h : M \to M$ which takes orbits of $\alpha$ to orbits of $\alpha'$. Hyperbolic systems, even those of rank one, are known to be structurally stable.

Local rigidity strengthens this notion in one way, by making the orbit equivalence a conjugacy, but weakens it in another, by allowing a (usually finite-dimensional) class of model perturbations. Consider a $C^\infty$ $\R^k \times \Z^l$-action $\alpha$ lying inside a class of actions $\mc A \subset \Diff^\infty(M)$, and suppose that $\alpha'$ is a $C^p$ action which is close to $\alpha$ on a compact generating set in the $C^q$ topology. If for every such $\alpha'$, there is a $C^r$ diffeomorphism $h$ such that $h^{-1} \of \alpha' \of h \in \mc A$, then $\alpha$ is said to be {\it $C^{p,q,r}$-rigid with respect to $\mc A$}. If $\mc A$ is understood, then we will often simply say $C^{p,q,r}$-rigid.

$\mc A$ is always chosen from a canonical set of models. This class is easily described in the case of homogeneous actions. An action $\alpha : \R^k \times \Z^l \to \Diff(M)$ is homogeneous if $M = G / \Gamma$ is the quotient of a Lie group $G$ by a lattice (or, more generally, closed subgroup) $\Gamma$ and there exists a homomorphism $i : \R^k \times \Z^l \to G$ such that $\alpha(a)g\Gamma = (i(a)g)\Gamma$. Then $\mc A$ may be chosen to be all homogeneous actions determined by homomorphisms $i'$ in a neighborhood of $i$.  When $i(\R^k \times \Z^l)$ is a split Cartan subgroup of a semisimple group, the action is called a {\it Weyl chamber flow}. 

Local rigidity for Anosov algebraic actions was first proved by Katok and Spatzier using harmonic analysis of homogeneous spaces \cite{ks97}. The method reduces the question of local rigidity to one of cocycle rigidity. Speical properties of Anosov actions are key to the analysis: that the orbit foliation and central foliations coincide give a global system of coordinates on this foliation (using the dynamics). This allows one to consider the cocycle that appears as a cocycle over the algebraic action, and also guarantees high regularity. Katok and Damjanovi{\'c} extended the local rigidity to restrictions of Weyl chamber flows on quotients of $SL(d,k)$ $k=\RR,\,\CC$ \cite{dk2011} using a new method, called the {\it geometric method}.

Instead of hard functional analysis, the solution of the cocycle problem which appears was defined along intersections of stable foliations called the {\it coarse Lyapunov foliations}. This has the benefit of not requiring high regularity of the cocycle. Since the web these foliations weave is transitive on the space $M$, one can extend the solution globally, provided well-definedness can be verified. Here, algebraic $K$-theory of the fields $\R$ and $\C$ was essential, as there is a direct correspondence between the $K_2$ groups and the web of coarse Lyapunov foliations. The scheme was further employed in \cite{damjanovic07}, \cite{zwang-1}, \cite{zwang-2} and \cite{vinhage15} to obtain local rigidity for various restrictions of Weyl chamber flows, which replaced the $K$-theory component with the more general theory of central extensions. However, these results were strongly limited by a genericity assumption which allowed the algebraic tools to be deduced from existing literature, most notably \cite{deodhar78}. As a result, a large class of partially hyperbolic symmetric space examples were not
covered.

The main difficulty in extending the local rigidity results for generic restrictions of Weyl chamber flows to other homogeneous partially hyperbolic systems is understanding the connection between the theory of central extensions and the web of coarse Lyapunov foliations. Since the goal of \cite{deodhar78} is to describe the universal central extension and not its relationship to abelian actions, they use the most convenient generators and relations: the coset foliations of root subgroups and the commutator relations between them. Generic restrictions of Weyl chamber flows are defined exactly so that the coarse Lyapunov foliations coincide with these coset foliations.

The progress made in \cite{zwang-1,zwang-2} used heavy computations for generic restrictions, where the algebra can be described through well-classified real forms of root systems. Since (irreducible) root systems appear in series $A_n$ through $D_n$, together with finitely many exceptional systems, the case of generic restrictions could conceivably be treated by considering each series ad-hoc.\footnote{Recently, a general, non-computational solution was found in \cite{vinhage15}, but it still relied heavily on the genericity assumption} The algebra that comes from other partially hyperbolic actions is not classfied in such a manner, so while an ad-hoc approach may be able to treat certain special actions, the broad treatment we provide requires a uniform approach.

Furthermore, our results are the first to address partially hyperbolic actions on semidirect produts (sometimes called twisted symmetric space examples). The case of semidirect products was thought to require a Fourier analysis approach to handle the torus fibers. To use the geometric method, we must describe what types of central extensions can appear in the Lie category, as well as show the coarse Lyapunov foliations correspond to a central extension. No well-developed theory for such questions existed for semidirect products, even in the Anosov setting. As a result, to obtain local rigidity, we prove results in these directions which are of indpendent interest.  

In this paper we prove the following theorem (for a precise formulation, see Theorem \ref{thm:main}), which shows local rigidity in a remarkably broad class of actions:
\begin{theorem}
\label{thm:imprecise}
Assume that $\alpha$ is a partially hyperbolic homogeneous action satisfying the assumptions laid out in Section \ref{sec:2}. Then $\alpha$ is $C^{\infty,1,\infty}$-rigid.
\end{theorem}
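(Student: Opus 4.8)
The plan is to follow the \emph{geometric method} initiated in \cite{dk2011} and extended in \cite{zwang-1,zwang-2,vinhage15}, reducing local rigidity to a cocycle rigidity problem along coarse Lyapunov foliations. First I would set up the standard framework: given a perturbation $\alpha'$ of $\alpha$ that is $C^1$-close on a compact generating set, one seeks a $C^\infty$ conjugacy $h$. By a now-standard argument (normal hyperbolicity of individual partially hyperbolic elements, together with the higher-rank structure), one first produces a continuous orbit equivalence and reduces matters to finding a conjugacy intertwining the actions on the neutral/orbit foliation; the transverse (hyperbolic) directions are handled by existing structural stability. The residual object to be trivialized is then a cocycle over $\alpha$ taking values in the model-action parameter space, and the heart of the matter is showing this cocycle is a coboundary with good regularity.

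Second, I would implement the \emph{path-construction} scheme: on each coarse Lyapunov leaf one has a transitive, essentially one-parameter (or unipotent-group) dynamics, so one can integrate the cocycle along such leaves to get a partial solution; the coarse Lyapunov web is transitive on $\mc G/\Lambda$, so concatenating paths through successive leaves produces a candidate global solution. Well-definedness of this concatenation amounts to showing that the solution is unchanged around closed loops in the web, i.e.\ that the obstruction cocycle descends to something controlled by the \emph{relations} among coarse Lyapunov subgroups. This is precisely where the theory of central extensions enters: the relations among the coarse Lyapunov subgroups (commutator relations, plus the new relations coming from the torus fibers in the semidirect-product/twisted cases) present $\mc G$, up to center, as a quotient of a group built from the coarse Lyapunov data, and the obstruction lives in the corresponding $H^2$-type group of that central extension. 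Under the hypotheses of Section~\ref{sec:2} — chiefly the exclusion of rank-one simple factors, phrased there as condition $(*)$ — this obstruction group is shown to vanish (or to be absorbed into the allowed model perturbations), so the loop obstructions vanish and the global solution is well-defined.

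Third, having a well-defined continuous solution, I would bootstrap regularity: the partial solutions along coarse Lyapunov leaves are as smooth as the leaves (which are smooth, being orbits of algebraic subgroups), and a standard elliptic-regularity / Journ\'e-type argument along the transverse web upgrades the glued solution to $C^\infty$ in the ambient space; the loss is only in the direction of the generating set, yielding the $C^{\infty,1,\infty}$ conclusion. Finally one checks that conjugating $\alpha'$ by $h$ lands inside the model class $\mc A$, i.e.\ the conjugated action is again a homogeneous translation action by a homomorphism near $i$.

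I expect the main obstacle to be the algebraic step: establishing, in a \emph{uniform, non-computational} way, that the coarse Lyapunov subgroups together with their relations realize $\mc G$ (modulo its center) as a controlled central extension, and that the associated obstruction vanishes under condition $(*)$. Unlike the generic restrictions of Weyl chamber flows treated previously — where the coarse Lyapunov foliations coincide with root-subgroup cosets and one may invoke \cite{deodhar78} — here the coarse Lyapunov subgroups can be coarser than root subgroups, and in the twisted case one must additionally understand central extensions of semidirect products with torus fibers, for which no prior theory exists. Developing this central-extension machinery for these groups, and matching it cleanly to the web of coarse Lyapunov foliations so that loop obstructions are exactly the computable cohomology classes, is the technical core; once that is in place, the dynamical steps (orbit equivalence, path integration, regularity bootstrapping) proceed along by-now-familiar lines.
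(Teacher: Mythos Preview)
Your outline follows essentially the same geometric-method scheme as the paper, and you correctly identify the central-extension step as the technical core. However, several points in your sketch are imprecise or miss ingredients that the paper must supply, and at least two of them are genuine gaps rather than routine details.

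First, the cocycle you obtain is not an ordinary cocycle over $\alpha$: since $A$ need not lie in a split Cartan subgroup, $A$ does not commute with the neutral group $N=C_{\mc G}(p(A))$, and the resulting object is a \emph{twisted} cocycle $\beta(ab,x)=\beta(a,\alpha^b x)\,\psi_a(\beta(b,x))$ with $\psi_a$ conjugation by $a$ (Lemma~\ref{lem:correction-cocycle}). The PCF theory in the literature does not cover this case, and Section~\ref{sec:PCF} develops it from scratch for slowly twisted cocycles. Your phrase ``cocycle over $\alpha$ taking values in the model-action parameter space'' hides this.

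Second, the obstruction group $\mc C_A/\mc S_A$ does \emph{not} vanish, nor is it absorbed into the model class. What the paper proves (Theorem~\ref{th:3}) is that $\mc C_A/\mc S_A$ is \emph{minimally almost periodic}: it admits no nontrivial continuous homomorphism to any Lie group. This requires putting the free-product topology on $\widetilde{\mc G}_A$ (Section~\ref{sec:7}), proving local path-connectedness of $\mc C_A$ (Proposition~\ref{le:5}), and invoking Gleason--Palais. Saying the obstruction ``is shown to vanish'' misstates the mechanism.

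Third, you omit the non-contractible cycles. After lifting to $\mc G_0$ and trivializing the PCF there, the transfer function on $M$ is well-defined only after showing that the induced homomorphism $\Gamma'\to N$ (or $\Gamma_\rho',\widetilde{\Gamma}\to N$) is trivial; this uses Margulis superrigidity and the normal subgroup theorem (Lemma~\ref{cor:no-lattice-morphisms}) together with smallness coming from Lemma~\ref{le:4}.

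Fourth, your regularity step is not what the paper does, and as stated would not give $C^{\infty,1,\infty}$. The paper obtains smoothness along each coarse Lyapunov foliation via Katok--Spatzier non-stationary normal forms, which requires that the conjugated homogeneous action act \emph{semisimply}; this is guaranteed only under the additional hypothesis that all roots of $\Delta_{A_0}$ are detected and $0\notin\Phi_{A,\rho}$ (Lemma~\ref{lem:isometric} and the precise Theorem~\ref{thm:main}). Global $C^\infty$ then comes from subelliptic regularity \cite{Spatzier2}, not a Journ\'e argument. Without the detection hypothesis the paper obtains only a H\"older conjugacy.
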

Theorem \ref{thm:imprecise} (Theorem \ref{thm:main}) is the first case of local rigidity for a broad
class of twisted symmetric space examples which are not Anosov. Such cases were believed to require a harmonic analysis approach in the torus fibers.
Even in the case of semisimple groups, the results are new, removing the genericity assumption. In fact, we obtain topological rigidity under a much more general assumption called the {\it genuinely higher rank condition} (see \starref). Furthermore, by employing new schemes we give a uniform approach for all partially hyperbolic algebraic systems.


\subsection{Outline of the Paper}
In Section \ref{sec:2}, we introduce the actions which we study and assumptions which will appear in our main results, and in Section \ref{sec:statement}
we state our results precisely.
In Sections \ref{sec:geom1} and \ref{sec:geom2} we study the geometric structure for general actions:  we describe the coarse Lyapunov foliations and the universal central Lie extensions (which are non-trivial for some twisted spaces), lift the action and its small perturbations from homogeneou spaces to the central Lie extensions, construct periodic cycle functional (PCF) for the twisted cocycle over general non-compact groups instead of the product of an abelian group and a compact group. These results in some ways parallel those that appear in previous papers for restrictions of full Cartan actions \cite{dk2005, dk2011, damjanovic07, zwang-1,vinhage15}.

The new ideas of the paper appear in Sections \ref{sec:alg1} through \ref{sec:alg3}, where we prove that the group generated by the coarse Lyapunov subgroups modulo the so-called {\it stable relations} is a central extension of the ambient Lie group (see Theorem \ref{thm:central}). For general actions, the corresponding root and weight systems are quite different from the standard ones. We generalize the ideas of R. Steinberg \cite{Steinberg}, \cite{Steinberg2} and V. Deodhar \cite{deodhar78} and adapt the algebraic structure to the geometric setting for general actions. This is one of the main difficulties of our approach. In the symmetric space examples the main technique is choosing suitable Weyl elements to recover undetected roots (Section \ref{sec:alg2}). For twisted spaces we make use of the irreducibility of the representation and Lie group structure to handle the difficulty from the non-trivial radical. These sections contain the main technical obstacles of the paper.

In Sections \ref{sec:cocyle-rigid} and \ref{sec:perturbed}, based on the conclusions from earlier sections,
we prove trivialization of small twisted cocycles obtained from the perturbation. Once the cocycle rigidity is obtained, the transfer function provides a continuous conjugacy. Global smoothness follows from the non-stationary normal form method. The key ingredients of in the proof of trivialization of twisted cocycles are:
(i) local transitivity of Lyapunov foliations, (ii) vanishing of PCF on stable cycles, (iii) vanishing of continuous continuous homeomorphisms from Schur multipliers to Lie groups and (iv) vanishing of small homeomorphism from $\Gamma$ to the neutral subgroup.

\section{The setting}\label{sec:2}
We consider  actions of higher rank abelian groups
 $\ZZ^k\times \RR^l, \,\, k+l\ge 2$ that come
from the following general  algebraic construction:

Let $H$ be a
connected Lie group, $A\subseteq H$ a closed abelian subgroup which is isomorphic to $\ZZ^k\times \RR^l$, $L$ a compact subgroup of
the centralizer $Z(A)$ of $A$, and $\Upsilon$ a cocompact lattice in
$H$. Then $\Z^k \times \R^l$ acts by left translations on the compact space
$M=L\backslash H/\Upsilon$. \emph{The linear part of the action} is the representation of $A$ on $\text{Lie}(H)/\text{Lie}(L)$ induced by
the adjoint representation of $A$ on $\text{Lie}(H)$.

The two specific
types of standard partially hyperbolic examples discussed below correspond to:
\begin{enumerate}
  \item [(a)] for the symmetric space examples take $H$ a semisimple connected Lie group $G$ of $\RR$-rank $\ge 2$ and $\Upsilon$ an
irreducible torsion-free cocompact lattice $\Gamma$ in $G$

  \medskip

  \item [(b)]  for the twisted symmetric space examples take $H=G\ltimes\RR^n$, a semidirect
product of a semisimple connected Lie group $G$ of $\RR$-rank $\ge 2$ with $\RR^n$, $\Upsilon$ a semidirect product of
an irreducible torsion-free cocompact lattice in $G$ with $\ZZ^n$
\end{enumerate}

\begin{definition} {\em Coarse Lyapunov distributions} are defined as the maximal non-trivial intersections of stable distributions of various action
elements.\end{definition}

For homogeneous actions these distribution are homogeneous that integrate to coset foliations called {\em coarse Lyapunov foliations} (see \cite[Section
2]{dk2005} and \cite{Kalinin} for detailed discussion in
greater generality).

We will now discuss these standard  examples in more detail. Throughout this paper $G$ will always denote a semisimple connected Lie group  of $\RR$- rank $\ge 2$ without compact factors and with finite center and $\Gamma$ an irreducible torsion-free cocompact lattice
in $G$. For any group $S$ we use $Z(S)$ to denote its center.

\subsection{Symmetric space examples}\label{sec:9}
For any $g\in G$ we have a corresponding Jordan-Chevalley normal form decomposition of $3$ commuting elements $g=s_gk_gn_g$ where $s_g$ is semisimple, $k_g$ is compact and $n_g$ is nilpotent. Set $p(g)=s_g$. For any abelian set $A \subset G$ there exists a $\RR$-split Cartan subgroup $A_0$ such that $p(a)\in A_0$ for any $a\in A $ (see Proposition \ref{prop:diagonalize}).

A  standard root system for $A_0$ comes from the decomposition of $\mathfrak{g}$, the Lie algebra of $G$, into the eigenspaces of
adjoint representation of $A_0$ whose elements are
simultaneously diagonalizable. For the subgroup $p(A)$ of $A_0$ we can also consider the
decomposition of $\mathfrak{g}$ with respect to the adjoint
representation of $p(A)$ and the resulting root system corresponding to the assignment of eigenvalues is
called the {\em restricted root system with respect to $A$}. Let $\Delta_A$
denote the restricted root system.  If $A=A_0$, $\Delta_{A_0}$ is the standard root system, and since $p(A) \subset A_0$, each $r \in \Delta_{A_0}$ restricts to an element of $\Delta_A$. $r \in \Delta_{A_0}$ is said to be {\it detected by $A$} if it does not restrict to the 0 functional on $A$. Calling $\Delta_{A}$ a restricted root system is somewhat abusive. Indeed, $\Delta_{A}$ does not carry the usual structures of a (restricted) root system, such as a canonical inner product and associated Weyl group (see Section \ref{sec:6}).

The Lie algebra
$\mathfrak{g}$ of $G$ decomposes
\begin{align}\label{for:6}
\mathfrak{g}=\mf g_0 + \sum_{\mu\in\Delta_A}\mathfrak{g}_\mu
\end{align}
where $\mathfrak{g}_\mu$ is the associated joint eigenspace of $\mu$ and
$\mf g_0$ is the Lie algebra of the centralizer $C_G(p(A))$ of $p(A)$.

Elements of
$A\setminus p^{-1}\left(\bigcup_{\phi\in\Delta_A}\ker(\phi)\right)$ are
\emph{regular} elements (for $A$). Connected components of the set of regular
elements are \emph{Weyl chambers} (for $A$). For any $\mu \in \Delta_A$ let $\mathfrak{g}^{(\mu)}=\sum_{k>0}\mathfrak{g}_{k\mu}$ and
$U_{[\mu]}$ be the corresponding subgroup of $G$. Then these subalgebra $\mathfrak{g}^{(\mu)}$ form coarse Lyapunov distributions and (double) cosets of these subgroups $U_{[\mu]}$ form coarse Lyapunov foliations of $\alpha_A$ (see Definition \ref{de:3}), which coincide with those of $\alpha_{p(A)}$ (see Proposition \ref{prop:diagonalize}).

If $p(A)=A_0$, the left translations  of $A$ on $ G/\Gamma$  is sometimes referred to as  {\em full Cartan action} (see \cite{ks97}). If the coarse Lyapunov foliations of $\alpha_A$ coincide with those of $\alpha_{A_0}$, then $p(A)$ is in a generic position (see \cite{dk2005}) and the action of $A$ on $ G/\Gamma$ is called a \emph{generic restriction}. In these two cases, $p(A)$ contains a regular element for $A_0$, which implies that every element of $A$ is semisimple (in particular, it is a subgroup of $C_G(A_0)$, which is always the product of $A_0$ and a compact group). This is why in former papers all the actions taken studies are restrictions of a split Cartan subgroup.

\begin{example}
Our first example is one which detects every root but is not generic (in the sense of \hyperlink{starp}{$(*')$}). Let $G = SL(dm,\R)$, with $d,m \ge 2$. Let $\mu_1,\dots,\mu_m \in \R$ be distinct real numbers such that $\sum \mu_i =0$, and consider the embedding of $\R^d$ defined via:

\[ \mbf{t} =  (t_1,\dots,t_d) \mapsto \diag(\mu_1 t_1,\dots,\mu_1t_d, \mu_2 t_1,\dots,\mu_2 t_d,\dots,\mu_mt_1, \dots,\mu_mt_d)  \]

Then on easily sees that the usual roots assigning the differences of entries of a diagonal matrix restrict to \[\Delta_A = \set{ \mbf t \mapsto \mu_i t_k - \mu_j t_l : 1 \le i,j \le m, 1 \le k,l \le d, (i,k) \not= (j,l)}.\] Note that if $i = j$, this is a multiple of the usual roots on $SL(d,\R)$, so many of the roots are proportional, positively and negatively. Similarly, if $k = l$, the corresponding root is a multiple of the functional which assigns the $k\tth$ entry of $\mbf t$, which is nonzero since we assume all $\mu_i$ are distinct. Notice that all of the usual roots of $SL(dm,\R)$ with respect to the diagonal subgroup are still detected by this embedding since all of the $\mu_i$ are distinct.
\end{example}

\begin{example}
\label{sec:6}
We also consider the following example to illustrate that Jordan blocks may appear. Let $G = SL(4,\R)$ and
\begin{align*}
  A &= \set{\begin{pmatrix}
e^t & e^ts & 0 & 0 \\
0 & e^t & 0 & 0 \\
0 & 0 & e^{s} & 0 \\
0 & 0 & 0 & e^{-2t-s}
\end{pmatrix}}= \exp\set{\begin{pmatrix}
t & s & 0 & 0 \\
0 & t & 0 & 0 \\
0 & 0 & s & 0 \\
0 & 0 & 0 & -2t-s
\end{pmatrix}},
\end{align*}
where $s,\,t\in\RR$. Then $A \cong \R^2$. The restricted root system for the corresponding subgroup $A' = p(A) = \set{\diag(e^t,e^t,e^{s},e^{-2t-s})}$ (the ``diagonal part'' of $A$) are
\begin{align*}
\chi_1 = \pm(t-s),\quad \chi_2 = \pm(3t+s),\quad\chi_3 = \pm(2t+2s).
\end{align*}
Both $\mathfrak{g}_{\chi_1}$ and $\mathfrak{g}_{\chi_2}$ are $2$-dimensional, but $\mf g_{\chi_3}$ is 1-dimensional. Note that $A'$ sits inside the full Cartan subgroup of diagonal matrices.
The $\ad$-action of $\log A$ on these spaces is not diagonalizable. For instance, on the subspace $\mathfrak{g}_{\chi_1}$ is given by:
\[ \mathfrak{g}_{\chi_1} = \set{\begin{pmatrix}
0 & 0 & x & 0 \\
0 & 0 & y & 0 \\
0 & 0 & 0 & 0 \\
0 & 0 & 0 & 0
\end{pmatrix} : x, y \in \R} .\]

Then the action of $\log A$ on $\mathfrak{g}_{\chi_1}$  in these coordinates is given by
\begin{align*}
 (x,y) \mapsto ((t-s)x + sy,(t-s)y).
\end{align*}
Furthermore, the neutral distributions exponentiates is $SL(2,\R)\times\RR^2$, which is the centralizer of $A'$ instead of $A$.

The restricted root system of $A$ is quite different from the standard root system corresponding to $A_0$. In our example, the Weyl group for the root system $\Delta_{A_0}$ is symmetric group $S_4$. However, $N(A')/C(A')=\{\id,(34)\} \cong \Z / 2\Z$, which does not act transitively on the Weyl chambers of $\Delta_{A}$.
Furthermore, $S_4$ (the Weyl group of $A_0$) nor does preserve the root spaces at all. For example, $\chi_1$ is the restriction of two roots: $r_{13}$ and $r_{23}$, which associate to a diagonal matrix the difference of its first and third, and second and third entries, respectively. Then $\mathfrak{g}_{\chi_1}$ under the action of $(13)$ (realized in the group $G$) in coordinates $(x,y)$ is given by
\[ \set{\begin{pmatrix}
0 & 0 & 0 & 0 \\
y & 0 & 0 & 0 \\
x & 0 & 0 & 0 \\
0 & 0 & 0 & 0
\end{pmatrix} : x, y \in \R} \]
which is split across elements of $\mf g_0$ and $\mf g_{-\chi_1}$. While this creates significant bookkeeping, the breaking of the restricted root system $\Delta_{A}$ is essential to our analysis.
\end{example}

\subsection{Twisted symmetric space examples} \label{sec:3}  Let $G$, $\Gamma$ and their associated structures be as in Section \ref{sec:9}, with the additional assumption that $\Gamma = \rho^{-1}(SL(N,\Z) \cap \rho(G))$ for some reprsentation $\rho : G \to SL(N,\R)$ such that the decomposition into irreducible subrepresentations does not contain the trivial representation.\footnote{Margulis superrigidity shows that one may expect this description for many lattices in $G$, but in general one may require some compact correction \cite[Theorem VII.5.13]{margulis91}}

We can build the associated semi-direct product $G_{\rho}=G\ltimes_\rho \RR^N$, and lattice $\Gamma_\rho = \Gamma \ltimes_\rho \Z^N$. The
multiplication of elements in $G_\rho$ is given by
\begin{align}\label{for:11}
(g_1,x_1)\cdot(g_2,x_2)=(g_1g_2,\rho(g_2^{-1})x_1+x_2),
\end{align}

This construction is equivalent to the following: consider the smooth manifold $G \times \T^N$, and the $\Gamma$ action $\gamma \cdot (g,x) = (g\gamma^{-1},\rho(\gamma)x)$. Since $\rho(\gamma) \in SL(N,\Z)$, this action is well-defined, and since $\Gamma$ is a lattice in $G$, the action is properly discontinuous. Then the topological quotient space $(G \times \T^N)/\sim$ is then canonically diffeomorphic to $G_\rho / \Gamma_\rho$.

We then consider the translation action of an abelian subgroup $A \subset G_\rho$. For any abelian set $A \subset G_\rho$ and $a=(g_a,v_a)\in A$, set $p_\rho(a)=p(g_a)$, where $p$ is the map defined in Section \ref{sec:9}. Then there exists an element $s\in G_\rho$ such that the coarse Lyapunov distributions for the action of $sAs^{-1}$ is the same as those for $p_\rho(A)$  (see Proposition \ref{prop:diagonalize1}).
Let $\Phi_{A,\rho}$ denote the \emph{restricted weights of $G$ with respect to $p_\rho(A)$}, associated with the decomposition of $\RR^N$ into eigenspaces under the representation $\rho(p_\rho(A))$. If $p_\rho(A)=A_0$, $\Phi_{A,\rho}$ is the standard weights.  Then the Lie algebra
$\mathfrak{g}_\rho$ of $G_\rho$ decomposes
\begin{align*}
\mathfrak{g}_\rho=\mf g_0 + \sum_{r \in\Delta_A}\mathfrak{g}_r +\sum_{\mu\in\Phi_{A,\rho}}\mathfrak{e}_\mu
\end{align*}
where $\mathfrak{e}_\mu$ is the weight space of $\mu$ and
$g_0$ is the Lie algebra of the centralizer $C_{G_\rho}(A)$ of $A$ in $G_\rho$.

Note that $r$ or $\mu$ may appear in the set of both restricted roots and weights. For any $r\in \Delta_A\cup \Phi_{A,\rho}$ let $\mathfrak{g}^{(r)}=\sum_{k\in \R_+} \left(\mathfrak{g}_{kr}+\mathfrak{e}_{kr}\right)$ and
$U_{[r]}$ be the corresponding subgroup of $G_\rho$. Then these subalgebra $\mathfrak{g}^{(r)}$ form coarse Lyapunov distributions and (double) cosets of these subgroups $U_{[r]}$ form coarse Lyapunov foliations of $\alpha_A$, which coincide with those of $\alpha_{p(A)}$ (see Proposition \ref{prop:diagonalize}).

Elements of
$A\backslash p^{-1}(\bigcup_{\phi\in\Delta_A\cup\Phi_{A,\rho}\setminus \set{0}}\ker(\phi))$
are \emph{regular} (for $A$) and connected components of the set of regular
elements \emph{Weyl chambers} (for $A$). If $A=A_0$, the left translations  of $A$ on $ G_\rho/\Gamma_\rho$  is sometimes referred to as  {\em full Cartan action}.

\section{Statement of Main Results}
\label{sec:statement}
\subsection{Smooth Rigidity and Factor Actions}\label{sec:13}
Let $\mc G / \Lambda$  denote $G/\Gamma$ as in
symmetric space examples or  $G_\rho/\Gamma_\rho$ as in twisted symmetric space
examples.
\begin{definition}\label{de:3}
If $i_0 : \R^k \times \Z^l \to \mc G$ is a homomrophism, the  action $\alpha$ of $\R^k \times \Z^l$ defined by $\alpha(a)(g\Lambda) = (i_0(a)g)\Lambda$  on $\mc G / \Lambda$ will be referred to as a {\em higher-rank translation} or just a {\em translation} for short.
\end{definition}

We let $A$ denote the image of $\R^k \times \Z^l$ in $\mc G$, and sometimes use identify $\R^k \times \Z^l$ with $A$. We use $\alpha_A$ or $\alpha_{i_0}$ to denote the translation action if we wish
to emphasize which translation we take.
A {\em standard perturbation} of the action  $\alpha_{i_0}$ is an action $\alpha_i$ where $i : \ZZ^k\times\RR^{l}\rightarrow N$ is a homomorphism close to $i_0$, where $N$ is the centralizer of $p(A)$ or $p_\rho(A)$, respectively.

\begin{remark}
In the hyperbolic situation any  standard perturbation is  simply a time change
corresponding to an automorphism of the acting group  but in the partially hyperbolic cases  standard perturbations are usually essentially different from each
other.
\end{remark}

\begin{definition} \label{def:rank1-factor} An action $\alpha'$ of $\R^k \times \Z^l$ on a manifold $\mathcal{M}'$ is a factor of an action $\alpha$ of $\R^k \times \Z^l$ on $\mathcal{M}$ if there exists an
epimorphism $h:\mathcal{M}\rightarrow \mathcal{M}'$ such that $h\circ\alpha=\alpha'\circ h$.

An action $\alpha'$ is a (vitrually) rank one factor if it is an factor and if $\alpha'(\R^k \times \Z^l)$
contains a rank one subgroup $S$ such that the coarse Lyapunov foliations for $\alpha'(\R^k \times \Z^l)$ is the same as for action $S$.
\end{definition}

\begin{definition}An action $\alpha_{i_0}$ of  $\ZZ^k\times\RR^l$ on $M$ is $C^{k,r,\ell}$ {\em locally
rigid} if any $C^k$ perturbation $\tilde{\alpha}$ which is
sufficiently $C^r$ close to $\alpha_{i_0}$ on a compact generating
set is $C^\ell$ conjugate to a standard perturbation
$\alpha_{i_0}$.
\end{definition}

\begin{remark} It is immediately obvious that if $\alpha_{i_0}$ is locally rigid then the same is true for any time change obtained by an automorphism of $\R^k \times \Z^l$. Hence the notion of local rigidity for $\alpha_{i_0}$  depends only of the subgroup  $A$.
\end{remark}

\subsection{Genuinely higher rank  and almost semisimple actions}We use $\mathcal{G}$ to denote $G$ or $G_\rho$. Let $\alpha_A$ be a translation on $M$. Then $\alpha_A$ lifts to an $A$ action on $\mathcal{G}$ by left translations.
 Suppose $\alpha_A$ satisfies property

\begin{center}
 \hypertarget{star} ($*$) there is no rank one factor of the corresponding lifted algebraic action on $\mathcal{G}$.
\end{center}

\noindent  If $\alpha_A$ satisfies property \starref, then we say $A$ is \emph{genuinely higher rank}.
 \begin{remark}
Condition \starref \, shows that for both spaces the group $G$ has neither compact factors, nor rank one factors. Furthermore, it also implies that the twisted spaces
$\rho$ has no trivial summands.
\end{remark}

\subsection{The Main Theorems}
\begin{definition}\label{de:2}
We say that an antisymmetric bilinear form $\omega$ on $\RR^N\times \RR^N$  \emph{invariant for the representation $\rho$}, if
\begin{align*}
 \omega(\rho(g)w_1,\rho(g)w_2)=\omega(w_1,w_2),\mbox{ for every }\,w_1,\,w_2\in \RR^N\text{ and }g \in G,
\end{align*}
We say that a twisted symmetric space obtained from $G_\rho$ is \emph{non-symplectic} or {\it has no symplectic contributions} if there is no nontrivial, antisymmetric bilinear form on $\mf e$  invariant for $\rho$. Otherwise, we say that the twisted symmetric space is \emph{symplectic} or  has a {\it symplectic contribution}.
\end{definition}
\begin{theorem}
\label{thm:main}\hspace{1cm}Suppose $\alpha_A$ is genuinely higher-rank. Then:
\begin{enumerate}

  \item  $\alpha_A$ is $C^{\infty,1,\Hol}$ locally rigid on symmetric spaces and non-symplectic twisted symmetric spaces. Furthermore, if all roots of
$\Delta_{A_0}$ are detected by $A$ and $0 \not\in \Phi_{A,\rho}$, $\alpha_A$ is $C^{\infty,1,\infty}$-rigid.

  \smallskip
   \item $\alpha_A$ is $C^{\infty,1,\Hol}$ locally rigid on symplectic twisted symmetric spaces if $0\notin \Phi_{A,\rho}$. Furthermore, if all roots of
$\Delta_{A_0}$ are detected by $A$, $\alpha_A$ is $C^{\infty,1,\infty}$-rigid.
  \end{enumerate}
\end{theorem}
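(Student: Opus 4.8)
The plan is to follow the \emph{geometric method} blueprint, but organized so that every nontrivial ingredient comes from the earlier sections of the paper. Given a $C^\infty$ perturbation $\tilde\alpha$ that is $C^1$-close to $\alpha_A$ on a compact generating set, the first step is to lift everything to the universal central Lie extension $\widehat{\mathcal G}$ of $\mathcal G$ constructed in Sections~\ref{sec:geom1}--\ref{sec:geom2}: the action, the coarse Lyapunov foliations, and the perturbation. Because $0 \notin \Phi_{A,\rho}$ (and, in the symmetric case, because the hypotheses force $G$ to have no rank-one or compact factors) the genuinely higher-rank assumption \starref{} applies, so every coarse Lyapunov subgroup $U_{[r]}$ is contractible under some element of $A$ and pairs of non-proportional coarse Lyapunov directions generate, via their commutators, enough of the group; this is exactly the input needed to build the periodic cycle functional (PCF) over the non-compact group $\widehat{\mathcal G}$ rather than over a product of an abelian and a compact group, which the excerpt indicates is done in Sections~\ref{sec:geom1}--\ref{sec:geom2}.

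Second, I would invoke Theorem~\ref{thm:central}: the group generated by the coarse Lyapunov subgroups modulo the stable relations is a central extension of $\mathcal G$. This is the structural heart of the argument and identifies the target of the PCF as a central kernel, i.e.\ a Schur-multiplier-type object. Then the cocycle produced by the perturbation (the ``twisted cocycle'' measuring the discrepancy between $\tilde\alpha$ and a standard perturbation along the coarse Lyapunov web) has its obstruction living in this central kernel. The four listed ingredients now close the loop: local transitivity of the Lyapunov foliations lets one propagate a local solution of the cocycle equation; vanishing of the PCF on stable cycles (ingredient (ii)) shows the propagated solution is well-defined around contractible cycles; vanishing of continuous homomorphisms from Schur multipliers into Lie groups (ingredient (iii)) kills the central obstruction coming from Theorem~\ref{thm:central}; and vanishing of small homomorphisms $\Gamma \to$ the neutral subgroup (ingredient (iv)) handles the cycles that wrap around the lattice. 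Together these trivialize the twisted cocycle, so $\tilde\alpha$ is conjugate to a standard perturbation by a continuous (a priori only H\"older) transfer map $h$.

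Third, for the regularity statement: the transfer function $h$ conjugating $\tilde\alpha$ to a standard perturbation $\alpha_i$ is continuous, and restricted to each coarse Lyapunov leaf it solves a cohomological equation whose higher regularity follows from the non-stationary normal form (Katok--Spatzier) theory, provided the relevant coarse Lyapunov direction carries a genuine contraction/expansion with no neutral component. This is where the dichotomy in the two clauses enters: $C^{\infty,1,\Hol}$ rigidity holds in general, but the upgrade to $C^{\infty,1,\infty}$ requires that \emph{all} roots of $\Delta_{A_0}$ be detected by $A$ (so no root degenerates to a neutral direction forcing a Jordan block along the orbit, as in the $SL(4,\R)$ example) and $0 \notin \Phi_{A,\rho}$ (so no weight direction is neutral). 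Under those conditions the normal forms along all coarse Lyapunov leaves are smooth, and one bootstraps $h$ from H\"older to $C^\infty$ along each leaf, then across the transitive web, to get global smoothness. The symplectic vs.\ non-symplectic distinction in clause (1) vs.\ clause (2) affects only which central extensions can appear (a nonzero invariant antisymmetric form on $\mf e$ contributes an extra central direction in $\widehat{\mathcal G}_\rho$), but since ingredient (iii) handles arbitrary central kernels, the same argument applies once $0\notin\Phi_{A,\rho}$ is assumed in the symplectic case.

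The main obstacle I expect is \emph{establishing Theorem~\ref{thm:central}} in the generality required — i.e.\ that the coarse Lyapunov subgroups modulo stable relations form a central extension when the restricted root/weight system $\Delta_A \cup \Phi_{A,\rho}$ is \emph{not} one of the classical (restricted) root systems and carries no Weyl group acting transitively on chambers (as the $SL(4,\R)$ example dramatizes). The classical approach of Steinberg and Deodhar builds the extension from root-subgroup generators and Chevalley-type commutator relations, but here proportional roots, undetected roots, Jordan blocks in the $\ad$-action, and the non-trivial radical of $G_\rho$ all break that machinery; one has to recover ``missing'' roots by choosing suitable Weyl elements in the symmetric case (Section~\ref{sec:alg2}) and exploit irreducibility of $\rho$ together with the Lie group structure to control the radical in the twisted case (Section~\ref{sec:alg3}). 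Secondary difficulties are the construction of the PCF over a genuinely non-compact $\widehat{\mathcal G}$ and the leafwise-to-global smoothing, but those are closer to adaptations of \cite{dk2011,zwang-1,vinhage15}; the purely algebraic identification of the central extension is the genuinely new technical core.
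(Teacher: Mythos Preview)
Your proposal is correct and follows essentially the same route as the paper: lift to $\mathcal G_0$, trivialize the PCF via Theorem~\ref{th:3} (which packages Theorem~\ref{thm:central} together with the minimally-almost-periodic argument), kill the lattice obstruction via Lemma~\ref{cor:no-lattice-morphisms}, and upgrade regularity via Lemma~\ref{lem:isometric} plus non-stationary normal forms. One step you skate over is that the transfer map produced by cocycle trivialization is a priori only a $C^0$-close \emph{semi}-conjugacy; the paper spends a separate argument (using the cycle-group isomorphism of Theorem~\ref{thm:perturbed-proj} on the cover) to prove injectivity, and your outline should flag this rather than treat ``conjugate'' as automatic. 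Your reading of the symplectic clause is also slightly off: the reason $0\notin\Phi_{A,\rho}$ is needed there is not that ingredient~(iii) fails, but that for Type~II actions the lifts $\hat\alpha_A(a)$ need not preserve each other's stable foliations (Section~\ref{sec:20}), so the PCF machinery itself is unavailable without the extra hypothesis.
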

\begin{remark}
The proof of smoothness of conjugacy relies on finding elements which act isometrically on each coarse Lyapunov foliation. Detection of all roots guarantees semisimplicity of the action, as well as all of its perturbations (Lemma \ref{lem:isometric}). Once a root or weight becomes undetected, the perturbations may fail act semisimply. The authors plan to address this issue in a subsequent paper, where smoothenss will be investigated in the presence of Jordan blocks.
\end{remark}

Let $\mathcal{G}_0$ denote the universal Lie central extension  of $\mathcal{G}$ (see Definition \ref{def:univ-lie}) and let $\Gamma'$ (resp. $\Gamma_\rho'$) be the lattice in $\mathcal{G}_0$ that covers $\Gamma$ (resp. $\Gamma_\rho$, see Lemma \ref{lem:lattice-lift}). Let $Z$ denote the center of $\mathcal{G}_0$ that lies in the radical of $\mathcal{G}_0$. For each $a \in A$ we can associate an element $\widehat{a} \in \mc G_0$. Then define $\widehat{\alpha}_A(a)$ be the left multiplication map on $\mc G_0$ (see \seref{se:5} and \seref{se:7} of Section \ref{sec:14}).
\begin{definition}\label{def:quasi-nil}
Let $\widetilde{\alpha}_A$ be a $C^\infty$ perturbation of $\alpha_A$ that is $C^1$-sufficiently close to $\alpha_A$. Fix a compact generating set $\mbb A$ of $A$. We say that $\widetilde{\alpha}_A$ admits  a {\it quasi-nil extension} (on $\mbb A$) if $\widetilde{\alpha}_A(a)$ can be lifted to a diffeomorphism $\widehat{\alpha}_A(a)$ on $\mc G_0$ for each $a \in \mbb{A}$ such that:
\begin{enumerate}[(i)]
\item $\widehat{\alpha}_A(a)$ commutes with the (right) $\Gamma'$ (resp. $\Gamma_\rho'$) and $Z$ actions.

\medskip
\item

If $\widehat{W}^s_a$ is the stable foliation for $\widehat{\alpha}_A(a)$, then $\widehat{W}^s_a$ is invariant under $\widehat{\alpha}_A(b)$, $b \in \mbb A$.

\medskip
\item $\widehat{\alpha}_A(a)$ is $C^1$-sufficiently close to $\widehat{\alpha}_A(a)$ for each $a \in \mbb{A}$.
\end{enumerate}
\end{definition}

\begin{remark}
Every homogeneous perturbation admits a quasi-nil extension. The terminology quasi-nil extension comes from the fact that in the homogeneous setting, the group generated by the diffeomorphisms $\widehat{\alpha}_A(a)$ is either isomorphic to $A$, or a 1-step nilpotent extension of $A$.
\end{remark}

\section{History of the  rigidity problem}
\subsection{Rigidity of hyperbolic  actions}
 Differentiable rigidity of higher rank  algebraic Anosov actions including Weyl chamber flows and twisted Weyl chamber flows was proved in the mid-1990s   \cite{ks97}.
The proof consists of two major parts:

 (i)  An  {\em a priori regularity}  argument  that shows  smoothness of the Hirsch-Pugh-Shub orbit equivalence
\cite{shub}. The key part of the argument is the  theory of  non-stationary normal forms developed in \cite{Guysinsky}.
`11
 (ii) {\em Cocycle rigidity} used to ``straighten out'' a time change; it is proved     by a harmonic analysis method in \cite{Spatzier1}.

All these actions satisfy condition $(*'')$:  {\em there are no rank one factors.}

\subsection{Difference between hyperbolic and partially hyperbolic actions}\label{sec:2.2}
The next step is to consider  algebraic {\em partially hyperbolic
actions}. Unlike the hyperbolic case, the a priori regularity method is not directly
applicable here since individual elements  of such actions are not
even structurally stable. In addition to their work on  hyperbolic
rigidity,  A. Katok and R. Spatzier also considered  cocycle
problem  for certain partially hyperbolic actions in
\cite{Spatzier2} and proved essential cocycle rigidity results.

Before  proceeding with the chronological account let us  explain an essential  point that also plays a role in the present work.

In the hyperbolic case smooth orbit rigidity  reduces the local differentiable rigidity problem to rigidity of vector valued cocycles.  For, the expression of the {\em old time}, i.e. that  of the unperturbed action through the {\em new time}, i.e. that  of the perturbed action is a cocycle over the {\em unperturbed action}   with values in the acting group that is  a  vector space or its  subgroup. This is the scheme of \cite{ks97}. In other words,  smooth orbit rigidity  reduces the differentiable conjugacy problem to a time change problem.

In the partially hyperbolic cases instead of smooth orbit rigidity  one may hope at best  to have
{\em smooth rigidity of neutral foliation} when the scheme of  \cite{ks97} is applicable. For that  one needs the following property additional to condition $(*)$:\medskip

\noindent ($\mathfrak B$) {\em The stable directions of various
action elements  or, equivalently,  coarse Lyapunov
distributions,(see
Section~\ref{sec:4}),
together with the orbit direction, generate  the tangent space as a
Lie algebra, i.e. those distributions and  their brackets of all
orders generate the Lie algebra linearly.} \medskip

Still even  after smooth rigidity of the orbit foliation  has been  established,  the problem of differentiable conjugacy   is not reduced to a cocycle problem over the unperturbed action; rather it reduces to the cocycle problem over the
perturbed action; furthermore, the values of the  cocycle may be in a non-abelian group: it is actually the exponential of the  central (neutral) distribution.

\subsection{Previous work on partially hyperbolic rigidity}
\subsubsection{KAM  method }
In Section \ref{sec:2} if we set $H=\R^k\ltimes\RR^m$ then we get the suspension of a toral automorphism action $\Z^k \curvearrowright \T^m$. The case of actions by commuting partially hyperbolic automorphisms of a torus satisfying condition $(*'')$ is considered in earnest in  2004 \cite{DD-AK-ERA} (complete proofs appeared in print).
 There, because the phase space of the
Poincar\'{e} section is abelian, the algebraic features of the stable distributions are
simpler. However, this comes at a cost, as the stable distributions will commute.
In particular, condition $(\mathfrak B)$ fails in this case. To handle this  problem a
 new method was introduced: a KAM type iteration scheme formally similar to that employed by J.Moser \cite{Moser} in the higher-rank version of a conventional  setting for  application of KAM scheme -- diffeomorphisms of the circle. The method was later applied in \cite{Damjanovic3} to obtain weak local rigidity for parabolic actions. The KAM method is powerful when the calculation in the dual spaces can
be carried out in an explicit way. But generally, the dual space is too complex to carry
out corresponding calculation. This drawback restricts the application only to some special models.

\subsubsection{The Computational Geometric method}
So far, the most effective method to treat partially hyperbolic symmetric space examples is the geometric method developed in developed in \cite{dk2005,dk2011}. This method bypasses subtleties of analysis and representation theory altogether.  It builds upon the  already mentioned  observation that  the problem of differentiable conjugacy   reduces to the cocycle problem over the  perturbed action. In this case the condition \starref \, implies an even stronger property ($\mathfrak B'$):

\medskip

\noindent ($\mathfrak B'$) {\em The stable directions of various  action elements   generate  the tangent space as a Lie algebra.}
\smallskip

Solution of coboundary equations  for actions are built  along broken  paths  consisting of pieces of stable foliations for different elements of the action. It was first observed in \cite{dk2005} that consistency of  such a construction follows in certain cases from description of generators and relations in the ambient Lie group. In \cite{dk2011} another key  ingredient  was introduced: under certain circumstances the web of Lyapunov foliations is so robust that the construction of solutions of the  coboundary equation carries out to the {\em perturbed action} thus providing  a solution of the conjugacy problem in the H\"older category. After that smoothness of the conjugacy is established by a priori regularity method as in \cite{ks97}.

In \cite{dk2005,dk2011} a special case $G=SL(n,\KK)$ $(n\geq
 3)$, $\KK=\RR$ or $\CC$  is considered. In this case  necessary algebraic information  can be extracted from the classical work of Steinberg, Matsumoto and Milnor, see \cite{Steinberg, Matsumoto, Milnor}. In these works, the groups $K_2^{\mbox{\tiny alg}}(\R)$ and $K_2^{\mbox{\tiny alg}}(\C)$ appear, so this method is sometimes referred to as the {\it algebraic $K$-theory/Geometric method}.

The approach of \cite{dk2005,dk2011} was further employed
in \cite{damjanovic07}, \cite{zwang-1}, \cite{zwang-2} and \cite{vinhage15} for
extending cocycle rigidity and differentiable
 rigidity from
$SL(n,\RR)/\Gamma$ and $SL(n,\CC)/\Gamma$ to compact homogeneous
spaces obtained from semisimple Lie
groups.  A great strength of this method is that it requires only $C^2$ closeness for
the perturbation, unlike the KAM scheme where  number of derivatives is  large and dependent on the data.
But this comes with a price. All previous works required the following key conditions
in order to carry out this scheme: \smallskip
\begin{enumerate}
  \item [($\mathfrak C$)] detailed information  about specific generators and relations (algebraic $K$-theory) in the ambient group is required, and
  \smallskip
  \item [($\mathfrak *'$)] \hypertarget{starp} coarse Lyapunov  distributions for the partially hyperbolic  restriction should be the same as for the ambient Cartan action.
\end{enumerate}

\newcommand{\starpref}{\hyperlink{starp}{($*'$)}}

This drawback restricts the method can only be used to treat ``generic restrictions'', i.e., actions satisfy condition \starpref, which is more restrictive than condition \starref \, (see \cite{dk2005, dk2011, damjanovic07, zwang-1,vinhage15}). The only exception appears in \cite{zwang-2}, where a very detailed study of the generators and describing is carried out. Because the commutator relations are highly dependant on the subgroup one chooses, there is no uniform computation for  non-generic restrictions.

Furthermore, no nontrivial progress using this method has been made for the partially  hyperbolic twisted space examples. Since until recently both ($\mf C$) and \starpref \, were needed to employ the geometric method,
 the only cases treated were symmetric space examples and twisted cases with all weights detected and not negatively proportional. In \cite{Spatzier1}, twisted symmetric space examples are considered, but only in the Anosov setting: this method fails to treat time changes of partially hyperbolic algebraic actions. There are two main obstacles in extending
the rigidity results for twisted Anosov examples to partially hyperbolic algebraic actions. Geometrically, for partially hyperbolic actions, a weight of the representation may vanish on the acting group, meaning that one needs to study not only the relationships of the coarse Lyapunov foliations in the fibers, but also their interactions with the Lyapunov foliations of the semisimple part.
Algebraically, detailed $K$-theory is not available from literature, unlike the symmetric space models, so we may not rely on any method which would employ condition ($\mf C$).

\subsubsection{The Topological Geometric Method}

We expand the rough scheme used in \cite{dk2011} and \cite{vinhage15} in key and subtle ways. The first feature is our treatment of nongeneric actions. All previous results following the broad scheme used in this paper applied only to restrictions of Weyl chamber flows. These are special cases of homogeneous actions where the homomorphism $i : A \to G$ determining the action has $i(A)$ a subset of a split Cartan subgroup. Furthermore, the genericity assumption \starpref made certain computations possible in special cases. In particular, in the case of $SL(d,\R)$, these calculations coincided with those done by Milnor in his study of $K_2(\R)$. The computations are carried out in split or complex algebraic groups by Steinberg, but no uniform approach can reproduce the calculations in a general higher-rank Lie group.

Later, a topological argument showed that a certain structural aspect of these calculations was sufficint to show local rigidity \cite{vinhage15}. That is, that a certain group extension defined via generating relations among unipotent subgroups was a perfect central extension. While this extended the class of groups to which one can apply the geometric method, it did not improve the genericity assumption on actions for which local rigidity was known. The principal difficulty we must overcome, then, is to show that this structural aspect still holds even when certain relations or generators are missing. This requires a careful look at Steinberg and Deodhar's proofs, as well as new ideas on how to effectively generate and present the ``missing'' generators and relations.


Our techniques also provide a new class of examples: we consider the case of semidirect products, which correspond to the geometry of twisted symmetric spaces. The class of semidirect products we consider is very broad: we do not require the defining representation to be Anosov, nor do we require non-resonance with the roots, key assumptions in many rigidity results. Because we work so broadly, we must take great care in understanding the relationships between the coarse Lyapunov foliations of the semisimple part and the representation part, which, without non-resonance and Anosov assumptions, would be extremely computationally involved when proceeding brute-force. We employ entirely new arguments and analysis, which are developed in Section \ref{sec:16} and Section \ref{sec:new}.

\subsubsection{Difference between genuinely higher rank and generic acions}

 The principal difficulty in studying actions satisfying condition \starref \, but not \starpref \,
is related to the complexity of the restricted root (and/or weight) systems for arbitrary
abelian subgroups, for which little is known in the literature (unlike the case of a split Cartan
subgroup).\footnote{With few exceptions. For the case treated in \cite{zwang-2}
 the restricted root systems of $SL(2n,k)$, $k=\RR$ or $\CC$ behaves like that of $Sp(2n,k)$.} In most cases, the restricted root and weight systems of general abelian groups do
not have the usual structures of Cartan root systems, like a canonical inner product. While a ``Weyl group'' can be constructed, it is not
guaranteed to act transitively on the Weyl chambers (see Section \ref{sec:6}). In
the symmetric spaces it is possible to handle the geometric properties in some cases
by using generating relations, but a complete system of generators and manageable
relations in twisted spaces are too complicated to compute even for the split Cartan
subgroup. In both  symmetric space examples and
twisted symmetric space examples  coarse Lyapunov distributions
of subgroups of split Cartan may be quite different from those of full Cartan
actions.

\subsection{Future Directions}
In current paper the genuinely higher rank condition is necessary for the
geometric method. For non-genuinely higher rank actions, algebraically, the free free product group generated by commutator relations is no longer a central extension of $\mc G$; geometrically, cycles of the same homotopy classes
can't be reduced to each other using allowable substitution of stable cycles.

The authors are working to extend the rigidity results to non-genuinely higher rank actions and the result will appear in a separate paper. This is not surprising
that this is already the case for standard Anosov action \cite{Spatzier1} and \cite{ks97}. If the lattice $\Gamma$ is irreducible, then  any higher rank actions on $M$ have no rank-one factors and the regular representations of the translations on the homogeneous space have a spectral gap, even if $G$ has rank-one factor of non-compact type. This is a deep result of representation theory. 

\section{Geometric Preparatory step I: Lyapunov Structures and Central Extensions}
\label{sec:geom1}
In this part from Section \ref{sec:4} to \ref{sec:11} we introduce general notations and results for partially hyperbolic actions. After making a detailed study of coarse Lyapunov foliations for general abelian homogeneous actions in Section \ref{sec:prelims}, we give necessary and sufficient conditions for simply connected twisted spaces to admit non-trivial central extensions in Section \ref{sec:16}.

\subsection{Coarse Lyapunov foliations}\label{sec:4} Let $\mathcal{B}$ be a compact manifold. A $C^1$ diffeomorphism $f: \mathcal{B}\to \mathcal{B}$ is called
\emph{partially hyperbolic} if  there is a continuous invariant splitting of the tangent bundle $T\mathcal{B}$:
\begin{align*}
 T\mathcal{B} = E^s_a\oplus E^0\oplus E^u_a
\end{align*}
and constants $C>0$, $0<\lambda_-<\widetilde{\lambda}_-<\widetilde{\lambda}_+<\lambda_+$,  $\lambda_-<1<\lambda_+$ such that for $n\geq0$:
\begin{align*}
 \norm{Df^nv}&\leq C\lambda_-^n\norm{v},\qquad v\in E^s_a,\\
 \norm{Df^{-n}v}&\leq C\lambda_+^{-n}\norm{v},\qquad v\in E^u_a,\\
 \norm{Df^nv}&\leq C\widetilde{\lambda}_+^n\norm{v},\qquad v\in E^0_a,\\
 \norm{Df^{-n}v}&\leq C\widetilde{\lambda}_-^{-n}\norm{v},\qquad v\in E^0_a.
\end{align*}
Now assume in addition $E^0$ is a continuous distribution uniquely
integrable to a foliation $\mathcal{N}$ with smooth leaves, then $\alpha(a)$ is \emph{uniformly normally
hyperbolic with respect to $\mathcal{N}$}.

Let $\alpha: A \rightarrow
\text{Diff}(\mathcal{B})$ be an abelian action of $A$ on a compact
manifold $\mathcal{B}$ by diffeomorphisms of $\mathcal{B}$. If there exists $a\in A$ such that $\alpha(a)$ is uniformly normally
hyperbolic with respect to $\mathcal{N}$, we will call such an action $\alpha$ {\it $\mathcal{N}$-normally hyperbolic}.  Elements in $A$ which are uniformly normally
hyperbolic with respect to $\mathcal{N}$ are called regular. Let
$\widetilde{A}$ be the set of regular elements. We call an action a
partially hyperbolic $A$ action if the set $\widetilde{A}$ is dense
in $A$. In particular, if $E^0$ is the tangent distribution to the
orbit foliation of a normally hyperbolic action, then the action is
called {\it Anosov} or {\it hyperbolic}.

Then we have an $\alpha$-invariant continuous splitting of the tangent bundle $T\mathcal{B}$ for partially hyperbolic $A$-action $\alpha$:
\begin{align*}
 T\mathcal{B} = \bigoplus_{1\leq i\leq r} E_i\oplus E^0
\end{align*}
where $E_i$ runs over all maximal nontrivial intersection $\bigcap_{1\leq j\leq m} E^s_{b_j}$ for $b_1,\cdots,b_m\in \widetilde{A}$. We call this decomposition the \emph{coarse Lyapunov decomposition} of $T\mathcal{B}$. Note that $E_i$ is tangent to the foliation $\mathcal{F}_i:=\bigcap_{1\leq j\leq m} \mathcal{W}^s_{b_j}$ with
$C^\infty$ leaves, where $\mathcal{W}^s_{b_j}$
is the stable foliation for
$\alpha(b_j)$. These are the {\it coarse Lyapunov foliations} of $\alpha$. Given a foliation $\mathcal{F}_{i}$ and $x\in \mathcal{B}$ we denote by
$\mathcal{F}_{i}(x)$ the leaf of $\mathcal{F}_{i}$ through $x$.
\begin{remark}
Notations in Section \ref{sec:2} coincide with those in this part for general partially hyperbolic actions.
\end{remark}

\subsection{Paths and cycles for a collection of foliations}\label{sec:5}
 A $\mc F$-{\it Lyapunov path} is a sequence of points $\tau = (x_0,x_1,\dots,x_n)$ such that $x_{i+1} \in \mathcal{F}_{j(i)}(x_i)$ for every $i$. The sequence $(\chi_{j(0)},\dots,\chi_{j(n-1)})$ is called the {\it combinatorial pattern} of $\tau$. The path is called a {\it cycle} if $x_n = x_0$. Note that the space of cycles with a fixed base point carries a canonical product, defined by
 \begin{align*}
   \tau^{(1)} * \tau^{(2)} = (x_0,x_1^{(2)},\dots,x_{n_2-1}^{(2)},x_0,x_1^{(1)},\dots,x_{n_1-1}^{(1)},x_0).
 \end{align*}
 Let $\mc P(\mc F)$ denote the space of paths in the foliation family $\mc F$ (with an understood fixed base point $x_0$), and $\mc C(\mc F)$ the space of cycles.

 \subsection{The Pugh-Shub-Wilkinson H\"older leaf conjugacy}\label{sec:11}
A map $f:M\to M$ is $\theta$ H\"{o}lder if
\begin{align*}
 C=\sup_{x\neq x'}\frac{d(f(x),f(x'))}{d(x,x')}<\infty.
\end{align*}
we will refer to the number $C$ as to the $\theta$-H\"{o}lder norm.

For the translation $\alpha_A$ denote its smooth neutral foliation by $\mathcal{N}$ and the family of coarse Lyapunov foliations by $\mathcal{U}_1,\cdots, \mathcal{U}_r$. The following is an immediate
corollary of the general stability result of Hirsch, Pugh and Shub for diffeomorphisms which are partially hyperbolic with a smooth neutral foliation (see Section $7$ of \cite{shub} and Theorem $A$ of \cite{psw12}).
\begin{theorem}\label{th:1}
Let $\widetilde{\alpha}_A$ be a $C^r$-small $C^\infty$ perturbation of $\alpha_A$, for some $r\geq1$.
then:
\begin{enumerate}
  \item $\widetilde{\alpha}_A$ is a partially hyperbolic action with a dense set of regular elements in $A$
which all have a common neutral foliation $\mc N'$ with $C^r$ leaves.

\smallskip
  \item Coarse Lyapunov foliations $\mathcal{F}_1,\cdots, \mathcal{F}_r$ of $\widetilde{\alpha}_A$ are continuous foliations with
smooth leaves which are leafwise $C^r$-close to the coarse Lyapunov foliations $\mathcal{U}_1,\cdots, \mathcal{U}_r$ of $\alpha_A$.

\smallskip
  \item There is a bi-$\varrho$-H\"older homeomorphism $\mbf h : M \to M$, $C^0$ close to $id_M$, which maps leaves of $\mc N'$ to leaves of $\mc N$: $\mbf h(\mc N') = \mc N$.  Furthermore, the constant $\varrho$ tends to $1$ and the $\varrho$-H\"{o}lder norm of $\mbf h$ is uniformly bounded as $\widetilde{\alpha}_A$ tends to $\alpha_A$ in the $C^r$-topology.
\end{enumerate}
\end{theorem}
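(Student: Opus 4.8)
The plan is to obtain this statement essentially for free from the Hirsch--Pugh--Shub (HPS) theory of normally hyperbolic foliations \cite[Section 7]{shub}, together with the H\"older refinement of Pugh--Shub--Wilkinson \cite{psw12}; the only genuinely action-theoretic ingredient is the commutativity of $\widetilde\alpha_A$, which I will use to upgrade the one-element persistence statements to statements about all of $A$.

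First I would fix a regular element $a_0$ of $\alpha_A$, so that $\alpha_A(a_0)$ is uniformly normally hyperbolic with respect to the smooth (indeed algebraic) neutral foliation $\mc N$. Because $A$ acts on $T\mc N$ with at most polynomial growth while the transverse contraction and expansion of a regular element are genuinely exponential, $\alpha_A(a_0)$ is $r$-normally hyperbolic for every $r$; HPS then yields, for the $C^1$-close $C^\infty$ diffeomorphism $\widetilde\alpha_A(a_0)$, a \emph{unique} invariant foliation $\mc N'$, $C^0$-close to $\mc N$, with $C^r$ leaves leafwise $C^r$-close to those of $\mc N$, along with stable and unstable bundles $\widetilde E^{s}_{a_0},\widetilde E^{u}_{a_0}$ and a leaf conjugacy. (Plaque expansiveness of $\mc N$, needed to make HPS applicable to a foliation rather than a single invariant manifold, is automatic here and is part of \cite{psw12}.) Next I would exploit commutativity: for any $b\in A$ the foliation $\widetilde\alpha_A(b)(\mc N')$ is invariant under $\widetilde\alpha_A(b)\widetilde\alpha_A(a_0)\widetilde\alpha_A(b)^{-1}=\widetilde\alpha_A(a_0)$, normally hyperbolic for it, and $C^0$-close to $\mc N$, so by uniqueness it equals $\mc N'$; the same argument shows $\widetilde E^{s}_{a_0}$ and $\widetilde E^{u}_{a_0}$ are $\widetilde\alpha_A$-invariant. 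This produces the common neutral foliation asserted in (1).

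For (2) I would use that $\alpha_A$ has only finitely many coarse Lyapunov foliations $\mc U_1,\dots,\mc U_r$, and that each is an intersection $\bigcap_j \mc W^s_{b_j}$ of stable foliations of finitely many regular $b_j$, which may be chosen deep inside Weyl chambers so that $\widetilde\alpha_A(b_j)$ retains a definite transverse spectral gap. Applying HPS to each $b_j$ gives stable foliations $\widetilde{\mc W}^s_{b_j}$ that are continuous with $C^r$ leaves and leafwise $C^r$-close to $\mc W^s_{b_j}$; I would then set $\mc F_i=\bigcap_j \widetilde{\mc W}^s_{b_j}$ and verify, using that the relevant dimensions are integers varying continuously, that $\dim\mc F_i=\dim\mc U_i$ and $TM=\bigl(\bigoplus_i T\mc F_i\bigr)\oplus T\mc N'$, with continuity of the bundles, $C^r$-regularity of leaves, and leafwise $C^r$-closeness to $\mc U_i$ all inherited from the $\widetilde{\mc W}^s_{b_j}$. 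Density of the regular set of $\widetilde\alpha_A$ I would then deduce from this decomposition (cf.\ \cite{Kalinin,dk2005}): it equips $\widetilde\alpha_A$ with finitely many continuous coarse Lyapunov rate functions on $A$ whose zero sets are $C^0$-close to the finitely many walls of $\alpha_A$, so that density of the regular set is preserved. This completes (1) and (2).

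Finally, for (3) I would take $\mbf h$ to be the leaf conjugacy between the center foliations of $\alpha_A(a_0)$ and $\widetilde\alpha_A(a_0)$, which is $C^0$-close to $\id_M$ and satisfies $\mbf h(\mc N')=\mc N$; its bi-H\"older regularity, and the fact that the exponent $\varrho$ tends to $1$ with uniformly bounded norm as $\widetilde\alpha_A\to\alpha_A$, I would read off from \cite[Theorem A]{psw12} via center bunching: a homogeneous partially hyperbolic action is $\theta$-center-bunched for every $\theta<1$ since its center is neutral, bunching is an open condition, and the H\"older exponent delivered by the construction is governed by the bunching constant. The routine parts are the two appeals to \cite{shub} and \cite{psw12}; the one place that needs genuine (though standard) care is the passage in the arguments for (1)--(2) where one checks that the coarse Lyapunov splitting of the normal bundle, and with it the system of walls, deforms continuously with the perturbation, which is exactly what makes density of the regular set survive.
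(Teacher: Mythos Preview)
Your proposal is correct and follows exactly the approach the paper takes: the paper does not give a proof but simply states the theorem as ``an immediate corollary of the general stability result of Hirsch, Pugh and Shub \dots\ (see Section 7 of \cite{shub} and Theorem A of \cite{psw12}),'' which is precisely the combination of HPS persistence and the Pugh--Shub--Wilkinson H\"older refinement that you invoke. Your write-up simply spells out the standard details (uniqueness plus commutativity to propagate $\mc N'$ to all of $A$, intersections of stable foliations for the coarse Lyapunov structure, center bunching for the H\"older exponent) that the paper leaves implicit in its citation.
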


We call $\mbf h$ the {\it Pugh-Shub-Wilkinson conjugacy}. Furthermore, $\mbf h$ can be chosen smooth and $C^1$ close to the identity along the leaves of $\mathcal{N}$ although we will not use the latter fact.

\subsection{Course Lyapunov foliations for $\alpha_A$}\label{sec:prelims}
In this section we will make a detailed study of the course Lyapunov foliations for $\alpha_A$ in both spaces. We continue letting $G$ denote a semisimple group without rank one factors and $G_\rho$ denote a semidirect product. We will see that it will be sufficient to analyze the Lyapunov structures for actions which are restrictions of Weyl chamber flows (ie, restrictions of the action of a Cartan subgroup).

\begin{proposition}\label{prop:diagonalize}
For any abelian subgroup $A$ of $G$, there exists a split Cartan subgroup $A_0$ and a homomorphism $p:A\to A_0$ such that the coarse Lyapunov distributions for $\alpha_A$ is the same as for $\alpha_{p(A)}$.
\end{proposition}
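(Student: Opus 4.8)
The plan is to reduce everything to the Jordan--Chevalley decomposition $g = s_g k_g n_g$ described in Section \ref{sec:9} and to the fact that a family of commuting semisimple elements can be simultaneously diagonalized inside a single split Cartan subgroup. First I would recall that for each $a \in A$ the element $p(a) = s_a$ is semisimple, and that the three factors $s_a, k_a, n_a$ commute with one another and depend only on $a$; moreover since $A$ is abelian, conjugation by any $b \in A$ fixes $a$ and hence (by uniqueness of the Jordan decomposition) permutes — in fact fixes — each of $s_a, k_a, n_a$. Thus $\{s_a : a \in A\}$, $\{k_a : a \in A\}$ and $\{n_a : a \in A\}$ are each commuting families, $p(A) = \{s_a\}$ is a commuting family of semisimple elements, and $A \subseteq C_G(p(A))$. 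The first real step is then to invoke the standard structure theory (this is the content being cited as Proposition \ref{prop:diagonalize} itself applied to the torus $p(A)$, or classically Borel--Tits): a commuting set of $\R$-semisimple elements lies in a maximal $\R$-split torus $A_0$, so $p : A \to A_0$ is a well-defined homomorphism into a split Cartan subgroup. (The homomorphism property of $p$ follows again from uniqueness of Jordan decomposition: $p(ab) = s_{ab} = s_a s_b = p(a)p(b)$ because $s_a s_b$ is semisimple, $k_ak_b$ is compact, $n_an_b$ is unipotent, and the three commute.)

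The heart of the argument is showing that passing from $A$ to $p(A)$ does not change the coarse Lyapunov decomposition. Here I would argue as follows. Write, for $a = s_a k_a n_a$, the adjoint action $\Ad(a) = \Ad(s_a)\Ad(k_a)\Ad(n_a)$; this is the multiplicative Jordan decomposition of $\Ad(a)$ on $\mf g$, with $\Ad(s_a)$ the (real) semisimple-hyperbolic part whose eigenvalues have the absolute values that determine contraction/expansion, $\Ad(k_a)$ elliptic (all eigenvalues of modulus $1$), and $\Ad(n_a)$ unipotent. Consequently the stable subspace $E^s_a = \{ v : \|\Ad(a)^n v\| \to 0 \text{ exponentially}\}$ is governed entirely by the eigenvalue-moduli of $\Ad(s_a) = \Ad(p(a))$: on the joint eigenspace $\mf g_\mu$ of the torus $A_0$ (notation of \eqref{for:6}), $v$ is in $E^s_a$ iff $\mu(\log p(a)) < 0$, and this is exactly the stable subspace of $\alpha_{p(A)}(p(a))$. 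Since the coarse Lyapunov distributions are the maximal nontrivial intersections of such stable subspaces over all regular elements, and since $a \mapsto p(a)$ is surjective onto $p(A)$, the families $\{E^s_a : a \in A \text{ regular}\}$ and $\{E^s_b : b \in p(A) \text{ regular}\}$ coincide as collections, hence so do their maximal intersections. The subtlety I would flag is the matching of the regular sets: one must check that $a$ is regular for $A$ (acting on $G/\Gamma$) exactly when $p(a)$ avoids the kernels of the restricted roots $\Delta_A$, i.e. that the elliptic and unipotent factors $k_a, n_a$ contribute nothing to the partially-hyperbolic splitting and do not destroy normal hyperbolicity with respect to the neutral foliation — this is where the compactness of $\langle k_a\rangle$-closure and the polynomial (sub-exponential) growth of $\Ad(n_a)^n$ are used, and it is the step that deserves the most care.

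The main obstacle, then, is not the algebra of the Jordan decomposition but the dynamical bookkeeping: verifying that the neutral foliation for $\alpha_A$ is genuinely integrable and that $E^s_a$ is exactly the exponential-contraction subspace even in the presence of Jordan blocks (cf. Example \ref{sec:6}, where $\Ad(\log A)$ is non-diagonalizable on $\mf g_{\chi_1}$). I would handle this by noting that on each $\mf g_\mu$ the operator $\Ad(a)|_{\mf g_\mu}$ has the form $e^{\mu(\log p(a))} \cdot (\text{bounded elliptic}) \cdot (\text{unipotent})$, so its norm grows/decays like $e^{n\mu(\log p(a))}$ up to polynomial factors; hence the contraction rates — and therefore $E^s_a$, $E^u_a$, and the neutral part $\mf g_0 = \Lie C_G(p(A))$ — are detected purely by the sign of $\mu(\log p(a))$, identical to the Weyl chamber picture of $\alpha_{p(A)}$. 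Uniqueness of integrability of the neutral distribution then follows as for restrictions of Weyl chamber flows since it is a homogeneous (coset) distribution. Finally I would remark that the semidirect-product case $G_\rho$ is entirely parallel, with $\Ad$ replaced by $\Ad \oplus (d\rho \circ \log)$ on $\mf g_\rho = \mf g + \mf e$ and $\Delta_A$ replaced by $\Delta_A \cup \Phi_{A,\rho}$ — this is what Proposition \ref{prop:diagonalize1} records — so the same eigenvalue-modulus argument applies verbatim.
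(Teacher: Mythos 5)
Your proposal is correct and takes essentially the same route as the paper: Jordan--Chevalley decomposition of each $a \in A$, commutativity of the semisimple parts so that they lie in a single split Cartan subgroup $A_0$, and the observation that the coarse Lyapunov splitting depends only on the eigenvalue moduli of $\Ad(a)$, which are detected by $\Ad(p(a))$ alone. The paper obtains commutativity of $\{\exp(X_a)\}$ via generalized eigenspaces of $\Ad(a)$ and works inside $\Ad(G)$ before lifting back using $A_0 \cap Z(G)=e$, whereas you argue via conjugation-invariance of the Jordan decomposition; your extra discussion of Jordan blocks and the sub-exponential growth of the elliptic and unipotent factors makes explicit a step the paper dispatches in a single sentence, but the underlying argument is the same.
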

\begin{proof}
Under the adjoint representation of $G$ for any $a \in A$,  $\text{Ad}(a)$ with corresponding Jordan-Chevalley normal form decomposition of $\text{Ad}(G)$ can be written as
\begin{align}\label{for:2}
  \text{Ad}(a)= \exp(Z_a)\exp(X_a)\exp(Y_a)
\end{align}
for $3$ commuting elements, where $Z_a$ is compact, $X_a$ is $\R$-semisimple, $Y_a$ is nilpotent (see, eg, \cite[Proposition 2]{goto78}).

We claim that $\exp(X_a)$ and $\exp(X_b)$ also commute for every $a,b \in A$. Indeed, we can consider the decomposition of $\mf g$, the Lie algebra of $G$ into generalized eigenspaces for the action of $\text{Ad}(a)$, and for the action of $\exp(X_a)$. By assumption $\exp(X_a)$ is a scalar multiple of identity on each generalized eigenspace of $\text{Ad}(a)$. Since $b$ commutes with $a$, $\exp(X_b)$ preserves each generalized eigenspace of $\text{Ad}(a)$ and acts by a scalar multiple in each, which implies that
$\exp(X_a)$ and $\exp(X_b)$ commute. Then there exists a split Cartan subgroup $C$ in $\text{Ad}(G)$ containing all $\exp(X_a)$, $a\in A$. Let $A_0$ be the split Cartan subgroup in $G$
 whose image under the adjoint representation is $C$. Then $\Ad$ is an isomorphism restricted to $C$, since $A_0\cap Z(G)=e$. So we may define the map $p(a) = \text{Ad}^{-1}(\exp(X_a))$. Then we get the conclusion immediately since the coarse Lyapunov distributions for $\alpha_A$ is determined by those of the
adjoint representation of $\text{Ad}(A)$ on $\mathfrak{g}$.

The decomposition \eqref{for:2} also implies that we have the corresponding normal form for $a$:
\begin{align}\label{for:3}
 a=s_ak_an_a
\end{align}
where $s_a=\exp(X_a)\in G$ is $\R$-semisimple, $k_a$ is compact and $n_a=\exp(Y_a)\in G$ is nilpotent. It is clear that $s_a$ and $n_a$ commute and $k_as_a=s_ak_az_1$ and $k_an_a=n_ak_az_2$, where $z_1,\,z_2\in Z(G)$. Since $Z(G)$ is finite, there exists $n\in N$ such that $k_as_a^nk_a^{-1}=s_a^n$, which implies that $\text{Ad}(k_a)(X_a)=X_a$. This shows that $k_a$ and $s_a$ commute. Similarly, we get $k_an_a=n_ak_a$.

So we conclude that $p(a)=s_a$ and $\text{Ad}(k_an_a)$ has all eigenvalues of  modulus $1$. So we get the result.

\end{proof}
Using \eqref{for:3} we set  $\varphi(a)=\text{Ad}(k_an_a)$ for any $a\in A$. Then $\varphi$ is a homomorphism since $\varphi(a)$ commutes with $\text{Ad}(p(b))$ for all $a,b \in A$. We get this by observing that $\text{Ad}(p(b))$ acts by homotheties on sums of generalized eigenspaces of $\text{Ad}(a)$. Then we get the next result immediately:

\begin{corollary}\label{cor:1}
For any abelian subgroup $A$ of $G$, let $N$ be the centralizer of $A$ in $G$.  Then there exists a homomorphism $\varphi: A\to GL(\Lie(N))$, such that $\Ad(a)$ on $\Lie(N)$ is the same as $\varphi(a)$ on $\Lie(N)$ for any $a\in A$.
\end{corollary}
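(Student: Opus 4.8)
The plan is to read the corollary directly off the factorization established while proving Proposition \ref{prop:diagonalize}. Recall from that proof that each $a\in A$ has a commuting decomposition $a = s_a k_a n_a$ with $s_a = p(a)$ being $\R$-semisimple, $k_a$ compact and $n_a$ nilpotent, and that on the Lie algebra this yields $\Ad(a) = \Ad(p(a))\circ \varphi(a)$ with $\varphi(a) := \Ad(k_an_a)$, whose eigenvalues all have modulus $1$. The discussion immediately preceding the corollary already records that $\varphi\colon A \to GL(\mf g)$ is a homomorphism and that every $\varphi(a)$ commutes with every $\Ad(p(b))$. So I only need two things: that $\Lie(N)$ is invariant under each $\varphi(a)$, and that $\Ad(p(a))$ restricts to the identity on $\Lie(N)$. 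Granting these, the homomorphism asked for is just $\varphi$ restricted to $\Lie(N)$, since for $X \in \Lie(N)$ one then has $\Ad(a)X = \Ad(p(a))\varphi(a)X = \varphi(a)X$.

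For the invariance I would use the standard identification $\Lie(C_G(S)) = \{X \in \mf g : \Ad(s)X = X \text{ for all } s \in S\}$ for the Lie algebra of the centralizer of a subgroup $S$. Because $\varphi(a')$ commutes with every $\Ad(p(b))$ and, being a homomorphism of the abelian group $A$, with every $\varphi(b)$, it commutes with every $\Ad(b) = \Ad(p(b))\varphi(b)$; hence it preserves $\bigcap_{b\in A}\ker(\Ad(b)-\id) = \Lie(N)$. A symmetric argument (or the relation $\Ad(p(a)) = \Ad(a)\varphi(a)^{-1}$, noting $A$ normalizes $N$) shows $\Ad(p(a))$ preserves $\Lie(N)$ as well.

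It remains to see $\Ad(p(a))|_{\Lie(N)} = \id$. The only step with any real content here is the containment $C_G(A) \subseteq C_G(p(A))$: if $g$ centralizes $A$ then $\Ad(g)$ commutes with $\Ad(a)$, so by uniqueness of the Jordan--Chevalley decomposition in the linear group $\Ad(G)$ it commutes with the $\R$-semisimple part $\Ad(s_a) = \Ad(p(a))$, and since $A_0 \cap Z(G) = e$ with $Z(G)$ finite this upgrades to $g\,p(a) = p(a)\,g$ --- exactly the Jordan-uniqueness-plus-finite-center manipulation already carried out inside the proof of Proposition \ref{prop:diagonalize}. (If instead $N$ is read as the centralizer of $p(A)$, so that $\Lie(N) = \mf g_0$, this step is automatic from the centralizer formula above.) From $C_G(A) \subseteq C_G(p(A))$ we get $\Lie(N) \subseteq \{X : \Ad(p(a))X = X\}$, i.e.\ $\Ad(p(a))|_{\Lie(N)} = \id$, and combining with the previous paragraph finishes the proof. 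I do not expect any genuinely new obstacle: the corollary is essentially a repackaging of the proof of Proposition \ref{prop:diagonalize}, the one substantive ingredient being that centralizing $A$ forces centralizing its $\R$-semisimple parts $p(A)$.
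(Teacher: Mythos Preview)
Your proof is correct and follows the same approach as the paper: the paper simply defines $\varphi(a)=\Ad(k_an_a)$ in the paragraph preceding the corollary, notes it is a homomorphism because each $\varphi(a)$ commutes with each $\Ad(p(b))$, and then declares the corollary immediate. You have supplied exactly the details that make this ``immediate'' step explicit, including the invariance of $\Lie(N)$ and the identity $\Ad(p(a))|_{\Lie(N)}=\id$, and you handle both possible readings of $N$ (centralizer of $A$ versus of $p(A)$), which is in keeping with the paper's later convention in \seref{se:3}.
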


We now make similar observations for the twisted spaces $G_\rho$.

\begin{proposition}\label{prop:diagonalize1}
For any abelian subgroup $A$ of $G_\rho$, there exists a split Cartan subgroup $A_0$ of $G$, a homomorphism $p:A\to A_0$ and an element $s\in G_\rho$ such that the coarse Lyapunov distributions for $\alpha_{sAs^{-1}}$ is the same as for $\alpha_{p(A)}$.
\end{proposition}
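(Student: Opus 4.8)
The plan is to follow the strategy of Proposition~\ref{prop:diagonalize}, isolating the part of the adjoint action that governs the coarse Lyapunov decomposition, but with one extra ingredient: in the twisted setting the ``translation parts'' of the elements of $A$ in the fiber $\RR^N$ cannot simply be absorbed into a Jordan component, and instead must be removed by conjugating $A$ by a suitable element of the vector group $\RR^N\subset G_\rho$. This is where the element $s$ in the statement comes from.

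\textbf{Reduction.} Let $\pi\colon G_\rho\to G$ be the projection and $\bar A=\pi(A)$, an abelian subgroup of $G$. By Proposition~\ref{prop:diagonalize} there are a split Cartan $A_0\subseteq G$ and a homomorphism $\bar p\colon\bar A\to A_0$, $\bar p(g)=s_g$, such that $\alpha_{\bar A}$ and $\alpha_{\bar p(\bar A)}$ have the same coarse Lyapunov distributions. Set $p:=\bar p\circ\pi\colon A\to A_0$ (this is the map $p_\rho$); it is a homomorphism because the $\RR$-split Jordan parts of commuting elements commute and multiply. As in Section~\ref{sec:3}, $p(A)$ produces the restricted weight decomposition $\RR^N=\bigoplus_\mu\mf e_\mu$ and the restricted root decomposition of $\mf g$; the compact and unipotent Jordan parts $k_{g_a},n_{g_a}$ commute with every $s_{g_b}$ ($b\in A$), hence preserve each $\mf e_\mu$ and each restricted root space, acting there with eigenvalues of modulus one, resp.\ unipotently. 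The coarse Lyapunov distributions on $G_\rho/\Gamma_\rho$ are read off from the stable subspaces of $\Ad$ on $\mf g_\rho=\mf g\oplus\RR^N$, and with respect to this splitting $\Ad((g_a,v_a))$ is block lower-triangular, with diagonal blocks $\Ad_G(g_a)$, $\rho(g_a)$ and nilpotent off-diagonal block $X\mapsto -\rho(g_a)\,d\rho(X)\,v_a$. Thus $\Ad(A)$ differs from $\Ad(p(A))$ only through the translation parts $v_a$, which enter solely via this nilpotent term.

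\textbf{Normalizing the translation parts.} Abelianness of $A$ gives $(\rho(g_b^{-1})-I)v_a=(\rho(g_a^{-1})-I)v_b$ for all $a,b\in A$. Fix a nonzero restricted weight $\mu$ and choose $b_0$ with $\mu(\log s_{g_{b_0}})\neq0$; then $\rho(g_{b_0}^{-1})-I$ is invertible on $\mf e_\mu$ (its eigenvalues there have modulus $e^{-\mu(\log s_{g_{b_0}})}\neq1$), so the identity, restricted to $\mf e_\mu$, forces the $\mf e_\mu$-component of every $v_a$ to equal $-(\rho(g_a^{-1})-I)u^\mu$ for one fixed vector $u^\mu\in\mf e_\mu$. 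Put $w:=\sum_{\mu\in\Phi_{A,\rho}\setminus\{0\}}u^\mu\in\RR^N$ and $s:=(e,w)$. A direct computation in $G_\rho$ gives $sas^{-1}=(g_a,\,v_a+(\rho(g_a^{-1})-I)w)$, whose translation part now lies in the restricted weight-zero space $\mf e_0$. Conjugation by $s$ leaves the $G$-parts (hence the $\RR$-split parts and $p$) unchanged.

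\textbf{Conclusion and main obstacle.} With every translation part in $\mf e_0$, the off-diagonal block $X\mapsto -\rho(g_a)\,d\rho(X)\,v_a$ of $\Ad(sas^{-1})$ sends $\mf g_{kr}$ into $\mf e_{kr}$ (the weight shifts by $kr$) and $\mf g_0$ into $\mf e_0$; together with the fact that the diagonal blocks preserve every restricted root/weight space, this shows $\Ad(sAs^{-1})$ preserves the decomposition $\mf g_\rho=(\mf g_0\oplus\mf e_0)\oplus\bigoplus_{[r]}\mf g^{(r)}$, where $\mf g^{(r)}=\sum_{k>0}(\mf g_{kr}+\mf e_{kr})$. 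Consequently each stable subspace $E^s_a$ splits along this decomposition, and on each $\mf g^{(r)}$ all eigenvalues of $\Ad(sas^{-1})$ have the same modulus $e^{kr(\log s_{g_a})}$ as those of $\Ad(p(a))$; hence $E^s_a\cap\mf g^{(r)}$ is either $0$ or all of $\mf g^{(r)}$, exactly as for $p(A)$. Therefore $sAs^{-1}$ and $p(A)$ have the same coarse Lyapunov decomposition of $\mf g_\rho$, and hence the same coarse Lyapunov distributions on $G_\rho/\Gamma_\rho$. The main obstacle is the second step: showing that a \emph{single} $w\in\RR^N$ simultaneously removes the fiber-translation part of every $a\in A$ along every nontrivial restricted weight. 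This uses abelianness of $A$ essentially, and one must check that the compact/unipotent Jordan parts of the $g_a$, although not scalar on $\mf e_\mu$, commute with enough operators for the invertibility of $(\rho(g_{b_0}^{-1})-I)|_{\mf e_\mu}$ and the coherence of the $u^\mu$ to hold; the rest is the same bookkeeping as in Proposition~\ref{prop:diagonalize}.
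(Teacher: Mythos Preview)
Your proposal is correct and follows essentially the same strategy as the paper: project to $G$ to define $p$, then conjugate by an element of the $\RR^N$-fiber to push all translation parts into the zero-weight space $\mf e_0$, after which the adjoint action differs from that of $p(A)$ only by a block-nilpotent piece respecting the coarse Lyapunov splitting.

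The one difference in execution is how the conjugating element is found. The paper fixes a single \emph{regular} element $a\in A$, so that $I-\rho(g_a)$ is invertible on all of $W=\bigoplus_{\mu\neq 0}\mf e_\mu$ at once, defines $s$ from the $W$-part of $v_a$ alone, and then argues via the centralizer: anything commuting with the normalized $sas^{-1}$ is forced to have translation part in $\mf e_0$. You instead exploit the cocycle identity $(\rho(g_b^{-1})-I)v_a=(\rho(g_a^{-1})-I)v_b$ directly, building $w$ weight-by-weight (allowing a different witness $b_0$ for each $\mu$) and verifying consistency across $A$ from the identity itself. Your route makes the cohomological nature of the obstruction more transparent and avoids appealing to the existence of a regular element; the paper's route is a bit quicker since one inversion handles everything and the centralizer argument replaces the weight-by-weight bookkeeping. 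Either way the content is the same.
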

\begin{proof}
For any $c\in G_\rho$ we have the expression $(g_c, v_c)$, where $g_c\in G$ and $v_c\in\RR^N$. Then the set $g_A=\{g_a, a\in A\}$ is an abelian subgroup of $G$. From
the decomposition \eqref{for:3} in Proposition \ref{prop:diagonalize} we see that for any $a\in A$ the generalized eigenspaces for $\rho(g_a)$ is determined by those of $p(g_A)$ where $p$ is as defined in Proposition \ref{prop:diagonalize}.

Recall notation in Section \ref{sec:3}. Choose a regular element $a\in A$. Then $a=(g_a, v_a)$. Note that $I-\rho(g_a)$ on the subspace $W=\sum_{0\neq\phi\in \Phi_{A,\rho}} \mathfrak{e}_\phi$ is invertible. Denote by $(I-\rho(g_a))^{-1}\mid_W$ the inverse map on $W$. We assume that $\mathfrak{e}_0=\{0\}$ if there is no $0$ weight in $\Phi_{A,\rho}$.

We have a decomposition of $v_a=\sum_{\phi\in \Phi_{A,\rho}} v_{a,\phi}$, where $v_{a,\phi}\in\mathfrak{e}_\phi$. Set $u=\sum_{0\neq\phi\in \Phi_{A,\rho}}v_{a,\phi}$. Then $\rho(g_a)u\in W$. Set $u'=-(I-\rho(g_a))^{-1}|_W(\rho(g_a)u)$ and $s=(g_a,u')$. Then by easy computation we have
\begin{align*}
  sas^{-1}=\big(g_a,\rho(g_a)v_{a,0}\big).
\end{align*}
Note that $\rho(g_a)v_{a,0}\in \mathfrak{e}_0$. Next we will show that $v_{sbs^{-1}}\in \mathfrak{e}_0$ for any $b\in A$. It suffices to show that for any $c\in G_\rho$, if $c$ commute with $\big(g_a,\rho(g_a)v_{a,0}\big)$ then $v_c\in \mathfrak{e}_0$. Since $c$ commute with $\big(g_a,\rho(g_a)v_{a,0}\big)$ then $g_c$ commute with $g_a$ and $v_c$ should satisfy the following identity:
\begin{align*}
 \rho(g_c)^{-1}\big(\rho(g_a)v_{a,0}\big)-\big(\rho(g_a)v_{a,0}\big)=\rho(g_a)^{-1}(v_c)-v_c.
\end{align*}
Writing $v_c=u_1+u_2$ where $u_1\in W$ and $u_2\in \mathfrak{e}_0$ and noting that $\rho(g_c)^{-1}$ preserve $\mathfrak{e}_0$, we have
\begin{align*}
 \rho(g_a)^{-1}(u_1)-u_1=0.
\end{align*}
This follows from that fact that the left side the above equation is in $\mathfrak{e}_0$, then the projection of the right side to $W$ should be $0$. Since
$I-\rho(g_a)^{-1}$ is invertible on $W$ then we see that $u_1=0$. This shows that $v_c\in \mathfrak{e}_0$. Then any element in $sAs^{-1}$ has the decomposition into two commuting terms: \begin{align*}
 (p(s_{g_a}),0)(k_{g_a}n_{g_a}, v_a),\qquad \forall a\in A.
\end{align*}
where $v_a\in \mathfrak{e}_0$ (see notations in \eqref{for:3}). Then we finish the proof.

\end{proof}
From \eqref{for:11} we see that $\rho(g)v=gvg^{-1}$ for any $g\in G$ and $v\in \RR^N$. Then similar to Corollary \ref{cor:1}, we get the next result by letting $\varphi(a)=\text{Ad}(k_an_a, v_a)$:
\begin{corollary}\label{cor:2}
For any abelian subgroup $A$ of $G$, let $s$ be the element as described in Proposition \ref{prop:diagonalize1}. Let $N$ be the centralizer of $sAs^{-1}$ in $G_\rho$.  Then there exists a homomorphism $\varphi: A\to GL(\Lie(N))$, such that $\Ad(sas^{-1})$ on $\Lie(N)$ is the same as $\varphi(a)$ on $\Lie(N)$ for any $a\in A$.
\end{corollary}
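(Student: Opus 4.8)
The plan is to imitate, almost line for line, the proof of Corollary~\ref{cor:1}, using in place of the $G$-normal form \eqref{for:3} the two-term commuting decomposition of $sas^{-1}$ produced by Proposition~\ref{prop:diagonalize1}. For $a=(g_a,v_a)\in A\subseteq G_\rho$ write $g_a=s_{g_a}k_{g_a}n_{g_a}$ as in \eqref{for:3} and set
\begin{align*}
 \sigma(a)=\bigl(p(s_{g_a}),0\bigr),\qquad \psi(a)=\bigl(k_{g_a}n_{g_a},v_a\bigr),
\end{align*}
so that $sas^{-1}=\sigma(a)\psi(a)$ with the two factors commuting and $v_a\in\mathfrak e_0$ (Proposition~\ref{prop:diagonalize1}); recall that the commutation here uses precisely that $\rho(p_\rho(a))$ is trivial on $\mathfrak e_0$. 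As in the hint I would put $\varphi(a):=\Ad(\psi(a))=\Ad\bigl(k_{g_a}n_{g_a},v_a\bigr)$, regarded first as an element of $GL(\mathfrak g_\rho)$ and only afterwards restricted to $\Lie(N)$.

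First I would show $\varphi$ is a homomorphism. Since both $(g,v)\mapsto g$ and $p$ are homomorphisms (Proposition~\ref{prop:diagonalize}), so is $a\mapsto\sigma(a)$; write $P(a):=\Ad(\sigma(a))$, so that $P(ab)=P(a)P(b)$ and $P(a)$ is diagonalizable. Writing everything with respect to $\mathfrak g_\rho=\mathfrak g\oplus\RR^N$, the element $\Ad(h,0)$ acts block-diagonally as $\Ad_G(h)\oplus\rho(h)$; applying the ``acts by homotheties on generalized eigenspaces'' argument of Corollary~\ref{cor:1} simultaneously on $\mathfrak g$ (for $\Ad_G(g_a)$) and on $\RR^N$ (for $\rho(g_a)$), and again using that $\rho(p_\rho(b))$ fixes $\mathfrak e_0$, one checks that $P(b)$ commutes with $\varphi(a)$ for all $a,b\in A$. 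Combining $\Ad(s(ab)s^{-1})=\Ad(sas^{-1})\Ad(sbs^{-1})$ with $\Ad(sas^{-1})=P(a)\varphi(a)$, $P(ab)=P(a)P(b)$ and this commutation, one cancels $P(a)P(b)$ and obtains $\varphi(ab)=\varphi(a)\varphi(b)$, exactly as in the paragraph preceding Corollary~\ref{cor:1}.

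Next I would show $\Ad(sas^{-1})$ and $\varphi(a)$ agree on $\Lie(N)$. On one side, $p(s_{g_a})$ is $\RR$-split in $G$, so both $\Ad_G(p(s_{g_a}))$ and $\rho(p(s_{g_a}))$ are $\RR$-diagonalizable with positive eigenvalues; hence $P(a)=\Ad(\sigma(a))$ is semisimple with positive real eigenvalues on $\mathfrak g_\rho$. On the other side, $\Ad(\psi(a))=\Ad(k_{g_a}n_{g_a},0)\,\Ad(0,v_a)$ is block lower-triangular with respect to $\mathfrak g\oplus\RR^N$ with diagonal blocks $\Ad_G(k_{g_a}n_{g_a})$ and $\rho(k_{g_a}n_{g_a})$, and both of these have all eigenvalues of modulus one — the former by Proposition~\ref{prop:diagonalize}, the latter because $k_{g_a}$ and $n_{g_a}$ commute (Proposition~\ref{prop:diagonalize}), so $\rho(k_{g_a}n_{g_a})=\rho(k_{g_a})\rho(n_{g_a})$ is a commuting product of an elliptic and a unipotent operator. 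Thus $\Ad(\psi(a))$ has all eigenvalues of modulus one, and since $\Ad(sas^{-1})=P(a)\varphi(a)$ is a commuting product of a positive-real-semisimple operator and an operator with unit-modulus spectrum, $P(a)$ is the positive-real-semisimple factor in the multiplicative Jordan decomposition of $\Ad(sas^{-1})$; in particular $\ker\bigl(\Ad(sas^{-1})-\id\bigr)\subseteq\ker\bigl(P(a)-\id\bigr)$. Since $\Lie(N)=\bigcap_{a\in A}\ker\bigl(\Ad(sas^{-1})-\id\bigr)$, this yields $P(a)|_{\Lie(N)}=\id$ and hence $\Ad(sas^{-1})|_{\Lie(N)}=\varphi(a)|_{\Lie(N)}$. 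Finally $sas^{-1}\in N$ because $sAs^{-1}$ is abelian, so $\Ad(sas^{-1})$ preserves $\Lie(N)$; therefore $\varphi$ maps $A$ into $GL(\Lie(N))$ and has the asserted property.

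The step I expect to demand the most care is confirming that the non-$\sigma$ factor is genuinely ``neutral'' — i.e.\ that inserting the translation $v_a\in\mathfrak e_0$ into $\psi(a)$ does not push any eigenvalue of $\Ad(\psi(a))$ off the unit circle — which is exactly where the block-triangular structure on $\mathfrak g_\rho=\mathfrak g\oplus\RR^N$ together with the finite-center arguments of Proposition~\ref{prop:diagonalize} are used; everything else is a direct transcription of the semisimple case.
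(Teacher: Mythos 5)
Your proof is correct and follows the paper's (one-line) approach: set $\varphi(a)=\Ad(k_{g_a}n_{g_a},v_a)$ and argue ``similarly to Corollary~\ref{cor:1}.'' You have in addition supplied the one step the paper's telegraphic proof leaves implicit: since the statement takes $N=C_{G_\rho}(sAs^{-1})$ rather than $C_{G_\rho}(p_\rho(A))$, one cannot simply invoke that $p_\rho(a)$ is central in $N$, and your multiplicative Jordan-decomposition argument (positive-semisimple factor $P(a)$ must be trivial on $\ker(\Ad(sas^{-1})-\id)$) is exactly the right way to recover $P(a)|_{\Lie(N)}=\id$. The only blemish is notational: write $\Ad(e_G,v_a)$ rather than $\Ad(0,v_a)$ for the inner automorphism by the translation part.
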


\subsection{Central Extensions of Twisted spaces with Symplectic Contributions}\label{sec:16}

 Recall notations in Section \ref{sec:3}.
 For the twisted symmetric space examples, notice that in
$\mathfrak{g}_\rho$
\begin{align}\label{for:9}
[(X_1,0),(X_2,0)]=[X_1,X_2]_\mathfrak{g}\quad\text{ and
}\quad[(X,0),(0,t)]=d\rho(X)t
\end{align}
where $X_1\,,X_2\,,X\in\mathfrak{g}$, $t\in\RR^N$;
$[\cdot,\cdot]_\mathfrak{g}$ denotes the Lie bracket in
$\mathfrak{g}$ and $d\rho$ is the induced Lie algebra representation
on $\RR^N$ from $\rho$. By complete reducibility for representations of
semisimple groups, there is a decomposition
of
\begin{align}\label{for:45}
 \RR^N=\bigoplus_{i\in I}\RR^{N_i},
\end{align}
such that $\rho$ is irreducible over
$\RR$ on each $\RR^{N_i}$ and hence $\rho$ is irreducible over $\RR$
on each $\RR^{N_i}$. Since there is no invariant subspace such that $\Lie(G)$ acts trivially (see assumption about $\Gamma$ in Section \ref{sec:3}) it follows the group $G_\rho$ is perfect.

\begin{definition}\label{de:4}
Let $S$ be an abstract group.
A central extension of $S$ is a pair $(\theta,S')$ where $S'$ is a
group, $\theta$ is a homomorphism of $S'$ onto $S$ and
$\ker\theta\subset Z(S')$.
\end{definition}

We state a result which is very useful for the discussion  (see Section $7$ of \cite{Steinberg2}):
\begin{fact}\label{fact:6}
If $\Psi_1: E\rightarrow F$ and $\Psi_2: F\rightarrow F_1$ are central extensions, then so is $\Psi_1\circ \Psi_2:E\rightarrow F_1$ provided $E$ is perfect.
\end{fact}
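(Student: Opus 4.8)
The plan is to verify the two defining conditions of a central extension for the composition $\Psi_1 \circ \Psi_2 : E \to F_1$: namely that it is surjective, and that its kernel is central in $E$. Surjectivity is immediate, since $\Psi_2$ maps $F$ onto $F_1$ and $\Psi_1$ maps $E$ onto $F$, so the composite is a surjection of $E$ onto $F_1$. The content is entirely in showing $\ker(\Psi_1\circ\Psi_2) \subseteq Z(E)$, and this is where the hypothesis that $E$ is perfect enters.

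First I would analyze the kernel. We have $\ker(\Psi_1\circ\Psi_2) = \Psi_1^{-1}(\ker\Psi_2)$. By assumption $\ker\Psi_1 \subseteq Z(E)$, and since $\Psi_1$ is surjective with central kernel, $\Psi_1^{-1}(\ker\Psi_2)$ is a normal subgroup of $E$ which maps onto $\ker\Psi_2 \subseteq Z(F)$ under $\Psi_1$. So I would fix $z \in \ker(\Psi_1\circ\Psi_2)$ and an arbitrary $x \in E$, and aim to show $[z,x] = e$. Because $\Psi_1(z) \in Z(F)$, for any $x \in E$ we have $\Psi_1([z,x]) = [\Psi_1(z),\Psi_1(x)] = e$, so $[z,x] \in \ker\Psi_1 \subseteq Z(E)$. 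Thus conjugation by $z$ induces a map $E \to \ker\Psi_1$, $x \mapsto [z,x]$, and since $\ker\Psi_1$ is central (hence abelian) and $z$ is fixed, a short computation shows this map is a homomorphism: $[z,xy] = [z,x]\cdot {}^x[z,y] = [z,x][z,y]$, using that $[z,y]$ is central so the conjugate by $x$ is itself.

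The key step is then the following: a homomorphism from $E$ to an abelian group must kill the commutator subgroup $[E,E]$; but $E$ is perfect, so $[E,E] = E$, and therefore the homomorphism $x \mapsto [z,x]$ is trivial. Hence $[z,x] = e$ for all $x \in E$, i.e. $z \in Z(E)$, which gives $\ker(\Psi_1\circ\Psi_2)\subseteq Z(E)$ and completes the proof. The only mildly delicate point — and the one I would be most careful to get right — is the verification that $x \mapsto [z,x]$ is genuinely a homomorphism, which relies crucially on $[z,E] \subseteq \ker\Psi_1 \subseteq Z(E)$ so that the twisting by inner automorphisms disappears; without the centrality of $\ker\Psi_1$ the commutator map would only be a crossed homomorphism and the argument would break down. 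This is precisely why perfectness of $E$ (rather than of $F$ or $F_1$) is the hypothesis that makes the composition of central extensions central.
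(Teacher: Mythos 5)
Your proof is correct and is essentially the standard Steinberg argument to which the paper merely refers (Section~7 of~\cite{Steinberg2}): reduce to showing $\ker(\Psi_2\circ\Psi_1)\subseteq Z(E)$, observe that for $z$ in this kernel the commutator map $x\mapsto [z,x]$ takes values in the central subgroup $\ker\Psi_1$ and is hence an honest homomorphism into an abelian group, and then use perfectness of $E$ to kill it. The one delicate point — that centrality of $\ker\Psi_1$ is what turns the crossed homomorphism $x\mapsto[z,x]$ into a genuine homomorphism — is exactly where you flag it, and your computation of $[z,xy]=[z,x]\cdot{}^x[z,y]=[z,x][z,y]$ is accurate.
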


Unlike the symmetric space examples, even if $G$ is simply connected, there may exist non-trivial perfect Lie central extensions of the semidirect product  $G\ltimes_\rho\RR^N$. We prove the following:

\begin{proposition}
\label{prop:no-top-ext}
Suppose $G_\rho$ is simply connected. Then it admits a non-trivial perfect Lie central extension if and only if it has a symplectic contribution (see Definition \ref{de:2}).
\end{proposition}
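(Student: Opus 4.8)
The plan is to compute the universal (perfect) Lie central extension of $\mf g_\rho = \mf g \ltimes_\rho \R^N$ at the Lie algebra level, using the standard identification of infinitesimal central extensions of a Lie algebra $\mf h$ with the second Lie algebra cohomology $H^2(\mf h;\R)$, and then to pass from the Lie algebra extension to a Lie group extension, using simple connectivity of $G_\rho$. Since $G_\rho$ is perfect (noted after \eqref{for:45}), $\mf g_\rho$ is perfect, so a universal central extension exists and is controlled by $H^2(\mf g_\rho;\R)$; the proposition is then equivalent to the statement that $H^2(\mf g_\rho;\R) \neq 0$ if and only if $\rho$ carries a nontrivial invariant antisymmetric bilinear form.

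First I would set up the cochain computation. Write a $2$-cocycle $c$ on $\mf g_\rho = \mf g \oplus \R^N$ (as a vector space) and decompose it into three pieces according to \eqref{for:9}: $c|_{\mf g \times \mf g}$, the mixed part $c|_{\mf g \times \R^N}$, and $c|_{\R^N \times \R^N}$. Because $\mf g$ is semisimple, Whitehead's lemmas give $H^1(\mf g;V) = H^2(\mf g;V) = 0$ for every finite-dimensional $\mf g$-module $V$; applying this with $V = \R$ (trivial module) kills the purely semisimple part up to coboundary, and applying it with $V = (\R^N)^*$ (via $d\rho$) lets me normalize the mixed part away up to coboundary. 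What remains is the $\R^N \times \R^N$ part: the cocycle condition, together with the normalizations just made, forces $c|_{\R^N\times\R^N}$ to be an antisymmetric bilinear form $\omega$ on $\R^N$ which is $d\rho$-invariant, i.e. $\omega(d\rho(X)v,w) + \omega(v,d\rho(X)w) = 0$ for all $X \in \mf g$; and by irreducibility of each $\R^{N_i}$ and complete reducibility, $d\rho$-invariance is equivalent to $\rho(G)$-invariance. Conversely any such $\omega$ is visibly a cocycle. One must also check that a nonzero such $\omega$ is not a coboundary: a coboundary would be $\omega(v,w) = \lambda([v,w]_{\mf g_\rho}) = 0$ since $[\R^N,\R^N]_{\mf g_\rho} = 0$, so indeed $H^2(\mf g_\rho;\R) \cong \{\,d\rho\text{-invariant antisymmetric forms on }\R^N\,\}$. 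This proves the Lie algebra version of the equivalence.

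Next I would upgrade this to a statement about Lie groups. Given a nonzero invariant $\omega$, form the $1$-dimensional central Lie algebra extension $\widetilde{\mf g_\rho} = \mf g_\rho \oplus \R z$ with bracket twisted by $\omega$; let $\widetilde{G_\rho}$ be its connected simply connected Lie group. The central subgroup corresponding to $\R z$ is closed (it is the image of the center, and $\widetilde{\mf g_\rho}$ is perfect so the extension is central), and the quotient $\widetilde{G_\rho}/\exp(\R z)$ is a connected Lie group with Lie algebra $\mf g_\rho$, hence is covered by $G_\rho$; since $\pi_1$ of the quotient is detected through the exact sequence and $G_\rho$ is simply connected one identifies it with $G_\rho$ itself, producing a nontrivial perfect Lie central extension. (Perfectness of $\widetilde{G_\rho}$ follows from perfectness of $\widetilde{\mf g_\rho}$: the commutator $[\widetilde{\mf g_\rho},\widetilde{\mf g_\rho}]$ contains $z$ precisely because $\omega \neq 0$.) For the converse, if $(\theta, P)$ is any nontrivial perfect Lie central extension with $P$ connected, differentiate to get a central extension of $\mf g_\rho$ by $\ker(d\theta)$; by the cohomology computation this is classified by an invariant antisymmetric form, and nontriviality of the group extension combined with simple connectivity of $G_\rho$ (which, via the long exact homotopy sequence of $\ker\theta \to P \to G_\rho$, prevents the extension from splitting merely topologically) forces the corresponding form to be nonzero, i.e. $\rho$ has a symplectic contribution.

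The main obstacle I expect is the converse, bookkeeping direction: ensuring that a genuinely nontrivial \emph{Lie} central extension of the \emph{group} cannot arise from a trivial Lie algebra class. This is where simple connectivity of $G_\rho$ is essential — it rules out the ``topological'' extensions (finite or $\Z$-covers) that would otherwise muddy the equivalence — and one must run the homotopy long exact sequence carefully, also checking that the center $\ker\theta$ can be reduced to a vector group (the radical placement of $Z$ in Definition \ref{def:univ-lie} and Fact \ref{fact:6}, on composition of central extensions of a perfect group, are the tools here). The Lie-algebra cocycle computation itself, while the technical heart, is routine once the Whitehead lemmas are invoked in the right modules; the real care is in the normalization of the mixed term $c|_{\mf g\times\R^N}$ and in verifying that the residual cocycle condition is exactly $d\rho$-invariance of $\omega$.
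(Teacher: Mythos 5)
Your proof is correct, and the converse direction takes a genuinely different route from the paper. The paper argues directly and structurally: given a perfect Lie central extension $\Theta : H \to G_\rho$, it shows $H$ is connected, takes a Levi decomposition $\mf h = \mf h_{\text{ss}} \ltimes \mf h_{\text{solv}}$, observes $\ker d\Theta$ meets only the $1$-dimensional (i.e.\ trivial) $\mf g$-irreducible summands of the radical, uses perfectness to get $\mf z = [\R^N,\R^N]_{\mf h}$, and then reads the invariance of the resulting form directly out of the Jacobi identity (their equation \eqref{for:50}). You instead compute $H^2(\mf g_\rho;\R)$ once and for all by normalizing a $2$-cocycle against Whitehead's first and second lemmas (equivalently, by collapsing the Hochschild--Serre spectral sequence of $\R^N \trianglelefteq \mf g_\rho$ at $E_2^{0,2} = (\bigwedge^2(\R^N)^*)^{\mf g}$, all other relevant terms vanishing), concluding that every class is uniquely represented by a $d\rho$-invariant antisymmetric form on $\R^N$; you then pass between the Lie algebra and Lie group levels using simple connectivity and perfectness. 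The forward directions are essentially the same construction in both cases (adjoin a central line and twist the bracket by $\omega$, then check Jacobi). The trade-off is that your argument buys a sharper statement — it identifies the full space of infinitesimal central extensions with $\mc I_G$ and so immediately yields the universality result (Proposition \ref{prop:universal-property}) and a normalization guarantee the paper has to redo by hand — at the cost of invoking the Whitehead lemmas and the cochain bookkeeping, where the paper's converse is elementary modulo Levi's theorem. One small point worth making explicit in your converse: after you identify $\mf p \cong \mf g_\rho \oplus \mf k$ when the form vanishes, perfectness of $P$ forces $[\mf p,\mf p] = \mf p$, hence $\mf k = 0$; then $\ker\theta$ is discrete and simple connectivity of $G_\rho$ kills it. Stating it this way is cleaner than appealing to the long exact homotopy sequence directly, and matches the role that connectedness of $H$ plays in the paper's argument.
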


\begin{proof}
First, assume that there is some nontrivial antisymmetric bilinear form $\omega$ on $\RR^N\times \RR^N$ invariant under $G$ (see Definition \ref{de:2}). Set $\mf h=\mf g_\rho \oplus \R Z$ as a vector space. Note that $\mf g_\rho = \mf g \ltimes_{d\rho} \RR^N$. Define a bracket $\set{\cdot,\cdot}$ on $\mf h$ as follows:
\begin{align*}
 \set{X_1 + V_1 + t_1Z,X_2 + V_2 + t_2Z} = [X_1 + V_1,X_2 + V_2]_{\mf g_\rho} + \omega(V_1,V_2)Z.
\end{align*}
where $[,]_{\mf g_\rho}$ is the usual Lie algebra structure on $\mf g_\rho$. Next, we show that $\set{\cdot,\cdot}$ is in fact a Lie bracket on $\mf h$. Antisymmetry of the operator follows from antisymmetry of $[\cdot,\cdot]_{\mf g_\rho}$ and $\omega$. To check that $\{,\}$ satisfies the Jacobi identity  it is sufficient to show that
\begin{align*}
\{\{X, V\}, V'\}+\{\{V, V'\}, X\}+\{\{V', X\}, V\}=0 \mbox{ for all } X\in \mathfrak{g} \mbox{ and } V,\,V'\in \RR^N,
\end{align*}
which is guaranteed by invariance of $\omega$ and the fact that the adjoint representation of $\mathfrak{g}$ is antisymmetric under $\varphi$.

Let $H$ be the simply connected Lie group with Lie algebra $\mf h$. Since  $[\mf h,\mf h]=\mf h$ by construction, $H$ is also perfect. It is clear that the natural projection $j:\mf h\to \mathfrak{g}_\rho$ is central and is a Lie algebra homeomorphism, which induces a homeomorphism $J$ between $H$ and $G_\rho$.
 Let $K=\ker J$ and $K^\circ$ be the connected component of $K$ containing the identity. Then $K^\circ$ is a closed subgroup in the center of $H$ since $j$ is central. Then we have the following sequence:
\begin{align*}
H \xrightarrow{\pi_1} H /K^\circ \xrightarrow{\pi_2} (H /K^\circ)/(K/K^\circ)\cong G_\rho.
\end{align*}
Note that $\pi_1\circ\pi_2=J$ and $\pi_1$ and $\pi_2$ are both central ($\pi_2$ in central since $K / K^\circ$ is discrete in $G_\rho' /K^\circ$). Hence $J$ is central following from Fact \ref{fact:6}. Hence $H$ is a central extension of $G_\rho$.

Now, we prove the converse. Assume that $H$ is a perfect central extension of $G_\rho$ with projection $\Theta$, and $K = \ker \Theta$. Let $H^\circ$ be the connected component of identity in $H$. Since $G_\rho$ is connected $\Theta$ restricted on
$H$ is surjective. This shows that $H=H^\circ\cdot Z$. Then the commutator group $[H,H]=H=[H^\circ,H^\circ]\subset H^\circ$ since $Z$ is in the center. This implies that $H$ is also connected.

Choose a Levi deomposition of $\mf h = \Lie(H)$ as $\mf h = \mf h_{\text{ss}} \ltimes \mf h_{\text{solv}}$.
Then
\begin{align*}
\mf g_\rho = d\Theta(\mf h) = d\Theta(\mf h_{\text{ss}}) \ltimes d\Theta(\mf h_{\text{solv}}).
\end{align*}
Since Levi decompositions are unique up to adjoint conjugation, we may without loss of generality assume that $d\Theta(\mf h_{\text{ss}}) = \mf g$ and $d\Theta(\mf h_{\text{solv}}) = \mf e$. Since $\ker d\Theta \cap \mf g = \set{0}$, we know that $d\Theta|_{\mf h_{\text{ss}}}$ is an isomorphism, and conclude that $\mf h \cong \mf g \ltimes \mf n$, where $\mf n$ is the radical of $\mf h$ which projects onto $\RR^N$. For the adjoint representation of $\mathfrak{g}$ on $\mf n$ we have a decomposition: $\mf n=\sum_i \mf n_i$ such that each $\mf n_i$ is irreducible by complete reducibility of semisimple Lie algebra. It is clear that $\mf n_i\cap \mf z\neq \set{0}$ if and only if $\dim n_i=1$. This shows that $\mf z=\sum_{\dim \mf n_i=1} \mf n_i$ and $d\Theta$ is isomorphic on $\RR^N=\sum_{\dim \mf n_i>1} \mf n_i$. Hence we get $\mf n = \RR^N \oplus \mf z$ as vector space and the adjoint representation of $\mathfrak{g}$ on $\RR^N$ in $\mf h$ is isomorphic to that of $\mathfrak{g}$ on $\RR^N$ in $\mathfrak{g}_\rho$. Finally, perfectness of $\mf h$ implies that $\mf z = [\RR^N,\RR^N]$. In particular, $\mf e$ is nonabelian if $\mf z$ is non-trivial. In $\mf h$ we have
\begin{align}\label{for:50}
 0&=[X, [V_1,V_2]]+[V_1, [X,V_2]]+[[X,V_1],V_2]\notag\\
 &=[V_1,\ad(X)(V_2)]+[\ad(X)(V_1),V_2]
\end{align}
for any $X \in \mathfrak{g}$ and $V_1,\,V_2\in \RR^N$. This shows that the Lie bracket $[\,,\,]_{\mf h} : \RR^N \times \RR^N \to \mf z$ is invariant under the adjoint representation of $\mathfrak{g}$ in $\mathfrak{g}_\rho$. But by projecting $\mf z$ down to a 1-dimensional subspace, we obtain an invariant anti-symmetric $2$-form, and hence a symplectic contribution. Hence we finish the proof.
\end{proof}

Let $\mc I_G \subset \bigwedge^2(\RR^N)^*$ denote the collection of $\rho$-invariant 2-forms on $\R^N$. From the proof of Proposition \ref{prop:no-top-ext}, we see that the following is well-defined:

\begin{definition}
\label{def:univ-lie}
Choose a basis $\omega_1,\dots,\omega_n$ of $\mc I_G$, and define a Lie algebra whose vector space structure is given by $\mf g_\rho' = \mf g \oplus \mf e \oplus\mathfrak{z}$, and whose algebra structure is given by:
\[ \begin{array}{c}
[(X_1,V_1,Z_1),(X_2,V_2,Z_2)] = \hspace{2.5in} \\
\hspace{1in} \big([X_1,X_2]_{\mf g},d\rho(X_1)V_2-d\rho(X_2)V_1,\omega_1(V_1,V_2),\dots,\omega_n(V_1,V_2)\big)
\end{array} \]

Then define the group $G_\rho'$ to be the simply connected Lie group with Lie algebra $\mf g_\rho'$, and let $\Theta : G_\rho' \to G_\rho$ be the canonical projection. We call $G_\rho'$ the {\it universal Lie central extension of $G_\rho$}.
\end{definition}

Observe that $G_\rho'$ is still a semidirect product of $\tilde{G}$, the universal cover of $G$, and a simply connected nilpotent Lie group $E' = \exp(\RR^N \oplus\mathfrak{z})$ (see, eg, \cite{var84}). The multiplication is therefore given as:
\begin{align*}
 (g_1,x_1) \cdot (g_2,x_2) = (g_1g_2,\rho'(g_2^{-1})(x_1)x_2)
\end{align*}
where $\rho'(g_2^{-1})(x_1)=g_2^{-1}x_1g_2$ satisfying $\rho \of \Theta = \Theta \of \rho'$.

\begin{proposition}
\label{prop:universal-property}
Suppose that

\[ 1 \to Z \to H \to G_\rho \to 1 \]

is a central extension of $G_\rho$ by an abelian Lie group $Z$. Then there is a unique homomorphism $\phi : G_\rho' \to H$ equivariant over $G_\rho$. In particular, any perfect Lie central extension of $G_\rho$ is a factor of $G_\rho'$.
\end{proposition}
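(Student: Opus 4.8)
The plan is to descend to the Lie algebra level and exploit that $G_\rho'$ is simply connected. Writing $q : H \to G_\rho$ for the given surjection and $\mf h = \Lie(H)$, it suffices to construct a Lie algebra homomorphism $\psi : \mf g_\rho' \to \mf h$ with $dq \circ \psi$ equal to the canonical projection $\mf g_\rho' \to \mf g_\rho$: then $\psi$ integrates to a Lie group homomorphism $\phi : G_\rho' \to H$ (simple connectivity of $G_\rho'$ guarantees this regardless of $H$), and $\phi$ is automatically equivariant over $G_\rho$ because $q \circ \phi$ and $\Theta$ are homomorphisms out of the connected group $G_\rho'$ sharing the same differential.

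To produce $\psi$, note that $\mf z := \Lie(Z) = \ker dq$ is central in $\mf h$, so $0 \to \mf z \to \mf h \to \mf g_\rho \to 0$ is a central extension of Lie algebras, and I would argue exactly as in the converse half of the proof of Proposition \ref{prop:no-top-ext}. Choose a Levi decomposition $\mf h = \mf h_{\mathrm{ss}} \ltimes \mf n$; since $\mf z$ is solvable it lies in the radical $\mf n$, and uniqueness of Levi factors up to conjugation by the radical lets us arrange $dq(\mf h_{\mathrm{ss}}) = \mf g$ and $dq(\mf n) = \RR^N$. Then $dq|_{\mf h_{\mathrm{ss}}} : \mf h_{\mathrm{ss}} \to \mf g$ is an isomorphism — its kernel is a semisimple subalgebra contained in the solvable $\mf z$, hence $0$ — giving a Lie subalgebra $\mf g \hookrightarrow \mf h$. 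Regarding $\mf n$ as a $\mf g$-module via this splitting, $\mf z$ is a trivial submodule and $dq : \mf n \to \RR^N$ is an epimorphism of $\mf g$-modules with kernel $\mf z$; by Weyl complete reducibility there is a $\mf g$-submodule $\mf m$ with $\mf n = \mf m \oplus \mf z$, and inverting the module isomorphism $dq|_{\mf m} : \mf m \to \RR^N$ extends the splitting to a $\mf g$-equivariant linear section $\sigma : \mf g \oplus \RR^N \to \mf h$.

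The crux of the argument will be the next step. The obstruction to $\sigma$ being a homomorphism is the ``error term'' $\omega_H(V_1,V_2) := [\sigma V_1,\sigma V_2]$, $V_1,V_2 \in \RR^N$: it lies in $\ker dq = \mf z$ because $\RR^N$ is abelian in $\mf g_\rho$, and by the Jacobi identity in the form \eqref{for:50} it is an antisymmetric bilinear map $\RR^N \times \RR^N \to \mf z$ invariant under $\mf g$, so that each of its scalar components is a $\rho$-invariant $2$-form, hence a linear combination of the fixed basis $\omega_1,\dots,\omega_n$ of $\mc I_G$. Thus $\omega_H = \sum_i \omega_i(\cdot,\cdot)\,z_i$ for suitable $z_1,\dots,z_n \in \mf z$, and defining $T$ on the central $\RR^n$-summand of $\mf g_\rho'$ by sending its $i$-th basis vector to $z_i$, a bracket-by-bracket verification — using that Definition \ref{def:univ-lie} encodes precisely the data $d\rho$ together with $\omega_1,\dots,\omega_n$ — shows $\psi := \sigma \oplus T$ is a Lie algebra homomorphism with $dq \circ \psi$ the canonical projection. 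Integrating gives the desired $\phi$.

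For uniqueness, any two equivariant lifts $\phi_1,\phi_2$ differ by the map $g \mapsto \phi_1(g)\phi_2(g)^{-1}$, which is a homomorphism $G_\rho' \to Z$ since $Z$ is central and abelian, hence trivial once $G_\rho'$ is known to be perfect; and $\mf g_\rho'$ is perfect because $[\mf g_\rho',\mf g_\rho']$ contains $\mf g$, then $d\rho(\mf g)\RR^N = \RR^N$ by irreducibility and the absence of trivial summands, and finally the central $\RR^n$ because $\omega_1,\dots,\omega_n$, being a \emph{basis} of $\mc I_G$, are linearly independent in $\bigwedge^2(\RR^N)^*$, so the induced map $\bigwedge^2 \RR^N \to \RR^n$ is onto. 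The ``in particular'' assertion follows: if $H$ is itself a perfect central extension, lifting through $q$ shows $H = Z \cdot \phi(G_\rho')$, whence $H = [H,H] = \phi([G_\rho',G_\rho']) = \phi(G_\rho')$, so $\phi$ is surjective and $H$ is a factor of $G_\rho'$. The main obstacle I anticipate is the middle part: peeling $\mf z$ off as a $\mf g$-module complement in the radical — it need not equal $[\RR^N,\RR^N]_{\mf h}$, coinciding with it only when $H$ is perfect — and then recognizing the residual bracket as a $\mf g$-invariant antisymmetric $\mf z$-valued form, which is exactly what the definition of $G_\rho'$ was built to absorb. Passing between the algebra and group statements, by contrast, should be routine given that $G_\rho'$ is simply connected.
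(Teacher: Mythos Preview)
Your argument is correct and follows essentially the same route as the paper: both pass to Lie algebras using simple connectivity of $G_\rho'$, invoke the structural decomposition $\mf h \cong \mf g \ltimes (\RR^N \oplus \mf z)$ established in the proof of Proposition~\ref{prop:no-top-ext}, and define the map on the central summand via the invariant $2$-forms. The only cosmetic difference is that the paper defines $d\phi$ on $\mf z_0$ by transporting brackets---writing $V = \sum [Y_{1,i},Y_{2,i}]_{\mf g_\rho'}$ and declaring $d\phi(V) = \sum [Y_{1,i},Y_{2,i}]_{\mf h}$, then checking well-definedness via the $\omega_i$---whereas you extract the $\mf z$-valued invariant form $\omega_H$ first and read off the linear map $T$ from its coefficients; these are equivalent presentations of the same construction, and your treatment of uniqueness and the ``in particular'' clause is somewhat more explicit than the paper's.
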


\begin{proof}
Since $G_\rho'$ is simply connected, it suffices to show the result at the level of Lie algebras. Note that $\mf g_\rho' \cong \mf g \ltimes (\RR^N \oplus \mf z_0)$ and that $\mf h \cong \mf g \ltimes (\RR^N \oplus \mf z)$ (where the direct sums are vector space direct sums). Recall that $[\RR^N,\RR^N]_{\mf g_\rho'} = \mf z_0$ and $[\RR^N,\RR^N]_{\mf h} \subset \mf z$ (this follows from the proof of Proposition \ref{prop:no-top-ext}). Then given $V \in \mf z_0$, we can write it as $V = \sum_{i=1}^m [Y_{1,i},Y_{2,i}]_{\mf g_\rho'}$, with $Y_{1,i},Y_{2,i} \in \RR^N$. So define $d\phi(V) = \sum_{i=1}^m[Y_{1,i},Y_{2,i}]_{\mf h}$. To see that this is well defined, observe that if $\sum_{i=1}^m[Y_{1,i},Y_{2,i}]_{\mf g_\rho'} = \sum_{j=1}^{m'}[Y_{1,j}',Y_{2,j}']_{\mf g_\rho'}$, then $\sum_{i=1}^m \omega(Y_{1,i},Y_{2,i}) = \sum_{j=1}^{m'}\omega(Y_{1,j}',Y_{2,j}')$ for every $\omega \in \mc I_G$. Then if we choose a basis $\mc B = \set{X_1,\dots,X_k}$ of $\mf z$, we see that it determines invariant 2-forms via the formula:

\[ [Y_1,Y_2]_{\mf h} = \sum_{i=1}^k \omega_i(Y_1,Y_2)X_i \]

So since these 2-forms determine the bracket in $\mf h$, we get that the definition of $d\phi$ on $\mf z_0$ is well-defined and unique. It is constructed exactly so that it extends in the obvious way to a Lie algebra homomorphism $d\phi : \mf g_\rho' \to \mf h$.

For the final part of the Proposition, we need to show that if $H$ is perfect, $d\phi$ is surjective onto $\mf z$. Observe that if $H$ is perfect it must be connected (as in the proof of the second part of \ref{prop:no-top-ext}). Then perfectness implies $[\mf h,\mf h] = \mf h$, so the algebra $\mf z = [\RR^N,\RR^N]_{\mf h}$. So we can write any $X \in \mf z$ as $X = [Y_1,Y_2]_{\mf h} = d\phi([Y_1,Y_2]_{\mf g_{\rho}})$. Hence we get that $H$ is a factor of $G_\rho'$.

\end{proof}

\begin{corollary}
The definition of $G_\rho'$ is independent of the choice of basis of $\mc I_G$.
\end{corollary}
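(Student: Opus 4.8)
The plan is to show that if $\set{\omega_i}_{i=1}^n$ and $\set{\omega_i'}_{i=1}^n$ are two bases of $\mc I_G$, then the two Lie algebras produced by Definition \ref{def:univ-lie} — call them $\mf g_{\rho,1}'$ and $\mf g_{\rho,2}'$ — are isomorphic by an isomorphism commuting with the projections onto $\mf g_\rho$, and then to integrate this to the simply connected groups $G_{\rho,1}'$ and $G_{\rho,2}'$. The whole argument is essentially formal once one notices how the central coordinate transforms under a change of basis; I would make "independent of the choice of basis" precise as "up to $G_\rho$-equivariant isomorphism," which is the natural notion of isomorphism of central extensions.

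The direct approach: write $\omega_i' = \sum_j a_{ij}\omega_j$ with $A = (a_{ij}) \in \GL_n(\R)$, and on the common underlying vector space $\mf g \oplus \mf e \oplus \mf z$ define $\Phi(X,V,Z) = (X,V,AZ)$, the identity on $\mf g \oplus \mf e$ and the linear map $A$ on the central coordinate $Z \in \mf z \cong \R^n$. One then checks directly that $\Phi$ is a Lie algebra homomorphism: applying $\Phi$ to $[(X_1,V_1,Z_1),(X_2,V_2,Z_2)]_1$ replaces the central component $(\omega_i(V_1,V_2))_i$ by $A\,(\omega_i(V_1,V_2))_i = (\omega_i'(V_1,V_2))_i$, which is precisely the central component of $[\Phi(X_1,V_1,Z_1),\Phi(X_2,V_2,Z_2)]_2$, while the $\mf g$- and $\mf e$-components of both brackets coincide because neither involves the forms. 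As $A$ is invertible and $\Phi$ is the identity on the first two factors, $\Phi$ is a linear isomorphism, and it plainly commutes with the projection $\mf g_{\rho,j}' \to \mf g_\rho$. Since $G_{\rho,1}'$ and $G_{\rho,2}'$ are simply connected, $\Phi$ integrates to a Lie group isomorphism $G_{\rho,1}' \xrightarrow{\sim} G_{\rho,2}'$ equivariant over $G_\rho$.

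Alternatively, one can argue abstractly. Each $\mf g_{\rho,j}'$ is perfect: $[\RR^N,\RR^N]_{\mf g_{\rho,j}'}$ is the image of $(V_1,V_2) \mapsto (\omega_i(V_1,V_2))_i$ over the chosen basis, and if this image were a proper subspace of $\mf z$ then some nontrivial linear combination of the basis forms would vanish identically, contradicting linear independence; so $[\RR^N,\RR^N]_{\mf g_{\rho,j}'} = \mf z$, and with perfectness of $\mf g$ this gives $[\mf g_{\rho,j}',\mf g_{\rho,j}'] = \mf g_{\rho,j}'$. Thus each $G_{\rho,j}'$ is a perfect Lie central extension of $G_\rho$, and the proof of Proposition \ref{prop:universal-property} (which used nothing about the particular basis) yields equivariant homomorphisms $\phi : G_{\rho,1}' \to G_{\rho,2}'$ and $\psi : G_{\rho,2}' \to G_{\rho,1}'$; the uniqueness clause applied with target $G_{\rho,1}'$ forces $\psi \circ \phi = \id$, and symmetrically $\phi \circ \psi = \id$, so $\phi$ is an isomorphism.

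I do not expect a genuine obstacle here. The only points requiring care are bookkeeping ones: keeping track that the central coordinates transform by $A$ (the whole content of the direct computation), stating the conclusion up to equivariant isomorphism, and — if one takes the abstract route — noting that perfectness of $\mf g_{\rho,j}'$ is exactly the place where linear independence of a basis of $\mc I_G$ is used.
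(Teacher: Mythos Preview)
Your proposal is correct, and your second (abstract) argument via Proposition~\ref{prop:universal-property} is exactly the paper's intended reasoning: the corollary is stated immediately after that proposition with no proof, signalling it is a direct consequence of the universal property together with the uniqueness clause. Your first (direct change-of-basis) argument is a valid and more explicit alternative that avoids invoking the universal property, at the cost of a small computation; either route is fine here.
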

Let $\{\mathfrak{g_i}\}_{i\in J}$ be a basis of $\mathfrak{g}$ with rational entries. Then $\mc I_G$ is exactly the subspace of $\bigwedge^2(\RR^N)^*$ satisfying:
\begin{align*}
 \omega(d\rho(\mathfrak{g_i})w_1,w_2)=-\omega(w_1,d\rho(\mathfrak{g_i}) w_2),\qquad \forall\,i\in J,\,\forall \,w_1,\,w_2\in \RR^N.
\end{align*}
Since $d\rho$ is $\Q$-rational, there is a basis $\{\omega_l\}_{l\in L}$ of $\bigwedge^2(\R^N)^*$ with rational coefficients.

Let $G_\rho'$ be the Lie group determined by this basis (see Definition \ref{def:univ-lie}). Let $\Gamma'$ be the lattice in $\widetilde{G}$ covering $\Gamma$ and $\mc Z = \Z^N \subset \mathfrak{z}$. Then since $\widetilde{G}$ sits inside of $G_\rho'$, so does $\Gamma'$. Then define the subgroup $\Gamma_\rho' = \left\langle \Gamma', \exp_{G_\rho'}(\mathcal{Z})\right\rangle$ of $G_\rho'$.

Let $X_1,\dots,X_N$ be the standard basis of $\Z^N$. Then we see that when combined with a rational basis of 2-forms to define $\mf g_\rho'$, it is a basis with rational structure constants. By Malcev's criterion for nilpotent groups, $\exp_{G_\rho'}(\mathcal{Z})$ generates a lattice in $E'$. Then we get the following immediately:

\begin{lemma}
\label{lem:lattice-lift}
$\Gamma_\rho'$ is a lattice in $G_\rho'$ and $\Theta(\Gamma_\rho') = \Gamma_\rho$, where $\Theta : G_\rho' \to G_\rho$ is the projection.
\end{lemma}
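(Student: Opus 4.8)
The plan is to verify the two assertions separately, relying on the structure of $G_\rho'$ as a semidirect product $\widetilde{G} \ltimes_{\rho_0} E'$ established just before Definition \ref{def:univ-lie}, together with Malcev's criterion. First I would note that $\Gamma_\rho' = \langle \Gamma', \exp_{G_\rho'}(\mathcal{Z}) \rangle$ sits inside the subgroup generated by $\widetilde{G}$ and $E'$, and in fact, because $\Gamma' \subset \widetilde G$ normalizes $E'$ and the $\Z$-span $\mathcal{Z}$ of the standard basis of $\mathfrak{z}$ is preserved (up to the lattice $\exp_{G_\rho'}$ generates) by the integral structure, one expects $\Gamma_\rho'$ to be a semidirect product $\Gamma' \ltimes \Lambda_{E'}$ where $\Lambda_{E'} = \langle \exp_{G_\rho'}(\mathcal{Z})\rangle$. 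To make this precise I would first apply Malcev's criterion: since $X_1,\dots,X_N$ (the standard basis of $\mathcal{Z} \cong \Z^N \subset \mathfrak{z}$), when combined with the chosen rational basis $\{\omega_l\}_{l\in L}$ of 2-forms and a rational basis of $\RR^N$, give $\mathfrak{e} \oplus \mathfrak{z}$ a basis with rational structure constants, the subgroup $\Lambda_{E'}$ generated by $\exp_{G_\rho'}(\mathcal{Z})$ is a lattice in $E' = \exp(\RR^N \oplus \mathfrak{z})$. Here one should be slightly careful that $\mathcal{Z}$ lies in the \emph{center-direction} $\mathfrak{z}$ of the nilpotent algebra, so its exponential is already a lattice in the central subgroup $\exp(\mathfrak z)$, and since $E'$ is 1-step nilpotent over $\RR^N$ this is enough to conclude $\Lambda_{E'}$ is discrete and cocompact in $E'$ — actually the statement of the lemma implicitly wants the full $\Gamma_\rho'$ to be cocompact, so the $\RR^N$-directions are covered by the projection of $\Gamma'$, see below.

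Next I would establish that $\Gamma_\rho'$ is discrete and cocompact in $G_\rho'$. For discreteness: $\Theta(\Gamma_\rho') = \Gamma_\rho$ is discrete in $G_\rho$ (it is a lattice there), and $\ker\Theta \cap \Gamma_\rho' = \exp_{G_\rho'}(\mathcal{Z}) \cap (\text{kernel directions})$, which is discrete in $Z = \exp(\mathfrak z)$ by Malcev; a short diagram chase on the exact sequence $1 \to Z \to G_\rho' \xrightarrow{\Theta} G_\rho \to 1$ then gives discreteness of $\Gamma_\rho'$. For cocompactness: $\Gamma_\rho \backslash G_\rho$ is compact, and the fiber of $\Gamma_\rho' \backslash G_\rho' \to \Gamma_\rho \backslash G_\rho$ over a point is a quotient of $Z$ by the image of the stabilizer, which is $\Lambda_{E'} \cap Z = \exp_{G_\rho'}(\mathcal Z)$-generated lattice, hence compact; a compact fibration over a compact base is compact. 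The identity $\Theta(\Gamma_\rho') = \Gamma_\rho$ then follows from $\Theta(\Gamma') = \Gamma$ (which holds because $\Gamma'$ is defined as the lattice in $\widetilde G$ covering $\Gamma$, and $\widetilde G \hookrightarrow G_\rho'$ is a lift of $G \hookrightarrow G_\rho$) together with $\Theta(\exp_{G_\rho'}(\mathcal Z)) = \exp_{G_\rho}(\Z^N) \subset \Z^N \subset \Gamma_\rho$, since $\Theta$ maps the $\mathfrak z$-direction to $0$ and the $\RR^N$-direction identically.

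The main obstacle I anticipate is not any single deep point but rather bookkeeping: one must check carefully that $\Gamma'$ and $\exp_{G_\rho'}(\mathcal Z)$ together generate a group whose intersection with each ``slice'' is exactly the expected lattice, i.e., that no unexpected extra elements appear when one conjugates $\exp_{G_\rho'}(\mathcal Z)$ by elements of $\Gamma'$ (this is where $\rho(\gamma) \in SL(N,\Z)$ and the $\Q$-rationality of $d\rho$ — hence the $\Gamma'$-invariance of the integral lattice spanned by a suitable rational basis — are used, exactly as in the untwisted reduction $G_\rho/\Gamma_\rho \cong (G\times \T^N)/\sim$ described in Section \ref{sec:3}). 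The cleanest way to handle this is to exhibit $\Gamma_\rho'$ as the image of a lattice under an explicit algebraic-group-style construction, or simply to verify that $\Gamma_\rho' \backslash G_\rho'$ fibers over $\Gamma \backslash \widetilde G$ (compact) with compact nilmanifold fibers $\Lambda_{E'}\backslash E'$, where the twisting is by $\rho_0$ restricted to $\Gamma'$, which preserves $\Lambda_{E'}$ precisely because the structure constants were chosen rational and $\rho$ is integral on $\Gamma$.
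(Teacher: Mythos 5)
Your proof follows the same skeleton as the paper's (which is essentially a one-line invocation of Malcev's criterion): rational structure constants $\Rightarrow$ Malcev gives a lattice in $E'$ $\Rightarrow$ the semidirect product of $\Gamma'$ and that lattice is a lattice in $G_\rho'$. Your fleshing-out with the exact sequence $1\to Z\to G_\rho'\to G_\rho\to 1$ for discreteness and the compact-fiber-over-compact-base argument for cocompactness is a correct and reasonable expansion.

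However, one step in your argument is wrong. You follow the paper's apparent typo ``$\mathcal{Z}=\Z^N\subset\mathfrak{z}$'' literally, notice the resulting tension (that $\exp_{G_\rho'}(\mathcal{Z})$ would then live only in the central direction $\exp(\mathfrak{z})$ and miss $\R^N$ entirely), and try to patch it by claiming the ``$\R^N$-directions are covered by the projection of $\Gamma'$.'' That is false: under the semidirect decomposition $G_\rho'=\widetilde{G}\ltimes_{\rho_0}E'$, the subgroup $\Gamma'$ sits entirely inside $\widetilde{G}$, so it contributes nothing to the $\R^N$ fiber. With $\mathcal{Z}\subset\mathfrak{z}$ the group $\Gamma_\rho'=\langle\Gamma',\exp(\mathcal{Z})\rangle$ would project under $\Theta$ onto $\Gamma$ (not $\Gamma_\rho=\Gamma\ltimes_\rho\Z^N$), and the lemma would simply be false. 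The intended reading, consistent with everything else in the paper (including your own later observation that $\Theta$ acts as the identity on the $\R^N$-direction and sends $\exp(\mathcal{Z})$ onto $\Z^N$), is $\mathcal{Z}=\Z^N\subset\mathfrak{e}\cong\R^N$, the standard integer lattice in the translation part. Then $\exp_{G_\rho'}(\mathcal{Z})$ is not central; its pairwise commutators $[\exp X_i,\exp X_j]$ fill the $\mathfrak{z}$-directions, the structure constants are rational because the $\omega_l$ were chosen rational, and Malcev gives a genuine cocompact lattice $\Lambda_{E'}$ in $E'=\exp(\R^N\oplus\mathfrak{z})$. The rest of your argument — $\Gamma'$ normalizes $\Lambda_{E'}$ because $\rho(\Gamma)\subset SL(N,\Z)$ and $d\rho$ is $\Q$-rational, so $\Gamma_\rho'\cong\Gamma'\ltimes\Lambda_{E'}$, the fibration $\Gamma_\rho'\backslash G_\rho'\to\Gamma'\backslash\widetilde{G}$ has compact nilmanifold fibers $\Lambda_{E'}\backslash E'$, and $\Theta(\Gamma_\rho')=\Gamma_\rho$ — then goes through as you wrote it.
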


Then $G_\rho$ is then isomorphic to the double coset space $\mathcal{R}\backslash (G_\rho'/\Gamma_\rho')$, where $\mathcal{R}=\exp(\mf z)/\mathbb{L}=\TT^{\dim \mf z}$.

\subsection{Useful notations} \label{sec:14}We try as much as possible to develop a unified
systems of notations. We will use notations from this section throughout
subsequent sections. So the reader should consult this section if an unfamiliar symbol appears.
\begin{sect}\label{se:2}
For any group $S$ and its subgroup $S_1$, we use $Z(S)$ to denote it center, $C_S({S_1})$ to denote the centralizer of $S_1$ in $S$. If $H_j$, $j\in J$ are subgroups of $S$, let $\prod_{j\in J} H_j$ (or $\left\langle H_j \right\rangle$ denote the subgroup generated by $H_j$, $j\in J$. For any subset $L\subset S$, we let $F_L$ be the free group with generating set $L$.
\end{sect}

\begin{sect}\label{se:1}
We use $\mathcal{G}$ to denote $G$ or $G_\rho$; and use $\mathcal{G}_0$ to denote the universal cover $\tilde{G}$ of $G$ or a fixed maximal perfect central Lie extension $G_\rho'$ of $G_\rho$. Fix a right invariant metric $d$ on $\mathcal{G}_0$. We denote by $B(h,r)$ the ball in $\tilde{G}$ centered at $h$
with radius $r$.

Let $\Gamma'$ be the corresponding lattice in $\tilde{G}$ covering the lattice $\Gamma \subset G$. Then $G/\Gamma$ is isomeric to $\tilde{G}/\Gamma'$. Then $d$
induces the metrics on $G$ and $G/\Gamma$, which is also denoted by $d$. There exists $c_1>0$ such that, for any $h\in \tilde{G}_0$, $B(h,c_1)$ projects homeomorphically onto its image in $G/\Gamma$.

Write $\text{Lie}(G_\rho')=\mathfrak{g}\oplus\RR^N\oplus \mf z$ under the isomorphism in proof of Proposition \ref{prop:no-top-ext} and $\mf e$ is the center. Let $\Gamma_\rho'$ be the corresponding lattice in $G_\rho'$ covering the lattice $\Gamma_\rho \subset G_\rho$ as described at the end of Section \ref{sec:16}.
We see that $d$ also induces metrics on $G_\rho$ and $G_\rho/\Gamma_\rho$. There exists $c_2>0$ such that, for any $h\in G_\rho'$, the set
\begin{align*}
 \bar{B}(h,c_2)=\{g\exp(v)h:v\in \mathfrak{e},\,g\in \tilde{G}\text{ and }d(g\exp(v),e)<c_2\}
\end{align*}
projects homeomorphically onto its image in $G_\rho$ and $G_\rho/\Gamma_\rho$.
\end{sect}

\begin{sect}\label{se:3}
Let $\alpha_A$ be as described in Section \ref{sec:2}. $p:A\rightarrow A_0$ will denote the homomorphism described in Proposition \ref{prop:diagonalize} and \ref{prop:diagonalize1}. Let $N$ denote the centralizer of $p(A)$. For the twisted space we may assume that $A \subset G\cdot N$ by conjugating by the element $s$ of Proposition \ref{prop:diagonalize1}.
 Since
the exponential maps on $\Lie(A_0)$ are both injective in $\mathcal{G}_0$ and $\mathcal{G}$, we can naturally lift $p$ to map valued in a split Cartan subgroup in $\mathcal{G}_0$, which is still denoted by $p$.

Let $\Lambda_A$ denote $\Delta_{A}$ or $\Delta_{A}\bigcup\Phi^*_{A,\rho}$, where $\Phi^*_{A,\rho}=\Phi_{A,\rho}\backslash 0$. We call $r\in \Delta_{A_0}$ {\it detected} if $r|_{A} \not= 0$, i.e., $r$ descends to a root in $\Delta_A$, otherwise we say that $r$ is {\it undetected}.

For any $\phi\in \Lambda_{A}$ let $U_{[\phi]}$ be the corresponding subgroup of $\mathcal{G}$ with Lie algebra $\mathfrak{g}^{(\phi)}$. Note that later if we write $U_{[\phi]}$, $\phi\in\Delta_{A_0}$, then it denotes the root subgroups for root system $\Delta_{A_0}$. It is clear that $U_{[\phi]}$, $\phi\in \Delta_{A}$ is the product of groups $U_{[r]}$, $r\in \Delta_{A_0}$ such that $r|_{A}=\lambda\phi$ for some $\lambda>0$.

If $\mathcal{G}=G_\rho$, for any $r\in \Delta_A\cup \Phi^*_{A,\rho}$, let $\mathfrak{g}_G^{(r)}=\sum_{\lambda>0}\mathfrak{g}_{\lambda r}$ and $\mathfrak{g}_{\mathfrak{e}}^{(r)}=\sum_{\lambda>0}\mathfrak{e}_{\lambda r}$; and let $U_{G,[r]}$ and $U_{\mathfrak{e},[r]}$ be the corresponding subgroup of $G_\rho$ respectively.
Let $U_{\mathfrak{e},[r],i}=U_{\mathfrak{e},[r]}\bigcap\RR^{N_i}$, $i\in I$ (see Section \ref{sec:16}). Then $U_{[r]}$ is generated by $U_{G,[r]}$ and $U_{\mathfrak{e},[r]}$ and
$U_{\mathfrak{e},[r]}$ is generated by $U_{\mathfrak{e},[r],i}$, $i\in I$.
\end{sect}

\begin{sect}\label{se:5}
Suppose $M=G/\Gamma$. Note that $A\cong\ZZ^k\times\RR^l$ (or its image in $G$). Let
$X_i$, $ 1\leq i\leq l$ be the corresponding elements in $\mathfrak{g}$ such that the flows $\{\exp(t X_i)\}_{t\in\RR}$, $ 1\leq i\leq l$ generate $\RR^l \subset A$. We note that for any $x,\,y\in \mathcal{G}_0$ if they project to an abelian pair in $\mathcal{G}$ then $[x,y]\in Z(\tilde{G})$.  This observation implies that
the subgroup $A_1$ generated by the flows $\{\exp(t X_i)\}_{t\in\RR}$ in $G'$, $ 1\leq i\leq l$ is abelian.

Let $g_i$, $ 1\leq i\leq k$ be a set of corresponding elements in $\tilde{G}$ covering the generators of $\ZZ^k \subset A$.  Note that each $g_i$ is in $C_{\tilde{G}}(A_1)$, $ 1\leq i\leq k$,  since $Z(\tilde{G})$ is discrete. However, the set $\{g_i, 1\leq i\leq k\}$ may be non-abelian and we have
\begin{align*}
 g_ig_j=g_jg_ig_{i,j}, \qquad g_{i,j}\in Z(\tilde{G}).
\end{align*}
In particular, if $A$ is inside the split Cartan subgroup then $A_1$ is abelian.

Let $\tilde{A}$ be the set containing $\{g_i, 1\leq i\leq k\}$ and $A_1$, and for any $a\in A$ denote the corresponding element in $\tilde{A}$ by $\tilde{a}$.
Then we obtain an action of $F_{\tilde{A}}$ on $\mathcal{G}_0$ generated by left translations $\alpha(\tilde{a},\cdot)$, which is denoted by $\alpha_{\tilde{A}}$.
It is clear that translations in $\alpha_{\tilde{A}}$ preserve stable/unstable foliations of each other. Thus we can also define the coarse Lyapunov foliations for $\alpha_{\tilde{A}}$, which are the same as those for $\alpha_{p(A)}$ (see \seref{se:3}).

\end{sect}

\begin{sect}\label{se:7}
Suppose $M=G_\rho/\Gamma_\rho$. For any $c\in G_\rho$ we have the expression $(g_c, v_c)$, where $g_c\in G$ and $v_c\in\RR^N$. The set $A_G=\{g_a,a\in A\}$ is an abelian subgroup in $G$.  Set $a'=(g_a', \exp_{G_\rho'}(\log v_a))$ for any $a\in A$, where $g_a'$ is the
corresponding element in $\tilde{A}_G$ as defined in \seref{se:5}.
\begin{remark}
Note that the set $A'=\{a',a\in A\}$ may no longer be an abelian set again in $G_\rho'$. In fact for any $a',\,b'$ and $x\in G_\rho'$ we have
\begin{align}\label{for:25}
 a'b'=b'a'cz,\qquad \text{ for some }c\in \exp(\mathfrak{z})\text{ and }z\in Z(\tilde{G}).
\end{align}
In particular, $z\in Z(G_\rho')$ since $z$ projects to identity in $G_\rho$.

If there is no symplectic contrubtion of $\rho$, then $c$ is trivial, since $\mathfrak{z}= \set{0}$ in this case. 

Let $\alpha_{A'}$ denote $F_{A'}$ action generated by left translations $\alpha(a',\cdot)$ on $\mathcal{G}_0$ for any $a\in A$.
\eqref{for:25} shows translations in $\alpha_{A'}$ preserve stable/unstable foliations of each other. Thus we can also define the coarse Lyapunov foliations for $\alpha_{A'}$, which are the same as those for $\alpha_{p(A)}$.
\end{remark}

\end{sect}

\section{Geometric Preparatory step II: Lifting from homogeneous space to Lie group}
\label{sec:geom2}

In this part we set up all the ingredients that will be use later to prove cocycle rigidity for perturbations, which is the principal step toward the proof of Theorem \ref{thm:main} and \ref{thm:main:1}. After conjugating the perturbation with the Hirsch-Pugh-Shub conjugacy $h$ , the perturbation has the same neutral foliations as for $\alpha_A$ (see Section \ref{sec:12}). Hence we set up a twisted cocycle (see Definition \ref{def:twisted-coc}) over the conjugated perturbation; then we lift both $\alpha_A$ and the
conjugated perturbation to $\mathcal{G}_0$ (see \seref{se:1}) and make a detailed study of dynamical properties of both quasi-nil extensions (see Definition \ref{def:quasi-nil}). Then we
construct the periodic cycle functional for the conjugated perturbation in Section \ref{sec:PCF}.

\subsection{Actions and cocycles on homogeneous space}\label{sec:12}

 Let $\widetilde{\alpha}_A$ be a $C^\infty$ $A$-action close to $\alpha_A$ in $C^1$
topology.  The neutral foliation for $\alpha_{A}$ is a smooth foliation, we may use
the Pugh-Shub-Wilkinson structural stability theorem to conjugate the small perturbation $\tilde{\alpha}_A$ to an action $\overline{\alpha}_A$ preserving the
neutral foliation $\mathcal{N}$ of $\alpha_A$ via a bi-$\varrho$-H\"older conjugacy $\mbf{h}$, where $\varrho$ is only dependent on $\alpha_A$ and the $C^1$-closeness of $\widetilde{\alpha}_A$. This has been explained
already in Sections \ref{sec:11}.  Thus $\bar{\alpha}_A$ is a small $\varrho$-H\"older perturbation
of $\alpha_A$ along the leaves of the neutral foliation $\mathcal{N}$ whose leaves are $\{N\cdot x : x\in M\}$ (see \seref{se:3} of Section \ref{sec:14}), although it can be chosen to be smooth along the leaves of that foliation.

 Then we have that $\overline{\alpha}_A$ is given by a map $\beta: A\times M\rightarrow N$ by
\begin{align}\label{for:10}
\bar{\alpha}_A(a, x) = \beta(a, x) \cdot \alpha_{A}(a, x)
\end{align}
 for $a\in A$ and $x\in M$. Note that $\beta$ is $C^0$ close to identity. Furthermore, it is a $\kappa$-H\"older map with small H\"older norm, where $\kappa=\varrho^2$.

Since $A$ may fail to commute with $N$, $\beta$ is not in general a cocycle over $\overline{\alpha}_A$. Instead, we need to address the case of {\it twisted cocycles}:

\begin{definition}
\label{def:twisted-coc}
Let $S$ be a Lie group and $\psi : A \to \Aut(S)$ is a homomorphism (we will denote the automorphism $\psi(a)$ by $\psi_a$) and $\alpha$ an action of $A$ on a manifold $\mathcal{B}$, a {\it cocycle over $\alpha$ taking values in $S$ twisted by $\psi$} is a map $\beta : A \times \mathcal{B} \to S$ such that:
\begin{align}\label{for:8}
 \beta(ab,x) = \beta(a,\alpha^b(x))\psi_a(\beta(b,x)).
\end{align}
$\beta$ is said to be cohomologous to a constant (ie, independent of $x \in \mathcal{B}$) twisted cocycle there exists a map $i:A\to S$ satisfying $i(ab) = i(a)\psi_a(i(b))$ and a continuous  transfer map $T : \mathcal{B} \to S$ such that for all $a\in A$
\begin{align*}
  \beta(a,x) = T(\alpha^a(x))i(a)\psi_a(T(x))^{-1}.
\end{align*}
We say that $\psi$ is a {\it slow twist} (or that $\beta$ is {\it slowly twisted}) if there exists a homeomorphism $\iota:A\to S$ such that $\psi_a$ is given by conjugation of $\iota(a)$ with eigenvalues of $\text{Ad}(\iota(a))$ all of modulus $1$ on $\text{Lie}(S)$.
\end{definition}

\begin{lemma}
\label{lem:correction-cocycle}
$\beta$ in \eqref{for:10} is a twisted cocycle over $\overline{\alpha}_A$ with $\psi_a$, $a\in A$ given by the conjugation of $a$.
\end{lemma}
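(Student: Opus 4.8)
The plan is to verify the twisted cocycle identity \eqref{for:8} directly from the definition \eqref{for:10}, treating it as an essentially formal computation once the right algebraic setup is in place. Recall that $\overline{\alpha}_A(a,x) = \beta(a,x)\cdot\alpha_A(a,x)$, where $\alpha_A(a,x) = i_0(a)x$ (left translation by $i_0(a) \in N$-normalizer... more precisely $i_0(a) \in A$) and $\beta(a,\cdot)$ is $N$-valued, with $N = C_{\mathcal G}(p(A))$. The key structural input is that $A$ normalizes $N$: since $N$ is the centralizer of $p(A)$ and $A$ commutes with $p(A)$ (indeed $p(A) \subset A$ up to the semisimple projection, and conjugation by $a$ preserves $C_{\mathcal G}(p(A))$ because $a$ commutes with $p(A)$), the automorphism $\psi_a = \mathrm{Ad}(a)|_N$ of $N$ is well-defined. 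This is exactly the twist claimed in the statement.

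First I would write out $\overline{\alpha}_A(ab,x)$ in two ways. On one hand, directly: $\overline{\alpha}_A(ab,x) = \beta(ab,x)\cdot i_0(ab)\cdot x$. On the other hand, using that $\overline{\alpha}_A$ is an action, $\overline{\alpha}_A(ab,x) = \overline{\alpha}_A(a, \overline{\alpha}_A(b,x))$. Expanding the right-hand side: $\overline{\alpha}_A(b,x) = \beta(b,x)i_0(b)x$, so
\[
\overline{\alpha}_A(a,\overline{\alpha}_A(b,x)) = \beta\big(a,\overline{\alpha}_A(b,x)\big)\, i_0(a)\,\beta(b,x)\,i_0(b)\,x.
\]
Now I would push $i_0(a)$ past $\beta(b,x)$ using the normalization: $i_0(a)\beta(b,x) = \big(i_0(a)\beta(b,x)i_0(a)^{-1}\big)i_0(a) = \psi_a(\beta(b,x))\, i_0(a)$. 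Substituting, the right-hand side becomes $\beta(a,\overline{\alpha}_A(b,x))\,\psi_a(\beta(b,x))\, i_0(a)i_0(b)\,x = \beta(a,\overline{\alpha}_A(b,x))\,\psi_a(\beta(b,x))\,i_0(ab)\,x$, using that $i_0$ is a homomorphism. Comparing with the direct expansion and cancelling $i_0(ab)\cdot x$ on the right (valid since both sides are of the form $(\text{element of }N)\cdot i_0(ab)x$ and the decomposition $g\Lambda \mapsto (n, \text{rest})$ is unique near the identity, or more simply by cancelling the right translation), I obtain $\beta(ab,x) = \beta(a,\overline{\alpha}_A(b,x))\,\psi_a(\beta(b,x))$, which is precisely \eqref{for:8}.

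The only genuinely nontrivial points to nail down are: (i) that $\psi_a = \mathrm{Ad}(a)|_N$ genuinely preserves $N$ and defines a homomorphism $A \to \mathrm{Aut}(N)$ — this follows from $a$ commuting with $p(A)$ and hence normalizing $C_{\mathcal G}(p(A)) = N$, and from the conjugation action being an anti... being a left action via the convention chosen; and (ii) the cancellation step, i.e. that from $n_1 \cdot i_0(ab)\cdot x = n_2 \cdot i_0(ab)\cdot x$ with $n_1, n_2 \in N$ one may conclude $n_1 = n_2$. For (ii), since $\overline{\alpha}_A$ is close to $\alpha_A$ the relevant $N$-components are close to the identity, and the map $N \times \{i_0(ab)x\} \to M$, $n \mapsto n\cdot i_0(ab)x$, is locally injective because $N$ acts locally freely on the relevant orbit (or one works on the universal cover / lift as in \seref{se:1}). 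This cancellation is the step I expect to require the most care to state cleanly, since $M = L\backslash \mathcal G/\Lambda$ is a double coset space and one must make sure the left-$L$ and right-$\Lambda$ quotients do not interfere; but conceptually it is routine, and the heart of the lemma is the one-line commutation identity $i_0(a)\beta(b,x) = \psi_a(\beta(b,x))i_0(a)$.
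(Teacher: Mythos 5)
Your proof is correct and follows essentially the same route as the paper's: expand $\overline{\alpha}_A(ab,x)$ once directly via \eqref{for:10} and once via the action property, push $i_0(a)$ past the $N$-valued correction to produce the conjugation twist, and compare. The paper's proof is terser --- it silently uses that $\psi_a$ preserves $N$ and that the $N$-component on both sides can be identified --- whereas you flag both of those points explicitly, but there is no genuine difference in the argument.
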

\begin{proof}
Since $A$ is abelian we have
\begin{align*}
 \bar{\alpha}_A(ab, x)&=\bar{\alpha}_A(a, \bar{\alpha}_A(b, x))=\beta(a, \bar{\alpha}_A(b, x)) \cdot \alpha_{A}(a, \bar{\alpha}_A(b, x))\\
 &=\beta(a, \bar{\alpha}_A(b, x)) \cdot a \cdot \beta(b, x) \cdot \alpha_{A}(b, x)
\end{align*}
On the other hand,
\begin{align*}
 \bar{\alpha}_A(ab, x)=\beta(ab, x) \cdot \alpha_{A}(ab, x).
\end{align*}
This implies
\begin{align}\label{for:32}
  \beta(ab, x)&=\beta(a, \bar{\alpha}_A(b, x))\cdot a\beta(b, x)a^{-1}=\beta(a, \bar{\alpha}_A(b, x))\cdot \psi_a(\beta(b, x)).
\end{align}
Then $\beta$ is a twisted cocycle over $\bar{\alpha}_A$.
\end{proof}

\begin{remark}
In fact Corollary \ref{cor:1} and \ref{cor:2} imply that $\beta$ is slowly twisted in both senses. Note that if $N$ is inside $C_{\mc G}(A_0)$, then we are in the case of an untwisted cocycle, since the conjugation action will be trivial. This is the case for restrictions of split Cartan actions.
\end{remark}

\subsection{Lifting of perturbed actions and cocycles}\label{sec:20}
In \seref{se:5} and \seref{se:7} of Section \ref{sec:14}, we lifted $\alpha_A$ to an $F_{\tilde{A}}$-action $\alpha_{\tilde{A}}$ or to an $F_{A'}$-action $\alpha_{A'}$, which may fail to give rise to $A$-actions. Our goal in this subsection will be to study the dynamical properties of liftings of the conjugated perturbed action $\overline{\alpha}_A$ on $\mathcal{G}_0$. Recall Definition \ref{de:2}. We consider two types of actions separately: An action introduced in Section \ref{sec:2} will be called {\it Type II} if it is a twisted symmetric space example which has both a symplectic contribution and zero weight for $A$. Otherwise, we call it {\it Type I}. Note that Type I actions are exactly the actions satisfying the conditions of Theorem \ref{thm:main}.

Fix a compact generating set $\mathbb{A}$ in $A$. First, we show how to lift $\overline{\alpha}_A(a,\cdot)$ for each $a\in \mathbb{A}$ for both types in a uniform way, which is standard. Let $\mf G = \Lie(\mc G)$ and $\mf G_0 = \Lie(\mc G_0)$. First, recall that $\overline{\alpha}_A(a,x) = \beta(a,x) \alpha_A(a,x)$. Since $\beta(a,x) \in N$ and is close to the identity (for elements of $\mbb{A}$), we can write $\beta(a,x) = \exp_{\mf G}( \beta_0(a,x))$ for some $\beta_0 : \mbb A \times M \to \Lie(N)$. Recall that there is a canonical vector space splitting $\mf G \to \mf G_0$ (see Definition \ref{def:univ-lie}), so that we identify $\mf G$ with a subspace of $\mf G_0$ (however, it is only a subalgebra if $\mf G = \mf G_0$). Then define:

\[ \hat{\alpha}_A(a,g) = \exp_{\mf G_0}(\beta_0(a,x))ag \]

on the group $\mc G_0$. Note that $\hat{\alpha}_A(a,g\gamma) = \hat{\alpha}(a,g)\gamma$ for every $\gamma \in \Gamma'$ (resp $\Gamma_\rho'$), so $\alpha$ is indeed a well-defined lift.

Fix a sufficiently small $0<c<c_0$ such that for any $a\in \mathbb{A}$ and $h\in \mathcal{G}_0$ $\hat{\alpha}_{A}(a)$ maps $B(h,\,c)$ inside $B\big(\hat{\alpha}_{A}(a,h),\,c_0\big)$. Let $j$ denote the projection from $\mathcal{G}_0$ to $M$. For Type I actions, for any
$h'\in B(h,\,c)$ if
\begin{align}\label{for:51}
 \overline{\alpha}_A(a,j(h'))=g_{h'}\cdot \overline{\alpha}_A(a,j(h)),
\end{align}
where $g_{h'}\in \mathcal{G}$ is close to identity, then
\begin{align}\label{for:52}
 \hat{\alpha}_A(a,h')=\exp_{\mc G_0}(\log_{\mc G} g_{h'})\cdot \hat{\alpha}_A(a,h).
\end{align}

Notice that Type I actions are exactly those action for which $N$ embeds into $\mc G_0$ as a subgroup (instead of a submanifold) via the exponential mapping. This implies that, for Type I actions,

\begin{align}\label{for:49}
 \hat{\alpha}_{A}(a)\circ \hat{\alpha}_{A}(b)x = c(a,b)(\hat{\alpha}_{A}(b)\circ \hat{\alpha}_{A}(a)x)
\end{align}
for any $a,\,b\in A$ and $c(a,b) = [\tilde{a},\tilde{b}] \in \mc G_0$ (resp. $[a',b']$) is an element of $Z(G) \cap \Gamma$ (we choose fixed, continuously varying representatives $\tilde{a}$ (resp. $a'$) in the group $\mc G_0$ as in \seref{se:5} and \seref{se:7}). Notice that this only occurs for discrete actions, for actions of $\R^k$ since $c$ depends continuously on $a$ and $b$. Notice also that this deficiency already occurs for the homogeneous action. 
\eqref{for:51} and \eqref{for:52} show that the leaves of stable/unstable/netrual foliations for $\overline{\alpha}_A$ can be uniquely lifted to leaves of stable/unstable/netrual foliations for $\hat{\alpha}_A$ correspondingly; and \eqref{for:49} shows that
elements in $\hat{\alpha}_A$ preserve stable/unstable foliations of each other.

For Type II actions, \eqref{for:49} changes to
\begin{align*}
 \hat{\alpha}_{A}(a)\circ \hat{\alpha}_{A}(b)x =c(a,b) \cdot c'(a,b,x) \cdot (\hat{\alpha}_{A}(b)\circ \hat{\alpha}_{A}(a)x)
\end{align*}
where $c'(a,b,x)$ is a continuous function taking values in $\exp(\mathfrak{z})$. Hence Type II actions may fail to preserve stable/unstable foliations of each other. If we suppose the smooth perturbation $\widetilde{\alpha}_A$ admits a quasi-nil extension, then conjugate this extension with $\tilde{H}$, where $\tilde{H}$ is the lifting of $\textbf{h}$ on $\mathcal{G}_0$ obtained by observing that it is close to the identity, so we may write it as left translation by some small group element. Denote the resulted set of homeomorphisms  by $\hat{\alpha}_{A}$.

We can lift the cocycle $\beta$ to $A \times \mc G_0$ by pullback (ie, $\beta(a,\tilde{x})=\beta(a,x)$, where $\tilde{x}\in \mathcal{G}_0$ projects to  $x\in M$). Then even though the lifted set $\widetilde{A}$ or $A'$ may fail to be abelian, because they lift the action on the base and the cocycle is constant on fibers, we have
\begin{align}\label{for:33}
  \beta(ab, \tilde{x})&=\beta(a, \widehat{\alpha}_A(b, \tilde{x}))\cdot \psi_a(\beta(b, \tilde{x})).
\end{align}
So $\beta$ is also called a twisted cocycle over $\widehat{\alpha}_A$.

\subsection{Slowly twisted cocycle and potential function}
\label{sec:PCF}
The contents of this section form generalizations of results from \cite[Section 3]{dk2005} and \cite[Lemma 4.1 \& 4.2]{knt00} to the case of twisted cocycles. We require this adaptation to address the case when $\alpha_A$ is not the restriction of a split Cartan action and its quasi-nil extensions. As we have seen in Proposition \ref{prop:diagonalize} and \ref{prop:diagonalize1} the leaves of the neutral foliation for $\alpha_A$ (or a quasi-nil extension) is $N=C_{\mc G}(p(A))$, not $C_{G_\mc G}(A)$, which results in the difficulty since we need to handle twisted cocycles instead of cocycles. All the properties in our setting are similar to those in previous papers for small untwisted cocycles or cocycles over compact or abelian groups (see \cite{dk2005}, \cite{knt00}, \cite{dktoral}).

In this part we prove general results for $C^0$-small slowly twisted $\kappa$-H\"older cocyles over a partially hyperbolic $A$-action taking values in $N$. We now begin the task of defining the periodic cycle functional. We desire a function family on the foliations $f(x,\cdot) : \mathcal{T}_{i}(x) \to N$ having the properties that $f(x,x) = e$ and:

\begin{equation}
f(\alpha^a(x),\alpha^a(y)) = \beta(a,x)\psi_a(f(x,y)) \beta(a,y)^{-1}
\end{equation}
for all $a \in A$. If this formula holds, then we may apply it to $a^n$ and rearrange to get:

\[f(x,y) = \psi_{a^{-n}}(\beta(a^n,x)^{-1} f(\alpha^{a^n}(x),\alpha^{a^n}(y))\beta(a^n,y))\]

If we choose $a$ which contracts the $\mathcal{T}_{i}(x)$, then the middle term on the right hand side should tend to some $f(z,z) = e$. This suggests defining the potential:

\begin{definition}
We define $N$-valued potential of $\beta$ as
\begin{align}\label{eq:pcf-fund}
p_{\beta,a}(x,y)=\lim_{n\to\delta\infty}\psi_{a^{-n}}(\beta(a^n,x)^{-1}\beta(a^n,y))
 \end{align}
where $\delta\in \{+,\,-\}$, and if $x$ is on a stable (resp. unstable) leaf of $y$ under  $\alpha^{a}$ we take $\delta=+$ (resp. $\delta=-$).
\end{definition}

The next proposition justifies the definition for $p_{\beta,a}(x,y)$ for small slowly twisted $\kappa$-H\"older cocycles. Before the proof we state two technical facts which allow us to control the order of the twist effectively.

\begin{fact}\label{lem:good-norm}
Suppose $S\subset GL(n,\R)$ is a compact abelian set such that for any $s\in S$ all of the eigenvalues $s$ of modulus $1$. Then for every $\ve > 0$ there exists a norm on $\RR^n$ under which the matrix norm $\norm{\cdot}$ of $s$ is less than $\ve+1$ for any $s\in S$.
\end{fact}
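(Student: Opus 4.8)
The plan is to combine two classical facts: a commuting family of complex matrices is simultaneously triangularizable, and the operator norm of a matrix with spectral radius $1$ can be pushed below $1+\ve$ by a suitable conjugation. The twist is that $S$ is only a compact \emph{set}, not a group, so one cannot simply average the standard Hermitian inner product over $S$ to obtain an invariant norm (and the group generated by $S$ need not be precompact, since $S$ may contain nontrivial unipotents). Accordingly, compactness of $S$ will be used only at the end, to make a single damping parameter work uniformly across all of $S$.

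Concretely, I would first note that since the elements of $S$ pairwise commute, the subalgebra of $\Mat(n,\CC)$ they generate is commutative, hence has a common eigenvector, and by induction on dimension there is $P\in\GL(n,\CC)$ with $P^{-1}sP$ upper triangular for every $s\in S$. Write $P^{-1}sP = D_s + N_s$ with $D_s$ diagonal and $N_s$ strictly upper triangular; the diagonal entries of $D_s$ are the eigenvalues of $s$, so $\norm{D_s}=1$ in the standard Hermitian operator norm on $\CC^n$. By compactness of $S$ there is $M<\infty$ bounding every entry of every $P^{-1}sP$. Now conjugate by $R_\lambda=\diag(\lambda,\lambda^2,\dots,\lambda^n)$ for small $\lambda\in(0,1]$: this multiplies the $(i,j)$ entry by $\lambda^{j-i}$, so $R_\lambda^{-1}P^{-1}sPR_\lambda = D_s+\widetilde N_s$, where the strictly-upper-triangular part $\widetilde N_s$ has every entry at most $\lambda M$ in absolute value, whence $\norm{\widetilde N_s}\le n\lambda M$. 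Thus $\norm{R_\lambda^{-1}P^{-1}sPR_\lambda}\le 1+n\lambda M$ for all $s\in S$ simultaneously; choosing $\lambda$ so that $n\lambda M<\ve$ and setting $\abs{v}' := \norm{R_\lambda^{-1}P^{-1}v}_2$ gives a norm on $\CC^n$ with $\abs{sv}'\le(1+\ve)\abs{v}'$ for every $s\in S$.

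Finally, since each $s\in S$ is a real matrix it preserves $\RR^n\subset\CC^n$, so the restriction of $\abs{\cdot}'$ to $\RR^n$ is a genuine norm on $\RR^n$ for which the (real) operator norm of each $s$ is still at most $1+\ve$, as required. I do not expect a serious obstacle here; the only points requiring care are the uniformity of the rescaling parameter $\lambda$ — which is precisely where compactness of $S$ is indispensable, as without it the entry bound $M$ could fail — and the (harmless) descent from the complex norm to a real one.
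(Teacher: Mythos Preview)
Your argument is correct and follows essentially the same route as the paper: simultaneously upper-triangularize the commuting family, then conjugate by a diagonal matrix with geometrically spaced entries $\lambda^k$ to damp the strictly upper-triangular part uniformly (using compactness of $S$ for the uniform entry bound). You are in fact more careful than the paper's sketch on one point---you triangularize over $\CC$ and then restrict the resulting norm to $\RR^n$, which is necessary since the eigenvalues need only lie on the unit circle and not in $\RR$---but the core idea is identical.
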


We briefly justify this fact. There exists a basis under which $s$ is an upper triangular matrix for any $s\in S$. Since the eigenvalues of $s$ all have modulus $1$, so must the entries of the diagonal of $s$ in this basis for any $s\in S$. Then consider the matrix
\begin{align*}
  \diag(\lambda^{\floor{n/2}},\lambda^{\floor{n/2}-1},\dots,\lambda^{1-\floor{n/2}},\lambda^{-\floor{n/2}}),\qquad \text{for }\lambda < 1.
\end{align*}
This matrix contracts every upper triangular matrix, and all of the entries are multiplied by a power of $\lambda$. Thus, if we choose $\lambda$ sufficiently small, if we scale the chosen basis appropriately, we get the norm of $\RR^n$ under this basis.
\begin{fact}\label{fact:1}
Fix a right invariant metric $d$ on $N$.  Then for any norm $\norm{\cdot}$ on its Lie algebra $\mathfrak{h}$ there is a small neighborhood $V$ of $0\in \mathfrak{h}$ such that $\exp:V\to N$ is a diffeomorphism. Furthermore, there exist constants $c_1,\,c_2$ (dependent on $\norm{\cdot}$ and $d$) such that
\begin{align*}
 c_1\norm{v}\leq d(\exp v,e)\leq c_2\norm{v}, \qquad \text{for }\forall v\in V.
\end{align*}
\end{fact}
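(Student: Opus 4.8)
The plan is to reduce the statement to a comparison between the given norm $\norm{\cdot}$ and the norm on $\mathfrak{h}=T_eN$ induced by the right-invariant metric $d$, and then to the standard fact that a Riemannian distance is bi-Lipschitz to the Euclidean distance of a coordinate chart near a point.

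First, recall from elementary Lie theory that $\exp=\exp_N:\mathfrak{h}\to N$ is a local diffeomorphism at $0$ with differential $d(\exp)_0=\id_{\mathfrak{h}}$ under the canonical identification $T_0\mathfrak{h}=\mathfrak{h}=T_eN$; this gives an open ball $V$ around $0$ on which $\exp$ is a diffeomorphism onto its image, and $V$ may be shrunk freely in what follows. Since all norms on the finite-dimensional space $\mathfrak{h}$ are equivalent, it suffices to establish the two-sided estimate for one convenient norm, and I would take $\norm{\cdot}_0$, the norm coming from the inner product $\langle\cdot,\cdot\rangle_e$ to which the right-invariant Riemannian metric underlying $d$ restricts on $T_eN=\mathfrak{h}$.

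For the upper bound I would use the curve $\gamma(t)=\exp(tv)$, $t\in[0,1]$, joining $e$ to $\exp v$: its Riemannian length is $\int_0^1\norm{d(\exp)_{tv}(v)}\,dt$, measured in the Riemannian metric at $\exp(tv)$, and since $d(\exp)_0=\id$ and the metric varies continuously, this integrand tends to $\norm{v}_0$ uniformly as $v\to 0$; hence, after shrinking $V$, $d(\exp v,e)\le\length(\gamma)\le 2\norm{v}_0$. For the lower bound I would transport $d$ to the chart $\psi=\exp^{-1}$ near $e$: there the metric tensor $g_{ij}$ is continuous with $g_{ij}(0)=\langle e_i,e_j\rangle_e$, so on a coordinate ball $B_r$ the $g$-length of any curve is at least $\tfrac12$ its Euclidean ($\norm{\cdot}_0$-)length. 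Then for $v$ with $\norm{v}_0<r$, any path from $e$ to $\exp v$ either stays inside $\psi^{-1}(B_r)$, and so has length $\ge\tfrac12\norm{v}_0$, or it leaves $\psi^{-1}(B_r)$, and so has Euclidean length $\ge r$ and hence $g$-length $\ge\tfrac12 r\ge\tfrac12\norm{v}_0$; taking the infimum gives $d(\exp v,e)\ge\tfrac12\norm{v}_0$. Combining the two bounds with the equivalence of $\norm{\cdot}$ and $\norm{\cdot}_0$ produces the constants $c_1,c_2$. The only point needing a bit of care — the main (minor) obstacle — is the lower bound, namely controlling paths that wander out of the coordinate ball; this is handled by exactly the dichotomy above, using that such a path already has Euclidean length at least $r\ge\norm{v}_0$.
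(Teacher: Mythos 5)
The paper states Fact~\ref{fact:1} without any proof, so there is no argument in the text to compare against; what you have written is a correct and essentially self-contained proof. The structure is the standard one: the upper bound comes from estimating the Riemannian length of the curve $t\mapsto\exp(tv)$ and exploiting $d(\exp)_0=\id_{\mathfrak h}$ together with continuity of the metric tensor, while the lower bound comes from pulling the metric tensor back through $\exp^{-1}$, noting it is within a uniform factor of the flat tensor on a small coordinate ball $B_r$, and handling paths that exit $\exp(B_r)$ by the dichotomy that any such path already has Euclidean length at least $r\geq\norm{v}_0$; both bounds are then transferred to the given norm $\norm{\cdot}$ by equivalence of norms on the finite-dimensional space $\mathfrak h$. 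One point worth stating explicitly: the Fact says ``right invariant metric,'' and your argument (necessarily) interprets this as the distance function of a right-invariant \emph{Riemannian} metric. That is certainly what the authors intend here (it is how $d$ is introduced in \seref{se:1}), but it is a genuine hypothesis, since an arbitrary right-invariant distance function compatible with the topology need not be locally Lipschitz-equivalent to a norm in exponential coordinates. Right-invariance itself is not actually used anywhere in your argument; for the local two-sided estimate at the identity a single fixed smooth Riemannian metric on $N$ would already suffice.
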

For any $a\in \mathbb{A}$,  we denote by $\lambda_{\pm}(a)$ the contraction
and expansion coefficients of $\alpha^a$,  defined by

Set $\epsilon=\min_{a\in \mathbb{A}}\{\lambda_{+}(a)^{-\frac{1}{2}},\lambda_{-}(a)^{\frac{1}{2}}\}$. Suppose $\beta$ is a $\kappa$-H\"older slowly twisted cocycle. Fix a norm $\norm{\cdot}$ on $\text{Lie}(N)$ such that $\sup_{a\in \mathbb{A},\delta=\pm 1}\norm{\text{Ad}(\iota(a))^\delta}<\epsilon^{\kappa/3}$ (see Definition \ref{def:twisted-coc}), under the matrix norm (see Fact \ref{lem:good-norm}).

\begin{proposition}\label{po:1}
  If $\beta$ satisfies:
 \begin{align}\label{for:58}
   \sup_{a\in \mathbb{A},\,x\in \mathcal{B},\,\delta=\pm 1}\norm{\Ad(\beta(a,x))^{\delta}}<\epsilon^{\kappa/3}.
 \end{align}
Then:
\begin{enumerate}
  \item \eqref{eq:pcf-fund} is well defined for any $a\in \mathbb{A}$. Furthermore, $p_{\beta,a}$ depends continuously on $x$ and $y$ along a stable (unstable) leaf for $\alpha(a)$. \label{prop:pot-well-def}

      \smallskip
  \item $p_{\beta,a}(x,y)=p_{\beta,b}(x,y)$ for any $a,\,b\in \mathbb{A}$ if $x$ and $y$ are inside a stable or unstable leaf for $\alpha(a)$ and $\alpha(b)$.  \label{prop:ind-on-a}
\end{enumerate}

\end{proposition}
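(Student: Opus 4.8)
The plan is to prove part \eqref{prop:pot-well-def} by showing the defining sequence in \eqref{eq:pcf-fund} is Cauchy, using the interplay between three rates of decay: the H\"older exponent $\kappa$, the contraction/expansion coefficients encoded in $\epsilon$, and the slow-twist constant appearing in \eqref{for:58}. Assume $x$ lies on a stable leaf of $y$ for $\alpha^a$ (the unstable case is symmetric, applying $a^{-1}$). Write $s_n(x,y) = \psi_{a^{-n}}(\beta(a^n,x)^{-1}\beta(a^n,y))$. Using the twisted cocycle identity \eqref{for:8} applied to $a^{n+1} = a \cdot a^n$, I would derive a recursion expressing $s_{n+1}$ in terms of $s_n$, the H\"older increment of $\beta(a,\cdot)$ between $\alpha^{a^n}(x)$ and $\alpha^{a^n}(y)$, and a conjugation by $\psi_{a^{-n-1}}(\beta(a,\cdot))$-type terms. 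The key quantitative input is: since $d(\alpha^{a^n}(x),\alpha^{a^n}(y)) \le C\lambda_-(a)^n d(x,y)$ decays geometrically and $\beta(a,\cdot)$ is $\kappa$-H\"older with small norm, the "new" contribution at step $n$ has size $O(\lambda_-(a)^{n\kappa})$ before twisting; the twist by $\psi_{a^{-n}}$ inflates it by at most $\norm{\mathrm{Ad}(\iota(a))}^n \le \epsilon^{-n\kappa/3}$-type factors, and the accumulated conjugations by $\beta$-terms contribute at most $(\epsilon^{\kappa/3})^{-n}$ by \eqref{for:58}. Choosing the norm on $\Lie(N)$ via Fact \ref{lem:good-norm} so that the twist norm is $< \epsilon^{\kappa/3}$, and recalling $\epsilon = \min_{a}\{\lambda_+(a)^{-1/2},\lambda_-(a)^{1/2}\}$ so that $\lambda_-(a) \le \epsilon^2$, the net rate at step $n$ is bounded by a geometric term $\theta^n$ with $\theta = \lambda_-(a)^{\kappa} \cdot \epsilon^{-2\kappa/3} \le \epsilon^{2\kappa}\epsilon^{-2\kappa/3} = \epsilon^{4\kappa/3} < 1$. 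Fact \ref{fact:1} then transfers these Lie-algebra norm estimates to the metric $d$ on $N$, so the partial products converge; continuity in $x,y$ along the leaf follows because each partial product is continuous and the convergence is uniform by the same geometric bound.

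For part \eqref{prop:ind-on-a}, suppose $x,y$ lie in a common stable leaf for both $\alpha(a)$ and $\alpha(b)$ (more precisely, in the stable leaf of the coarse Lyapunov foliation refining both). The strategy is first to reduce to the case where $b = a^m c$ or the two contractions are "comparable" by noting that on the relevant coarse Lyapunov foliation all regular elements in the cone determined by that foliation contract it, and then to show $p_{\beta,a}$ is unchanged if we replace $a$ by $a^m$ (immediate from the definition, since the sequence along $a^m$ is a subsequence of that along $a$) and by a commuting element $c$ that preserves the leaf: here one uses that $\beta$ is a cocycle, so $\beta(a^n,\cdot)$ and $\beta(c,\cdot)$ are related by \eqref{for:8}, and $\psi_c$ has norm close to $1$ by the slow-twist hypothesis, hence does not affect the limit. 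The general case follows by a standard argument interpolating between the chambers containing $a$ and $b$: since the coarse Lyapunov foliation is contracted throughout an open cone of elements, and $p_{\beta,\cdot}$ is locally constant on that cone by the two reductions above (any two nearby elements generate, up to powers, a common contracting element), connectedness of the cone gives $p_{\beta,a} = p_{\beta,b}$.

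I expect the main obstacle to be the bookkeeping in part \eqref{prop:pot-well-def}: unlike the untwisted case treated in \cite[Section 3]{dk2005}, the conjugations $\psi_{a^{-n}}$ grow with $n$, and one must verify that the slow-twist bound $\norm{\mathrm{Ad}(\iota(a))} < \epsilon^{\kappa/3}$ together with the cocycle smallness \eqref{for:58} genuinely beats the H\"older decay rate $\lambda_-(a)^{n\kappa}$ — the exponent $\kappa/3$ (rather than $\kappa/2$) is presumably chosen to leave room for \emph{both} the twist of the ambient $\psi_a$ \emph{and} the accumulated twist coming from the $\beta$-conjugations, and getting all three factors to combine into a single ratio $< 1$ requires care with which element's contraction coefficient controls which term. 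A secondary subtlety is that $N$ is non-abelian and possibly non-nilpotent (for Type II), so the "rearrangement" turning $\beta(a^{n+1},x)^{-1}\beta(a^{n+1},y)$ into a $\psi$-twisted product of the $n$-th term and a small correction must be done multiplicatively in $N$, not additively in $\Lie(N)$, and one passes to $\Lie(N)$ only after localizing via Fact \ref{fact:1}.
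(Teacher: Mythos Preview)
Your plan for part \eqref{prop:pot-well-def} is essentially the paper's argument: define $\gamma_n(x,y)=\psi_{a^{-n}}(\beta(a^n,x)^{-1}\beta(a^n,y))$, use the twisted cocycle identity to write $\gamma_{n+1}\gamma_n^{-1}$ as $\psi_{b_{n-1}}$ applied to a H\"older increment, and then combine the three exponential rates (H\"older contraction $\lambda^{n\kappa}$, ambient twist $\epsilon^{n\kappa/3}$, accumulated $\beta$-conjugation $\epsilon^{n\kappa/3}$) via Facts \ref{lem:good-norm} and \ref{fact:1} to get a geometric Cauchy estimate. Your exponent bookkeeping is right in spirit; the paper records exactly the product $\epsilon^{n\kappa/3}\cdot\epsilon^{n\kappa/3}\cdot\lambda^{n\kappa}$ you anticipate.

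Your plan for part \eqref{prop:ind-on-a}, however, is both more complicated than necessary and has a genuine gap. The interpolation-through-a-cone argument presupposes that $p_{\beta,c}$ is defined and varies nicely for \emph{all} $c$ in an open cone of $A$, but the construction in part \eqref{prop:pot-well-def} is only carried out for $c\in\mathbb{A}$, where the norm on $\Lie(N)$ and the bound \eqref{for:58} have been arranged; there is no continuous family to interpolate along, and no ``locally constant'' statement available. The reduction ``$b=a^m c$ with $c$ commuting'' is also not canonical for two arbitrary elements of $\mathbb{A}$ contracting the same coarse Lyapunov leaf.

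The paper's argument is direct and avoids all of this. One first checks the equivariance
\[
p_{\beta,b}(a\cdot x,a\cdot y)=\beta(a,x)\,\psi_a\bigl(p_{\beta,b}(x,y)\bigr)\,\beta(a,y)^{-1},
\]
valid for \emph{any} $a,b\in\mathbb{A}$ (not just $a=b$). Setting $r_n=p_{\beta,b}(a^n x,a^n y)\,p_{\beta,a}(a^n x,a^n y)^{-1}$, the two equivariance relations combine to give $r_0=d_n^{-1}r_n d_n$ for an explicit product $d_n$ of $\beta(a,\cdot)$-values and $a$'s. Now $r_n\to e$ because $a^n x$ and $a^n y$ coalesce and both potentials are H\"older, while $\|\Ad(d_n^{-1})\|$ grows at most like $\epsilon^{-2n\kappa/3}$ by the same bounds used in part \eqref{prop:pot-well-def}. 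The net effect is $d(r_0,e)\le C\lambda^{n\kappa/3}\to 0$, hence $r_0=e$. This uses nothing beyond what you already set up for part \eqref{prop:pot-well-def}; no subsequence, commuting-element, or connectedness argument is needed.
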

\begin{proof} Proof of \eqref{prop:pot-well-def}:
We prove the case when $x$ and $y$ are on a stable leaf of $a$. Then $\epsilon^{\kappa/3}< \lambda_{+}(a)^{-\kappa/3}$. For ease of notation, we will denote the action of $\alpha$ by $\alpha^a(x)= a \cdot x$. Let $\gamma_{n}(x,y) = \psi_{a^{-n}}(\beta(a^n,x)^{-1}\beta(a^n,y))$. Iterating \eqref{for:8} we have

\begin{align}\label{for:7}
  \gamma_{n+1}(x,y)\gamma_{n}(x,y)^{-1}=\psi_{b_{n-1}}\big(\beta(a,a^{n}\cdot x)^{-1}\beta(a,a^{n}\cdot y)\big)
\end{align}
where $b_{n-1}=a^{-1}\beta(a,x)^{-1}a^{-1}\beta(a,a\cdot x)^{-1}\cdots a^{-1}\beta(a,a^{n-1}\cdot x)^{-1}$.

Write
\begin{align*}
  \beta(a,a^{n}\cdot x)^{-1}\beta(a,a^{n}\cdot y)=\exp(\mathfrak{c}_n)\quad\text{and}\quad\beta(a,a^{n}\cdot y)\beta(a,a^{n}\cdot x)^{-1}=\exp(\mathfrak{p}_n)
\end{align*}
 where $\mathfrak{c}_n,\,\mathfrak{p}_n\in \text{Lie}(N)$ (they are small and hence must be exponential).

 Since $\exp(\mathfrak{c}_n)=\beta(a,a^{n}\cdot y)^{-1}\exp(\mathfrak{p}_n)\beta(a,a^{n}\cdot y)$, by using Fact \ref{fact:1} and the assumption about $\beta$, for big enough $n$ we have
\begin{align}\label{for:4}
 \norm{\mathfrak{c}_n}&\leq c_1^{-1}d\Big(\exp\big(\text{Ad}(\beta(a,a^{n}\cdot y)^{-1})\mathfrak{p}_n\big),e\Big)\leq c_1^{-1}c_2\norm{\text{Ad}(\beta(a,a^{n}\cdot y)^{-1})\mathfrak{p}_n}\notag\\
 &\leq c_1^{-1}c_2\epsilon^{\kappa/3}\norm{\mathfrak{p}_n}\leq c_1^{-2}c_2\epsilon^{\kappa/3}d\big(\beta(a,a^{n}\cdot x),\beta(a,a^{n}\cdot y)\big)\notag\\
 &\leq c_1^{-2}c_2\epsilon^{\kappa/3}C(\beta,a) \lambda_{+}(a)^{n\kappa}d_{\mathcal{B}}(x,y)^\kappa,
\end{align}
where $C(\beta,a)$ is the H\"older norm of $\beta(a,\cdot)$.

By definition we have $\gamma_{n+1}(x,y)\gamma_{n}(x,y)^{-1}=\exp\big( \text{Ad}(b_{n-1})\mathfrak{c}_n\big)$. Then by using Fact \ref{fact:1} for big enough $n$ we have
\begin{align}\label{for:5}
  &d\big(\gamma_{n+1}(x,y), \gamma_{n}(x,y)\big)=d\big(\gamma_{n+1}(x,y)\gamma_{n}(x,y)^{-1}, e\big)\leq c_2\norm{\text{Ad}(b_{n-1})\mathfrak{c}_n}\notag\\
  &\leq \Pi_{j=0}^{n-1}\big(\norm{\text{Ad}(\iota(a))^{-1}}\cdot\norm{\text{Ad}(\beta(a,a^{j}\cdot x))}\big)\norm{\mathfrak{c}_n}\notag\\
  &\leq \epsilon^{n\kappa/3}\cdot\epsilon^{n\kappa/3}\cdot c_1^{-2}c_2\epsilon^{\kappa/3}C(\beta,a) \lambda_{+}(a)^{n\kappa}d_{\mathcal{B}}(x,y)^\kappa\notag\\
  &\leq c_1^{-2}c_2C(\beta,a) \lambda^{(n-1)\kappa/3}d_{\mathcal{B}}(x,y)^\kappa.
\end{align}
Hence $\gamma_{n}(x,y)$ is a Cauchy sequence which implies that \eqref{eq:pcf-fund} is well defined.

\eqref{for:4} and \eqref{for:5} also hold for any $n$ if $d(x,y)$ is small enough. This shows that
\begin{align*}
 d(\gamma_{n}(x,y), e)&\leq\sum_{j=1}^nd\big(\gamma_{j}(x,y), \gamma_{j-1}(x,y)\big)\notag\\
 &\leq \sum_{j=1}^nc_1^{-2}c_2C(\beta,a) \lambda^{(j-1)\kappa/3}d_{\mathcal{B}}(x,y)^\kappa\leq C(\beta,a)'d_{\mathcal{B}}(x,y)^\kappa.
\end{align*}
Let $n\rightarrow\infty$ we get
\begin{align*}
 d(p_{\beta,a}(x,y), e)&\leq C(\beta,a)'d_{\mathcal{B}}(x,y)^\kappa
\end{align*}
if $d(x,y)$ is small enough. This also implies the continuity in $x$ and $y$. The case when $x$ and $y$ are on a unstable leaf can be proved by using the same method. Hence we finish the proof.

\medskip

\noindent Proof of \eqref{prop:ind-on-a}: We first note that if $\beta$ is a cocycle over $\alpha$ then for any $a,\,b\in \mbb A$ we have
\begin{align*}
 p_{\beta,b}(a\cdot x,a\cdot y)&= \beta(a,x)\psi_b(p_{\beta,b}(x,y))\beta(a,y)^{-1},\qquad\text{and}\\
 p_{\beta,a}(a\cdot x,a\cdot y)&=\beta(a,x)\psi_a(p_{\beta,a}(x,y))\beta(a,y)^{-1}.
\end{align*}
This implies that
\begin{align*}
 p_{\beta,b}(a\cdot x,a\cdot y)p_{\beta,a}(a\cdot x,a\cdot y)^{-1}=\beta(a,x)\psi_a\big(p_{\beta,b}(x,y)p_{\beta,a}(x,y)^{-1}\big)\beta(a,x)^{-1}.
\end{align*}
Set $r_n=p_{\beta,b}(a^n\cdot x,a^n\cdot y)p_{\beta,a}(a^n\cdot x,a^n\cdot y)^{-1}$. Therefore, above equation shows that for each $n \in \Z$:
\begin{align*}
r_0=d_n^{-1}r_nd_n,
\end{align*}
where $d_n=\beta(a,a^{n-1}\cdot x)a\beta(a,a^{n-2}\cdot x)a\cdots \beta(a,x)a$.

By using \eqref{for:7} for big enough  $n$  (resp. for big enough $-n$) we have
\begin{align*}
\norm{\log r_n}&\leq c_1^{-1}d(p_{\beta,b}(a^n\cdot x,a^n\cdot y),e)+c_1^{-1}d(p_{\beta,a}(a^n\cdot x,a^n\cdot y),e)\\
&\leq c(\beta,a,b)\lambda_{+}(a)^{\kappa n}d_{\mathcal{B}}(x,y)^\kappa.
\end{align*}
Furthermore, similar to \eqref{for:5} we have
\begin{align*}
  d(r_0,e)&=d(d_n^{-1}r_nd_n,e)\leq c_2\norm{\log(d_n^{-1}r_nd_n)}=\norm{\text{Ad}(d_{n-1})\log r_n}\\
  &\leq \Pi_{j=0}^{n-1}\big(\norm{\text{Ad}(\iota(a))^{-1}}\cdot\norm{\text{Ad}(\beta(a,a^{j}\cdot x))^{-1}}\big)\norm{\log r_n}\notag\\
  &\leq c(\beta,a,b) \lambda_{+}(a)^{n\kappa/3}d_{\mathcal{B}}(x,y)^\kappa.
\end{align*}
By letting $n\to \infty$ ($n\to -\infty$), the right hand side of the expression above tends to zero. Thus $r_0=e$ and this implies the conclusion.
\end{proof}
As an immediate corollary to above proposition, we get
\begin{corollary}\label{cor:5}
$p_{\beta,a}(x,y)$ depends continuously on $x$ and $y$.
\end{corollary}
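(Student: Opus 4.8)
The plan is to upgrade the leafwise continuity established in part~\eqref{prop:pot-well-def} of Proposition~\ref{po:1} to joint continuity, by observing that the approximating maps are jointly continuous and that the convergence to $p_{\beta,a}$ is \emph{locally uniform}. Fix $a\in\mathbb A$; by symmetry we treat the case $y\in\mathcal W^s_a(x)$ (for $y\in\mathcal W^u_a(x)$ one replaces $n\to+\infty$ by $n\to-\infty$ throughout). Recall that $p_{\beta,a}(x,y)=\lim_{n\to\infty}\gamma_n(x,y)$ with $\gamma_n(x,y)=\psi_{a^{-n}}\!\big(\beta(a^n,x)^{-1}\beta(a^n,y)\big)$, and note that each $\gamma_n$ is defined and \emph{jointly} continuous on all of $M\times M$: iterating \eqref{for:8} expresses $\beta(a^n,\cdot)$ in terms of the H\"older map $\beta(a,\cdot)$ and the homeomorphisms $\alpha^{a^j}$ (Theorem~\ref{th:1}), $\psi_{a^{-n}}$ is a fixed linear conjugation, and the group operations on $N$ are continuous.

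Next I would make the convergence uniform near a fixed base point $(x_0,y_0)$ with $y_0\in\mathcal W^s_a(x_0)$. By continuity of the stable foliation $\mathcal W^s_a$ and its local product structure, there is an $M\times M$-neighborhood $V$ of $(x_0,y_0)$ on which, for $(x,y)\in V$ with $y\in\mathcal W^s_a(x)$, the intrinsic distance from $x$ to $y$ along $\mathcal W^s_a(x)$ is bounded by a constant $D$ depending only on $V$; hence there is $m_0$, depending only on $V$, such that $d\big(\alpha^{a^m}(x),\alpha^{a^m}(y)\big)$ lies below the scale at which the estimates \eqref{for:4}--\eqref{for:5} from the proof of Proposition~\ref{po:1} are valid, for all $m\ge m_0$ and all such $(x,y)$. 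To invoke \eqref{for:5} even when $d_{\mathcal B}(x_0,y_0)$ is not small, one first uses the functional equation $p_{\beta,a}(\alpha^{a}x,\alpha^{a}y)=\beta(a,x)\,\psi_a\big(p_{\beta,a}(x,y)\big)\,\beta(a,y)^{-1}$ (the identity already exploited in the proof of part~\eqref{prop:ind-on-a} of Proposition~\ref{po:1}) to rewrite $p_{\beta,a}(x,y)=\psi_{a^{-m}}\!\big(\beta(a^m,x)^{-1}\,p_{\beta,a}(\alpha^{a^m}x,\alpha^{a^m}y)\,\beta(a^m,y)\big)$ and work with the contracted pair $(\alpha^{a^m}x,\alpha^{a^m}y)$. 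Summing \eqref{for:5} over $j\ge m$ then yields
\[
 d\big(\gamma_n(x,y),\,p_{\beta,a}(x,y)\big)\ \le\ C''\,\lambda^{n\kappa/3}\qquad\text{for all }n\ge m_0,\ (x,y)\in V,\ y\in\mathcal W^s_a(x),
\]
with $\lambda<1$ as in \eqref{for:5} and $C''$ depending only on $V$, since the only $(x,y)$-dependent quantity entering the bound, $d_{\mathcal B}(x,y)^\kappa$, is bounded on $V$.

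Since $\gamma_n\to p_{\beta,a}$ uniformly on $V\cap\{(x,y):y\in\mathcal W^s_a(x)\}$ and each $\gamma_n$ is jointly continuous, the limit $p_{\beta,a}$ is jointly continuous at $(x_0,y_0)$; as this base point was arbitrary, the corollary follows. All the computations involved are routine repetitions of the H\"older/contraction bookkeeping from the proof of Proposition~\ref{po:1}; the only genuinely new point — and the step I would be most careful about — is the uniformity of the convergence as the \emph{leaf} itself varies, which is why I isolate the neighborhood $V$ on which the leaf-distance, and hence the ``warm-up time'' $m_0$, is controlled uniformly.
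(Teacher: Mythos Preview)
Your proposal is correct and follows essentially the same route as the paper: both arguments show that the approximants $\gamma_n$ are jointly continuous and that the convergence $\gamma_n\to p_{\beta,a}$ is locally uniform on $\{(x,y):y\in\mathcal W^s_a(x)\}$ by recycling the telescoping estimate \eqref{for:5}, then apply the standard ``uniform limit of continuous functions'' argument. The paper's write-up is terser---it packages the uniformity as a single bound $\sum_{n\ge m}d(\gamma_{n+1}(x',y'),\gamma_n(x',y'))\le C_{x,y}\lambda_+(a)^{m\kappa/3}d_{\mathcal B}(x,y)^\kappa$ valid for $(x',y')$ close to $(x,y)$ and then runs the $\epsilon/3$ splitting directly---whereas you are more explicit about the warm-up index $m_0$ and the functional equation, but the substance is the same.
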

\begin{proof}
\eqref{for:5} implies that there is $\delta>0$ such that if $d_{\mathcal{B}}(x,x')+d_{\mathcal{B}}(y,y')<\delta$ for any $x',\,y'\in M$, then
\begin{align*}
  \sum_{n\geq m}d\big(\gamma_{n+1}(x',y'), \gamma_{n}(x',y')\big)\leq C_{x,y}\lambda_{+}(a)^{m\kappa/3}d_{\mathcal{B}}(x,y)^\kappa.
\end{align*}
if $x'$ and $y'$ are on a stable leaf of $a$, where $C_{x,y}$ is a constant dependent only on $x,\,y$. Note that for any $\epsilon>0$
\begin{align*}
 d\big(p_{\beta,a}(x,y), p_{\beta,a}(x',y')\big)&\leq \sum_{0\leq i\leq m}d\big(\gamma_{i}(x',y'), \gamma_{i}(x,y)\big)\\
 &+\sum_{n\geq m+1}d\big(\gamma_{n+1}(x',y'), \gamma_{n}(x',y')\big)\\
 &+\sum_{n\geq m+1}d\big(\gamma_{n+1}(x,y), \gamma_{n}(x,y)\big)\\
 &\leq \sum_{0\leq i\leq m}d\big(\gamma_{i}(x',y'), \gamma_{i}(x,y)\big)+2\epsilon
\end{align*}
 if $m$ is big enough. This shows the continuity if  $x'$ and $y'$ are on a stable leaf of $a$. Similarly, we get the result if $x'$ and $y'$ are on a unstable leaf of $a$. Then we finish the proof.
 \end{proof}

\subsection{Periodic cycle functional and cocycle rigidity}\label{sec:18}

Proposition \ref{po:1} allows us to associate to each Lyapunov path an element of $N$ in the following manner: If $\tau = (x_0,x_1,\dots,x_n)$ is a Lyapunov path, let
\begin{align*}
 F_\beta(\tau) = \prod_{k=n}^1 p_{\beta,a_k}(x_k,x_{k-1})
\end{align*}
where $a_k\in \mathbb{A}$.

For any $a\in A$, $\tau = (\alpha^ax_0,\alpha^ax_1,\dots,\alpha^ax_n)$ is also a Lyapunov path, which is denoted by $\alpha^a\tau$.

The next proposition quite standard and the proof is the same as the untwisted cocycles.  We still give a proof here to make this part complete. We assume notations in Proposition \ref{po:1}.

\begin{proposition}
\label{prop:PCF-nec-suff}
Suppose coarse Lyapunov foliations of $\alpha$ are transitive.  Also suppose $\mathbb{A}(\ZZ)$ is a subgroup of $A$ generated by elements in $\mathbb{A}$ over $\ZZ$. For any $c\in \mathbb{A}(\ZZ)$ $\beta(c,\cdot)$ is cohomologous to a constant twisted cocycle simultaneously if $P_\beta$ vanishes identically on any Lyapunov cycles. Furthermore, if $\mathbb{A}(\ZZ)$ is dense in $A$ then
$\beta $ is cohomologous to a constant twisted cocycle simultaneously.
\end{proposition}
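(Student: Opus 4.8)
The plan is to run the standard periodic-cycle-functional argument, now carrying the twist $\psi$ through every step. Assume $P_\beta$ vanishes on every Lyapunov cycle. First I would record the formal properties of $F_\beta$: from \eqref{eq:pcf-fund} one reads off $p_{\beta,a}(x,y)^{-1}=p_{\beta,a}(y,x)$, and by Proposition \ref{po:1} the value $p_{\beta,a}(x,y)$ for $x,y$ on a common leaf of a coarse Lyapunov foliation does not depend on which $a\in\mbb A$ contracting (or expanding) that leaf is chosen; enlarging $\mbb A$ by finitely many regular elements if necessary — which changes neither $\mbb A(\ZZ)$ nor $A$ — we may assume such an $a$ exists for every coarse Lyapunov foliation. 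Hence $F_\beta$ is a well-defined function on $\mc P(\mc F)$, it is multiplicative under concatenation of paths, $F_\beta(\tau^{(1)}\ast\tau^{(2)})=F_\beta(\tau^{(1)})F_\beta(\tau^{(2)})$, and $F_\beta(\tau^{-1})=F_\beta(\tau)^{-1}$ for the reversed path $\tau^{-1}$.

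Next I would build the transfer map. Fix a base point $x_0$. By transitivity of the coarse Lyapunov foliations each $x$ is joined to $x_0$ by a Lyapunov path $\sigma_x$, and I set $T(x)=F_\beta(\sigma_x)$. If $\sigma_x'$ is another such path, the concatenation of $\sigma_x$ with the reversal of $\sigma_x'$ is a Lyapunov cycle; by hypothesis its $F_\beta$ is $e$, so $F_\beta(\sigma_x')^{-1}F_\beta(\sigma_x)=e$ and $T$ is well defined. Continuity of $T$ is the only analytic point, and it follows from local transitivity of the foliations: a point $x'$ near $x$ is reached from $x$ by a short Lyapunov path with a uniformly bounded number of edges, so $T(x')$ differs from $T(x)$ by a product of finitely many factors $p_{\beta,a}(\cdot,\cdot)$ whose two arguments are close, and these tend to $e$ as $x'\to x$ by Corollary \ref{cor:5}.

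The heart of the matter is the cohomology identity, and the key input is the transformation rule derived inside the proof of Proposition \ref{po:1}: for $c\in\mbb A$ and $y,z$ on a common stable (or unstable) leaf of $a$,
\[ p_{\beta,a}(\alpha^c y,\alpha^c z)=\beta(c,y)\,\psi_c\!\big(p_{\beta,a}(y,z)\big)\,\beta(c,z)^{-1}. \]
Since $\alpha^c$ preserves each coarse Lyapunov foliation, $\alpha^c\sigma_x$ is again a Lyapunov path, from $\alpha^c x_0$ to $\alpha^c x$. Applying the rule to each of its edges, the inner factors $\beta(c,\cdot)^{-1}\beta(c,\cdot)$ telescope and the $\psi_c$'s come out of the whole product, yielding $F_\beta(\alpha^c\sigma_x)=\beta(c,x)\,\psi_c(T(x))\,\beta(c,x_0)^{-1}$. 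On the other hand, a path $x_0\to\alpha^c x_0$ followed by $\alpha^c\sigma_x$ is a Lyapunov path from $x_0$ to $\alpha^c x$, so well-definedness of $T$ gives $T(\alpha^c x)=F_\beta(\alpha^c\sigma_x)\,T(\alpha^c x_0)$. Combining the two displays,
\[ \beta(c,x)=T(\alpha^c x)\,i(c)\,\psi_c(T(x))^{-1},\qquad i(c):=T(\alpha^c x_0)^{-1}\beta(c,x_0), \]
with $i(c)$ independent of $x$; and since $T$ was built with no reference to $c$, the same $T$ serves all $c\in\mbb A$, which is the asserted simultaneity.

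Finally I would propagate this to $\mbb A(\ZZ)$ and, under the density hypothesis, to $A$. The twisted cocycle identity \eqref{for:8} shows that if the displayed identity holds for $c$ and $d$ with the common transfer $T$, then it holds for $cd$ with $i(cd)=i(c)\psi_c(i(d))$; likewise $\beta(c^{-1},x)=\psi_{c^{-1}}\!\big(\beta(c,\alpha^{c^{-1}}x)^{-1}\big)$ yields it for $c^{-1}$ with $i(c^{-1})=\psi_{c^{-1}}(i(c)^{-1})$. Hence $\beta(c,\cdot)$ is cohomologous, via $T$, to the constant twisted cocycle $i$ for every $c\in\mbb A(\ZZ)$. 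If $\mbb A(\ZZ)$ is dense in $A$, then since $a\mapsto\beta(a,x_0)$, $a\mapsto\psi_a$ and $T$ are continuous, the formula $i(a)=T(\alpha^a x_0)^{-1}\beta(a,x_0)\,\psi_a(T(x_0))$ extends $i$ continuously to $A$, and the cohomology identity, holding on the dense set $\mbb A(\ZZ)$, holds on all of $A$; thus $\beta$ itself is cohomologous to a constant twisted cocycle. I expect the only real obstacle to be bookkeeping: confirming that the twist in the transformation rule is $\psi_c$ (the translation direction), which is exactly what makes the telescoping collapse as in the untwisted case, and checking that $i$ is a genuine twisted cocycle that extends continuously; the transitivity and continuity inputs are immediate from the preceding results.
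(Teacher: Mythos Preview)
Your proof is correct and follows essentially the same approach as the paper's: define $T$ via $F_\beta$ along paths from a fixed base point, use vanishing on cycles for well-definedness, apply the transformation rule under $\alpha^c$ to obtain the cohomology identity, then extend to $\mathbb{A}(\ZZ)$ by the twisted cocycle law and to $A$ by density and continuity. Your version is somewhat more detailed (you address continuity of $T$ and the extension steps explicitly), and your final identity $\beta(c,x)=T(\alpha^c x)\,i(c)\,\psi_c(T(x))^{-1}$ in fact matches Definition~\ref{def:twisted-coc} on the nose, whereas the paper's displayed formula has the inverses placed on the opposite factors.
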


\begin{proof}
Since $P_\beta$ vanishes identically, we may safely define a function $T : M \to N$ by choosing a path $\tau$ from the fixed base point $x_0$ to $x$ and letting $T(x) = F_\beta(\tau_0)$. Then we may build a path from $\alpha^a(x_0)$ to $\alpha^a(x)$ by flowing $\tau_0$ by $a\in \mathbb{A}$ and concatenating this with the original path to $x$. It is easy to check that
\begin{align*}
 F_\beta(\alpha^a(\tau))=\beta(a,x_0) \psi_a(F_\beta(\tau))\beta(a,x)^{-1}.
\end{align*}
Therefore:
\begin{align*}
  T(\alpha^a(x))&= T(\alpha^a(x_0)) F_\beta(\alpha^a(\tau)) \\
 &=T(\alpha^a(x_0)) \beta(a,x_0) \psi_a(F_\beta(\tau))\beta(a,x)^{-1} \\
 &=T(\alpha^a(x_0))\beta(a,x_0) \psi_a(T(x)) \beta(a,x)^{-1}
\end{align*}
Thus if we let $i(a) = T(\alpha^a(x_0))\beta(a,x_0)$ (note that $x_0$ is fixed, so this is a function of $a$ only), then we have shown that
\begin{align*}
 \beta(a,x) = T(\alpha^a(x))^{-1}i(a)\psi_a(T(x)),\qquad \forall a\in \mathbb{A}.
\end{align*}
It is easy to check that above can be extended to
\begin{align*}
 \beta(a,x) = T(\alpha^a(x))^{-1}i(a)\psi_a(T(x)),\qquad \forall a\in \mathbb{A}(\ZZ).
\end{align*}
Finally, the continuity of $\beta$, $T$ and $i$ implies the last statement.
Hence we finish the proof.

\end{proof}
\section{Algebraic Preparatory step I: notations and main technical step}\label{sec:alg1}
\subsection{Central Extensions}\label{sec:15}

In this section, we introduce some principal auxiliary groups. In the case of homogeneous actions, the space of paths and cycles introduced in Section \ref{sec:5} carry a particularly interesting algebraic structure after one quotients by a group with certain dynamical meaning. Because the periodic cycle functional discussed in the previous section has certain continuity, putting a ``good'' topology on these spaces also plays an important role. In this section, we carefully introduce these groups and their topologies. Recall notations in \seref{se:3} of Section \ref{sec:14}.

\begin{definition}\label{de:1}
 For $\mu, \nu\in \Lambda_A$
such that $\mu\neq -k\nu$, for any $k \in \R_+$, define $C(\mu,\nu) = \set{t_1\mu + t_2\nu : t_1,t_2 \in \R_+} \cap \Lambda_A$ to be {\it the cone generated by $\mu$ and $\nu$}.

It is easy to check that
\begin{equation}
\label{eq:comm-eq}
[U_{[\mu]}, U_{[\nu]}]\subset\prod_{\chi \in C(\mu,\nu)}U_{[\chi]}.
\end{equation}

This clearly gives rise to relations between
commutators of above form and elements $R_{\mu,\nu,A}$ which belongs to $\displaystyle\prod_{\chi \in C(\mu,\nu)}U_{[\chi]}$:

\begin{align*}
  [x,y]=R_{\mu,\nu,A}(x,y,\mathfrak{o})\qquad x\in U_{[\mu]} \text{ and }y\in U_{[\nu]},
\end{align*}

where $\mathfrak{o}$ is an order for the roots appearing on the the right side product of \eqref{eq:comm-eq}. If such an ordering is fixed or understood, we often write $R_{\mu,\nu,A}(x,y)$ instead.
Such relations are called {\it commutator relations}. For simplicity, we usually write $R_{\mu,\nu,A}(x,y)$.
\end{definition}

Let $\widetilde{\mathcal{G}}_A=\prod^*_{[r] \in \Lambda_A} U_{[r]}$ be the free product of groups. Now for $\mu, \nu\in \Lambda_A$
such that $\mu\neq -k\nu$, for any $k \in \R_+$, element of the form
\begin{align}\label{for:28}
 [x,y]R_{\mu,\nu,A}(x,y)^{-1}
\end{align}
has a natural meaning in $\widetilde{\mathcal{G}}_A$. Let $\mc S_A$ be the normal subgroup generated by elements of the above forms.

Suppose $G'$ is either a connected Lie group with Lie algebra $\text{Lie}(G)$ (in the case of semisimple groups), or the universal Lie central extension of $G_\rho$. For any $r\in \Lambda_A$ there is a canonical homomorphism $\pi_{G'} : \widetilde{\mathcal{G}}_A \to G'$ which lifts the inclusions of $\mf g^{(r)}$ to $\mf g'=\text{Lie}(G')$. $\pi_{G'}$ is then the homomorphism guaranteed by the universal property of free products.

Define $\mc C_A(G')= \ker \pi_{G'}$ to be the {\it cycle subgroup} (this coincides with the cycle group defined in Section \ref{sec:PCF}). For simplicity, we denote $\mc C_A(\mathcal{G}_0)$ by $\mc C_A$, $\pi_{\mathcal{G}_0} $ by $\pi_1$ (see \eqref{se:1} of Section \ref{sec:14}).
\subsection{Topology on $\widetilde{\mathcal{G}}_A$} \label{sec:7}

The group $\widetilde{\mc G}_A$ carries a canonical topology (called the {\it free product topology}), which is the finest topology making $\widetilde{\mathcal{G}}_A$  a topological group such that the inclusion of $U_{[r]}$ into $\widetilde{\mc G}_A$ is a homeomorphism onto its image for every $r\in \Lambda_A$. This topology was first introduced by Graev in \cite{Gra50}, and it enjoys the following universal property:

\smallskip
\noindent \emph{If $\varphi_{[r]} : U_{[r]} \to H$ are continuous homomorphisms for every $r \in \Lambda_A$, then the extension $\phi : \widetilde{\mc G}_A \to H$ guaranteed by the universal property of free products is continuous.}

\smallskip
The proof of existence for this topology is nontrivial. The first English proof to appear was in \cite{morris}, but the proof was found to have an error. The first proof was Graev's Russian proof \cite{Gra50}, and an English proof can be found in \cite{ordman74}.

In most cases, the topology of a free product is quite pathological. If at least one group of the product is not discrete, the free product is never metrizable, nor even first countable. There are weaker topologies on the free product that yield these nicer properties, but they lack the universal property the free product topology carries.

When the groups appearing in the a free product are $k_\omega$ spaces, the free product topology can be described explicitly. We follow \cite{ordman74-1} to describe the topology of $\widetilde{\mc G}_A$, and reproduce elements of this paper, changing some notations to our specific case.

An element $\mbf r \in\Lambda_A^n$ is called a {\it combinatorial pattern}, and $n$ is called its {\it combinatorial length}. For any $\textbf{r}=(r_1,\cdots,r_n)\in \Lambda_A^n$ let $X_{\textbf{r}}=U_{[r_1]}\times \cdots\times U_{[r_n]}$. Then there is a natural map $\iota_{\mbf r} : X_{\textbf{r}} \to \widetilde{\mathcal{G}}_A$ as follows: $\iota_{\mbf r}(x_1,\cdots,x_n)=x_1\cdot \dots \cdot x_n$. Then we may combine the $\iota_{\mbf r}$ into a single map $\iota : X \to \widetilde{\mc G}_A$, where $X = \displaystyle \bigsqcup_{n \in \N} \bigsqcup_{\mbf{r} \in \Lambda_A^n} X_{\mbf r}$.

For any $\mbf{r}\in \Lambda_A^n$, let $K_{\mbf{r}}=\iota_{\mbf{r}}(X_{\textbf{r}})$. We topologize $K_{\mbf{r}}$ as a quotient of $X_{\mbf{r}}$ and define $B$ to be open in $\widetilde{\mathcal{G}}_A$ whenever $B \cap K_{\mbf r}$ is open in each $K_{\mbf{r}}$.

Note that each $U_{[r]}$ has a countable sequence of compact sets $U_{[r],m}$ whose union is $U_{[r]}$ which determine the topology of $U_{[r]}$. That is, $B$ is open in $U_{[r]}$ if and only if $B \cap U_{[r],m}$ is open in $U_{[r],m}$ for every $m \in \N$. Spaces admitting such a decomposition are called $k_\omega$ spaces. Any manifold is a $k_\omega$ space, as we can take an increasing union of closed balls around a fixed point $x_0$. If $\mbf r = (r_1,\dots,r_n)$, $X_{\mbf r}$ has decomposition as $X_{\mbf r} = \bigcup_{m \in \N} U_{[r_1],m} \times \dots U_{[r_n],m}$ coming from the decomposition of each $U_{[r]}$; and $\iota_{\mbf r}(U_{[r_1],m} \times \dots U_{[r_n],m})$ are the proper
compact sets to determine the topology of $K_{\mbf r}$.  So by Theorem 3.2 of \cite{ordman74-1}, the topology on $\widetilde{\mc G}_A$ inherited from the canonical topology on $X$ coincides with the free product topology. We summarize this is the following:

\begin{proposition}
\label{prop:topology-def}
The free product topology on $\widetilde{\mc G}_A$ is the quotient topology induced by the map $\iota$. In particular, a function $f : \widetilde{\mc G}_A \to Y$ is continuous if and only if $f \of \iota_{\mbf r} : X_{\mbf r} \to Y$ is continuous for every combinatorial pattern $\mbf r$.
\end{proposition}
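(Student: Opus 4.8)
The plan is to deduce the statement from the explicit description of the free product topology for $k_\omega$ groups due to Graev \cite{Gra50} and Ordman \cite{ordman74-1}, exactly as indicated in the paragraphs preceding the Proposition. First I would record the relevant $k_\omega$ structures. Each coarse Lyapunov subgroup $U_{[r]}$ is a Lie group, hence a $k_\omega$ space: fixing a basepoint, the increasing sequence of compact sets $U_{[r],m}$ (closed metric balls of radius $m$, say) exhausts $U_{[r]}$ and determines its topology, in the sense that $B\subset U_{[r]}$ is open if and only if $B\cap U_{[r],m}$ is open in $U_{[r],m}$ for every $m$. Consequently, for every combinatorial pattern $\mbf r=(r_1,\dots,r_n)\in\Lambda_A^n$ the product $X_{\mbf r}=U_{[r_1]}\times\cdots\times U_{[r_n]}$ is again a $k_\omega$ space with exhausting sequence $U_{[r_1],m}\times\cdots\times U_{[r_n],m}$, and hence so is the disjoint union $X=\bigsqcup_{n}\bigsqcup_{\mbf r\in\Lambda_A^n}X_{\mbf r}$, whose $k_\omega$ structure is given by the finite unions of the compact images $\iota_{\mbf r}(U_{[r_1],m}\times\cdots\times U_{[r_n],m})$ taken over patterns of length at most $m$.

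Next I would observe that the topology we have placed on $\widetilde{\mc G}_A$ — declaring $B$ open if and only if $B\cap K_{\mbf r}$ is open in the quotient $K_{\mbf r}=\iota_{\mbf r}(X_{\mbf r})$ for every $\mbf r$ — is by construction precisely the quotient (colimit) topology induced by $\iota:X\to\widetilde{\mc G}_A$, since a subset of the disjoint union $X$ is open exactly when its trace on each summand $X_{\mbf r}$ is open. Theorem 3.2 of \cite{ordman74-1} (which rests on \cite{Gra50}, with an English account also in \cite{ordman74}) asserts precisely that, when the factors $U_{[r]}$ are $k_\omega$ topological groups, this colimit topology makes the abstract free product $\widetilde{\mc G}_A$ into a $k_\omega$ topological group in which each inclusion $U_{[r]}\hookrightarrow\widetilde{\mc G}_A$ is a closed topological embedding, and that this topology is the finest group topology with that property; that is, it coincides with the free product topology. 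I expect the only genuinely delicate point — which I would cite rather than reprove — to be the verification that this colimit topology is a group topology: continuity of multiplication comes down to the fact that a product of two quotient maps of $k_\omega$ spaces is again a quotient map, applied to the maps $X_{\mbf r}\times X_{\mbf s}\to K_{\mbf r}\cdot K_{\mbf s}$, which lands in $K_{\mbf t}$ for the concatenated pattern $\mbf t$, while continuity of inversion corresponds to reversing combinatorial patterns.

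Finally, the ``in particular'' clause is then formal. Since the topology on $\widetilde{\mc G}_A$ is the quotient topology under $\iota$, a map $f:\widetilde{\mc G}_A\to Y$ is continuous if and only if $f\of\iota:X\to Y$ is continuous, and since $X$ is the disjoint union of the $X_{\mbf r}$ this holds if and only if $f\of\iota_{\mbf r}$ is continuous for every combinatorial pattern $\mbf r$. Applying this to a homomorphism $\phi$ recovers the universal property stated before the Proposition: $\phi\of\iota_{\mbf r}$ is the word map $(x_1,\dots,x_n)\mapsto\phi(x_1)\cdots\phi(x_n)$, which is continuous once each restriction $\phi|_{U_{[r]}}$ is. The main obstacle, to reiterate, is not in this formal bookkeeping but in justifying that the proposed colimit topology is a compatible group topology and has the claimed finest-topology property — and for that I would appeal to \cite{Gra50,ordman74-1} rather than redo the argument.
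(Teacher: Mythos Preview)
Your proposal is correct and follows essentially the same approach as the paper: the paper presents this proposition as a summary of the discussion immediately preceding it, which records the $k_\omega$ structure on each $U_{[r]}$ and on the $X_{\mbf r}$ and then invokes Theorem~3.2 of \cite{ordman74-1} to identify the quotient topology from $X$ with the free product topology. Your write-up simply fleshes out that discussion (and correctly flags that the nontrivial point---that the colimit topology is a group topology---is delegated to the cited references), so there is nothing to add.
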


The free product topology also makes $\widetilde{\mathcal{G}}_A$ a countable $CW$-complexes. For a good description of the $CW$-complex structure we refer the reader to \cite[Example 5.4]{ordman74-1}, which describes explicitly the first few cells and gluings for $S^1 * S^1$.

\subsubsection{The Free Group Topology vs. a Topology on Generators}
The topology on free products is from the 70's, but the connection with the spaces of Lyapunov paths and cycles remained undiscovered until very recently. Papers first applying this method did not topologize the path group, but instead looked only at the generators of the cycle group. The first application of the method was to the case of $SL(d,\R)$ in \cite{dk2011}, where the group $\mc C_A / \mc S_A$ has an explicit description in terms of generators and relations by the classical Matsumoto description of $K_2(\R)$. There, the generators fit into a single combinatorial pattern, and it was shown that $\mc S_A$ is dense in that combinatorial pattern, which implies trivialization of the periodic cycle functional everywhere. The free product topology was first used for cocycle and local rigidity in \cite{vinhage15}.

It should be true that $\mc S_A$ is dense in $\mc C_A$ for all groups, but this is only verified in certain special semisimple groups using ad-hoc techniques in a case-by-case basis (see \cite{damjanovic07}, \cite{zwang-1} and \cite{zwang-2}).

\subsection{Main Technical Step}
The group $\mc S_A$ will play an important role because it is the largest subgroup on which we can directly prove vanishing of the periodic cycle functional (see Section \ref{sec:18}). By construction, the periodic cycle functional takes the concatenation of paths to the product of the values on each path. The structure of $\mc C_A / \mc S_A$ is therefore the most important step in the paper.

An abelian topological group $S$ is called {\it minimally almost periodic} if any homomorphism from $S$ to a Lie group is trivial.

\begin{remark}
Such groups are very exotic, since the Pontryagin duality theorem tells us that no locally compact group can be minimally almost periodic. Such groups are known to exist, but are always infinite dimensional (see Theorem \ref{th:2}). This definition only holds if the group is abelian, otherwise one needs to insist on no homomorphisms to $U(n)$ for every $n$.
\end{remark}

The crucial step in proving the main Theorem \ref{thm:main} is:
\begin{theorem}\label{th:3}
Suppose $\alpha_A$ satisfies condition \starref. Then $\mc C_A/ \mc S_A$ is a minimally almost periodic abelian topological group with the topology inherited from $\widetilde{\mathcal{G}}_A$.
\end{theorem}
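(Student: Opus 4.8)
The plan is to decompose the statement into two parts: first that $\mc C_A / \mc S_A$ is abelian, and second that it is minimally almost periodic. The abelianness should follow from the algebraic structure established in the algebraic preparatory sections (Theorem \ref{thm:central}): the point is that $\widetilde{\mathcal{G}}_A / \mc S_A$, obtained from the free product by imposing exactly the commutator relations $R_{\mu,\nu,A}$, is (when $\alpha_A$ is genuinely higher rank) a central extension of $\mathcal{G}_0$. Since $\mc C_A = \ker \pi_1$ and $\mc S_A \subset \mc C_A$, the quotient $\mc C_A / \mc S_A$ is precisely the kernel of the induced projection $\widetilde{\mathcal{G}}_A / \mc S_A \to \mathcal{G}_0$, which by Definition \ref{de:4} lies in the center of $\widetilde{\mathcal{G}}_A / \mc S_A$; in particular it is abelian. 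This is where condition \starref{} enters: without it, the free product modulo stable relations is not a central extension, and the kernel need not be central or abelian.

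For minimal almost periodicity, suppose $\varphi : \mc C_A / \mc S_A \to L$ is a continuous homomorphism to a Lie group $L$; I must show $\varphi$ is trivial. The strategy is to use the universal property of the central extension $\widetilde{\mathcal{G}}_A / \mc S_A$. Since $\mc C_A/\mc S_A$ sits centrally inside $\widetilde{\mathcal{G}}_A / \mc S_A$ with quotient $\mathcal{G}_0$, and since $\widetilde{\mathcal{G}}_A / \mc S_A$ is perfect (it surjects onto the perfect group $\mathcal{G}_0$ and is a central extension, so perfectness lifts), one wants to push $\varphi$ forward to build a Lie central extension of $\mathcal{G}_0$. Concretely: the obstruction to extending $\varphi$ over all of $\widetilde{\mathcal{G}}_A / \mc S_A$ is measured by a class that, because $\mathcal{G}_0$ is the universal Lie central extension (Definition \ref{def:univ-lie}, Proposition \ref{prop:universal-property}) — the universal cover $\tilde G$ for semisimple $G$, or $G_\rho'$ in the twisted case — must be split by a homomorphism already defined on $\mathcal{G}_0$. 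More precisely, the pushout central extension $1 \to \mathrm{im}(\varphi) \to E \to \mathcal{G}_0 \to 1$ in the Lie category is classified by Proposition \ref{prop:universal-property} as a factor of $\mathcal{G}_0$ itself, forcing the extension to be trivial after restricting to a finite cover, hence $\varphi$ trivial on a finite-index subgroup; combined with the divisibility/connectedness structure of $\mc C_A / \mc S_A$ one upgrades this to $\varphi \equiv 1$.

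The main obstacle — and the step requiring the most care — is the continuity and topological input: one must verify that the pushout extension $E$ really is a \emph{Lie} group (or at least a locally compact, hence Lie by Gleason–Yamabe, extension), and that the relevant maps are continuous in the free product topology. Here Proposition \ref{prop:topology-def} is the essential tool: a map out of $\widetilde{\mathcal{G}}_A$ (or a quotient) is continuous iff its restriction to each $X_{\mbf r} = U_{[r_1]} \times \cdots \times U_{[r_n]}$ is continuous, and each such piece is a finite-dimensional manifold. Thus continuity of $\varphi$ plus the commutator relations should let one produce, combinatorial-pattern by combinatorial-pattern, a local Lie group structure on $E$, then invoke the structure theory of locally compact groups together with Proposition \ref{prop:universal-property} to kill it. A secondary subtlety is that $L$ need not be abelian: since $\mc C_A / \mc S_A$ is abelian its image $\varphi(\mc C_A/\mc S_A)$ is an abelian subgroup, so one may replace $L$ by the closure of that image, reducing to the abelian (indeed Pontryagin-dual-accessible) situation and allowing the argument to conclude as in the remark following the definition of minimal almost periodicity.
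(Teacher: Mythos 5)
Your high-level picture is right: abelianness does come from Theorem \ref{thm:central}, and minimal almost periodicity is proved by promoting a putative homomorphism $\varphi:\mc C_A/\mc S_A\to L$ to a Lie central extension of $\mc G_0$, which is then shown to split, and perfectness kills the kernel. The pushout you describe is, up to isomorphism, the paper's quotient $\overline{\mc G}_A=(\widetilde{\mc G}_A/\mc S_A)/\ker\varphi$ fitting into $1\to P\to\overline{\mc G}_A\to\mc G_0\to 1$ with $P=\mathrm{im}(\varphi)$. But several of your tactical steps have genuine gaps.

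First, you do not actually produce the Lie group structure on $E$ (equivalently on $\overline{\mc G}_A$). The remark that ``continuity of $\varphi$ plus the commutator relations should let one produce, combinatorial-pattern by combinatorial-pattern, a local Lie group structure on $E$'' is where the paper does real work, and your version is too vague to carry out. The paper's mechanism is: (i) show $\mc C_A$ is locally path-connected (Proposition \ref{le:5}); hence the quotient $P=(\mc C_A/\mc S_A)/\ker\varphi$ is locally path-connected, and since $\varphi$ gives a continuous injection of $P$ into the Lie group $L$, Gleason--Palais (Theorem \ref{th:2}) forces $P$ to be a Lie group; then (ii) produce a local section of $\overline{\mc G}_A\to\mc G_0$ by choosing a single combinatorial pattern $\mbf r$ with $\iota_{\mbf r}:X_{\mbf r}\to\mc G_0$ surjective (possible by Corollaries \ref{cor:det-gen} and \ref{cor:twisted-generate}) and invoking Sard's theorem (Lemma \ref{le:7}); then (iii) apply Proposition \ref{prop:central-splits}, which uses Gleason--Montgomery--Zippin, to conclude $\overline{\mc G}_A\cong\mc G_0\times P$. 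None of these three ingredients appears in your sketch, and ``Gleason--Yamabe on a locally compact extension'' is not a substitute because you have not established local compactness.

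Second, you misapply Proposition \ref{prop:universal-property}. That proposition classifies perfect \emph{Lie} central extensions of $G_\rho$ as factors of $G_\rho'$; it does not directly say anything about central extensions of $\mc G_0=G_\rho'$ itself. What is actually needed is that any Lie central extension of $\mc G_0$ with a local section splits, which is precisely the content of Proposition \ref{prop:central-splits} (whose proof does use Proposition \ref{prop:universal-property}, but only after the extension has been shown to be Lie, and combined with the simple connectedness of $\mc G_0$). Your phrase ``forcing the extension to be trivial after restricting to a finite cover, hence $\varphi$ trivial on a finite-index subgroup; combined with the divisibility/connectedness structure\ldots'' is a non-sequitur: $\mc G_0$ is already simply connected, there is no finite cover to pass to, and no divisibility or finite-index argument appears or is needed. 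Once $\overline{\mc G}_A\cong\mc G_0\times P$, perfectness of $\overline{\mc G}_A$ (inherited from $\widetilde{\mc G}_A/\mc S_A$) immediately gives $P=\{e\}$, with no intermediate steps.

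In short: your strategic decomposition matches the paper, but the two load-bearing lemmas --- local path-connectedness of $\mc C_A$ plus Gleason--Palais to get the Lie structure on $P$, and a Sard-theorem local section plus Proposition \ref{prop:central-splits} to split the extension --- are missing, and Proposition \ref{prop:universal-property} is invoked in the wrong direction.
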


To prove Theorem \ref{th:3} the principal ingredient is to show that $\mc C_A/ \mc S_A$ is central in $\widetilde{\mc G}_A / \mc S_A$, which is the main technical result of the algebraic part of the paper. Recall Definition \ref{de:4}. Then:

\begin{theorem}
\label{thm:central}
Suppose $\alpha_A$ satisfies condition \starref. Then $\widetilde{\mathcal{G}}_A/ \mc S_A$ is a perfect central extension of $\mathcal{G}$.
\end{theorem}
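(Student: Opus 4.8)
The plan is to produce the central extension $\widetilde{\mathcal G}_A/\mathcal S_A \to \mathcal G$ by realizing $\widetilde{\mathcal G}_A/\mathcal S_A$ as a Steinberg-type group built from the coarse Lyapunov subgroups, with $\mathcal S_A$ playing the role of the commutator relations, and then verify the two axioms of Definition \ref{de:4}: that the natural projection $\pi_1 : \widetilde{\mathcal G}_A/\mathcal S_A \to \mathcal G$ is surjective with kernel $\mathcal C_A/\mathcal S_A$, and that this kernel is central. Surjectivity is immediate since $\mathcal G$ is generated by the subgroups $U_{[r]}$ (this uses that $G$ has no compact or rank-one factors, hence is generated by its unipotent coarse Lyapunov subgroups, and in the twisted case that $G_\rho$ is perfect and the fiber subgroups together with those of $G$ generate $G_\rho$, as noted in Section \ref{sec:16}). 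Perfectness of $\widetilde{\mathcal G}_A/\mathcal S_A$ then follows: each $U_{[r]}$ is a connected nilpotent group, so lies in the derived subgroup once one knows it is generated by commutators of coarse Lyapunov subgroups in the quotient — this is exactly the content of the commutator relations $R_{\mu,\nu,A}$ once one produces, for each $r \in \Lambda_A$, a pair $\mu,\nu$ with $r \in C(\mu,\nu)$, which the genuinely higher rank condition \starref{} guarantees.

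The heart of the matter is centrality of $\mathcal C_A/\mathcal S_A$ in $\widetilde{\mathcal G}_A/\mathcal S_A$. First I would reduce this to showing that conjugation by each generator $u \in U_{[\mu]}$ acts trivially on $\mathcal C_A/\mathcal S_A$; since $\mathcal C_A/\mathcal S_A$ is normal (it is the kernel of $\pi_1$ composed with the quotient), it suffices to check that for $c \in \mathcal C_A$ and $u \in U_{[\mu]}$, the commutator $[u,c]$ lies in $\mathcal S_A$. The standard Steinberg-Deodhar mechanism is: express $c$ as a word in the generators, push $u$ through that word using the commutator relations $R_{\mu,\nu,A}$ modulo $\mathcal S_A$, and observe that, modulo $\mathcal S_A$, conjugation by $u$ on a generator $v \in U_{[\nu]}$ changes $v$ only by a product of elements in $U_{[\chi]}$ for $\chi \in C(\mu,\nu)$ — and these "correction" terms are controlled. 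The obstruction in the classical setting is that one needs enough roots $\chi$ present with enough relations among them; in the generic (condition \starpref) case this is handed to you by Deodhar's classification. Here, the main obstacle — and the technical core that Sections \ref{sec:alg2} and \ref{sec:alg3} are devoted to — is that $\Lambda_A$ is not an honest (restricted) root system: there is no Weyl group acting transitively on chambers, roots may be positively and negatively proportional in irregular ways, Jordan blocks appear, and in the twisted case roots and weights mix. So one cannot invoke existing presentations. The way forward is to recover the "missing" roots and relations by hand: in the symmetric-space case (Section \ref{sec:alg2}) by choosing suitable Weyl elements $w \in N(A_0)$ realized in $G$ and conjugating known relations to detect roots that $A$ fails to see directly; in the twisted case (Section \ref{sec:alg3}) by exploiting irreducibility of $\rho$ on each $\R^{N_i}$ and the explicit Lie-bracket structure $[(X,0),(0,t)] = d\rho(X)t$ from \eqref{for:9} to present the fiber subgroups and their interaction with the semisimple part.

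Concretely, the sequence of steps I would carry out is: (1) establish the commutator calculus in $\widetilde{\mathcal G}_A/\mathcal S_A$ — that modulo $\mathcal S_A$ the subgroups $U_{[r]}$ satisfy the cone commutator relations \eqref{eq:comm-eq} with the $R_{\mu,\nu,A}$ as imposed relations, and derive consequences like a Chevalley-commutator-type formula; (2) prove a "detection" lemma: using Weyl elements (symmetric case) or representation-theoretic irreducibility (twisted case) together with \starref, show every root/weight in $\Delta_{A_0}$ (resp. every relevant weight) can be "seen" inside $\widetilde{\mathcal G}_A/\mathcal S_A$ even if undetected by $A$ — this is where \starpref{} is finally avoided; (3) use (1) and (2) to show that for any $u \in U_{[\mu]}$ and any word $w$ representing an element of $\mathcal C_A$, conjugation $u w u^{-1} w^{-1} \in \mathcal S_A$, by an induction on combinatorial length, at each step absorbing the cone-correction terms into $\mathcal S_A$ using the relations from (1); this gives centrality of $\mathcal C_A/\mathcal S_A$; (4) combine centrality with surjectivity and perfectness to conclude $\widetilde{\mathcal G}_A/\mathcal S_A$ is a perfect central extension of $\mathcal G$. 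The single hardest point is step (2)–(3) in the twisted, non-generic setting: without non-resonance between roots and weights one must carefully track how fiber generators conjugate through semisimple generators when a weight vanishes on $A$, and it is precisely here that the irreducibility argument and the Lie-group structure of the radical (rather than a brute-force presentation) are indispensable.
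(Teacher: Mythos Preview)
Your outline correctly identifies the technical themes (Weyl-element conjugation to recover undetected roots in the semisimple case, irreducibility of $\rho$ in the twisted case), but the architecture you propose for centrality has a genuine gap, and the paper's route is structurally different.

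The gap is in step (3). Your induction on combinatorial length requires pushing $u \in U_{[\mu]}$ past each letter $v_i \in U_{[\mu_i]}$ of a cycle word using relations in $\mathcal S_A$. But $\mathcal S_A$ contains a commutator relation $R_{\mu,\nu,A}$ only when $\mu$ and $\nu$ are \emph{not} negatively proportional over $A$ (Definition \ref{de:1}); when $\mu_i = -k\mu$ for some $k>0$ there is simply no relation available, and any nontrivial cycle word must contain letters of both signs. So ``absorbing the cone-correction terms into $\mathcal S_A$'' cannot proceed past such a letter, and the induction stalls exactly where the difficulty lies. Your step (2) (``detection lemma'') recovers undetected roots but does nothing for this negatively-proportional obstruction.

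The paper sidesteps this in the semisimple case by a different mechanism: rather than proving centrality directly in $\widetilde{\mathcal G}_A/\mathcal S_A$, it constructs an equivariant \emph{surjection} $\Psi : \widetilde{\mathcal G}_{A_0}/\mathcal S_{A_0} \to \widetilde{\mathcal G}_A/\mathcal S_A$ over $G$ (Proposition \ref{prop:restricted-factor}), where $A_0$ is the full Cartan. The Weyl-conjugation machinery (Lemmas \ref{lem:weyl-conj}, \ref{lem:det-gen}, \ref{lem:isom-gen-ind}) is used not to prove centrality but to \emph{define} $\Psi$ on undetected root subgroups and then to show $\Psi(\mathcal S_{A_0}) = e$ (Proposition \ref{prop:undetected-comm}; the hardest subcase is precisely when $r,r'$ are both detected but negatively proportional over $A$, handled by finding $s$ with $w_s(r), w_s(r')$ nonproportional). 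Centrality and perfectness of $\widetilde{\mathcal G}_A/\mathcal S_A$ then follow formally from Deodhar's theorem for $\widetilde{\mathcal G}_{A_0}/\mathcal S_{A_0}$ via surjectivity of $\Psi$ (Section \ref{cor:simple-central}). In the twisted case (Section \ref{sec:alg3}) the paper does argue more directly, but again not by word-length induction: it establishes a canonical form $\widetilde p = \widetilde g\,\widetilde x\,\widetilde y\,\widetilde z$ (Proposition \ref{prop:semidirect-path-struct}) and treats each factor separately, with the key device being Lemma \ref{lem:1dim-centrality}: any element of $\langle \tilde U_{[r]}, \tilde U_{[-r]}\rangle$ projecting to $e$ is already central, proved by rewriting generators of $\tilde U_{[\pm r]}$ in terms of subgroups attached to \emph{nonproportional} roots and weights (Lemmas \ref{fact:10}, \ref{le:6}) --- precisely to evade the negatively-proportional obstruction rather than push through it.
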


\begin{remark}\label{re:3}
From Remark \ref{re:2} and Fact \ref{fact:6} we see that Theorem \ref{thm:central} holds for $\mc G'$ when $\mc G'$ is a perfect central extension or factor of $\mathcal{G}$. That is, to prove Theorem \ref{thm:central}, it suffices to prove that the theorem holds for any Lie group $S$ with $\text{Lie}(S)=\text{Lie}(\mathcal{G})$.

If $\widetilde{\mathcal{G}}_A/ \mc S_A$ is a perfect central extension of $\mathcal{G}_0$, then the kernel is $\mc C_A/ \mc S_A$, which implies that $\mc C_A/ \mc S_A$ is abelian.
\end{remark}

If $A=A_0$ then a similar result to Theorem \ref{thm:central} is proved in \cite{deodhar78} when $G$ is simple; and Theorem \ref{th:3} for the semisimple $G$ is proved in \cite{vinhage15}. This theorem is an extension of these ideas. However, the treatment of undetected roots requires new methods. Furthermore, Theorems \ref{th:3} and \ref{thm:central} have only been proven in semisimple cases previously, extending them to the twisted cases also requires new techniques, see Section \ref{sec:alg3}.

\begin{remark}\label{re:2}
By using Baker-Campbell-Hausdorff formula we see that relations $R_{\alpha,\beta,A}$ also hold in $G'$.
Hence $\pi_{G'}$
descends to $\pi_{G'} :\widetilde{\mathcal{G}}_A /\mc S_A\to G'$.
\end{remark}

We end this part by introducing a notation and a result which will be frequently used later. It holds in the context of both semisimple and twisted cases.

\begin{definition}
\label{def:Aadmissible} A subset $\Phi \subset \Lambda_A$ is called $A$-admissible if:

\begin{enumerate}[(i)]
\item whenever $r,r' \in \Phi$ and $r + r' \in \Lambda_A$, $r + r' \in \Phi$; and

\medskip
\item if $r\in\Phi$ and $-kr\in \Lambda_A$, $k\in \R_+$, then $kr\notin \Phi$.
\end{enumerate}
\end{definition}

Observe that $C(r,r')$ is $A$-admissible for every $r$ and $r'$ which are not negatively proportional over $A$. If $\Phi \subset \Lambda_A$, let $U_\Phi$ denote the subgroup $\prod_{\mu \in \Phi}U_{[\mu]} \subset \widetilde{\mc G}_A / \mc S_A$ (see \seref{se:2} of Section \ref{sec:14}). We have the following lemma whose proof is identical to that of Lemma 1.10 in \cite{deodhar78}:

\begin{lemma}
\label{lem:detected-comm}
If $\Phi$ is $A$-admissible, $\pi_1|_{U_\Phi} :U_\Phi \rightarrow \mc G_0$ is an isomorphism onto its image.
\end{lemma}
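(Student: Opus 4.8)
\textbf{Proof plan for Lemma \ref{lem:detected-comm}.}

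The strategy is to reduce to the situation already handled by Deodhar in \cite[Lemma 1.10]{deodhar78}, which treats the case where $\Phi$ is a set of positive roots in a genuine (possibly non-reduced) root system, closed under addition, by showing that the map is injective via a normal-form/uniqueness argument for elements of $U_\Phi$. The first step is to observe that, since $\Phi$ is $A$-admissible, property (i) guarantees that $U_\Phi = \prod_{\mu \in \Phi} U_{[\mu]}$ is genuinely a \emph{group} inside $\widetilde{\mc G}_A/\mc S_A$: any commutator $[U_{[\mu]},U_{[\nu]}]$ with $\mu,\nu\in\Phi$ lands in $\prod_{\chi\in C(\mu,\nu)}U_{[\chi]}$ by \eqref{eq:comm-eq}, and by property (i) every element of $C(\mu,\nu)$ lies in $\Phi$ (property (ii) ensures no element of $\Phi$ is negatively proportional to another, so $C(\mu,\nu)$ is itself $A$-admissible and well-behaved). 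Hence $U_\Phi$ is closed under the group operations and the commutator relations $\mc S_A$ impose no relations that leave $U_\Phi$. Second, I would fix a total order on $\Phi$ compatible with addition (e.g. refining the partial order by ``height'' coming from a linear functional positive on $\Phi$, which exists because $\Phi$ lies in an open half-space by (ii)), and show, using only the commutator relations modulo $\mc S_A$, that every element of $U_\Phi$ can be written uniquely as an ordered product $\prod_{\mu\in\Phi} u_\mu$ with $u_\mu\in U_{[\mu]}$, exactly as in the Chevalley/Steinberg commutator-formula bookkeeping.

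Third, the key point is that $\pi_1$ restricted to each $U_{[\mu]}$ is injective — indeed $\pi_1$ maps $U_{[\mu]}$ isomorphically onto the corresponding coarse Lyapunov subgroup of $\mc G_0$, since $U_{[\mu]}$ is by definition a subgroup of $\mc G$ (or $\mc G_\rho$) and the inclusions were used to define $\pi_{G'}$ — and that in $\mc G_0$ the product map $\prod_{\mu\in\Phi}U_{[\mu]}\to\mc G_0$ is also injective with image a (unipotent-by-...) subgroup, because $\Phi$ being $A$-admissible means the corresponding sum of root/weight spaces $\sum_{\mu\in\Phi}\mf g^{(\mu)}$ is a nilpotent (or nilpotent-radical-type) subalgebra on which $\exp$ is a diffeomorphism and the product-of-one-parameter-type coordinates are global. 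Then injectivity of $\pi_1|_{U_\Phi}$ follows by comparing the two normal forms: if $\pi_1(\prod u_\mu) = \pi_1(\prod u_\mu')$ in $\mc G_0$, uniqueness of the ordered factorization downstairs forces $\pi_1(u_\mu)=\pi_1(u_\mu')$ for each $\mu$, hence $u_\mu = u_\mu'$ by injectivity on each factor, hence the elements agree in $U_\Phi$. Surjectivity onto the image is immediate.

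The main obstacle, and the reason one cannot simply cite \cite{deodhar78} verbatim, is that in our setting $\Lambda_A$ is a \emph{restricted} root/weight system for an arbitrary abelian $A$ and carries none of the usual structure — no inner product, no Weyl group acting transitively on chambers, and in the twisted case a mix of roots and weights with possible zero weight and with the $\ad$-action of $\log A$ possibly non-semisimple on $\mf g^{(\mu)}$ (Jordan blocks, cf. Example \ref{sec:6}). So the two things requiring care are: (a) the existence of a linear functional positive on all of $\Phi$ (which (ii) delivers, since no two elements are negatively proportional and $\Phi$ sits in an $A$-admissible cone), giving the height function and hence the total order needed for the normal form; and (b) the verification that downstairs in $\mc G_0$ the subalgebra $\sum_{\mu\in\Phi}\mf g^{(\mu)}$ is closed under bracket and that $\exp$ restricted to it is injective onto a closed subgroup — in the twisted case this uses the description of $\mc G_0 = G_\rho'$ as a semidirect product $\widetilde G \ltimes E'$ with $E'$ simply connected nilpotent, so that the relevant subgroups of the radical behave as expected. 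Once (a) and (b) are in place, the bookkeeping of the commutator relations modulo $\mc S_A$ is formally identical to Deodhar's, so the proof goes through mutatis mutandis.
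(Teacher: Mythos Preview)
Your proposal is correct and takes essentially the same approach as the paper: the paper simply asserts that the proof is identical to that of \cite[Lemma~1.10]{deodhar78}, and what you have written is precisely a careful unpacking of Deodhar's normal-form argument together with the verifications needed to carry it over to the restricted root/weight setting and to the twisted case $\mc G_0 = G_\rho'$. Your added discussion of points (a) and (b) is exactly the ``mutatis mutandis'' the paper leaves implicit.
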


\section{Algebraic Preparatory step II: symmetric space examples}\label{sec:alg2}
The subsequent discussion in this section will be devoted to the proof Theorem \ref{thm:central} when $\mathcal{G}=G$. Let $k=\RR$ or $\CC$.
By Remark \ref{re:3} we can further assume that $G$ is a direct product of $G_i$, $1\leq i\leq j$ where each $G_i$ is the $k$-rational points of an almost simple simply connected algebraic group $Q_i$ of $k$-rank $\geq 2$. We use $Q$ to denote $\Pi_{i=1}^jQ_i$. Then $G=Q_k$. For a subgroup $H$ of $Q$, let $H_{k}$
denote the set $H\bigcap Q_k$.  We construct universal central extension for $G$ in  the next two sections.

\subsection{Notations and preliminaries}\label{sec:1} In this part, we
follow nations and quote conclusions without proof fairly close to
\cite{dk2011}. Many of the definitions have already been made, but we will describe them here in the context of real and complex algebraic groups. The definitions will coincide with the Lie group definitions given in Section \ref{sec:9}.

 Let $G^+_{k}$ be the group
generated by the subgroups $U_k$, where $U$ runs through the set of unipotent $k$-split subgroups of $Q$. By assumption about $G$ we see that $G=G^+_{k}$ (see \cite{margulis91}).
 Let $\mathfrak{q}$ be the Lie algebra
of $Q$, $\mathfrak{g}=\mathfrak{q}_k$ be the $k$-points of $\mathfrak{q}$ and $S\subset Q$ be the $k$-split torus. Let $A_0=S_k$. For a subalgebra $\mathfrak{s}$ of $\mathfrak{q}$, let $\mathfrak{s}_{k}$
denote the set $\mathfrak{s}\bigcap \mathfrak{g}$.

Let $\Delta_{S}$ be the $k$-root system of $Q$ with respect to $S$ and let $\Delta_{A_0}$ be the restricted roots on $A_0$. For any $\phi\in \Delta_{A_0}$, denote by $\mathfrak{g}_{\phi}$ the corresponding root space; also let $\mathfrak{g}^{(\phi)}_k=(\sum_{j\in \R_+}\mathfrak{g}_{j\phi})\bigcap \mathfrak{g}$ and
$U_{[\phi]}$ be the corresponding subgroup of $Q_k$.

Set $\Delta^1_{A_0}=\{\alpha\in\Delta_{A_0}|\mu/2\notin \Delta_{A_0}\}$.  For $\mu\in
\Delta^1_{A_0}$, let $G^{\mu}$ be the subgroup
generated by $U_{[\alpha]}$ and $U_{[-\alpha]}$. Let $C(S)$ be the
centralizer of $S$ in $Q$ and $N(S)$ the normalizer of $S$ in $Q$
and $W_0=N(S)/C(S)$ be the Weyl group. Let $W\subset N(S)_k$ be a
complete representatives. $(N(S)\cap G^{\mu})/(C(S)\cap G^{\mu})$ is a group of order two which can be identified with a subgroup of $W_0$; and the corresponding element in $W$ is denoted by $w_{\alpha}$.  We also assume that $w_{\alpha}$ is so
chosen that for any $\mu\in \Delta_{A_0}$, $w_{\mu}\in N(S)\bigcap
G^{\mu}$ and has order $2$ (see Remark \ref{re:1} below). (Note that $w_{\mu}$ is being used to denote both the defined
automorphism and the reflection in the hyperplane orthogonal
to $\mu$).
\begin{remark}\label{re:1}
$w_{\mu}\in N(S)\bigcap
G^{\mu}$ can be obtained as follows: Let $X\in \mathfrak{g}_\mu$ be
a $\R$-rational(nilpotent) element. Let $x=\exp X\in U_{[\mu]}$. The Jacobson-Morosov theorem which asserts the
existence of an element $Y\in \mathfrak{g}_{-\mu}$ such that
$\{X,Y,[X,Y]\}$ span a three-dimension split Lie algebra over $k$. Let $y=$exp$(-Y)$ then $xyx=yxy$. Note that $y\in U_{[-\mu]}$ is uniquely determined. Set $w_\mu=xyx$ then we obtain one representative of $w_\mu\in W_0$. We denote by
$w_{\mu}(x)$ and call it \emph{the detailed expression} of $w_{\mu}$ if we want to emphasize the initial element $x$. Note that $w_{\mu}(x)=w_{-\mu}(y)$. Furthermore, for any $u\in N(S)_k$, $uw_{\mu}(x)u^{-1}=w_{u(\mu)}(uxu^{-1})$.
\end{remark}
If
$\mu\in \Delta^1_{A_0}$ and $x, x_1 \in U_{[\mu]} \setminus \set{e}$, consider
the element $h_\mu(x,x_1)= w_\mu(x)w_\mu(x_1)^{-1}$. Then since $w_\mu(x)$ is an element of order 2 in $W_0$, $h_\mu(x,x_1)\in C(S)\bigcap G_k^+$. Let $H_{\mu}$ be
the subgroup generated by these elements.

\subsection{Central extensions}\label{sec:8}
Let $S$ be an abstract group. A central extension  $(\theta,S')$
of $S$ is said to be universal if for any central extension $(\eta,
E')$ of $S$, there exists a unique homomorphism $\phi:S'\rightarrow
E'$ such that $\eta\circ\phi=\theta$. In the category of groups (not necessarily topological groups), a group admits a universal central extension if and only if it is perfect (for the proof of this and other elementary properties of a
universal central extension, one may refer to \cite[Section 7]{Steinberg2}).

\begin{theorem}[Theorem 1.9, \cite{deodhar78}]
\label{thm:deodhar-centr-ext}
If $G$ is as described in Section \ref{sec:1} and if $A = A_0$ and is almost simple, then $\widetilde{\mathcal{G}}_A/ \mc S_A$  is a universal central extension of $G$.
\end{theorem}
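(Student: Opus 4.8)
\emph{Strategy.} The plan is to check the two conditions defining a universal central extension in the sense of Section~\ref{sec:8}: that $\widetilde{\mc G}_{A_0}/\mc S_{A_0}$ is a perfect central extension of $G$, and that it maps uniquely, over $G$, into every central extension of $G$. When $G$ is almost simple of $k$-rank $\ge 2$ we have $\Lambda_{A_0}=\Delta_{A_0}$, the split Cartan $A_0$ is its own semisimple part, and, $G$ being simple of rank $\ge 2$ with no compact or rank-one factors, the full Cartan action $\alpha_{A_0}$ has no rank-one factor; so condition~\starref{} holds and Theorem~\ref{thm:central} gives that $\widetilde{\mc G}_{A_0}/\mc S_{A_0}$ is a perfect central extension of $G$. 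Writing $\pi:\widetilde{\mc G}_{A_0}/\mc S_{A_0}\to G$ for the canonical projection, its kernel $\mc C_{A_0}/\mc S_{A_0}$ is central, and everything is reduced to the universal property. (By Remark~\ref{re:3} and Fact~\ref{fact:6} one may replace $G$ by any convenient perfect central extension or factor while doing this.)

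\emph{The formal part.} Fix a central extension $(\eta,E')$ of $G$. Uniqueness of a morphism $\phi$ over $G$ is automatic from perfectness: if $\phi_1,\phi_2$ both cover $\id_G$, then $x\mapsto\phi_1(x)\phi_2(x)^{-1}$ takes values in $\ker\eta\subseteq Z(E')$ and, its target being central, is a homomorphism from the perfect group $\widetilde{\mc G}_{A_0}/\mc S_{A_0}$ to an abelian group, hence trivial. For existence, suppose for the moment we have a coherent family of homomorphic lifts $\phi_r:U_{[r]}\to E'$ of the inclusions $U_{[r]}\hookrightarrow G$, $r\in\Delta_{A_0}$ (this is the content of the next step). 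One then wants the commutator relations $[\phi_\mu(x),\phi_\nu(y)]=R_{\mu,\nu,A_0}(x,y)$, read off through the $\phi_\chi$, $\chi\in C(\mu,\nu)$, whenever $\mu\neq-k\nu$ for all $k\in\R_+$. These come for free once the lifts are in hand: because $\ker\eta$ is central, $[\phi_\mu(x),\phi_\nu(y)]$ does not depend on the chosen lifts of $x$ and $y$ (central factors cancel in a commutator), so it is a well-defined function of $(x,y)$ carried by $\eta$ to $R_{\mu,\nu,A_0}(x,y)$; and since $C(\mu,\nu)$ is $A_0$-admissible, coherence of the lifts (the analogue of Lemma~\ref{lem:detected-comm} for the subgroups $\phi_\chi(U_{[\chi]})$) shows $\eta$ is injective on $\prod_{\chi\in C(\mu,\nu)}\phi_\chi(U_{[\chi]})$, so the identity holds verbatim in $E'$. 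The $\phi_r$ then assemble, by the universal property of free products, to a homomorphism $\widetilde{\mc G}_{A_0}\to E'$ that kills $\mc S_{A_0}$, i.e.\ to the desired $\phi$, and $\eta\circ\phi=\pi$ by construction.

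\emph{The hard part.} What is not formal is producing a coherent family of lifts $\phi_r$: one cannot split an arbitrary central extension over a single root subgroup by soft means. Here I would follow the scheme of \cite{deodhar78} — lift the root subgroups inside the rank-one subgroups $G^{\mu}$, organize the choices using the detailed expressions $w_\mu(x)$ of Remark~\ref{re:1}, and transport them over the Weyl orbit by conjugating with chosen lifts $\widetilde w_\mu$ of the Weyl elements, checking that the resulting $SL_2$-type relations and conjugation formulas, which a priori hold only modulo $\ker\eta$, in fact hold identically. It is precisely at this point that $k\text{-rank}(G)\ge 2$ and almost-simplicity are used — for $k$-rank $1$ the kernel need not even be central and the statement is false — because the extra commutator relations available in higher rank force every a-priori-central discrepancy to collapse. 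Equivalently, this step is the vanishing $H_2\big(\widetilde{\mc G}_{A_0}/\mc S_{A_0};\Z\big)=0$, which one could instead deduce via Hopf's formula from the presentation of $\widetilde{\mc G}_{A_0}/\mc S_{A_0}$ as the free product of the $U_{[r]}$ modulo the commutator relations; superperfectness together with the centrality of $\ker\pi$ furnished by Theorem~\ref{thm:central} then yields universality. I expect this coherent-lifting (equivalently, $H_2$-vanishing) step to be the principal obstacle; the rest is the routine central-extension bookkeeping of the preceding paragraph.
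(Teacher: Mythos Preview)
This theorem is not proved in the paper; it is quoted from \cite{deodhar78} as foundational input. So there is no in-paper proof to compare against.

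Your strategy is circular within the paper's logical structure. You invoke Theorem~\ref{thm:central} to establish that $\widetilde{\mc G}_{A_0}/\mc S_{A_0}$ is a perfect central extension of $G$. But look at how Theorem~\ref{thm:central} is proved in the semisimple case (Section~\ref{cor:simple-central}): the argument opens by citing Corollary~\ref{cor:4}, which the paper states as ``a direct consequence of Theorem~\ref{thm:deodhar-centr-ext}''. The dependency chain runs Theorem~\ref{thm:deodhar-centr-ext} $\Rightarrow$ Corollary~\ref{cor:4} $\Rightarrow$ Theorem~\ref{thm:central}, and you are trying to run it backwards. Deodhar's theorem is the foundation beneath Theorem~\ref{thm:central}, not a consequence of it.

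Your ``formal part'' --- deducing the commutator relations in $E'$ from coherent lifts via admissibility, and the uniqueness argument from perfectness --- is fine. Your ``hard part'' correctly locates where the content lives: producing coherent homomorphic lifts $\phi_r:U_{[r]}\to E'$ and showing the relations lift exactly rather than merely modulo center, which is where $k$-rank $\ge 2$ enters. But ``I would follow the scheme of \cite{deodhar78}'' is a citation, not a proof, and the $H_2$-vanishing reformulation is equivalent to the theorem rather than an independent route to it. The one step carrying all the weight is exactly the one you defer --- and that step \emph{is} Deodhar's theorem. Since the paper itself treats this result as a black box from \cite{deodhar78}, there is nothing to reconstruct here beyond what Deodhar did; just be aware that you cannot bootstrap it from Theorem~\ref{thm:central}.
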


Throughout Sections \ref{sec:alg2} we use the following notations in $\widetilde{\mathcal{G}}_A/ \mc S_A$ for the sake of computation:
for any detected $r\in \Delta_{A_0}$ and any $u\in U_{[r]}$, let $\tilde{u}$ denote the corresponding element in $\widetilde{\mathcal{G}}_A/ \mc S_A$.
Set $\tilde{U}_{[r]}=\{\tilde{u}: u\in U_{[r]}\}$ for detected $r\in \Delta_{A_0}$ and $\tilde{U}_{[s]}=\{\tilde{u}: u\in U_{[s]}\}$ for $s\in \Delta_{A}$. Note that the first one projects to the $r$-root subgroup for root system $\Delta_{A_0}$ and the latter one projects to the $s$-root subgroup for root system $\Delta_{A}$, which is product of subgroups $\tilde{U}_{[\gamma]}$, for $\gamma\in \Delta_{A_0}$ such that $\gamma|_{A}=ks$ for some $k \in \R_+$ (also see \seref{se:3} of Section \ref{sec:14} for comparison).

$\tilde{w}_r$, $\tilde{h}_r(u,u_1)$ for $u,u_1\in
U_{[r]}$ and $\tilde{H}_r$ are obviously defined elements/sets of $\widetilde{\mathcal{G}}_A/ \mc S_A$ for detected $r\in \Delta_{A_0}$.

The next result illustrates the structure of $\ker(\widetilde{\mathcal{G}}_{A_0}/ \mc S_{A_0}\rightarrow G)$, where $G$ is as described in Section \ref{sec:1} and is almost simple. We will not need it in this section, but this technical description will appear in our analysis of twisted spaces:

\begin{theorem}[Corollary 1.13, \cite{deodhar78}] \label{le:1}
Let $\widetilde{W}$ be the subgroup of $\widetilde{\mathcal{G}}_{A_0}/ \mc S_{A_0}$ generated by $\big\{\tilde{w}_\mu(u):
\mu\in\Delta^1_{A_0}, u\in U_{[\mu]} \setminus \set{e}\big\}$. For $\mu\in\Delta^1_{A_0}$,
denote by $\tilde{H}_{\mu}$ the subgroup generated by
$\tilde{h}_{\mu}(v,v_1)$, $v, v_1\in U_{[\mu]} \setminus \set{e}$. Let
$\tilde{H}$ be he subgroup generated by $\{\tilde{H}_{\mu},
\mu\in\Delta^1_{A_0}\}$ and $\Delta$ denote the set of simple roots. Then
\begin{enumerate}[1.]
\item $\tilde{H}_{\mu},\mu\in\Delta^1_{A_0}$, is normal in
$\widetilde{H}$, and $\widetilde{H}$ is normal in $\widetilde{W}$.

\item $\tilde{H}=\prod_{\mu\in \Delta}\tilde{H}_{\mu}$

\item For the projection $\pi_G : \widetilde{\mathcal{G}}_{A_0}/ \mc S_{A_0}\rightarrow G$, $\ker(\pi_G)\subset \tilde{H}$.
\end{enumerate}
\end{theorem}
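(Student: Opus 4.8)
Since this is Corollary 1.13 of \cite{deodhar78}, the argument is available in the literature; I describe the plan I would follow, which runs along Steinberg's analysis of the Steinberg group and its canonical torus. Throughout write $\Sigma := \widetilde{\mathcal{G}}_{A_0}/\mc S_{A_0}$, which is a perfect (indeed universal) central extension of $G$ by Theorem \ref{thm:deodhar-centr-ext}; let $s_\mu$ denote the reflection attached to $\mu\in\Delta^1_{A_0}$, fix a positive system, and set $\tilde U^{\pm} := \prod_{\pm\mu>0}\tilde U_{[\mu]}\subset\Sigma$. The first step is to collect three families of relations, each inherited from the defining relations of $\mc S_{A_0}$ together with Baker--Campbell--Hausdorff (cf.\ Remark \ref{re:2}): the \emph{commutator relations} $[\tilde U_{[\mu]},\tilde U_{[\nu]}]\subset\prod_{\chi\in C(\mu,\nu)}\tilde U_{[\chi]}$; the \emph{Weyl relations} $\tilde w_\mu(u)\,\tilde U_{[\nu]}\,\tilde w_\mu(u)^{-1}=\tilde U_{[s_\mu\nu]}$ together with $\tilde w_\mu(u)\,\tilde w_\nu(v)\,\tilde w_\mu(u)^{-1}=\tilde w_{s_\mu\nu}\bigl(\tilde w_\mu(u)\,v\,\tilde w_\mu(u)^{-1}\bigr)$; and the \emph{torus relations}, that conjugation by $\tilde h_\mu(v,v_1)$ preserves each $\tilde U_{[\nu]}$, acting on it through the character $\nu$, so that $\tilde h_\mu(v,v_1)\,\tilde w_\nu(u)\,\tilde h_\mu(v,v_1)^{-1}$ is again of the form $\tilde w_\nu(u')$. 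I will also need a cocycle-type identity controlling the products of the $\tilde h_\mu$ among themselves, whose error terms are central in $\Sigma$.

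With these in hand, parts 1 and 2 will be formal. For $\tilde H_\mu\triangleleft\tilde H$: conjugating a defining factor $\tilde w_\mu(u)$ of $\tilde h_\mu$ by $\tilde h_\nu(v,v_1)$ stays of the form $\tilde w_\mu(u')$ by the torus relations, so $\tilde h_\nu(v,v_1)\,\tilde h_\mu(u,u_1)\,\tilde h_\nu(v,v_1)^{-1}\in\tilde H_\mu$. For $\tilde H\triangleleft\tilde W$: conjugating $\tilde h_\nu(v,v_1)$ by $\tilde w_\mu(u)$ produces, via the Weyl relations, a $\tilde h_{s_\mu\nu}$-element, which lies in $\tilde H$. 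For part 2, I would reduce $\tilde H_\nu$ for arbitrary $\nu\in\Delta^1_{A_0}$ to the simple roots: pick $w\in W_0$ with $w\nu\in\Delta$, lift $w$ to a word $\tilde{\dot w}$ in the $\tilde w_\mu$, and push $\tilde h_\nu(v,v_1)$ into $\tilde H_{w\nu}$ using the Weyl relations; the central corrections that appear are themselves products of $\tilde h$-elements (Steinberg symbols), so an induction on the height of $\nu$, together with part 1, gives $\tilde H=\prod_{\mu\in\Delta}\tilde H_\mu$.

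The real content is part 3, and here the plan is to lift the Bruhat decomposition of $G$ to $\Sigma$. Using the commutator and Weyl relations one shows that every $\tilde g\in\Sigma$ can be written
\[
 \tilde g=\tilde u_1\cdot\tilde{\dot w}\cdot\tilde h\cdot\tilde u_2,\qquad \tilde u_1\in\tilde U^+,\ \tilde h\in\tilde H,\ \tilde u_2\in\tilde U^+\cap\tilde{\dot w}^{-1}\tilde U^-\tilde{\dot w},
\]
for an appropriate $w\in W_0$ and a chosen lift $\tilde{\dot w}$, mirroring $G=\bigsqcup_w U^+\,\dot w\,H\,(U^+\cap\dot w^{-1}U^-\dot w)$. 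By Lemma \ref{lem:detected-comm} (Deodhar's Lemma 1.10) $\pi_G$ is injective on $\tilde U^+$ and on $\tilde U^+\cap\tilde{\dot w}^{-1}\tilde U^-\tilde{\dot w}$, and it carries $\tilde W$ onto a set of representatives of $W_0$ compatibly with the chosen lifts. Then for $\tilde g\in\ker\pi_G$, projecting the decomposition to $G$ gives $e=u_1\,\dot w\,h\,u_2$, and uniqueness of the Bruhat decomposition of $G$ forces $w=e$ and $u_1=u_2=e$; injectivity of $\pi_G$ on the unipotent factors and the normalization $\tilde{\dot e}=e$ then give $\tilde u_1=\tilde u_2=\tilde{\dot w}=e$, so $\tilde g=\tilde h\in\tilde H$, as desired.

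The main obstacle, exactly as in Steinberg's and Deodhar's originals, is the bookkeeping. In part 2 one must verify that every central error arising when commuting the $\tilde h$'s past one another and past the $\tilde w_\mu$'s can be absorbed back into $\tilde H$. In part 3 one must establish both halves of the lifted Bruhat decomposition: existence is a direct consequence of the relations, but the uniqueness required to pin down $w$, $\tilde u_1$ and $\tilde u_2$ genuinely uses perfectness of $\Sigma$ (Theorem \ref{thm:deodhar-centr-ext}) together with the injectivity in Lemma \ref{lem:detected-comm}. Once these are in place, the three conclusions follow formally.
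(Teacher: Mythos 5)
The paper does not reprove this statement: it is imported verbatim as Corollary~1.13 of Deodhar \cite{deodhar78}, so there is no in-paper proof to compare against. Your outline is the correct one and is essentially the Steinberg--Deodhar argument: establish the commutator, Weyl, and torus relations in $\Sigma=\widetilde{\mathcal{G}}_{A_0}/\mc S_{A_0}$; deduce parts~1 and~2 by pushing $\tilde h_\nu$-elements to the simple roots via $\tilde w$-conjugation and absorbing the resulting Steinberg-symbol corrections; and for part~3, lift the Bruhat decomposition of $G$ to $\Sigma$ and use uniqueness in $G$ together with injectivity of $\pi_G$ on each unipotent cell to force $w=e$, $\tilde u_1=\tilde u_2=e$, hence $\tilde g\in\tilde H$. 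That matches Deodhar's Proposition~1.11 and its corollaries.

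One attribution is slightly off. You claim the uniqueness of the lifted Bruhat decomposition ``genuinely uses perfectness of $\Sigma$.'' It does not: the uniqueness of $w$, $\tilde u_1$, $\tilde u_2$ comes entirely from projecting to $G$, invoking uniqueness of the Bruhat decomposition there, and then using that $\pi_G$ is injective on $\tilde U^+$ and on $\tilde U^+\cap\tilde{\dot w}^{-1}\tilde U^-\tilde{\dot w}$ (Lemma~\ref{lem:detected-comm}, i.e.\ Deodhar's Lemma~1.10). Perfectness enters in establishing that $\Sigma$ is a universal central extension (Theorem~\ref{thm:deodhar-centr-ext}) and in controlling the center, not in the cell-decomposition argument. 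This is harmless for the conclusion but worth correcting, since the logical dependencies matter when one later tries to transport the argument to the non-Cartan restrictions $A\subsetneq A_0$, which is precisely what the surrounding sections do.
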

\begin{remark}
The above  theorem still holds if we get rid of the restriction that $G$ is almost simple.
\end{remark}
We end this part by the stating next result, which is a direct consequence of Theorem \ref{thm:deodhar-centr-ext}:
\begin{corollary}\label{cor:4}
If $G$ is as described in Section \ref{sec:1}. Then $\widetilde{\mathcal{G}}_{A_0}/ \mc S_{A_0}$ is a perfect central extension of $G$.
\end{corollary}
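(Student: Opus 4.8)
The plan is to deduce the statement from the almost simple case, Theorem~\ref{thm:deodhar-centr-ext}, by tracking how the construction $\widetilde{\mathcal{G}}_{A_0}/\mc S_{A_0}$ behaves under the direct product decomposition $G = \prod_{i=1}^{j} G_i$. Write $A_{0,i}$ for the split Cartan subgroup of $G_i$, so that $A_0 = \prod_{i=1}^j A_{0,i}$, and let $\widetilde{G_i} := \prod^*_{[\phi] \in \Delta_{A_{0,i}}} U_{[\phi]}$ with $\mc S_i$ the corresponding normal subgroup of commutator relations, so that $\widetilde{G_i}/\mc S_i$ is the analogue of $\widetilde{\mathcal{G}}_{A_0}/\mc S_{A_0}$ built from $G_i$ with $A = A_{0,i}$. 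The first observation is that restricted roots of a product do not mix factors: since $\mathfrak{g} = \bigoplus_i \mathfrak{g}_i$, every $\phi \in \Delta_{A_0}$ has root space inside a single $\mathfrak{g}_i$ and vanishes identically on $A_{0,\ell}$ for $\ell \ne i$. Hence $\Delta_{A_0} = \bigsqcup_{i=1}^j \Delta_{A_{0,i}}$, and for $\phi \in \Delta_{A_{0,i}}$ the root subgroup $U_{[\phi]}$ lies in $G_i$. By associativity of the free product,
\[ \widetilde{\mathcal{G}}_{A_0} = \prod^*_{[\phi] \in \Delta_{A_0}} U_{[\phi]} = \prod^*_{1 \le i \le j} \Big( \prod^*_{[\phi] \in \Delta_{A_{0,i}}} U_{[\phi]} \Big) = \prod^*_{1 \le i \le j} \widetilde{G_i}. \]

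Next I would sort out the relations $\mc S_{A_0}$. A generator $[x,y]R_{\mu,\nu,A_0}(x,y)^{-1}$ with $\mu, \nu \in \Delta_{A_{0,i}}$ in the same factor is, by the same vanishing argument (note $C(\mu,\nu) \subseteq \Delta_{A_{0,i}}$), a defining relation of $\mc S_i$ inside $\widetilde{G_i}$. If instead $\mu \in \Delta_{A_{0,i}}$ and $\nu \in \Delta_{A_{0,\ell}}$ with $i \ne \ell$, then $\mu$ is never a negative multiple of $\nu$, so the commutator relation is defined; moreover $C(\mu,\nu) = \emptyset$, because a strictly positive combination $t_1\mu + t_2\nu$ is nonzero on both $A_{0,i}$ and $A_{0,\ell}$ and hence cannot be a restricted root. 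Thus $R_{\mu,\nu,A_0}(x,y) = e$ and the relation reads $[x,y] = e$. In the free product above, quotienting by the normal closure of the commutators between root subgroups of distinct factors forces the factors $\widetilde{G_i}$ themselves to commute, since they are generated by those root subgroups; a routine normal-form argument then identifies the result with the direct product $\prod_{i=1}^j \widetilde{G_i}$. Since each $\mc S_i$ is already normal in this direct product, a final quotient yields
\[ \widetilde{\mathcal{G}}_{A_0}/\mc S_{A_0} \;\cong\; \prod_{i=1}^{j} \big( \widetilde{G_i}/\mc S_i \big), \]
compatibly with the canonical projections onto $G = \prod_i G_i$.

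To conclude, apply Theorem~\ref{thm:deodhar-centr-ext} to each almost simple factor $G_i$, which is itself a group ``as described in Section~\ref{sec:1}'' with $j = 1$: each $\widetilde{G_i}/\mc S_i$ is a universal central extension of $G_i$, hence perfect with central kernel (see \cite[Section 7]{Steinberg2}). A finite direct product of perfect groups is perfect, and the product of the kernels is central in the product, so $\prod_{i=1}^j \big( \widetilde{G_i}/\mc S_i \big)$ is a perfect central extension of $G$; transporting along the isomorphism above finishes the proof.

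Regarding the main obstacle: none of the steps is deep. The only point demanding care is the identification of $\widetilde{\mathcal{G}}_{A_0}/\mc S_{A_0}$ with the direct product of the factors' extensions, which hinges on the elementary but crucial fact that the restricted root system of a product of groups is the disjoint union of those of the factors, so that every cross-factor commutator relation is nothing but commutativity.
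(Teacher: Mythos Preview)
Your proof is correct and follows essentially the same approach as the paper's: both reduce to the almost simple case (Theorem~\ref{thm:deodhar-centr-ext}) via the decomposition $\Delta_{A_0} = \bigsqcup_i \Delta_{A_{0,i}}$ and the observation that the cross-factor commutator relations force root subgroups from distinct factors to commute in $\widetilde{\mathcal{G}}_{A_0}/\mc S_{A_0}$. The only stylistic difference is that you package this as an explicit isomorphism $\widetilde{\mathcal{G}}_{A_0}/\mc S_{A_0} \cong \prod_i (\widetilde{G_i}/\mc S_i)$, whereas the paper argues elementwise (writing each $\tilde g$ as $\tilde g_1 \cdots \tilde g_j$ with $\tilde g_i$ in the $i$th factor and checking perfectness and centrality directly); your formulation is slightly cleaner but not materially different.
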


\subsection{Basic properties of Weyl elements in $\widetilde{\mathcal{G}}_{A}/ \mc S_{A}$}
From this part to the end of this section we study $\widetilde{\mathcal{G}}_{A}/ \mc S_{A}$. So unless especially stated, all $\tilde{\cdot}$ denote elements in $\widetilde{\mathcal{G}}_{A}/ \mc S_{A}$.

From Example \ref{sec:6} we see that the Weyl elements associated to the root system $\Delta_A$ are not transitive on $\Delta_A$. Furthermore, the Weyl elements generated from detected roots in $\Delta_{A_0}$ not necessarily preserve the root spaces of $\Delta_A$. To overcome this difficulty we consider the action of Weyl elements generated from detected roots in $\Delta_{A_0}$ on root spaces of $\Delta_{A_0}$ instead, by noting that the root spaces of $\Delta_A$ are products of those of $\Delta_{A_0}$ (see \seref{se:3} of Section \ref{sec:14}). So unless especially stated, we only consider subgroups of $\widetilde{U}_{[r]}$, for detected $r\in \Delta_{A_0}$ throughout this section (note that they are canonically subgroups of $\widetilde{U}_{[r|_A]}$). Since $G$ is a direct product of $G_i$, $1\leq i\leq j$, $\Delta_{A_0}=\bigcup_{i=1}^j \Delta_{A_0,i}$, where $\Delta_{A_0,i}$ is the (standard)root system for $G_i$, $1\leq i\leq j$. Then the restricted root system with respect to $A$ on $G_i$ is $\Delta_{A,i}=\{r\mid_A: r\in \Delta_{A_0,i}\}$, $1\leq i\leq j$.

We will frequently used the words ``proportional'' and ``nonproportional''. We say that $a$ is proportional to $b$, if $a=kb$, $k\in\RR$; otherwise,
$a$ is nonproportional to $b$, i.e., $a\neq kb$, for any $k\in\RR$. Note that since in this context, $a$ and $b$ may be interpreted as linear functionals on $A$ or $A_0$, so we will often emphasize by saying that $a$ and $b$ are proportional {\it over $A$}, or $a|_A$ and $b|_A$ are proportional. If we do not specify, all functionals will be considered functionals on $A_0$ in this section.

We first state a simple but useful fact:

\begin{fact}\label{fact:5}
For any $r\in\Delta_{A_0}$ (detected or undetected) and any $w\in W_0$, $w$ can be written as a product of $w_1w_2$, where $w_2$ is a product of $w_s$, such that $s$ is not proportional to $r$ over $A$ and $w_1$ is a product of $w_{s'}$ such that ${s'}$ is proportional to $r$ over $A$.
\end{fact}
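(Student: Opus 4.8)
\textbf{Proof proposal for Fact \ref{fact:5}.}

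The plan is to induct on the length of $w$ as a word in the generators $\{w_s : s \in \Delta_{A_0}\}$ of $W_0$, peeling off generators one at a time and moving the ones proportional to $r$ (over $A$) to the left. Write $w = w_{s_1} w_{s_2} \cdots w_{s_m}$. If $m = 0$ there is nothing to prove, and if $m = 1$ the statement is immediate ($w_1 = w$ and $w_2 = e$ if $s_1$ is proportional to $r$ over $A$, or vice versa). For the inductive step, I would first apply the inductive hypothesis to $w' = w_{s_2} \cdots w_{s_m}$ to write $w' = w_1' w_2'$ with $w_1'$ a product of reflections in roots proportional to $r$ over $A$ and $w_2'$ a product of reflections in roots not proportional to $r$ over $A$. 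It then remains to absorb the leftmost generator $w_{s_1}$ into this decomposition.

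If $s_1$ itself is proportional to $r$ over $A$, then $w = (w_{s_1} w_1') w_2'$ already has the desired form. If $s_1$ is not proportional to $r$ over $A$, the issue is to commute $w_{s_1}$ past $w_1'$. The key observation is that if $s$ is proportional to $r$ over $A$ and $t$ is arbitrary, then $w_s w_t = w_{w_s(t)} w_s$, and $w_s(t) = t - \langle t, s^\vee\rangle s$ differs from $t$ by a multiple of $s$, hence its restriction to $A$ differs from $t|_A$ by a multiple of $r|_A$. Therefore $w_s(t)$ is proportional to $r$ over $A$ if and only if $t$ is. Applying this repeatedly as I push $w_{s_1}$ rightward through the reflections making up $w_1'$, each reflection $w_s$ in $w_1'$ (which is proportional to $r$ over $A$) turns $w_{s_1}$ into $w_{s_1''}$ for some $s_1''$ still not proportional to $r$ over $A$, and leaves behind a reflection $w_s$ (unchanged, still proportional to $r$ over $A$) on the left. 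After passing all the way through $w_1'$, we obtain $w = w_1' \cdot w_{s_1^\star} \cdot w_2'$ where $s_1^\star$ is not proportional to $r$ over $A$; setting $w_1 = w_1'$ and $w_2 = w_{s_1^\star} w_2'$ completes the induction.

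I expect no serious obstacle here: the only point requiring care is the bookkeeping in the commutation step, namely verifying that proportionality-over-$A$ of a root is preserved under reflection by a root proportional to $r$ over $A$, which is the elementary linear-algebra fact noted above (a reflection $w_s$ changes a functional only by adding a multiple of $s$, so restricting to $A$ only changes it by a multiple of $r|_A$). One should also note that this argument uses only the abstract Coxeter/Weyl-group relations together with the linear action on functionals, so it applies verbatim regardless of whether $r$ is detected, and regardless of the failure of $\Delta_A$ to be an honest root system.
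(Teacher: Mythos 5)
Your proof is correct, and it takes essentially the same route as the paper: exploit the Weyl-group commutation relation $w_a w_b = w_{w_a(b)}w_a$ and the fact that reflecting by a root whose restriction to $A$ is proportional to $r|_A$ does not change whether a functional is proportional to $r|_A$. Two small points of comparison. First, you state the preservation property in exactly the form that drives the sorting argument: for $s$ proportional to $r$ over $A$, $w_s(t)|_A$ is proportional to $r|_A$ if and only if $t|_A$ is. The paper's sketch instead records the complementary observation (that reflecting a proportional root by a non‑proportional one generically produces a non‑proportional root), which is true but not what you actually need to preserve the two blocks as you commute; your version is the one that makes the induction go through cleanly. Second, you push a single non‑proportional reflection rightward past the proportional block, which directly produces the stated ordering $w = w_1 w_2$ with the proportional block $w_1$ on the left, whereas the paper's phrase ``push the proportional reflections to the right'' literally yields the reverse ordering (one would then have to apply the argument to $w^{-1}$, or commute in the other direction, to recover the stated form). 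Your explicit induction on word length also fills in the bookkeeping that the paper leaves implicit. In short: same idea, but your write‑up is more careful and matches the statement directly.
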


\begin{proof}
Note that $w_{s_1}w_{s_2}=w_{w_{s_1}(s_2)}w_{s_1}$ for any $\gamma_1,\,\gamma_2\in\Delta_{A_0}$. Further, note that if $s_2|_A$ is proportional to $r|_A$, but $s_1|_A$ is not proportional to $r|_A$, then $w_{w_{s_1}(s_2)}$ is not proportional to $r|_A$ since $w_{s_1}(s_2)=ns_1+s_2$ for some $n\in\ZZ$.  Hence we may push all of the roots which restrict to exponents proportional to $r|_A$ to the right hand side of the product.
\end{proof}

As a consequence of Lemma \ref{lem:detected-comm}, we get the following lemma which is important for the sequel:
\begin{lemma}
\label{lem:weyl-conj}
If $r_1,r_2, w_{r_1}(r_2) \in \Delta_{A_0}$ are all detected and $v \in U_{[r_2]}$, then
\begin{align*}
 \widetilde{w}_{r_1}\tilde{v}\widetilde{w}_{r_1}^{-1} =\tilde{v_1}\in \tilde{U}_{[w_{r_1}(r_2)]}.
\end{align*}
\end{lemma}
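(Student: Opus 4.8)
The lemma is the lift to $\widetilde{\mathcal{G}}_A/\mc S_A$ of the classical fact that Weyl elements conjugate root subgroups to root subgroups; since this already holds in $\mc G_0$, the entire content is that the conjugate $\widetilde{w}_{r_1}\tilde v\widetilde{w}_{r_1}^{-1}$ lands in the correct subgroup of $\widetilde{\mathcal{G}}_A/\mc S_A$ and does not acquire a nontrivial element of the kernel $\mc C_A=\ker\pi_1$. The plan is first to fix the candidate target element. Because $w_{r_1}(r_2)$ is detected, $\Phi_0:=\{k\,w_{r_1}(r_2):k\in\R_+\}\cap\Delta_{A_0}$ is $A$-admissible, so Lemma~\ref{lem:detected-comm} gives that $\pi_1$ restricts to an isomorphism of $\tilde U_{[w_{r_1}(r_2)]}$ onto $U_{[w_{r_1}(r_2)]}$. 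Since $\pi_1(\widetilde{w}_{r_1}\tilde v\widetilde{w}_{r_1}^{-1})=w_{r_1}\,\pi_1(\tilde v)\,w_{r_1}^{-1}\in U_{[w_{r_1}(r_2)]}$, there is a unique $\tilde v_1\in\tilde U_{[w_{r_1}(r_2)]}$ with the same $\pi_1$-image, and it suffices to prove that $z:=\widetilde{w}_{r_1}\tilde v\widetilde{w}_{r_1}^{-1}\tilde v_1^{-1}$, which a priori lies in $\mc C_A$, is trivial.

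When $r_2$ is proportional to $r_1$ over $A_0$ (so $w_{r_1}(r_2)=-r_2$), the computation takes place entirely inside the rank-one subgroup $\langle\tilde U_{[r_1]},\tilde U_{[-r_1]}\rangle$, and I would carry it out directly from the detailed expression $\widetilde{w}_{r_1}=\tilde x\tilde y\tilde x$, where $x\in U_{[r_1]}$ and $y\in U_{[-r_1]}$ form a Jacobson--Morozov pair (Remark~\ref{re:1}), using the relation $\tilde x\tilde y\tilde x=\tilde y\tilde x\tilde y$ (which holds modulo $\mc S_A$) together with the commutator relations $R_{\mu,\nu,A}$ among the positive multiples of $r_1$. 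This is the classical $SL(2)$-computation lifted to the central extension.

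The main case is when $r_2$ is not proportional to $r_1$ over $A_0$, so $r_1$ and $r_2$ span a rank-two subsystem of $\Delta_{A_0}$. Writing $\widetilde{w}_{r_1}=\tilde x\tilde y\tilde x$ as above, I would evaluate $\widetilde{w}_{r_1}\tilde v\widetilde{w}_{r_1}^{-1}$ in three stages, conjugating successively by $\tilde x$, by $\tilde y$, and by $\tilde x$. At each stage the intermediate element is obtained from $\tilde v$ by commutator manipulations with $\tilde x$ or $\tilde y$, so (after suitably ordering the factors) it lies in a product $U_\Phi$ with $\Phi$ an explicitly identified $A$-admissible set built from the cones $C(r_1,r_2)$ and $C(-r_1,r_2)$ and their $\Lambda_A$-saturations; on such $U_\Phi$, Lemma~\ref{lem:detected-comm} says $\pi_1$ is injective, which forces each intermediate element to be the unique lift of the corresponding element of $\mc G_0$. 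Concatenating the three stages and comparing with the computation of $w_{r_1}vw_{r_1}^{-1}$ in $\mc G_0$ then yields $z=e$.

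The delicate point, and the step I expect to cause the most trouble, is that $\widetilde{w}_{r_1}$ necessarily involves both $\tilde U_{[r_1]}$ and $\tilde U_{[-r_1]}$, so there is no single $A$-admissible set containing all the roots that occur along the way; one must regroup and reorder the factors at each of the three stages so that each intermediate expression individually lies in a one-sided collection of root subgroups to which Lemma~\ref{lem:detected-comm} applies. A further subtlety, which is exactly why the statement is phrased in terms of $\Delta_{A_0}$-root subgroups rather than $\Delta_A$-root subgroups, is that for a general abelian $A$ the element $w_{r_1}$ need not preserve the $\Delta_A$-root spaces (Example~\ref{sec:6}); the argument is therefore run entirely at the level of the $\tilde U_{[r]}$ with $r\in\Delta_{A_0}$ detected, which are the building blocks of the coarse Lyapunov subgroups $U_{[\mu]}$, $\mu\in\Lambda_A$, and the information for $\Lambda_A$ is reassembled afterwards from the $\Delta_{A_0}$-level statement.
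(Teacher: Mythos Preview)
Your case split is the wrong one, and this creates a genuine gap. You divide according to whether $r_1$ and $r_2$ are proportional \emph{over $A_0$}, but the relevant dichotomy is proportionality \emph{over $A$}. The commutator relations generating $\mc S_A$ (Definition~\ref{de:1}) are only defined for $\mu,\nu\in\Lambda_A$ with $\mu\neq -k\nu$, $k>0$; that is, for roots whose restrictions to $A$ are not negatively proportional. So in your ``main case'' ($r_1,r_2$ nonproportional over $A_0$), the three-stage conjugation by $\tilde x\tilde y\tilde x$ breaks down precisely when $r_1|_A$ and $r_2|_A$ are proportional: if $r_1|_A=\lambda r_2|_A$ with $\lambda>0$, then $-r_1|_A$ and $r_2|_A$ are negatively proportional, the cone $C(-r_1,r_2)$ is undefined, and there is no relation in $\mc S_A$ to simplify $[\tilde y,\tilde v]$ at the middle stage. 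The symmetric obstruction occurs for $\lambda<0$ at the outer stages. Your remark that ``one must regroup and reorder'' does not help here, because the needed commutator relation simply is not in $\mc S_A$.

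Your handling of the $A_0$-proportional case has a second, independent problem: you invoke the braid relation $\tilde x\tilde y\tilde x=\tilde y\tilde x\tilde y$ ``which holds modulo $\mc S_A$'', but this is not among the generators of $\mc S_A$ (it involves $U_{[r_1]}$ and $U_{[-r_1]}$, which are negatively proportional), and proving it in $\widetilde{\mc G}_A/\mc S_A$ is essentially what Theorem~\ref{thm:central} is about. Assuming it here is circular.

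What the paper does is this. When $r_1|_A$ and $r_2|_A$ are nonproportional, the set $\Phi'=\{(ir_1+jr_2)|_A:j\ge 1,\ i\in\Z\}\cap\Lambda_A$ is $A$-admissible and contains everything produced by the full conjugation $\tilde x\tilde y\tilde x$, so a single application of Lemma~\ref{lem:detected-comm} suffices. When $r_1|_A$ and $r_2|_A$ \emph{are} proportional (this includes your $A_0$-proportional subcase), the paper uses the genuinely higher rank hypothesis~\starref\ to produce an auxiliary detected root $r$ with $r_2=w_r(r')$, where $r,r',r_2$ are pairwise nonproportional over $A$; one then writes $\tilde v=\widetilde w_r\tilde v_1\widetilde w_r^{-1}$ via the already-proved nonproportional case and pushes the $\widetilde w_{r_1}$-conjugation through, checking at each step that the roots involved remain nonproportional over $A$. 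This auxiliary-root reduction is the missing idea in your proposal.
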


\begin{proof}
We will first prove the case in which $r_1$ and $r_2$ are nonproportional over $A$. Let $\Phi'= \set{ (ir_1 + jr_2)|_A : i \in \NN, j \in \Z}$. Then one can  verify that $\Phi'$
is $A$-admissible. Thus we can apply Lemma \ref{lem:detected-comm} to $\Phi'$. Now, $\widetilde{w}_{r_1}\tilde{v} \widetilde{w}_{r_1}^{-1} \in\prod_{r \in \Phi'}^* \tilde{U}_{[r]}$ because of the relations $R_{\mu,\nu,A}$ and $\tilde{U}_{w_{r_1}(r_2)}\in \prod_{r \in \Phi'}^* \tilde{U}_{[r]}$ since $w_{r_1}(r_2)\mid_A=(nr_1+r_2)|_A\in \Phi'$, $n\in\ZZ$. Lemma \ref{lem:detected-comm} immediately shows that the result holds in this case.

Now we move to the case when $r_1$ and $r_2$ are propotional over $A$. Note that:
\begin{itemize}
  \item[($\dagger$)] \label{for:40} for any $s\in \Delta_{A_0}$, if $s$ is proportional to $r_2$ over $A$, then $w_s(r_2)$ is proportional to $r_2$ over $A$;

  \smallskip
  \item[($\ddagger$)] \label{for:44} if $s$ is not proportional to $r_2$ over $A$, but $w_s(r_2)$ is proportional to $r_2$ over $A$, then $w_s(r_2)|_A = r_2|_A$.
\end{itemize}
Suppose $r_2\in \Delta_{A_0,i}$. We wish to find detected $r,\,r\in \Delta_{A_0}$ with $r$, $r'$ and $r_2$ pairwise nonproportional over $A$ such that
$r_2 = w_r(r')$.  Assume that for each $r \in \Delta_{A_0}$ $w_r(r_2)$ is proportional to $r_2$ over $A$. Then using Fact \ref{fact:5} we see that any element of $W_0$ acts on $r_2$ first by elements non-proportional to $r_2$ over $A$, which must fix it by ($\ddagger$); then we may act by those proportional. This shows that the orbit of $r_2$ in $W$ contains only roots proportional to $r_2$ over $A$ by ($\dagger$). Since the orbit of any root must contain a basis of $A^*$, this shows that all roots in $\Delta_{A,i}$ are proportional to each other, which contradicts the fact that $A$ is genuinely higher rank. So we conclude that we may write $r_2 = w_r(r')$ with $r$ and $r'$ detected and linearly independent over $A$.

Thus if $\widetilde{v} \in U_{[r_2]}$, we can write $\tilde{v }= \widetilde w_r\tilde{v_1}\widetilde w_r^{-1}$ for a suitable $\tilde{v_1} \in \tilde{U}_{[r']}$ by earlier part of the proof. Then:
\begin{align}\label{for:34}
 \widetilde{w}_{r_1}\tilde{v}\widetilde{w}_{r_1}^{-1} & =\widetilde{w}_{r_1}(\widetilde w_r \tilde{v_1} \widetilde w_r^{-1} )\widetilde{w}_{r_1}^{-1}
  = \left(\widetilde w_{r_1} \widetilde w_r \widetilde w_{r_1}^{-1}\right)  \widetilde w_{r_1} \tilde{v_1} \widetilde w_{r_1}^{-1} \left(\widetilde w_{r_1}\widetilde w_r \widetilde w_{r_1}^{-1}\right)^{-1}.
 \end{align}
Since $r$ and $r_2$ nonproportional over $A$, by earlier part of the proof and Remark \ref{re:1}, it follows that
\begin{align*}
 \widetilde w_{r_1}\widetilde w_r(x)\widetilde w_{r_1}^{-1}=\widetilde w_{w_{r_1}(r)}(w_{r_1}xw_{r_1}^{-1}).
\end{align*}
Here $w_r(x)$ is the detailed expression of $w_r$ as described in Remark \ref{re:1} (this is a special case of the subsequent Corollary).  Also,

\begin{align*}
 \widetilde w_{r_1} \tilde{v}_1 \widetilde w_{r_1}^{-1} = \tilde{v}_2 \in \tilde{U}_{[w_{r_1}(r')]}.
\end{align*}

We wish to see that $w_{r_1}(r)$ and $w_{r_1}(r')$ are not proportional over $A$. Note that
\begin{align*}
 w_{r_1}(r')=w_{r_1}(w_{r}(r_2))=w_{w_{r_1}(r)}w_{r_1}(r_2).
\end{align*}
Furthermore, we see that $w_{r_1}(r_2)\mid_A$ is proportional to $r_2\mid_A$ while $w_{r_1}(r)|_A = r|_A + nr_1|_A$ is not proportional to $r_2|_A$ since $r_1|_A$ is proportional but $r|_A$ is not by assumption. Then $w_{r_1}(r')|_A=w_{w_{r_1}(r)}w_{r_1}(r_2)|_A$ is not proportional to $w_{r_1}(r)|_A$.

Applying above discussion for \eqref{for:34} we see that
\begin{align*}
 \widetilde{w}_{r_1} \tilde{v} \widetilde{w}_{r_1}^{-1}=\widetilde w(\widetilde w_{r_1}\tilde{v_1} \widetilde w_{r_1}^{-1})
 \widetilde w^{-1}\in \tilde{U}_{[w_{r_1}(r_2)]}
 \end{align*}
 where $\widetilde w=\widetilde w_{w_{r_1}(r)}(w_{r_1}xw_{r_1}^{-1})$, by noting that
 \begin{align*}
  w_{w_{r_1}(r)}(w_{r_1}(r'))= (w_{r_1} w_r w_{r_1}^{-1}) w_{r_1}(r') = w_{r_1}(w_r(r')) = w_{r_1}(r_2).
 \end{align*}
\end{proof}
As an immediate corollary of the above lemma and Remark \ref{re:1} we have:

\begin{corollary}
\label{cor:weyl-conj}
If $r_1,r_2, w_{r_1}(r_2) \in \Delta_{A_0}$ are all detected then
\begin{align*}
\widetilde{w}_{r_1}\widetilde{w}_{r_2}(x)\widetilde{w}_{r_1}^{-1} =\widetilde{w}_{r_2}(w_{r_1}xw_{r_1}^{-1}),\qquad e\not=x\in U_{[r_2]}.
\end{align*}
\end{corollary}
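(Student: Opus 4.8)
The plan is to expand the Weyl element via its Jacobson--Morozov presentation, distribute the conjugation by $\widetilde{w}_{r_1}$ across the three factors, and then identify each conjugated factor using Lemma~\ref{lem:weyl-conj}. First I would fix the defining data of $\widetilde{w}_{r_2}(x)$: by Remark~\ref{re:1}, write $x = \exp X$ with $X \in \mathfrak{g}_{r_2}$ an $\R$-rational nilpotent element; then there is a uniquely determined $Y \in \mathfrak{g}_{-r_2}$ with $\{X,Y,[X,Y]\}$ spanning a split three-dimensional subalgebra, and with $y = \exp(-Y) \in U_{[-r_2]}$ one has $w_{r_2}(x) = xyx$, so in $\widetilde{\mathcal G}_A/\mc S_A$ we get $\widetilde{w}_{r_2}(x) = \tilde{x}\,\tilde{y}\,\tilde{x}$. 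Consequently
\[
\widetilde{w}_{r_1}\,\widetilde{w}_{r_2}(x)\,\widetilde{w}_{r_1}^{-1}
= \big(\widetilde{w}_{r_1}\tilde{x}\widetilde{w}_{r_1}^{-1}\big)\big(\widetilde{w}_{r_1}\tilde{y}\widetilde{w}_{r_1}^{-1}\big)\big(\widetilde{w}_{r_1}\tilde{x}\widetilde{w}_{r_1}^{-1}\big).
\]
Since $r_1,r_2,w_{r_1}(r_2)$ are detected, so are $-r_2$ and $w_{r_1}(-r_2)=-w_{r_1}(r_2)$, so Lemma~\ref{lem:weyl-conj} applies both to $\tilde{x}$ (with $x\in U_{[r_2]}$) and to $\tilde{y}$ (with $y\in U_{[-r_2]}$), yielding $\widetilde{w}_{r_1}\tilde{x}\widetilde{w}_{r_1}^{-1} = \tilde{x}_1 \in \tilde{U}_{[w_{r_1}(r_2)]}$ and $\widetilde{w}_{r_1}\tilde{y}\widetilde{w}_{r_1}^{-1} = \tilde{y}_1 \in \tilde{U}_{[-w_{r_1}(r_2)]}$.

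The crux is to pin down \emph{which} elements $\tilde{x}_1,\tilde{y}_1$ are, not merely that they lie in the respective root subgroups; for this I would push everything down by $\pi_1$. Since $\pi_1$ restricts to the canonical injective inclusion on each root subgroup $U_{[\mu]}$, the natural map $U_{[\mu]}\to\tilde{U}_{[\mu]}$ is an isomorphism and $\pi_1|_{\tilde{U}_{[\mu]}}$ is injective; and by Remark~\ref{re:2} $\pi_1 : \widetilde{\mathcal G}_A/\mc S_A \to \mc G_0$ is a homomorphism, hence intertwines conjugation. Writing $\hat{w}_{r_1} := \pi_1(\widetilde{w}_{r_1})$, which represents the Weyl element $w_{r_1}$ in $\mc G_0$ up to a central element (irrelevant for conjugation), we obtain $\pi_1(\tilde{x}_1) = \hat{w}_{r_1} x \hat{w}_{r_1}^{-1} = w_{r_1}xw_{r_1}^{-1} =: x'$ and likewise $\pi_1(\tilde{y}_1) = w_{r_1}yw_{r_1}^{-1} =: y'$. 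By injectivity of $\pi_1$ on $\tilde{U}_{[\pm w_{r_1}(r_2)]}$, this forces $\tilde{x}_1$ and $\tilde{y}_1$ to be the canonical lifts of $x'$ and $y'$.

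It then remains to recognize $\tilde{x}_1\tilde{y}_1\tilde{x}_1$ as a detailed Weyl expression. As $\Ad$ is an algebra automorphism, $\{X':=\Ad(w_{r_1})X,\ Y':=\Ad(w_{r_1})Y,\ [X',Y']\}$ again spans a split three-dimensional subalgebra, with $x'=\exp X' \in U_{[w_{r_1}(r_2)]}$ and $y'=\exp(-Y') \in U_{[-w_{r_1}(r_2)]}$; by the uniqueness clause of Remark~\ref{re:1}, $Y'$ is exactly the element attached to $X'$. Hence $w_{w_{r_1}(r_2)}(x') = x'y'x'$, and lifting gives $\widetilde{w}_{w_{r_1}(r_2)}(w_{r_1}xw_{r_1}^{-1}) = \tilde{x}_1\tilde{y}_1\tilde{x}_1$, which is precisely $\widetilde{w}_{r_1}\widetilde{w}_{r_2}(x)\widetilde{w}_{r_1}^{-1}$ by the display above (here the subscript on the right is $w_{r_1}(r_2)$, consistent with the conjugation formula in Remark~\ref{re:1}). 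The one point needing care is the second paragraph: making rigorous that the conjugate equals the \emph{specific} lift of $w_{r_1}xw_{r_1}^{-1}$ rather than an unspecified element of the target root subgroup, and checking that the central ambiguity in $\pi_1(\widetilde{w}_{r_1})$ is harmless; the rest is bookkeeping with the Jacobson--Morozov construction.
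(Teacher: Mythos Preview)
Your proof is correct and is exactly the argument the paper has in mind when it says the corollary follows immediately from Lemma~\ref{lem:weyl-conj} and Remark~\ref{re:1}: expand $\widetilde{w}_{r_2}(x)=\tilde x\tilde y\tilde x$, conjugate factorwise, and use Lemma~\ref{lem:weyl-conj} (together with the injectivity of $\pi_1$ on each $\tilde U_{[\mu]}$, which is built into that lemma via Lemma~\ref{lem:detected-comm}) to identify each conjugate with the canonical lift of $w_{r_1}(\cdot)w_{r_1}^{-1}$. You are also right that the subscript on the right-hand side should read $w_{r_1}(r_2)$ rather than $r_2$; the paper uses it in that corrected form in the proof of Lemma~\ref{lem:weyl-conj} and elsewhere. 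One minor simplification: there is no central ambiguity to worry about, since $\pi_1(\widetilde{w}_{r_1})$ is by construction exactly the chosen representative $w_{r_1}(x_0)\in\mc G_0$, so the identification $\pi_1(\tilde x_1)=w_{r_1}xw_{r_1}^{-1}$ is literal.
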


Lemma \ref{lem:weyl-conj} and Corollary \ref{cor:weyl-conj} drive the remaining arguments of the section, as they allow us to use the Weyl group to conjugate root spaces when they take an inconvenient form. Before proceeding further with establishing technical results for recovering undetected root subgroups, we prove
the following results which are very useful for the discussion.

\begin{lemma}
\label{lem:det-gen}
If $A$ satisfies the genuinely higher rank condition then:
\begin{enumerate}
  \item \label{for:16} for any $r\in \Delta_{A_0}$ there exists a detected root $r_1\in \Delta_{A_0}$ such that $r_1$ and $r$ are not proportional over $A$ and $w_{r_1}(r)$ is detected;

      \medskip
  \item \label{for:35} for any $r\in \Delta_{A_0}$ if $r$ is undetected there exist at least two detected roots $r_1,\,r_2\in \Delta_{A_0}$ such that $r_1$ and $r_2$ are not proportional over $A$ and $w_{r_i}(r)$, $i=1,\,2$, are both detected.
\end{enumerate}

\end{lemma}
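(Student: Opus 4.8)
The plan is to use the Weyl group action on roots, combined with the genuinely higher rank hypothesis, to find the detected roots we need. The key point throughout is Fact \ref{fact:5} (decomposition of Weyl group elements into ``parallel'' and ``transverse'' parts relative to a given root), together with the elementary fact that the Weyl orbit of any root $r$ must span $A_0^*$, and in fact, restricted to each simple factor $G_i$, the orbit of a root in $\Delta_{A_0,i}$ must span the dual of the corresponding piece of $A_0$. The genuinely higher rank condition \starref enters by ruling out the degenerate situation in which all roots of $\Delta_{A_0,i}$ restrict to functionals proportional over $A$: if that happened, the coarse Lyapunov foliations on $G_i$ would be generated by a rank one subgroup, producing a rank one factor.

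\textbf{Proof of \eqref{for:16}.} Let $r \in \Delta_{A_0}$, say $r \in \Delta_{A_0,i}$. I would argue by contradiction: suppose that for every detected root $r_1 \in \Delta_{A_0}$ not proportional to $r$ over $A$, the root $w_{r_1}(r)$ is undetected. First, I claim we may reduce to working inside $\Delta_{A_0,i}$, since $w_s$ fixes $r$ for any $s$ in a different factor. Now take any $w \in W_{0,i}$ (the Weyl group of $\Delta_{A_0,i}$). By Fact \ref{fact:5}, write $w = w_1 w_2$ where $w_2$ is a product of reflections $w_s$ with $s$ not proportional to $r$ over $A$, and $w_1$ is a product of $w_{s'}$ with $s'$ proportional to $r$ over $A$. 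I need to analyze how $w$ moves $r$. The issue is that a single $w_s$ with $s$ nonproportional to $r$ may be \emph{undetected}; but I can use induction on the length of $w_2$, using the identity $w_{s_1}w_{s_2} = w_{w_{s_1}(s_2)}w_{s_1}$ to always arrange that the reflection being applied is detected — or else conclude directly. The cleanest route: show that if the hypothesis-for-contradiction holds, then every element of $W_{0,i}$ carries $r$ to a root whose restriction to $A$ is proportional to $r|_A$ (roots proportional to $r$ over $A$ are automatically detected or automatically undetected together, and reflections $w_{s'}$ with $s'$ proportional to $r$ preserve this proportionality class by $(\dagger)$-type reasoning as in the proof of Lemma \ref{lem:weyl-conj}). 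Since the $W_{0,i}$-orbit of $r$ spans $\Delta_{A_0,i}$'s ambient space, this forces every root in $\Delta_{A_0,i}$ to restrict to a multiple of $r|_A$ over $A$, i.e. $\Delta_{A,i}$ is one-dimensional — contradicting genuine higher rank. Hence some detected $r_1$, nonproportional to $r$ over $A$, has $w_{r_1}(r)$ detected.

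\textbf{Proof of \eqref{for:35}.} Now suppose additionally $r$ is undetected. By part \eqref{for:16} we already have one detected $r_1$, nonproportional to $r$ over $A$, with $w_{r_1}(r)$ detected. I need a second detected root $r_2$, nonproportional to $r$ over $A$ \emph{and nonproportional to $r_1$ over $A$}, with $w_{r_2}(r)$ detected. I would again argue by contradiction: if no such $r_2$ exists, then every detected root not proportional to $r$ over $A$ and with $w_{r_2}(r)$ detected must in fact be proportional to $r_1$ over $A$. Combined with part \eqref{for:16}'s argument, this says the set of detected roots $r_2$ with $w_{r_2}(r)$ detected is contained in the proportionality class of $r_1$ over $A$ (together with roots proportional to $r$, which are undetected since $r$ is). Re-running the orbit-spanning argument: the images $w(r)$ as $w$ ranges over $W_{0,i}$ are obtained by successive reflections, and at each step the only reflections that can ``detect'' an undetected image are those in the proportionality class of $r_1$ — these move $r$ within the two-dimensional span of $r$ and $r_1$. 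So the entire $W_{0,i}$-orbit of $r$ lies in $\vspan\{r|_A, r_1|_A\}$ after restriction... more carefully, one shows $\Delta_{A,i}$ would then be at most two-dimensional with a very restricted structure, and in fact one exploits that in an irreducible root system of rank $\ge 2$ on each factor (guaranteed by \starref forcing $\RR$-rank $\ge 2$ and no rank one factors), there are always at least two Weyl reflections through roots in distinct proportionality classes over $A$ taking $r$ to a detected root. The precise bookkeeping — chasing which reflections stay detected and tracking restriction classes — is where the real work lies.

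\textbf{Main obstacle.} The principal difficulty is the combinatorial bookkeeping in both contradiction arguments: one must carefully track, through a product of Weyl reflections, which intermediate roots are detected versus undetected and which proportionality class (over $A$) each restriction lands in, given that the restriction map $\Delta_{A_0,i} \to \Delta_{A,i}$ is far from injective and need not respect any of the usual root-system structure. The statements $(\dagger)$ and $(\ddagger)$ from the proof of Lemma \ref{lem:weyl-conj} are the right local tools, but assembling them into a global statement about $W_{0,i}$-orbits — and cleanly invoking genuine higher rank to get the contradiction — requires care. I expect part \eqref{for:35} (needing \emph{two} nonproportional detected roots, not just one) to be the harder of the two, since it is no longer enough that $\Delta_{A,i}$ be higher-dimensional; one needs the Weyl-reflection access to be ``spread out'' enough among proportionality classes, which should ultimately follow from irreducibility of $\Delta_{A_0,i}$.
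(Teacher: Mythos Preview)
Your approach is correct and matches the paper's: contradiction via Fact~\ref{fact:5} plus the fact that the Weyl orbit of $r$ spans, forcing $\Delta_{A,i}$ into a single proportionality class and contradicting genuine higher rank. The bookkeeping you worry about dissolves once you record the elementary dichotomy the paper states first: if $s$ is detected and not proportional to $r$ over $A$, then $w_s(r) = r - n s$ restricts to $r|_A - n\, s|_A$, so $w_s(r)$ undetected would force $r|_A \in \RR \cdot s|_A$, contradicting nonproportionality --- hence $w_s(r)$ is either equal to $r$ or detected. With this in hand, the contradiction hypothesis in \eqref{for:16} becomes simply ``$w_s(r) = r$ for every detected $s$ not proportional to $r$ over $A$,'' and Fact~\ref{fact:5} then gives immediately that the full Weyl orbit of $r$ restricts to multiples of $r|_A$; no induction on word length or rewriting via $w_{s_1}w_{s_2} = w_{w_{s_1}(s_2)}w_{s_1}$ is needed. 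For \eqref{for:35}, since $r$ is undetected you have $r|_A = 0$, so the span you land in is $\RR \cdot r_1|_A$, one-dimensional rather than two --- and the same argument (Fact~\ref{fact:5} now applied relative to $r_1$, not $r$: roots not proportional to $r_1$ over $A$ must fix $r$ by the dichotomy plus the contradiction hypothesis) gives the contradiction directly, with no appeal to irreducibility beyond what higher rank already provides.
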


\begin{proof}
Proof of \eqref{for:16}: Note that if $s \in \Delta_{A_0}$ is detected and $s$ is not proportional to $r$ over $A$, then either $w_{s}(r) = r$ or $w_{s}(r)$ is detected. Suppose $r\in \Delta_{A_0,i}$.  Using Fact \ref{fact:5}, if for every detected $s$, $w_{s}(r) = r$, then the Weyl orbit of $r$ contains roots only proportional to $r$ over $A$. This shows that all roots in $\Delta_{A,i}$ are proportional to $r$, since the Weyl orbit of $r$ contains a basis of $A^*$. This is a contradiction to genuinely higher-rank property of $A$. Then we get the result.

\smallskip
Proof of \eqref{for:35}: Previous result shows the existence of one root. Now we show the existence of the second root. Suppose that $r_1$ has the property that $w_{r_1}(r)$ is detected. If any other $r_2$ with the same property is proportional to $r_1$, then the Weyl orbit of $r$ must contain roots only proportional to $r_1$ by Fact \ref{fact:5} (act first by roots not proportional to $r_1$ over $A$, which must fix $r$ by assumption). This shows all roots in $\Delta_{A,i}$ are proportional to each other (since $r$ restricts to 0 over $A$), which contradicts the fact that $A$ is higher rank.
\end{proof}

\begin{corollary}
\label{cor:det-gen}
For any $r\in \Delta_{A_0}$ the subgroups $U_{[s]}$, $s\in\Delta_{A_0}$ with $s$ not proportional to $r$ over $A$ generate the group $G$.
\end{corollary}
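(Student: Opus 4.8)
The plan is to set $H \le G$ to be the subgroup generated by the $U_{[s]}$ with $s \in \Delta_{A_0}$ not proportional to $r$ over $A$, and to show $H = G$. Since $G = G^+_k$ is generated by all the root subgroups $U_{[s]}$, $s \in \Delta_{A_0}$ (Section \ref{sec:1}), it suffices to prove that $U_{[t]} \subset H$ for every $t \in \Delta_{A_0}$ that \emph{is} proportional to $r$ over $A$. First note that whenever $s$ is not proportional to $r$ over $A$, neither is $-s$, so the Weyl representative $w_s = w_s(x) = xyx$ of Remark \ref{re:1} (with $x \in U_{[s]}$, $y \in U_{[-s]}$) lies in $H$; hence every element of the subgroup $W' \le W_0$ generated by such $w_s$ has a representative in $H$. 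Since a representative $w \in N(S)$ of a Weyl element satisfies $w U_{[t]} w^{-1} = U_{[w(t)]}$ (because $\Ad(w)$ carries $\mathfrak g_{jt}$ onto $\mathfrak g_{jw(t)}$ for all $j$, so it carries $\mathfrak g^{(t)}$ onto $\mathfrak g^{(w(t))}$), it is enough to find, for each such $t$, an element $w \in W'$ with $w(t)$ \emph{not} proportional to $r$ over $A$: then $U_{[t]} = w^{-1}U_{[w(t)]}w \subset H$.

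To construct such a $w$ we may assume $t$ lies in a simple factor $\Delta_{A_0,i}$ and work inside that factor, replacing $W'$ by the subgroup $W_i' \le W_{0,i}$ generated by the $w_s$ with $s \in \Delta_{A_0,i}$ not proportional to $r$ over $A$ (the other generators of $W'$ act trivially on $\Delta_{A_0,i}$). Suppose, toward a contradiction, that every root in the orbit $W_i' \cdot t$ is proportional to $r$ over $A$. A reflection $w_{s'}$ in a root $s' \in \Delta_{A_0,i}$ that is proportional to $r$ over $A$ satisfies $w_{s'}(v) = v - n\, s'$ for some $n \in \Z$, so $w_{s'}(v)|_A = v|_A - n\, s'|_A \in \R\, r|_A$ whenever $v|_A, s'|_A \in \R\, r|_A$; thus such reflections preserve the set of roots whose $A$-restriction lies in the line $\R\, r|_A$. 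Invoking the decomposition of Fact \ref{fact:5} inside $W_{0,i}$ — write any $w \in W_{0,i}$ as $w_1 w_2$ with $w_2 \in W_i'$ and $w_1$ a product of reflections in roots proportional to $r$ over $A$ — we conclude that \emph{all} of $W_{0,i}\cdot t$ has $A$-restrictions inside $\R\, r|_A$. But $W_{0,i}\cdot t$ spans the reflection representation of the irreducible, rank-$\ge 2$ system $\Delta_{A_0,i}$, so the $A$-restrictions of $W_{0,i}\cdot t$ span $\vspan_{\R}(\Delta_{A,i})$, which is not contained in any line because $\alpha_A$ is genuinely higher rank — this is exactly the consequence of \starref\ already exploited in the proof of Lemma \ref{lem:det-gen}. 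This contradicts containment in the at-most-one-dimensional space $\R\, r|_A$, so some $w \in W_i'$ moves $t$ to a position not proportional to $r$ over $A$, as required. (When $r$ is undetected the line $\R\, r|_A$ is $\{0\}$ and the same argument shows $\vspan_{\R}(\Delta_{A,i}) \neq \{0\}$, i.e., the factor is nontrivial.)

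The delicate point, and the main obstacle, is precisely this last step: we must move $t$ off the line $\R\, r|_A$ using only reflections in roots that are themselves not proportional to $r$ over $A$, so that the conjugating Weyl element stays inside $H$. Fact \ref{fact:5} is what makes this possible — it lets the ``forbidden'' reflections (those in roots proportional to $r$ over $A$) be pushed to one end of any Weyl word, where they merely permute roots within the line $\R\, r|_A$ and so cannot help the orbit escape it. Granting this, $U_{[t]} \subset H$ for every $t$ proportional to $r$ over $A$, and hence $H = G$.
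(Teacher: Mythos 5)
Your proof is correct and follows the same strategy as the paper's: show that the ``missing'' root subgroups can be recovered by conjugating with a Weyl element built entirely from allowed reflections, using Fact~\ref{fact:5} and the genuinely higher rank condition. The paper does this in two lines by citing Lemma~\ref{lem:det-gen}(1), producing a single reflection $w_{r'}$ with $r'$ not proportional to $r$ over $A$ and $w_{r'}(r)$ detected, so that $U_{[r]}=w_{r'}U_{[w_{r'}(r)]}w_{r'}^{-1}$ lies in the generated subgroup. Your version is actually more careful on two points the paper glosses over. First, the paper explicitly addresses only $U_{[r]}$ itself, while one must show $U_{[t]}\subset H$ for \emph{every} $t$ proportional to $r$ over $A$; in particular, when $r$ is detected and $t$ is undetected, Lemma~\ref{lem:det-gen}(1) applied to $t$ produces an $r'$ that is merely detected, and could a priori be proportional to $r$ over $A$ -- one needs the two-root version Lemma~\ref{lem:det-gen}(2) (or your orbit argument) to guarantee the conjugating data avoids the forbidden line. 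Second, Lemma~\ref{lem:det-gen}(1) as stated only says $w_{r'}(r)$ is detected, not that $w_{r'}(r)$ is non-proportional to $r$ over $A$, which is what is actually needed for $U_{[w_{r'}(r)]}\subset H$; this follows from the proof of the lemma but not from its statement. Your argument sidesteps both issues by working with the full subgroup $W'$ of the Weyl group generated by allowed reflections, showing via Fact~\ref{fact:5} and the $(*)$ condition that some element of $W'$ (possibly a word of length $>1$) moves $t$ off the line $\R\, r|_A$. The trade-off is that you inline and re-derive a variant of Lemma~\ref{lem:det-gen} rather than citing it, making the proof longer but self-contained and, arguably, tighter.
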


\begin{proof}
Note that if we had chosen the subgroups to vary over all of the roots $\Delta_{A_0}$ the result would follow classically. So we just need to show that  $U_{[r]}$ is generated by elements in $U_{[s]}$, with $s$ not proportional to $r$ over $A$ (note that $U_{[r]}$ may be undetected). By \eqref{for:16} of Lemma \ref{lem:det-gen}, we can find detected $r'\in\Delta_{A_0}$ such that $w_{r'}(r)$ is also detected and $r'$ is not proportional to $r$ over $A$. Since $w_{r'}U_{[w_{r'}(r)]}w_{r'}^{-1} = U_{[r]}$ $w_{r'}$ is constructed from elements of $U_{[r']} \cup U_{[-r']}$ (see Remark \ref{re:1}), we get the conclusion.
\end{proof}

We are now in a position to construct the elements inside undetected root subgroups for $\Delta_{A_0}$. Next result shows that not only are we able to construct undetected root subgroups by using detected ones, but most importantly there are at least two equivalent ways to do so, whose role will be quite clear in next part.

\begin{lemma}
\label{lem:isom-gen-ind}
Suppose $r_i,\,s_i\in \Delta_{A_0}$, $i=1,\,2$ are detected, and $r\in \Delta_{A_0}$ is undetected. Also suppose $w_{r_1}(r_2) = w_{s_1}(s_2) = r$.  For any $u \in U_r$, denote by $w_{r_1}^{-1}uw_{r_1}=u_1$ and $w_{s_1}^{-1}uw_{s_1}=u_2$, then $\widetilde w_{r_1}\tilde{u}_1\widetilde{w}_{r_1}^{-1} = \widetilde w_{s_1}\tilde{u}_2 \widetilde{w}_{s_1}^{-1}$.
\end{lemma}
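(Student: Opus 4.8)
The plan is to reduce the asserted identity — which takes place inside the subgroup $\tilde U_{[r]}$ attached to the \emph{undetected} root $r$, where we have little direct control — to an identity between genuine group conjugates inside a \emph{detected} root subgroup, by conjugating everything by one well-chosen Weyl element that carries $r$ to a detected root. Throughout write $u_1 = w_{r_1}^{-1}uw_{r_1}\in U_{[w_{r_1}(r)]}$ and $r_2 = w_{r_1}(r)$ (consistent with $w_{r_1}(r_2)=r$, since $w_{r_1}$ has order two in $W_0$), abbreviate $L_{r_1}(u) := \widetilde w_{r_1}\tilde u_1\widetilde w_{r_1}^{-1}$, and similarly for $s$. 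Since $r$ is undetected, $r|_A=0$, so $r_2|_A$ is a nonzero multiple of $r_1|_A$ (nonzero because $r_2$ is detected); likewise $s_2|_A$ is proportional over $A$ to $s_1|_A$.

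First I would fix a detected root $\tau\in\Delta_{A_0}$ with $w_\tau(r)$ detected and with $\tau$ nonproportional over $A$ to both $r_1$ and $s_1$ (existence discussed below), and prove the normalization: writing $r_3 := w_\tau(r)$,
\[ \widetilde w_\tau\, L_{r_1}(u)\, \widetilde w_\tau^{-1} \;=\; \widetilde{\big(w_\tau u w_\tau^{-1}\big)} \;\in\; \tilde U_{[r_3]}, \]
where the right-hand side is the canonical lift of the genuine group element $w_\tau u w_\tau^{-1}\in U_{[r_3]}$. To see this, expand $\widetilde w_\tau L_{r_1}(u)\widetilde w_\tau^{-1}=\big(\widetilde w_\tau\widetilde w_{r_1}\widetilde w_\tau^{-1}\big)\big(\widetilde w_\tau\tilde u_1\widetilde w_\tau^{-1}\big)\big(\widetilde w_\tau\widetilde w_{r_1}\widetilde w_\tau^{-1}\big)^{-1}$. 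Since $\tau,r_1$ are detected and nonproportional over $A$, $w_\tau(r_1)$ is detected, and the identity $\widetilde w_\tau\widetilde w_{r_1}(x)\widetilde w_\tau^{-1}=\widetilde w_{w_\tau(r_1)}(w_\tau x w_\tau^{-1})$ (Corollary \ref{cor:weyl-conj}, in the form used in the proof of Lemma \ref{lem:weyl-conj}) identifies the outer factors with a detailed Weyl element for the detected root $w_\tau(r_1)$. Since $\tau,r_2$ are detected and nonproportional over $A$ (as $r_2|_A\parallel r_1|_A$) with $w_\tau(r_2)$ detected, Lemma \ref{lem:weyl-conj} gives $\widetilde w_\tau\tilde u_1\widetilde w_\tau^{-1}=\widetilde{(w_\tau u_1 w_\tau^{-1})}\in\tilde U_{[w_\tau(r_2)]}$. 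Finally $w_{w_\tau(r_1)}(w_\tau(r_2))=w_\tau w_{r_1}(r_2)=w_\tau(r)=r_3$ is detected, so Lemma \ref{lem:weyl-conj} applies once more, and Lemma \ref{lem:detected-comm} (injectivity of $\pi_1$ on $A$-admissible products) pins the resulting element down as the lift of $w_{w_\tau(r_1)}\,(w_\tau u_1 w_\tau^{-1})\,w_{w_\tau(r_1)}^{-1}=w_\tau\,(w_{r_1}u_1w_{r_1}^{-1})\,w_\tau^{-1}=w_\tau u w_\tau^{-1}$. The crucial feature is that this output depends only on $\tau$ and $u$ — not on $r_1$.

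Granting the normalization the conclusion is immediate: running the same computation with $(r_1,r_2,u_1)$ replaced by $(s_1,s_2,u_2)$ — legitimate exactly because $\tau$ was chosen nonproportional over $A$ to $s_1$ as well — gives $\widetilde w_\tau L_{s_1}(u)\widetilde w_\tau^{-1}=\widetilde{(w_\tau u w_\tau^{-1})}=\widetilde w_\tau L_{r_1}(u)\widetilde w_\tau^{-1}$, and since conjugation by $\widetilde w_\tau$ is a bijection of $\widetilde{\mathcal G}_A/\mc S_A$ we obtain $L_{r_1}(u)=L_{s_1}(u)$, which is the assertion.

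The hard part is producing a single $\tau$ serving both $r_1$ and $s_1$. Lemma \ref{lem:det-gen}(\ref{for:35}) supplies two detected roots $t,t'$, nonproportional over $A$ to each other, with $w_t(r),w_{t'}(r)$ detected; at least one of them is nonproportional over $A$ to $r_1$ and at least one to $s_1$, so if one root does both jobs we take it. In the remaining case $t\parallel r_1$ and $t'\parallel s_1$ over $A$ (up to relabeling), hence $r_1\not\parallel s_1$ over $A$, and I would instead chain: the normalization shows $L_\rho(u)=L_{\rho'}(u)$ whenever some detected $\sigma$ with $w_\sigma(r)$ detected is nonproportional over $A$ to both $\rho$ and $\rho'$, so it suffices to connect $r_1$ to $s_1$ by a short path of such admissible lifts. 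The genuinely higher rank hypothesis is precisely what prevents the set of admissible lifts $\{\rho\in\Delta_{A_0} : \rho|_A\neq 0,\ (r,\rho)\neq 0\}$ (the inner product being the canonical one on $\Delta_{A_0}$) from collapsing, modulo $A$, into two lines — which is exactly the obstruction to building the chain. Establishing this non-degeneracy cleanly is the one genuinely delicate point; everything else is bookkeeping with the Weyl-conjugation identities of Lemma \ref{lem:weyl-conj} and Corollary \ref{cor:weyl-conj}.
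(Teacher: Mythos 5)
Your normalization step --- conjugating $L_{r_1}(u)=\widetilde w_{r_1}\tilde{u}_1\widetilde w_{r_1}^{-1}$ by a detected $\widetilde w_\tau$ with $w_\tau(r)$ detected and $\tau$ nonproportional over $A$ to $r_1$, to land on the canonical lift of $w_\tau u w_\tau^{-1}$ --- is correct. But your case analysis inverts the paper's, and this leaves a genuine gap exactly where you flag one. When $r_1\parallel s_1$ over $A$, any admissible $\tau\not\parallel r_1$ over $A$ serves both and you are done; Lemma \ref{lem:det-gen}\eqref{for:35} supplies one. When $r_1\not\parallel s_1$ over $A$, you need a single $\tau$ nonproportional over $A$ to both, or a chain passing through a third $A$-proportionality class of admissible roots. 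Neither need exist. In the paper's own Example \ref{sec:6}, $G=SL(4,\RR)$, $A'=\set{\diag(e^t,e^t,e^s,e^{-2t-s})}$, and $r=e_1-e_2$ is undetected; the admissible roots --- detected $\sigma$ with $\inner{e_1-e_2}{\sigma}\neq 0$ --- are exactly $\pm(e_1-e_3),\pm(e_2-e_3)$ (restricting to $\pm(t-s)$) and $\pm(e_1-e_4),\pm(e_2-e_4)$ (restricting to $\pm(3t+s)$), giving only two $A$-proportionality classes. If $r_1|_{A'}\in\RR(t-s)$ and $s_1|_{A'}\in\RR(3t+s)$, there is no admissible $\tau$ outside both lines and your chain cannot even start. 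The genuinely higher rank condition does not guarantee a third class.

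The paper avoids this by treating the nonproportional case $r_1\not\parallel s_1$ over $A$ directly, without any auxiliary root: it writes $\tilde v_1=\widetilde w_{s_1}^{-1}\tilde u_1\widetilde w_{s_1}$ and reassociates
\[
\widetilde w_{r_1}\widetilde w_{s_1}\tilde v_1\widetilde w_{s_1}^{-1}\widetilde w_{r_1}^{-1}
=\widetilde w_{s_1}\bigl(\widetilde w_{s_1}^{-1}\widetilde w_{r_1}\widetilde w_{s_1}\bigr)\tilde v_1\bigl(\widetilde w_{s_1}^{-1}\widetilde w_{r_1}\widetilde w_{s_1}\bigr)^{-1}\widetilde w_{s_1}^{-1},
\]
so that $s_1$ itself plays the role of your interpolating $\tau$: since $s_1\not\parallel r_1$ over $A$, both $w_{s_1}(r_1)$ and $w_{s_1}(r_2)$ are detected, Corollary \ref{cor:weyl-conj} and Lemma \ref{lem:weyl-conj} rewrite the inner block (after checking $w_{s_1}(r_1)\not\parallel w_{s_1}(r_2)$ over $A$), and Lemma \ref{lem:detected-comm} identifies it with $\tilde u_2$. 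The proportional case $r_1\parallel s_1$ over $A$ --- the one you do handle --- is then dispatched by two applications of this computation with a single auxiliary $\gamma\not\parallel r_1$ over $A$. The reassociation identity above, not an auxiliary third root or a chain, is what your argument is missing in the nonproportional case.
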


\begin{proof}
At first we note that $r_1$ and $r_2$ must be proportional over $A$, as are $s_1$ and $s_2$.

First we consider the case when  $r_1$ and $s_1$ are nonproportional over $A$. Then $r_2$ and $s_1$ are also nonproportional over $A$. This implies that both $w_{s_1}(r_1)$ and $w_{s_1}(r_2)=w_{s_1}w_{r_1}(r)$ are detected. Denote by $\tilde{v}_1=\widetilde w_{s_1}^{-1}\tilde{u}_1\widetilde w_{s_1}$, so that:
\begin{equation}\begin{array}{rcl}
\label{eqar:1}
  \widetilde w_{r_1}\tilde{u_1}\widetilde{w}_{r_1}^{-1} & = & \widetilde w_{r_1}\widetilde w_{s_1}\tilde{v_1}\widetilde w_{s_1}^{-1}\widetilde{w}_{r_1}^{-1}\\
  &\overset{(1)}{=} & \widetilde w_{s_1} \left( \widetilde w_{s_1}^{-1} \widetilde w_{r_1}(x)
   \widetilde w_{s_1}\right)\tilde{v_1}\left( \widetilde w_{s_1}^{-1} \widetilde{w}_{r_1}(x)\widetilde w_{s_1}\right)^{-1} \widetilde w_{s_1}^{-1}\\
   &\overset{(2)}{=} & \widetilde w_{s_1}\widetilde w_{w_{s_1}(r_1)}(x')\tilde{v}_1\widetilde w_{w_{s_1}(r_1)}(x')^{-1} \widetilde w_{s_1}^{-1},
\end{array}
\end{equation}

Here in $(1)$ we insert an element and its inverse on the left and right, and add the detailed expression of $w_{r_1}$ as described in Remark \ref{re:1} for emphasis; in $(2)$ we used Corollary \ref{cor:weyl-conj}, with $x' = w_{s_1}^{-1}xw_{s_1}$.

Since $\tilde{v}_1 \in \tilde{U}_{[w_{s_1}w_{r_1}(r)]}=\tilde{U}_{[w_{s_1}(r_2)]}$ and

\begin{align*}
 w_{w_{s_1}(r_1)}\big(w_{s_1}(r_2)\big)=w_{s_1}w_{r_1}w_{s_1}^{-1}w_{s_1}(r_2)=w_{s_1}w_{r_1}(r_2)=w_{s_1}(r)=s_2,
\end{align*}

which implies that $w_{w_{s_1}(r_1)}\big(w_{s_1}(r_2)\big)$ is detected and hence $w_{s_1}(r_1)$ and $w_{s_1}(r_2)$ are nonproportional over $A$ (since if they were proportional, they would both be proportional to $s_2$ over $A$). Then Lemma \ref{lem:weyl-conj} shows that
\begin{align*}
 \widetilde w_{w_{s_1}(r_1)}(x')\tilde{v}_1\widetilde w_{w_{s_1}(r_1)}(x')^{-1}\in \tilde{U}_{[s_2]}.
\end{align*}
 Noting that
\begin{align*}
   w_{w_{s_1}(r_1)}(x')v_1w_{w_{s_1}(r_1)}(x')^{-1}=u_2,
\end{align*}
we get
\begin{align*}
 \widetilde w_{w_{s_1}(r_1)}(x')\tilde{v}_1\widetilde w_{w_{s_1}(r_1)}(x')^{-1}=\tilde{u}_2
\end{align*}
by Lemma \ref{lem:detected-comm}. So we conclude from this and \eqref{eqar:1} that $\widetilde{w}_{s_1} \tilde{u}_2 \widetilde{w}_{s_1}^{-1} = \widetilde w_{r_1} \tilde{u}_1 \widetilde{w}_{r_1}^{-1}$, which was our goal.

If $r_1\mid_A$ and $s_1\mid_A$ are proportional, by using Lemma \ref{lem:det-gen} we choose detected $\gamma\in \Delta_{A_0}$ with $\gamma\mid_A$ not proportional to $r_1\mid_A$ such that
$w_{\gamma}(r)=\gamma_1$ is detected. Set $w_{\gamma_1}^{-1}uw_{\gamma_1}=v$. Then:
\begin{align*}
\widetilde w_{r_1}\tilde{u_1}\widetilde{w}_{r_1}^{-1}=\widetilde w_{\gamma_1}\tilde{v}\widetilde{w}_{\gamma_1}^{-1}
\qquad\text{and}\qquad \widetilde w_{s_1}\tilde{u_2} \widetilde{w}_{s_1}^{-1}=\widetilde w_{\gamma_1}\tilde{v}\widetilde{w}_{\gamma_1}^{-1}
\end{align*}
by previous discussion. Then we get the result immediately. Hence we finish the proof.

\end{proof}

 \subsection{An Equivariant Isomorphism of $\widetilde{\mathcal{G}}_{A_0}/ \mc S_{A_0}$  and $\widetilde{\mathcal{G}}_{A}/ \mc S_{A}$}
 Next we prove the following proposition which plays a key role in the proof of Theorem \ref{thm:central}. The subsequent discussion in this section will be devoted to the proof of this result.
 \begin{proposition}
\label{prop:restricted-factor}
There is a surjective homomorphism $\Psi : \widetilde{\mathcal{G}}_{A_0}/ \mc S_{A_0} \to \widetilde{\mathcal{G}}_{A}/ \mc S_{A}$ such that the following diagram commutes:

\[
\begin{tikzcd}
\widetilde{\mathcal{G}}_{A_0}/ \mc S_{A_0} \arrow{rr}{\Psi} \arrow{dr} & & \widetilde{\mathcal{G}}_{A}/ \mc S_{A} \arrow{dl} \\
 & G &
\end{tikzcd}
\]
\end{proposition}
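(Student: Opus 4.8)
The goal is to construct a surjective homomorphism $\Psi : \widetilde{\mathcal{G}}_{A_0}/\mc S_{A_0} \to \widetilde{\mathcal{G}}_A/\mc S_A$ compatible with the projections to $G$. The natural candidate arises from the fact that each root space $\mf g^{(\phi)}$ for $\phi \in \Delta_A$ is a product of root spaces $\mf g^{(r)}$ for detected $r \in \Delta_{A_0}$ with $r|_A$ positively proportional to $\phi$ (see \seref{se:3}). So for a detected $r \in \Delta_{A_0}$ the subgroup $U_{[r]}$ sits canonically inside $U_{[r|_A]}$, which gives a continuous homomorphism $U_{[r]} \to \widetilde{\mathcal{G}}_A/\mc S_A$. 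For an \emph{undetected} $r$, there is no direct inclusion, so I would use Lemma \ref{lem:det-gen} and Lemma \ref{lem:isom-gen-ind}: pick a detected $r_1 \in \Delta_{A_0}$, nonproportional to $r$ over $A$, such that $w_{r_1}(r)$ is detected, and define the image of $\tilde u \in \tilde U_{[r]}$ to be $\widetilde w_{r_1} \widetilde{(w_{r_1}^{-1} u w_{r_1})} \widetilde{w}_{r_1}^{-1}$, where the tilde on the inner term denotes the already-defined image of the detected root subgroup element. Lemma \ref{lem:isom-gen-ind} is exactly what guarantees this is independent of the choice of $r_1$.

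Having defined $\Psi$ on every generating subgroup $U_{[r]}$, $r \in \Delta_{A_0}$, the universal property of the free product $\widetilde{\mathcal{G}}_{A_0} = \freep_{[r]\in\Delta_{A_0}} U_{[r]}$ extends it to a homomorphism $\widetilde{\mathcal{G}}_{A_0} \to \widetilde{\mathcal{G}}_A/\mc S_A$ (continuous, by the universal property of the free product topology, Proposition \ref{prop:topology-def}). The next step is to check that $\mc S_{A_0}$ is killed: I must verify that for nonproportional $\mu,\nu \in \Delta_{A_0}$, the image of the commutator relation $[x,y]R_{\mu,\nu,A_0}(x,y)^{-1}$ is trivial in $\widetilde{\mathcal{G}}_A/\mc S_A$. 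This is the place where one does genuine work. When $\mu,\nu$ are both detected and nonproportional over $A$, the cone $C(\mu,\nu)$ maps into an $A$-admissible set, and by Lemma \ref{lem:detected-comm} the relevant subgroup maps isomorphically onto its image in $\mc G_0$; since the commutator relations $R_{\mu,\nu,A_0}$ and $R_{\mu,\nu,A}$ are both forced by the Baker-Campbell-Hausdorff structure in $\mc G_0$ (Remark \ref{re:2}), the relation is preserved. When some of $\mu,\nu$ are undetected, or detected but proportional over $A$, one reduces to the detected-nonproportional case by conjugating with suitable Weyl elements, using Lemma \ref{lem:weyl-conj} and Corollary \ref{cor:weyl-conj} to move the offending root spaces into general position; the consistency is again underwritten by Lemma \ref{lem:isom-gen-ind}.

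Once $\Psi : \widetilde{\mathcal{G}}_{A_0}/\mc S_{A_0} \to \widetilde{\mathcal{G}}_A/\mc S_A$ is well-defined, commutativity of the triangle is immediate on generators: $\pi_G \of \Psi$ and $\pi_G$ (from $\widetilde{\mathcal{G}}_{A_0}/\mc S_{A_0}$) agree on each $\tilde U_{[r]}$ because the inclusion $U_{[r]} \hookrightarrow U_{[r|_A]}$ is compatible with the inclusions into $\mf g' = \Lie(\mc G_0)$, and the Weyl-conjugation formula used to define $\Psi$ on undetected subgroups is exactly the one that holds in $\mc G_0$ (Remark \ref{re:1}); since both groups are generated by these subgroups, the triangle commutes. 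Surjectivity follows because the subgroups $\tilde U_{[\phi]}$, $\phi \in \Delta_A$, generate $\widetilde{\mathcal{G}}_A/\mc S_A$, and each is a product of images of detected $\tilde U_{[r]}$, which lie in the image of $\Psi$.

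\textbf{Main obstacle.} The crux is verifying that $\Psi$ descends modulo $\mc S_{A_0}$ — i.e., that the commutator relations defining $\mc S_{A_0}$ go to relations already present in $\widetilde{\mathcal{G}}_A/\mc S_A$. The subtlety is that a cone $C(\mu,\nu)$ in $\Delta_{A_0}$ may contain undetected roots, whose images in $\widetilde{\mathcal{G}}_A/\mc S_A$ were defined only indirectly via Weyl elements, so one cannot simply invoke Lemma \ref{lem:detected-comm} directly; one has to carefully track how the Weyl-conjugated definitions interact with the commutator expansions, and handle the proportional-over-$A$ cases where the restricted root system $\Delta_A$ genuinely "breaks" (as in Example \ref{sec:6}). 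The genuinely higher rank hypothesis \starref\ enters precisely here, through Lemma \ref{lem:det-gen}, to guarantee enough detected roots in general position to run these reductions.
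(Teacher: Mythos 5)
Your proposal is correct and follows essentially the same route as the paper: define $\psi_r$ by inclusion on detected $r$, by Weyl conjugation on undetected $r$ (with Lemma \ref{lem:isom-gen-ind} providing well-definedness), extend by the universal property of free products, and kill $\mc S_{A_0}$ by a case analysis on whether $\mu,\nu$ are detected and whether they are proportional over $A$, using Lemma \ref{lem:detected-comm} in the clean case and Weyl conjugation (Lemma \ref{lem:weyl-conj}, Corollary \ref{cor:weyl-conj}, Lemma \ref{lem:det-gen}) to move the remaining cases into general position. This is exactly the structure of the paper's argument, down to the observation that case (a) (both detected, not negatively proportional over $A$) has a fully detected cone while the other cases require conjugation to reach that situation.
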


For any $r\in \Delta^1_{A_0}$ (see Section \ref{sec:1}), we define a map $\psi_r:U_{[r]}\rightarrow \widetilde{\mathcal{G}}_{A}/ \mc S_{A}$ as follows: if $r$ is detected then $\psi_r(u)=\tilde{u}$ for any $u\in U_{[r]}$ (note that $\widetilde{u}$ represents an element of $\widetilde{U}_{r|_A}$). If $r$ is undetected, choose detected $s \in \Delta_{A_0}$ such that $w_s(r)$ is detected, (see Lemma \ref{lem:det-gen}) and then set $\psi_r(u)=\widetilde w_{s}\tilde{u_1}\widetilde{w}_{s}^{-1}$ where $u_1=w_{s}^{-1}u w_{s}\in U_{[w_s(r)]}$. Lemma \ref{lem:isom-gen-ind}
shows that $\psi_r(u)$ is well-defined and is independent of the choice of $\gamma$. That we may choose a uniform conjugation implies that this is a homomorphism. Then $\{\psi_r,r\in \Delta^1_{A_0}\}$ induces a map $\Psi :\widetilde{\mathcal{G}}_{A_0}\rightarrow \mc G_A / \mc S_A$ by the universal property of free products.

To prove Proposition \ref{prop:restricted-factor}, it suffices to show that $\Psi(\mc S_{A_0})=e\in \widetilde{\mathcal{G}}_{A}/ \mc S_{A}$, so that $\Psi$ descends to a surjective homomorphism $\Psi : \widetilde{\mathcal{G}}_{A_0}/ \mc S_{A_0} \to \widetilde{\mathcal{G}}_{A}/ \mc S_{A}$. Recall notation in Section \ref{sec:15}.  Denoted by $\tilde{R}_{\alpha,\beta,A_0}(u,v)$ the corresponding element of $R_{\alpha,\beta,A_0}(u,v)$  in $\widetilde{\mathcal{G}}_{A_0}$ or just $\tilde{R}(u,v)$ for simplicity.

Note that even though $[u,v]$ may not be inside a single $U_{[r]}$ for $r \in \Delta_A$, we may choose a canonical presentation using \eqref{eq:comm-eq}. Then for any $r_1,r_2 \in \Delta_{A_0}$ which are nonproportional over $A$, the cone $C(r_1,r_2)$ is $A$-admissible (see Definition \ref{de:1}). So we fix an ordering on the roots appearing in $C(r_1,r_2)$ and present them this way.

We prove a technical Lemma about the maps $\psi_\mu$ and their composite $\Psi$:

\begin{lemma} \label{fact:8}For any $\mu_1,\,\mu_2\in \Delta_{A_0}$, if $\mu_2$ is detected and $\mu_1$ is undetected, then for every $x \in U_{[\mu_1]}$:
\begin{align}\label{for:38}
 \widetilde{w}_{\mu_2}\psi_{\mu_1}(x)\widetilde{w}_{\mu_2}^{-1}=\psi_{w_{\mu_2}(\mu_1)}(w_{\mu_2}xw_{\mu_2}^{-1}).
\end{align}
\end{lemma}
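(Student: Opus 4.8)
The plan is to deduce the identity from the two conjugation rules already in hand, Corollary~\ref{cor:weyl-conj} and Lemma~\ref{lem:weyl-conj}, after choosing a good auxiliary root with which to write $\psi_{\mu_1}(x)$. Throughout I would record the fixed Weyl representatives as detailed expressions, $w_\mu = w_\mu(z_\mu)$ for a chosen initial element $z_\mu$. Since $\mu_1|_A = 0$ one has $w_{\mu_2}(\mu_1)|_A = -\langle\mu_1,\mu_2^\vee\rangle\mu_2|_A$, so $w_{\mu_2}(\mu_1)$ is detected exactly when $w_{\mu_2}(\mu_1)\neq\mu_1$; I would treat the two cases in parallel and only separate them at the very end. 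In both cases I would first pick, using Lemma~\ref{lem:det-gen} and the genuinely higher rank hypothesis, a detected root $s$ with $w_s(\mu_1)$ detected and with $s|_A$ not a multiple of $\mu_2|_A$; such an $s$ exists, because otherwise the two non-proportional roots furnished by part~(2) of Lemma~\ref{lem:det-gen} would both be proportional to $\mu_2$ over $A$. This choice forces $w_{\mu_2}(s)$ to be detected, and, since $w_s(\mu_1)|_A$ is a nonzero multiple of $s|_A$, also forces $w_{\mu_2}w_s(\mu_1)$ to be detected.

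Now write $u_1 := w_s^{-1}xw_s \in U_{[w_s(\mu_1)]}$, so that $\psi_{\mu_1}(x) = \widetilde w_s\,\widetilde{u_1}\,\widetilde w_s^{-1}$ and
\[
\widetilde w_{\mu_2}\,\psi_{\mu_1}(x)\,\widetilde w_{\mu_2}^{-1}
= \bigl(\widetilde w_{\mu_2}\widetilde w_s\widetilde w_{\mu_2}^{-1}\bigr)\bigl(\widetilde w_{\mu_2}\widetilde{u_1}\widetilde w_{\mu_2}^{-1}\bigr)\bigl(\widetilde w_{\mu_2}\widetilde w_s\widetilde w_{\mu_2}^{-1}\bigr)^{-1}.
\]
Corollary~\ref{cor:weyl-conj} (all of $\mu_2$, $s$, $w_{\mu_2}(s)$ detected) gives $\widetilde w_{\mu_2}\widetilde w_s\widetilde w_{\mu_2}^{-1} = \widetilde w_{s'}(z')$, where $s' := w_{\mu_2}(s)$ and $z' := w_{\mu_2}z_sw_{\mu_2}^{-1}$, using the group identity $w_{s'}(z') = w_{\mu_2}w_s(z_s)w_{\mu_2}^{-1}$ from Remark~\ref{re:1}. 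Lemma~\ref{lem:weyl-conj} (all of $\mu_2$, $w_s(\mu_1)$, $w_{\mu_2}w_s(\mu_1)$ detected) gives $\widetilde w_{\mu_2}\widetilde{u_1}\widetilde w_{\mu_2}^{-1} = \widetilde{u_1'}$ with $u_1' := w_{\mu_2}u_1w_{\mu_2}^{-1}\in U_{[w_{\mu_2}w_s(\mu_1)]}$. A one-line computation in $G$ then shows $w_{s'}(z')\,u_1'\,w_{s'}(z')^{-1} = w_{\mu_2}xw_{\mu_2}^{-1}$, whence
\[
\widetilde w_{\mu_2}\,\psi_{\mu_1}(x)\,\widetilde w_{\mu_2}^{-1} = \widetilde w_{s'}(z')\,\widetilde{u_1'}\,\widetilde w_{s'}(z')^{-1}.
\]

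It remains to recognize the right-hand side as $\psi_{w_{\mu_2}(\mu_1)}(w_{\mu_2}xw_{\mu_2}^{-1})$, and here the two cases diverge. If $w_{\mu_2}(\mu_1)$ is detected, then $w_{s'}\bigl(w_{\mu_2}w_s(\mu_1)\bigr) = w_{\mu_2}(\mu_1)$ is detected, so Lemma~\ref{lem:weyl-conj} (which, as its proof shows, applies verbatim to any detailed expression of a Weyl element) collapses the right-hand side to $\widetilde{w_{\mu_2}xw_{\mu_2}^{-1}} = \psi_{w_{\mu_2}(\mu_1)}(w_{\mu_2}xw_{\mu_2}^{-1})$. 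If instead $w_{\mu_2}(\mu_1) = \mu_1$ is undetected, then $w_{s'}(\mu_1) = w_{\mu_2}w_s(\mu_1)$ is detected, so $s'$ is an admissible auxiliary root for evaluating $\psi_{\mu_1}$ at $w_{\mu_2}xw_{\mu_2}^{-1}$; moreover, rerunning the argument of Lemma~\ref{lem:isom-gen-ind} shows that the defining formula for $\psi_{\mu_1}$ is also insensitive to which detailed expression of the chosen Weyl representative is used, so the displayed element is precisely $\psi_{\mu_1}(w_{\mu_2}xw_{\mu_2}^{-1})$. This last point — promoting the well-definedness of Lemma~\ref{lem:isom-gen-ind} to arbitrary detailed expressions, which amounts to the statement that conjugation by an element of $\widetilde H_{s'}$ acts on detected root subgroups compatibly with $G$ — together with the bookkeeping ensuring that every root produced by conjugating with $\widetilde w_{\mu_2}$ stays detected (the place where genuine higher rank is used, via Lemma~\ref{lem:det-gen}), is the main obstacle; the rest is formal manipulation with the relations $R_{\mu,\nu,A}$.
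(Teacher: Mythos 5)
Your proof is correct but follows a genuinely different route from the paper. The paper keeps the auxiliary root fixed and pushes $\widetilde w_{\mu_2}$ \emph{inward}: in the undetected case $w_{\mu_2}(\mu_1)=\mu_1$, it chooses $\theta$ nonproportional to $\mu_2$ over $A$ with $\mu_1=w_\theta(\theta')$, inserts $\widetilde w_\theta^{-1}\widetilde w_\theta$ so that the remaining inner conjugation is by $\widetilde w_\theta^{-1}\widetilde w_{\mu_2}\widetilde w_\theta = \widetilde w_{w_\theta(\mu_2)}(\cdot)$, notes $w_{w_\theta(\mu_2)}(\theta')=\theta'$ is detected so the inner conjugation stays in $\tilde U_{[\theta']}$, and recognizes the result with the \emph{same} $\widetilde w_\theta$; the detected case is treated separately as ``almost immediate'' by taking $\mu_2$ itself as the auxiliary root in the definition of $\psi_{\mu_1}$. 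You instead push $\widetilde w_{\mu_2}$ \emph{outward}: conjugating the auxiliary Weyl element $\widetilde w_s$ by $\widetilde w_{\mu_2}$, producing a new auxiliary root $s'=w_{\mu_2}(s)$ with a specific detailed expression $\widetilde w_{s'}(z')$, and only separating the two cases at the final recognition step. This unifies the bulk of the computation, which is nice, but it forces you to confront the dependence on the \emph{detailed expression} of $\widetilde w_{s'}$ (since $z'=w_{\mu_2}z_sw_{\mu_2}^{-1}$ need not be the ``reference'' choice), a point the paper's route sidesteps entirely by never replacing its chosen Weyl representative. You correctly flag this; and your assertion that it can be handled is right, since the proportional branch of the proof of Lemma~\ref{lem:isom-gen-ind} reduces to the nonproportional branch, and that argument only uses that the conjugating element lies in $\langle\tilde U_{[r_1]},\tilde U_{[-r_1]}\rangle$ and projects to $w_{r_1}$ --- so it is insensitive to the detailed expression. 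Still, you should spell this out rather than gesture at ``rerunning the argument,'' since it is the one place your approach carries a dependency the paper's does not. A second, smaller difference: your uniform treatment asks for $s$ with $s|_A$ nonproportional to $\mu_2|_A$ even in the detected case, where the paper gets by with nothing at all; your existence argument via part~(2) of Lemma~\ref{lem:det-gen} is fine but is extra machinery that the paper's detected case avoids.
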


\begin{proof}
If $w_{\mu_2}(\mu_1)$ is detected, then $\psi_{w_{\mu_2}(\mu_1)}(w_{\mu_2}xw_{\mu_2}^{-1})\in \tilde{U}_{[w_{\mu_2}(\mu_1)]}$. We then get \eqref{for:38} almost immediately, since to define $\psi_{\mu_1}$, we choose any root $\mu' \in \Delta_{A_0}$ such that $w_{\mu'}(\mu_1)$ is detected. So we may choose $\mu' = \mu_2$, and \eqref{for:38} is exactly the definition.

Suppose $w_{\mu_2}(\mu_1)$ is undetected. In this case, since $w_{\mu_2}(\mu_1) = \mu_1 + n\mu_2$ for some $n$, we conclude that $w_{\mu_2}(\mu_1) = \mu_1$. Then we choose some way to present $\psi_{\mu_1}(x)$, writing $\mu_1 = w_\theta(\theta')$. We may choose $\theta$ (and $\theta'$) to be nonproportional to $\mu_2$ over $A$. Then

\begin{align*}
\psi_{\mu_1}(x) = \widetilde w_\theta \widetilde x_1 \widetilde w_\theta^{-1},
\end{align*}
where $x_1 = w_\theta^{-1}xw_\theta$.

Noting that $w_\theta w_{\mu_2}(\mu_1)= w_\theta (\mu_1)= \theta'$ is detected, we set
\begin{align*}
 \psi_{w_{\mu_2}(\mu_1)}(w_{\mu_2}xw_{\mu_2}^{-1})=\tilde{w}_\theta\tilde{x_2}\tilde{w}_\theta^{-1},\quad \text{where }x_2=w_\theta^{-1} w_{\mu_2}xw_{\mu_2}^{-1}w_\theta
 \in \tilde{U}_{[w_\theta w_{\mu_2}(\mu_1)]}.
\end{align*}
This shows that
\begin{align*}
\tilde{w}_{\mu_2} \psi_{w_{\mu_2}(\mu_1)}(x)\tilde{w}_{\mu_2}^{-1}=\tilde{w}_{\mu_2} \tilde{w}_\theta\tilde{x_1}\tilde{w}_\theta^{-1}\tilde{w}_{\mu_2}^{-1}=
\tilde{w}_\theta\tilde{w}_{w_\theta(\mu_2)}(z)\tilde{x_1}\tilde{w}_{w_\theta(\mu_2)}(z)^{-1}\tilde{w}_\theta^{-1}
\end{align*}
where $z=w_\theta^{-1} tw_\theta$ if $w_{\mu_2}(t)$ is the detailed expression of $w_{\mu_2}$.

So we wish to show that we can apply the inner conjugation in the extension $\widetilde{\mc G_A} / \mc S_A$. Note that $w_{w_\theta(\mu_2)}(\theta')=w_\theta w_{\mu_2}w_\theta^{-1}(\theta')=w_\theta w_{\mu_2}(\mu_1)= w_\theta(\mu_1) = \theta'$ is detected, then we have
\begin{align*}
 \tilde{w}_{w_\theta(\mu_2)}(z)\tilde{x_1}\tilde{w}_{w_\theta(\mu_2)}(z)^{-1}\in
\tilde{U}_{[w_\theta(\mu_2)(\theta')]}.
\end{align*}
Since $\tilde{w}_{w_\theta(\mu_2)}(z)\tilde{x_1}\tilde{w}_{w_\theta(\mu_2)}(z)^{-1}$ projects to $x_2$, then by Lemma \ref{lem:detected-comm} we see that
$\tilde{w}_{w_\theta(\mu_2)}(z)\tilde{x_1}\tilde{w}_{w_\theta(\mu_2)}(z)^{-1}=\tilde{x_2}$, which implies that \eqref{for:38} holds again. This proves the Lemma.
\end{proof}

\begin{proposition}
\label{prop:undetected-comm}
For any $u \in U_{[r]}$ and $v \in U_{[r']}$ where $r,\,r'\in \Delta^1_{A_0}$ and $r \not= -r'$ (over $A_0$), let $\tilde{u}$ and $\tilde{v}$ denote the corresponding elements in $\widetilde{\mathcal{G}}_{A_0}$. Then
\begin{align}\label{for:39}
\Psi([\tilde{u},\tilde{v}])=\Psi(\tilde{R}(u,v)).
\end{align}
\end{proposition}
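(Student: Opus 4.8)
The plan is to prove \eqref{for:39} for all admissible pairs $(r,r')$ by a case analysis, reducing everything to one clean situation handled by Lemma \ref{lem:detected-comm}; establishing \eqref{for:39} for every commutator generator is exactly what forces $\Psi(\mc S_{A_0}) = e$, hence Proposition \ref{prop:restricted-factor}. First I would record the elementary but crucial observation that $\pi_1 \of \psi_\mu$ coincides with the inclusion $U_{[\mu]}\hookrightarrow \mc G_0$ for every $\mu\in\Delta^1_{A_0}$: for detected $\mu$ this is immediate, and for undetected $\mu$ it follows because $\widetilde w_s$ projects to $w_s$ under $\pi_1$. Consequently $\pi_1 \of \Psi = \pi_{\mc G_0}$ on every generator of $\widetilde{\mc G}_{A_0}$, hence on all of $\widetilde{\mc G}_{A_0}$, and by Baker--Campbell--Hausdorff (Remark \ref{re:2}) $\pi_{\mc G_0}([\tilde u,\tilde v]) = \pi_{\mc G_0}(\tilde R(u,v)) = [u,v]\in\mc G_0$. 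So the two sides of \eqref{for:39} always project to the same element; the task is to upgrade equality of projections to equality of elements, which Lemma \ref{lem:detected-comm} accomplishes as soon as both sides are exhibited inside a common subgroup $U_\Phi$ with $\Phi$ an $A$-admissible subset of $\Lambda_A$.

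Next I would treat the main case: $r$ and $r'$ both detected, with $r|_A$ and $r'|_A$ not negatively proportional over $A$. Here $\psi_r(u)=\tilde u\in U_{[r|_A]}$ and $\psi_{r'}(v)=\tilde v\in U_{[r'|_A]}$, so the commutator relation \eqref{eq:comm-eq} in $\widetilde{\mc G}_A/\mc S_A$ already gives $\Psi([\tilde u,\tilde v])=[\tilde u,\tilde v]\in U_{C(r|_A,r'|_A)}$. On the other side, every root $\chi\in C(r,r')$ has the form $\chi = t_1 r + t_2 r'$ with $t_1,t_2\ge 0$ not both zero, so $\chi|_A = t_1 r|_A + t_2 r'|_A$ is a nonzero element of $C(r|_A,r'|_A)$ precisely because $r|_A,r'|_A$ are not negatively proportional; hence $\chi$ is detected, $\psi_\chi(R_\chi)=\widetilde{R_\chi}\in U_{[\chi|_A]}$, and $\Psi(\tilde R(u,v))=\prod_\chi\widetilde{R_\chi}\in U_{C(r|_A,r'|_A)}$. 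Since $C(r|_A,r'|_A)$ is $A$-admissible, Lemma \ref{lem:detected-comm} together with the equality of projections from the first step forces $\Psi([\tilde u,\tilde v])=\Psi(\tilde R(u,v))$.

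Finally I would reduce the remaining configurations --- at least one of $r,r'$ undetected, or $r,r'$ detected with $r|_A = -c\,r'|_A$ for some $c>0$ (while $r\neq -r'$ over $A_0$) --- to the main case by conjugating \eqref{for:39} by a Weyl element $\widetilde w_s$. Conjugation sends the left side to $[\widetilde w_s\psi_r(u)\widetilde w_s^{-1},\widetilde w_s\psi_{r'}(v)\widetilde w_s^{-1}]$ and the right side to $\prod_\chi\widetilde w_s\psi_\chi(R_\chi)\widetilde w_s^{-1}$; applying Lemma \ref{fact:8} to the undetected roots among $\{r,r',\chi\}$, Lemma \ref{lem:weyl-conj} to the detected ones, and Corollary \ref{cor:weyl-conj} to rewrite conjugates of Weyl elements, this turns \eqref{for:39} for $(r,r')$ into \eqref{for:39} for the pair $(w_s(r),w_s(r'))$ with arguments replaced by their $w_s$-conjugates. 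It therefore suffices to produce $s\in\Delta_{A_0}$, detected and nonproportional over $A$ to both $r$ and $r'$, such that $w_s(r)$ and $w_s(r')$ are detected and not negatively proportional over $A$, and --- when one reflection does not suffice --- to iterate; existence of such reflections is where the genuinely higher rank hypothesis \starref enters, via Lemma \ref{lem:det-gen} and Fact \ref{fact:5}. I expect this selection step, together with the bookkeeping needed to keep every conjugation invoked above legitimate in $\widetilde{\mc G}_A/\mc S_A$ (tracking detailed Weyl expressions as in Remark \ref{re:1} and Corollary \ref{cor:weyl-conj}), to be the main obstacle; the difficulty is organizational rather than conceptual.
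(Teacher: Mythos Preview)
Your plan matches the paper's proof closely: the same base case (both roots detected and not negatively proportional over $A$, handled by Lemma \ref{lem:detected-comm}) and the same reduction mechanism (conjugate by a suitable $\widetilde w_s$, using Lemma \ref{lem:weyl-conj}, Corollary \ref{cor:weyl-conj}, and Lemma \ref{fact:8}). The paper organizes the reduction into four explicit cases rather than a single iterated conjugation, but the content is the same.

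One point you should not dismiss as merely organizational: in the case where $r,r'$ are both detected with $r|_A = \lambda r'|_A$, $\lambda<0$, Lemma \ref{lem:det-gen} alone does \emph{not} hand you an $s$ with $w_s(r)|_A$ and $w_s(r')|_A$ nonproportional --- that lemma only moves a single root off the bad line. The paper produces $s$ by exploiting the canonical inner product on $\Delta_{A_0}$: since $r\neq \lambda r'$ over $A_0$ and the roots outside the $r|_A$-line span $A_0^*$, some such $s$ satisfies $\inner{s}{r}\neq\lambda\inner{s}{r'}$, and a short computation then shows $w_s(r)|_A$ and $w_s(r')|_A$ are independent. This is a genuine (if small) idea rather than bookkeeping, and without it your iteration could stall. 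A second subtlety in the same case is that roots $\omega\in C(r,r')$ may themselves be undetected even though $r,r'$ are detected, so $\Psi(\tilde R(u,v))$ already involves the $\psi_\omega$ for undetected $\omega$; the paper handles this by choosing the \emph{same} $s$ to present those $\psi_\omega$, so that after conjugation everything lands in the $A$-admissible set $w_s(C(r,r'))$. Once you incorporate these two points your outline becomes a complete proof.
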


Note that $r\not= -r$ is equivalent to the condition that $r$ and $r'$ are not negatively proportional over $A_0$, since we have assumed that they are in $\Delta^1_{A_0}$. Since such relations and their conjugates generate $\mc S_{A_0}$ we immediately get that $\Psi(\mc S_{A_0})=e$.

\begin{proof}
We divide the proof into four cases:

\begin{enumerate}[{\bf Case }($a$)]
\item $r$ and $r'$ are both detected, and are not negatively proportional over $A$

\smallskip
\item $r$ and $r'$ are both detected and are negatively proportional over $A$ (but not over $A_0$)

\smallskip
\item $r$ is detected, and $r'$ is undetected

\smallskip
\item Both $r$ and $r'$ are undetected
\end{enumerate}
First we consider \textbf{case $(a)$}: if $r$ and $r'$ are both detected and $r$ is not negatively proportional to $r'$ over $A$, then the conclusion follows directly from Lemma \ref{lem:detected-comm}.

\smallskip

\noindent Next we consider  \textbf{case $(b)$}: $r$ and $r'$ are both detected and $r|_A=\lambda r'|_A$ for some $\lambda<0$.
We claim that there exists $s \in \Delta_{A_0}$ such that $s|_A$ is not proportional to $r|_A$ and $\inner{s}{r} \not= \lambda \inner{s}{r'}$ (with the canonical inner product on the root system $\Delta_{A_0}$. Set
$E=\{\eta\in \Delta_{A_0} : \eta|_A = kr|_A,k\in\RR\}$. Note that $E$ contains all undetected roots.
Indeed, Lemma \ref{lem:det-gen} implies that every root in $\Delta_{A_0}$ is a linear combination of roots in $\Delta_{A_0}\backslash E$. Hence $\Delta_{A_0}\backslash E$ contains a basis of $A_0^*$. So if $\inner{s}{r} = \lambda\inner{s}{r'}$ for every $s \in \Delta_{A_0}\backslash E$, we would conclude that $r= \lambda r'$, which is a contradiction to the assumption that $r$ and $r'$ are not negatively proportional over $A_0$.

So we may choose such an $s$. Then if $k w_s(r)|_A=w_s(r')|_A$ for some $k\in\RR$, we get
\begin{align*}
 kr|_A-2k\frac{\inner{s}{r}}{\inner{s}{s}}s|_A=kw_s(r)|_A&=w_s(r')|_A=r'|_A-2\frac{\inner{s}{r'}}{\inner{s}{s}}s|_A,\text{ then}\\
 (k\lambda-1)r'|_A&=\left(\frac{2k\inner{s}{r}-2\inner{s}{r'}}{\inner{s}{s}}\right)s|_A.
\end{align*}
By assumption $r'|_A$ is not proportional to $s|_A$, so both sides must be zero. Then we have
\begin{align*}
  k\lambda=1\text{ and }2k\inner{s}{r}=2\inner{s}{r'},
\end{align*}
which implies $\inner{s}{r} = \lambda \inner{s}{r'}$. This is a contradicts the choice of $s$. Furthermore, since $s$ is not proportional to $r$ over $A$, then both $w_s(r')$ and $w_s(r)$ are detected. This shows that $w_s(r)$ and $w_s(r')$ are nonproportional over $A$.

We compute:
\begin{align*}
 \tilde{w}_s\cdot \Psi([\tilde{u},\tilde{v}])\cdot \tilde{w}_s^{-1}&\overset{(1)}{=}\tilde{w}_s[\tilde{u},\tilde{v}]\tilde{w}_s^{-1}=[\widetilde w_s\tilde{u}\widetilde w_s^{-1}, \widetilde w_s \tilde{v}\widetilde w_s^{-1}].
\end{align*}
It is clear that $\tilde{u}$ and $\tilde{v}$ represent elements in $\widetilde{\mathcal{G}}_{A_0}$ on the left side of $(1)$; and represent elements in $\widetilde{\mathcal{G}}_{A}/ \mc S_{A}$ on the right side.

We note that $w_s(r)|_A$ and $w_s(r')|_A$ are nonproportional by previous discussion. Then $w_s(C(r,r')) = C(w_s(r),w_s(r'))$ is $A$-admissible by linearity of reflections and projections, and the independence of $w_s(r)$ and $w_s(r')$. By using Lemma \ref{lem:weyl-conj} we get

\begin{align}\label{for:36}
 \tilde{w}_s \Psi([\tilde{u},\tilde{v}])\tilde{w}_s^{-1}\in [\tilde{U}_{[w_s(r)]},\tilde{U}_{[w_s(r')]}]\in \prod_{\eta\in w_s(C(r,r'))}\tilde{U}_{[\eta]},
\end{align}
If each $\omega\in C(r,r')$ is detected, then by using Lemma \ref{lem:weyl-conj} again we obtain:
\begin{align*}
 \tilde{w}_s\Psi(\tilde{R}(u,v))\tilde{w}_s^{-1}&\in \tilde{w}_s \left(\prod_{\omega\in C(r,r')}\tilde{U}_{[\omega]}\right)\tilde{w}_s^{-1}=\prod_{\eta\in w_s(C(r,r'))}\tilde{U}_{[\eta]}.
\end{align*}
Observe that since $w_s(r)$ and $w_s(r')$ are linearly independent, every
\begin{align*}
\mu \in C(w_s(r),w_s(r')) = w_s(C(r,r'))
\end{align*}
is detected. Hence if $\mu \in C(r,r')$ is undetected, $w_s(\mu)$ is detected.

If there exists undetected $\mu\in C(r,r')$, we write $\mu=w_s(w_s(\mu))$. Note that since $w_s(\mu)$ is detected, for any $x\in U_{[\mu]}$, we may set $\psi_\mu(x)=\widetilde w_{s}\tilde{x_1}\widetilde{w}_{s}^{-1}$ where $x_1=w_{s}^{-1}xw_{s}$ (see definition below Proposition \ref{prop:restricted-factor}).

Then we have $\Psi(\tilde{R}(u,v))\in \prod_{\omega\in C(r,r')}\tilde{V}_{[\omega]}$,
where $\tilde{V}_{[\omega]}=\tilde{U}_{[\omega]}$ if $\omega$ is detected and $\tilde{V}_{[\omega]}=\tilde{w}_s^{-1}\tilde{U}_{[w_s(\omega)]}\tilde{w}_s$ if $\omega$ is undetected. This implies that

\begin{align}\label{for:37}
  \tilde{w}_s \Psi(\tilde{R}_r(u,v))\tilde{w}_s^{-1}\in \prod_{\eta\in w_s(C(r,r'))} \tilde{V}_{[\eta]}.
\end{align}
\eqref{for:36} and \eqref{for:37} show that both $\tilde{w}_s \Psi(\tilde{R}(u,v))\tilde{w}_s^{-1}$ and $\tilde{w}_s \Psi([\tilde{u},\tilde{v}])\tilde{w}_s^{-1}$ are inside $\prod_{\eta\in w_s(C(r,r'))}\tilde{U}_{[\eta]}$. Furthermore, they project to
$R(w_s(u),w_s(v))$ and $[w_s(u),w_s(v)]$ respectively in $G$, then by Lemma \ref{lem:detected-comm}  we get:
\begin{align}\label{for:14}
 \tilde{w}_s \Psi(\tilde{R}(u,v))\tilde{w}_s^{-1}=\tilde{w}_s \Psi([\tilde{u},\tilde{v}])\tilde{w}_s^{-1}.
\end{align}
This implies the result.

\smallskip

\noindent Now we consider \textbf{case $(c)$}: $r$ is detected and $r'$ is undetected. By Lemma \ref{lem:det-gen}, we can write $r'=w_s(s')$, where $s,\,s'$ in
$\Delta_{A_0}$ are both detected and $s$ and $r$ are not proportional over $A$. Then we set $\psi_{r'}(v)=\tilde{w}_s \tilde{v_1} \tilde{w}_s^{-1}$, where $v=w_s(v_1)w_s^{-1}$, $v_1\in U_{[s']}$. Note that $w_s(r)$ is detected since $s$ and $r$ a not proportional over $A$. Then we have

\begin{align*}
  \Psi([\tilde{u},\tilde{v}])&=[\tilde{u}, \widetilde w_{s}\tilde{v_1} \widetilde w_{s}^{-1}]=\widetilde w_{s}[\widetilde w_{s}^{-1}\tilde{u}\widetilde w_{s},  \tilde{v_1} ]\widetilde w_{s}^{-1}\\
  &\in \widetilde w_{s} [\tilde{U}_{[w_s(r)]},  \tilde{U}_{[s']} ] \widetilde w_{s}^{-1} \overset{(\dagger)}{\subset} \widetilde w_s \left( \prod_{\eta\in C(w_s(r),s')}\tilde{U}_{[\eta]}\right) \widetilde w_s^{-1}.
\end{align*}
Note that ($\dagger$) follows from the fact that $C(w_s(r),s')\subset \{ir\mid_A+js'\mid_A:i\geq 1\}\bigcap \Lambda_A$ where the latter is $A$-admissible.
Finally, we may distribute the conjugation into the product to arrive at the desired equality since we also have
\begin{align*}
 w_s(C(w_s(r),s'))\subset \{ir\mid_A+js'\mid_A:i\geq 1\}\bigcap \Lambda_A.
\end{align*}
Thus, we finish this case.

\smallskip

\noindent Finally we consider \textbf{case $(d)$}: Suppose that $r$ and $r'$ are undetected. By Lemma \ref{lem:isom-gen-ind} we can write $r = w_s(r_1)$ such that $s$ and $r_1$ are detected. Write $u=w_su_1w_s^{-1}$ and $v=w_sv_1w_s^{-1}$. By using Lemma \ref{fact:8} we get
 \begin{align*}
 \tilde{w}_s^{-1}\Psi([\tilde{u},\tilde{v}])\tilde{w}_s&=\Psi([\tilde{u_1},\tilde{v_1}])\qquad\qquad\text{and}\\
 \tilde{w}_s^{-1}\Psi\big(\tilde{R}(u,v)\big)\tilde{w}_s&=\Psi\big(\tilde{R}(u_1,v_1))\big).
 \end{align*}
Since $u_1\in U_{[r_1]}$ and $r_1$ is detected,
\begin{align*}
 \Psi([\tilde{u_1},\tilde{v_1}])=\Psi\big(\tilde{R}(u_1,v_1)\big)
\end{align*}
by either case (b) or (d) depending on whether $w_s(r')$ is detected or not. This implies the result again. Then we finish the proof of this case. Since this exhausts all cases, we conclude the proof.
\end{proof}

\subsection{Proof of Theorem \ref{thm:central} when $\mathcal{G}=G$}\label{cor:simple-central}
Now we are ready to prove Theorem \ref{thm:central} when $\mathcal{G}=G$. By Remark \ref{re:3} we can assume $G$ is as described in Section \ref{sec:1}.
By using Corollary \ref{cor:4}, $\pi: \widetilde{\mathcal{G}}_{A_0}/ \mc S_{A_0}\to G$ is a perfect central extension. Note that since $\widetilde{\mathcal{G}}_{A}/ \mc S_{A}$ is a factor of the perfect group $\widetilde{\mathcal{G}}_{A_0}/ \mc S_{A_0}$, it is again perfect.

For $\pi_{G}:\widetilde{\mathcal{G}}_{A}/ \mc S_{A}\to G$ (see Section \ref{sec:15}), we note that $\pi_{G}\circ \Psi=\pi$, where $\Psi$ is as defined in Proposition \ref{prop:undetected-comm}. If $\pi_G(\sigma) = e \in G$ for some $\sigma \in \widetilde{\mathcal{G}}_{A}/ \mc S_{A}$, then surjectivity and equivariance implies that $\sigma = \Psi(\sigma')$ for some $\sigma' \in \ker(\pi)$. But since $\widetilde{\mathcal{G}}_{A_0}/ \mc S_{A_0}$ is a central extension, $\sigma'$ commutes with everything in $\widetilde{\mathcal{G}}_{A_0}/ \mc S_{A_0}$. Again, since $\Psi$ is surjective, this implies that $\sigma = \Psi(\sigma')$ is central in $\widetilde{\mathcal{G}}_{A}/ \mc S_{A}$.

\subsection{Kernel of $\pi_{G}$}
\label{sec:kernel-sec}

At the end of this section we state a result which will be used later. For any undetected $r\in\Delta^1_{A_0}$, choose detected $r_1,\,r_2\in \Delta_{A_0}$, such that $w_{r_2}(r_1)=r$. For $x,\,x_1\in
U_{[r]}\setminus\set{e}$ define
\begin{align}\label{for:43}
 \textbf{h}_r(x,x_1)=w_{r_2}\cdot \left(w_{r_1}(w_{r_2}^{-1}xw_{r_2})w_{r_1}(w_{r_2}^{-1}x_1w_{r_2})\right)\cdot w_{r_2}^{-1}.
\end{align}
Let $\mbf{H}_{r}$ be the subgroup generated by $\mbf{h}_r(x,x_1)$. Note the similarities with the elements $\widetilde{h}_\mu(x,x_1)$ appearing in Theorem \ref{le:1} (they are exactly the images of $\widetilde{h}_r(x,x_1)$ via $\Psi$). By using Theorem \ref{le:1} and Proposition \ref{prop:restricted-factor} we get the following result immediately.

\begin{corollary}\label{cor:3}
Let
$\widetilde{H}$ be he subgroup generated by $\widetilde{H}_{r},
r\in\Delta^1_{A_0}$ for detected $r$, and $\widetilde{\mbf{H}}_{r}$, for undetected $r\in \Delta^1_{A_0}$.
Then each $\widetilde{H}_r$ or $\widetilde{\mbf{H}}_{r}$  is normal inside $\tilde{H}$. Furthermore, if $S$ is a connected Lie group with $\text{Lie}(S)=\text{Lie}(G)$, then
$\ker\big(\widetilde{\mathcal{G}}_{A}/ \mc S_{A}\to S\big)\subset \tilde{H}$.

\end{corollary}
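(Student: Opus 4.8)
The plan is to deduce both assertions by transporting Deodhar's structural result (Theorem \ref{le:1}) across the equivariant surjection $\Psi : \widetilde{\mathcal{G}}_{A_0}/ \mc S_{A_0} \to \widetilde{\mathcal{G}}_{A}/ \mc S_{A}$ of Proposition \ref{prop:restricted-factor}; once the right dictionary is in place everything is formal.

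\textbf{Step 1: identify the torus-type subgroups through $\Psi$.} I would first verify that $\Psi$ carries the subgroup $\widetilde{H}$ of $\widetilde{\mathcal{G}}_{A_0}/ \mc S_{A_0}$ appearing in Theorem \ref{le:1} onto the subgroup $\tilde{H}$ of the present Corollary, and, more precisely, that $\Psi(\widetilde{H}_\mu) = \widetilde{H}_\mu$ for detected $\mu \in \Delta^1_{A_0}$ and $\Psi(\widetilde{H}_\mu) = \mbf{H}_\mu$ for undetected $\mu$. For detected $\mu$ this is immediate from $\psi_\mu(u) = \tilde{u}$: $\Psi$ sends the Weyl element $\widetilde{w}_\mu(x)$ to the corresponding Weyl element of $\widetilde{\mathcal{G}}_{A}/ \mc S_{A}$, hence $\widetilde{h}_\mu(x,x_1) = \widetilde{w}_\mu(x)\widetilde{w}_\mu(x_1)^{-1}$ to the like-named element there. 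For undetected $\mu$, writing $\mu = w_{r_2}(r_1)$ with $r_1, r_2$ detected (Lemma \ref{lem:det-gen}) and expanding $\widetilde{w}_\mu(x)$ via the conjugation rule of Corollary \ref{cor:weyl-conj} inside $\widetilde{\mathcal{G}}_{A_0}/ \mc S_{A_0}$, one reads off that $\Psi(\widetilde{h}_\mu(x,x_1))$ is precisely the element $\mbf{h}_\mu(x,x_1)$ of \eqref{for:43}; the independence of the auxiliary roots is exactly Lemma \ref{lem:isom-gen-ind}. Letting $x, x_1$ range over $U_{[\mu]}\setminus\{e\}$ and taking generated subgroups gives the displayed identifications, and hence $\Psi(\widetilde{H}) = \tilde{H}$.

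\textbf{Step 2: normality.} By Theorem \ref{le:1}(1), $\widetilde{H}_\mu \trianglelefteq \widetilde{H}$ in $\widetilde{\mathcal{G}}_{A_0}/ \mc S_{A_0}$. Since a group homomorphism carries a normal subgroup of its domain to a normal subgroup of its image, applying $\Psi$ (which maps $\widetilde{H}$ onto $\tilde{H}$ by Step 1) gives $\widetilde{H}_r \trianglelefteq \tilde{H}$ for detected $r \in \Delta^1_{A_0}$ and $\mbf{H}_r \trianglelefteq \tilde{H}$ for undetected $r$, which is the first assertion.

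\textbf{Step 3: the kernel.} Let $S$ be connected with $\Lie(S) = \Lie(G)$, and let $\pi_S : \widetilde{\mathcal{G}}_{A}/ \mc S_{A} \to S$ and $\pi_S^{0} : \widetilde{\mathcal{G}}_{A_0}/ \mc S_{A_0} \to S$ be the canonical projections (these exist because the commutator relations hold in $S$ by Baker--Campbell--Hausdorff, cf. Remark \ref{re:2}). Checking on the generating root subgroups $U_{[r]}$ one sees $\pi_S \circ \Psi = \pi_S^{0}$ (for undetected $r$ use that $\pi_S$ sends $\widetilde{w}_s$ to the corresponding Weyl element of $S$); this is the analogue, with $S$ in place of $G$, of the commuting triangle in Proposition \ref{prop:restricted-factor}. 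Now take $\sigma \in \ker \pi_S$ and pick $\sigma' \in \Psi^{-1}(\sigma)$ (possible since $\Psi$ is onto); then $\pi_S^{0}(\sigma') = \pi_S(\sigma) = e$, so $\sigma' \in \ker \pi_S^{0}$, and it suffices to show $\ker \pi_S^{0} \subseteq \widetilde{H}$, as that yields $\sigma = \Psi(\sigma') \in \Psi(\widetilde{H}) = \tilde{H}$. For $S = G$ this is Theorem \ref{le:1}(3) together with the subsequent Remark. For general $S$ I would pass to the adjoint group: $\Ad : S \to G_{\mathrm{ad}}$ is a covering with discrete central kernel and $\pi_{G_{\mathrm{ad}}}^{0} = \Ad \circ \pi_S^{0}$, so $\ker \pi_S^{0} \subseteq \ker \pi_{G_{\mathrm{ad}}}^{0}$, and the latter lies in $\widetilde{H}$ — this is part of Deodhar's description in \cite{deodhar78} of the kernel of the projection to an arbitrary isogenous form (Theorem \ref{le:1}(3) records the case $S = G$), the point being that $\widetilde{H}$, the subgroup generated by the $\widetilde{h}_\mu$, already absorbs all the ``torus'' ambiguity between isogenous forms.

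\textbf{Expected main obstacle.} I do not expect a serious difficulty here: the content is carried by Theorem \ref{le:1} and Proposition \ref{prop:restricted-factor}. The only places demanding care are Step 1 — checking that $\Psi$ matches the generators $\widetilde{h}_\mu(x,x_1)$ with $\widetilde{h}_r$ and $\mbf{h}_r$ literally, which is precisely what the formula \eqref{for:43} was set up to achieve — and the second half of Step 3, the routine but slightly technical reduction of the kernel statement for an arbitrary isogenous $S$ to Deodhar's computation, which relies on $\widetilde{H}$ containing the kernel of the projection to the adjoint group.
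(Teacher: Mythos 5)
Your proposal is correct and follows essentially the same approach as the paper, which simply states the corollary ``follows immediately'' from Theorem~\ref{le:1} and Proposition~\ref{prop:restricted-factor}; your three steps fill in exactly what that terse citation suppresses, and Step~1 in particular is the observation the paper itself makes in Section~\ref{sec:kernel-sec} (``they are exactly the images of $\widetilde h_r(x,x_1)$ via $\Psi$''). One small point worth making explicit in Step~3: passing from $S=G$ to a general connected $S$ with $\Lie(S)=\Lie(G)$ via the adjoint group requires not only $\ker\pi_G\subset\widetilde H$ (Theorem~\ref{le:1}(3)) but also that $\pi_G(\widetilde H)$ contains $Z(G)$, so that $\ker\pi^0_{G_{\rm ad}}=\pi_G^{-1}(Z(G))\subset\widetilde H$; this extra ingredient is Deodhar's Proposition~1.8 (that $\widetilde H$ surjects onto the centralizer of $A_0$ in $G^+_k$), which the paper invokes later for the perfectness argument in Section~\ref{sec:alg3} but is not part of Theorem~\ref{le:1} as quoted. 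You flag this as the ``slightly technical'' spot, and it is indeed the one place where a reader needs to pull an extra fact from Deodhar's paper.
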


\section{Algebraic Preparatory step III: twisted symmetric space examples}
\label{sec:alg3}

The aim of this section is to prove  Theorem \ref{thm:central} when $\mathcal{G}=G_\rho$. As with the semisimple case, the principle difficulty will come from the negatively proportional weights and the zero weights of the representation, if they are present. Furthermore, the existence of non-trivial radical in the universal Lie central extension will create extra complications. In Section \ref{sec:alg2} the key technical ingredient in the proof is the action of the Weyl group, which acts transitively on all roots of a fixed length for a split Cartan subgroup inside each simple factor. This fails for most weight systems. So we introduce new techniques using different conjugations to act on the weight spaces.

Recall notations in \seref{se:3} of Section \ref{sec:14}. For any $\mu \in \Lambda_A^*$, set $\tilde{U}_{G,[\mu]}=\{\tilde{u}: u\in U_{G,[\mu]}\}$ for $\mu \in \Delta_{A}$ and $\tilde{U}_{\mathfrak{e},[\mu]}=\{\tilde{u}: u\in U_{\mathfrak{e},[\mu]}\}$ for $\mu \in \Phi_{A,\rho}^*$. Also set $\tilde{U}_{\mathfrak{e},[\mu],i}=\{\tilde{u}: u\in U_{\mathfrak{e},[\mu],i}\}$, $i\in I$. Note that for every $\mu \in \Lambda_A^*$, we then get from Lemma \ref{lem:detected-comm}:

\begin{equation}
\label{eq:weightroot-decomp}
 \tilde{U}_{[r]} = \tilde{U}_{G,[r]} \cdot \prod_{i \in I} \tilde U_{\mf e,[r],i}
\end{equation}
Similar to notations below Theorem \ref{thm:deodhar-centr-ext}, we set $\tilde{U}_{[r]}=\{\tilde{u}: u\in U_{G,[r]}\}$ for detected $r\in \Delta_{A_0}$.
Combining this with \eqref{eq:weightroot-decomp}, we get that if $\mu \in \Delta_A$:

\[ \tilde U_{[\mu]} = \prod_{\substack{s\in \Delta_{A_0} \\ s|_A = \mu}} \tilde{U}_{[s]} \cdot \prod_{i \in I} \tilde U_{\mf e,[\mu],i} \]

If we write $r\in \Delta_{A_0}$ then $\mathfrak{g}_r$ (see Section \ref{sec:3}) comes from the decomposition of $\mathfrak{g}$ with respect to $A_0$, if we write $\mu \in \Delta_{A}$ then $\mathfrak{g}_\mu$ comes from the decomposition of $\mathfrak{g}$ with respect to $A$.

\subsection{Basic properties of weight system}\label{sec:new}
In this part we show the technique to handle negatively proportional weights and roots, which is the first step toward the proof of Theorem \ref{thm:central}. We begin by proving technical results which will be useful for subsequent discussions, the first of which constructs a basis of each detected weight space using nonproportional ones:

\begin{lemma}
\label{lem:gen-zero-weights}
Assume $A$ satisfies the genuinely higher rank condition. For any $\mu\in \Phi_{\rho,A}$, if $U_{\mathfrak{e},[\mu],i}\not=\set{0}$, there exist fixed $\mu_j\in \Phi_{\rho,A}$ and $r_j\in \Delta_{A_0}$, together with distinguished vectors $X_j\in \mathfrak{g}_{r_j}$ and $y_j\in U_{\mathfrak{e},[\mu_j],i} \setminus \set{0}$, $j\in J$ such that $\mu_j$ and $r_j|_A$ are not proportional and the set $\{d\rho(X_j)y_j : j \in J\}$ spans $U_{\mathfrak{e},[\mu],i}$.
\end{lemma}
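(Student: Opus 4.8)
The goal is to express an arbitrary detected weight space $U_{\mathfrak{e},[\mu],i}$ as a span of vectors $d\rho(X_j)y_j$, where $X_j$ lies in some root space $\mathfrak{g}_{r_j}$ with $r_j|_A$ not proportional to $\mu$, and $y_j$ lies in another weight space $U_{\mathfrak{e},[\mu_j],i}$. The strategy is to exploit the irreducibility of $\rho$ on $\mathbb{R}^{N_i}$ (see Section \ref{sec:16}) together with the genuinely higher rank condition, which prevents all weights of $\Phi_{\rho,A}$ restricted to $\Delta_{A_0,i}$ from being mutually proportional over $A$.

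First I would fix the irreducible summand $\mathbb{R}^{N_i}$ and consider the subspace $V \subseteq \mathbb{R}^{N_i}$ spanned by all vectors of the form $d\rho(X)y$ with $X \in \mathfrak{g}_r$, $y \in U_{\mathfrak{e},[\nu],i}$ for $\nu \in \Phi_{\rho,A}$ and $r \in \Delta_{A_0}$ with $r|_A$ not proportional to the weight of the line through $y$. The natural approach is to show $V$ together with the weight spaces it is ``built from'' is $d\rho(\mathfrak{g})$-invariant in a suitable sense, and then invoke irreducibility. More precisely, I would argue by contradiction: suppose $U_{\mathfrak{e},[\mu],i}$ is not contained in such a span. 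Let $W$ be the sum of all weight spaces $U_{\mathfrak{e},[\nu],i}$ together with all images $d\rho(X)U_{\mathfrak{e},[\nu],i}$ allowed by the nonproportionality constraint; one checks that whether $W$ is $d\rho(\mathfrak{g})$-invariant reduces to understanding the ``forbidden'' root directions — those $r$ with $r|_A$ proportional to a given weight $\nu$. The key point is that the set $E_\nu = \{r \in \Delta_{A_0,i} : r|_A \text{ proportional to } \nu \text{ over } A\}$ cannot be all of $\Delta_{A_0,i}$, and in fact $\Delta_{A_0,i}\setminus E_\nu$ still spans $A_0^*$ (this is exactly the type of statement proved via Lemma \ref{lem:det-gen} and the argument in Case $(b)$ of Proposition \ref{prop:undetected-comm}), so that there are ``enough'' admissible root directions to move around inside $\mathbb{R}^{N_i}$.

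The technical heart is then a decomposition argument: given $X \in \mathfrak{g}_r$ with $r|_A$ proportional to $\mu$, I want to write $d\rho(X)y$, for $y \in U_{\mathfrak{e},[\mu],i}$, as a linear combination of admissible products. The idea is to pick an auxiliary root $s \in \Delta_{A_0,i}$ with $s|_A$ not proportional to $\mu|_A$ (possible by the higher rank condition and the spanning statement above), use the Jacobson–Morozov / bracket relations $[\mathfrak{g}_s, \mathfrak{g}_{r-s}] \ni$ (a multiple of) $X$ type identities inside $\mathfrak{g}$, together with $d\rho([Y,Z]) = d\rho(Y)d\rho(Z) - d\rho(Z)d\rho(Y)$, to replace $X$ by brackets of root vectors in directions not proportional to $\mu$. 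One has to be slightly careful that $r - s$ or $s$ might again restrict to something proportional to $\mu$, but iterating and using that $\Delta_{A_0,i}\setminus E_\mu$ spans resolves this in finitely many steps. After this, finiteness of $\Delta_{A_0}$ and $\Phi_{\rho,A}$ lets me extract a \emph{fixed} finite index set $J$ and fixed distinguished vectors $X_j, y_j$, which is what the statement demands.

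\textbf{Main obstacle.} The delicate part is handling the negatively proportional and zero weights cleanly: when $\mu$ itself is a problematic weight (zero weight, or negatively proportional to another), the bracket manipulations must avoid producing terms in $U_{\mathfrak{e},[\mu],i}$ paired with root directions proportional to $\mu$, and one must verify that the spanning set $\Delta_{A_0,i}\setminus E_\mu$ really is large enough — this is where genuinely higher rank is used essentially, and where I expect the argument to require the most care, likely mirroring the reflection/inner-product bookkeeping of Case $(b)$ in Proposition \ref{prop:undetected-comm} but now transported through the representation $d\rho$ rather than the adjoint action. I would also need to confirm that the constructed vectors can be chosen independently of the ambient extension (i.e. the construction takes place entirely in $\mathfrak{g}$ and $\mathbb{R}^{N_i}$), so that the lemma is available uniformly for all the groups $\widetilde{\mathcal{G}}_A/\mc S_A$ and their factors.
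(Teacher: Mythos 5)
The paper's proof is short and rests on one crisp idea you haven't put your finger on: \emph{choose an ordered PBW basis of $U(\mathfrak{g})$ in which root vectors $X_{r,i}$ with $r|_A$ not proportional to $\mu$ come first}. Concretely: pick any weight $\nu$ not proportional to $\mu$ with $U_{\mathfrak{e},[\nu],i}\neq 0$ (possible by genuinely higher rank), fix $0\neq y\in U_{\mathfrak{e},[\nu],i}$, and order a basis of $\mathfrak{g}$ as $P_\mu^2, P_\mu^1, L_0$ (roots not proportional to $\mu$, roots proportional to $\mu$, centralizer). Irreducibility gives $U(\mathfrak{g})\cdot y = \RR^{N_i}$, and PBW gives ordered monomials spanning $U(\mathfrak{g})$. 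Weight counting then forces any such monomial whose image of $y$ lands in $U_{\mathfrak{e},[\mu],i}$ to have as its leftmost nonzero factor some $X_{r,i}\in P_\mu^2$ (otherwise the weight would be $\nu + c\mu$, contradicting $\nu\not\sim\mu$). Peeling off that last operator gives exactly the pairs $(X_j,y_j)$ the lemma asks for, and finiteness is automatic. No contradiction argument, no bracket rewriting, no invariant subspace.

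Your proposal identifies the right ingredients (irreducibility, genuinely higher rank giving a non-proportional auxiliary weight, the spanning statement from Lemma~\ref{lem:det-gen}) but the central mechanism is missing, and the contradiction setup you sketch does not work as stated: the $W$ you define contains all weight spaces $U_{\mathfrak{e},[\nu],i}$, hence already equals $\RR^{N_i}$, leaving nothing to contradict. The bracket-rewriting route you describe (replacing $X\in\mathfrak{g}_r$ with $r|_A\sim\mu$ by $[Y,Z]$ with $Y,Z$ in ``allowed'' directions, in the spirit of Lemma~\ref{fact:10}) is a genuine technique used elsewhere in the paper, but here it would need delicate iteration to guarantee that after the rewrite the \emph{outermost} operator again lands in an admissible direction relative to the new intermediate weight — exactly the bookkeeping that PBW with a well-chosen ordering makes automatic. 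Your worry about the ``delicate part'' is real, but the resolution is to avoid that route entirely.
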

\begin{proof}
Choose $\nu\in \Phi_{\rho,A}$ that is not proportional to $\mu$ and $U_{\mathfrak{e},[\nu],i}\not=\set{0}$. Fix $y\in U_{\mathfrak{e},\nu,i}\setminus \set{0}$. For any $r\in \Delta_{A_0}$, let
\begin{align*}
L_r=\{X_{r,i}\}  \text{ be a basis of }\mathfrak{g}_r\quad \text{and} \quad L_0=\{Y_i\} \text{ is a basis of } \mf g_0 = \text{Lie}(N).
\end{align*}
Note that $\mathfrak{g}_r$ comes from the decomposition of $\text{Lie}(G_\rho)$ with respect to $A_0$.

For any $\mu \in \Phi_{\rho,A}$, Let $P_\mu^1$ be the union of the $L_s$ such that $s|_A$ is proportional to $\mu$ and $P_\mu^2$ be the union of $L_s$ where $s|_A$ are not proportional to $\mu$. Since $\{P_r^2, P_r^1, L_0\}$ is a ordered basis of $\mathfrak{g}$, by Poincar\'{e}-Birkhoff-Witt theorem and irreducibility of $\RR^{N_i}$ for each $i\in I$, monomials $X_{r_1,1}^{k_1}X_{r_2,2}^{k_2}\cdots X_{r_m,m}^{k_m}Y_{1}^{l_1}Y_2^{l_2}\cdots Y_n^{l_n}$ withe respect to the order form a basis of universal enveloping algebra of $\mathfrak{g}$. Then there exist monomials $X_{r_{j_1},1}^{k_1}X_{r_{j_2},2}^{k_2}\cdots X_{r_{j_m},m}^{k_m}Y_{1}^{l_1}Y_2^{l_2}\cdots Y_n^{l_n}$, $j\in J$ such that
\begin{align*}
 d\rho(X_{r_{j_1},1})^{k_1} d\rho(X_{r_{j_2},2})^{k_2}\cdots  d\rho(X_{r_{j_m},m})^{k_m} d\rho(Y_{1})^{l_1} d\rho(Y_2)^{l_2}\cdots  d\rho(Y_n)^{l_n}y,\quad j\in J
\end{align*}
span a basis of $U_{\mathfrak{e},[\mu],i}$.

By assumption $r_{j_1}\in P_\mu^2$ for each $j\in J$. For any $j\in J$, set
\begin{align*}
y_j=d\rho(X_{r_{j_1},1})^{k_1-1} d\rho(X_{r_{j_2},2})^{k_2}\cdots  d\rho(X_{r_{j_m},m})^{k_m} d\rho(Y_{1})^{l_1} d\rho(Y_2)^{l_2}\cdots  d\rho(Y_n)^{l_n}y
\end{align*}
and $X_j=X_{r_{j_1},1}$.

Then $X_j$ and $y_j$, $j\in J$ are what we need.
\end{proof}

\begin{lemma}\label{fact:10}
For any $r \in \Delta_{A_0}$ and $g\in U_{G, [r]}$, there exists $S\subset \Delta_{A_0}$ such that $g\in \prod_{\theta\in S} U_{G, [\theta]}$. Furthermore, $S$ is inside an $A$-admissible set and any $\theta\in S$ is not proportional to $r$ over $A$.
\end{lemma}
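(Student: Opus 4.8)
The plan is to realize the single root subgroup $U_{G,[r]}$ inside a group generated by root subgroups $U_{G,[\theta]}$ whose roots $\theta$ lie strictly inside a strongly convex cone that avoids the line $\R\, r|_A$. Since every $U_{G,[\theta]}$ with $\theta\in\Delta_{A_0}$ lies in $G$, this is a statement about the semisimple group alone, and after replacing $G$ by the simple factor carrying $r$ I may assume $\Delta_{A_0}$ is irreducible of rank $\ge 2$. By the genuinely higher rank hypothesis (used in the same way as in Lemma \ref{lem:det-gen}), not all roots of $\Delta_{A_0}$ restrict to a multiple of $r|_A$; set $E_r=\set{\theta\in\Delta_{A_0}:\theta|_A\in\R\, r|_A}$, so $E_r\ne\Delta_{A_0}$.

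The combinatorial core will be to split $r=\theta_1+\theta_2$ with $\theta_1,\theta_2\in\Delta_{A_0}$ and $\theta_1,\theta_2\notin E_r$. First I claim there is a root $r_1\notin E_r$ with $\langle r,r_1^{\vee}\rangle\ne 0$. Indeed, if not, every root outside $E_r$ is orthogonal to $r$, so $\Delta_{A_0}\subseteq\vspan(E_r)\cup r^{\perp}$; here $r^{\perp}$ is a proper subspace, and $\vspan(E_r)$ is proper as well — otherwise the restriction map on $\vspan(\Delta_{A_0})$ would have one-dimensional image, forcing $E_r=\Delta_{A_0}$ — and an irreducible root system cannot be contained in a union of two proper subspaces, a contradiction. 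Given such an $r_1$: it is not proportional to $r$ over $A$, and because $\langle r,r_1^{\vee}\rangle\ne 0$ one of $r-r_1$, $r+r_1$ is a root (the former if $\langle r,r_1^{\vee}\rangle>0$, the latter if $<0$); writing $r$ as the sum of that root and $\mp r_1$ accordingly gives $r=\theta_1+\theta_2$ with $\set{\theta_1,\theta_2}=\set{r\mp r_1,\ \pm r_1}$, and a one-line computation shows $\theta_1|_A$ and $\theta_2|_A$ are linearly independent (hence neither is proportional to $r|_A$, none of the combinations $a\theta_1|_A+b\theta_2|_A$ with $a,b\ge 1$ vanishes, and $\theta_1|_A,\theta_2|_A$ are not negatively proportional).

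It then remains to extract $U_{G,[r]}$. Since $\theta_1|_A,\theta_2|_A$ are independent, the closed cone $\Phi=\set{a\,\theta_1|_A+b\,\theta_2|_A : a,b\ge 0,\ (a,b)\ne(0,0)}\cap\Lambda_A$ is strongly convex, hence $A$-admissible. By the Chevalley commutator relations in $G$ (cf. \eqref{eq:comm-eq} and Remark \ref{re:2}) the commutator of generic elements of $U_{G,[\theta_1]}$ and $U_{G,[\theta_2]}$ is an ordered product of elements of the $U_{G,[\chi]}$ over roots $\chi=a\theta_1+b\theta_2\in\Delta_{A_0}$ with $a,b\ge 1$; since $\theta_1+\theta_2=r$ is a root, the $U_{G,[r]}$-factor of this product carries a nonzero coefficient, and solving the relation for that factor gives $U_{G,[r]}\subseteq\big\langle U_{G,[\theta_1]},\, U_{G,[\theta_2]},\, U_{G,[\chi]}\ (\chi=a\theta_1+b\theta_2\in\Delta_{A_0},\ a,b\ge 1,\ \chi\ne r)\big\rangle$ (in the non-reduced case one runs the same extraction for $2r$, adjoining finitely many more such $\chi$). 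Let $S$ be this finite set of roots. Then $S\subseteq\Delta_{A_0}$; the restrictions $\set{\theta|_A:\theta\in S}$ lie in $\Phi$, which is $A$-admissible; and no $\theta\in S$ is proportional to $r$ over $A$ — this holds for $\theta_1,\theta_2$ because they lie outside $E_r$, and for $\chi=a\theta_1+b\theta_2$ because $\chi|_A\in\R\, r|_A=\R(\theta_1|_A+\theta_2|_A)$ would force $a=b$ by independence, hence $\chi=ar$, a root only for $a=1$, i.e. $\chi=r$, which is excluded. Thus $g\in U_{G,[r]}\subseteq\prod_{\theta\in S}U_{G,[\theta]}$.

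The only genuine obstacle is the combinatorial claim of the middle paragraph — that $r$ splits off a root not proportional to it over $A$. This is precisely where the genuinely higher rank condition is indispensable: it must be deduced from the geometry of $\Delta_{A_0}$ together with that hypothesis, and the point that needs ruling out is the scenario in which every root geometrically transverse to $r$ becomes $A$-proportional to $r$, which is exactly what the ``union of two proper subspaces'' argument excludes. When $r$ is undetected the statement is read with ``not proportional to $r$ over $A$'' understood as ``detected'', and the same scheme applies after first reflecting $r$ to a detected root by \eqref{for:16} of Lemma \ref{lem:det-gen} and conjugating the resulting product back.
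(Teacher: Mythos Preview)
Your argument for detected $r$ is correct and is essentially the paper's proof in different clothing. The paper writes $r=w_{s_1}(s_2)=ns_1+s_2$ with $s_1,s_2$ not proportional to $r$ over $A$ (quoting Lemma~\ref{lem:det-gen}) and then uses $u=[\log y,v]$ with $v\in\mathfrak g_{(n\mp 1)s_1+s_2}$; this is precisely your splitting $r=\theta_1+\theta_2$ with $\theta_1=\pm s_1$, $\theta_2=(n\mp1)s_1+s_2$, and your Chevalley-commutator extraction is the same mechanism. Your ``union of two proper subspaces'' argument to produce $r_1\notin E_r$ with $\langle r,r_1^\vee\rangle\ne0$ is an equivalent (and arguably cleaner) substitute for the Weyl-orbit argument behind Lemma~\ref{lem:det-gen}(1).

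There is, however, a genuine gap in your last paragraph. For undetected $r$ your direct scheme fails because $\theta_1|_A=-\theta_2|_A$, as you notice; but the proposed fix---reflect to a detected $r'=w_s(r)$, apply the detected case to obtain $S'$ inside the cone on $\theta_1'|_A,\theta_2'|_A$, and conjugate back---does \emph{not} yield an $A$-admissible container for $S=w_s(S')$. Indeed $r'|_A$ is proportional to $s|_A$, so $w_s(\theta_1')|_A+w_s(\theta_2')|_A=w_s(r')|_A=r|_A=0$: the restrictions of $w_s(\theta_1')$ and $w_s(\theta_2')$ are negatively proportional, and the image cone degenerates to a full line. Thus $S$ can contain negatively $A$-proportional roots and is not inside any $A$-admissible set. (The paper's own proof begins ``For any detected $r$'' and the lemma is only invoked for detected roots, so this does not affect the applications; but your claimed reduction does not work as written.)
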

\begin{proof}
First, we prove the case $g=\exp(u)$ with $u\in \mathfrak{g}_r$. For any detected $r\in \Delta_{A_0}$ \eqref{for:16} of  Lemma \ref{lem:det-gen} shows that we can write $r=w_{s_1}(s_2)$ where the $s_i\in \Delta_{A_0}$ are not proportional to $r$ over $A$. Suppose $w_{s_1}=yxy^{-1}$ is a detailed expression of $w_{s_1}$, where $y\in U_{G,[s_1]}$  and $x \in U_{G,[-s_1]}$. Note that  $w_{s_1}(s_2)=ns_1+s_2$, $0\neq n\in\ZZ$. Then for any $0\neq u\in \mathfrak{g}_{r}$:
 \begin{enumerate}
   \item if $n>0$ , there exists a suitable $v\in \mathfrak{g}_{(n-1)s_1+s_2}$ such that $u=[\log y, v]$. Then $\exp(u)\in \prod_{\theta\in S} U_{G, [\theta]}$ where $S=\{r\neq is_1+js_2, ks_1: j,\,k\geq 1, i\geq0\}\bigcap \Delta_{A_0}$;

   \smallskip
   \item if $n<0$, then exists a suitable $v\in \mathfrak{g}_{(n+1)s_1+s_2}$ such that $u=[\log x, v]$. Then $\exp(u)\in \prod_{\theta\in S} U_{G, [\theta]}$ where $S=\{r\neq is_1+js_2, ks_1: j\geq 1, \,k\leq-1,\,i\leq0\}\bigcap \Delta_{A_0}$.
 \end{enumerate}
If $is_1+js_2 = mr$, then $m$ is $1$ or $2$. So when $2r\notin \Delta_{A_0}$, from above discussion we get the result. If $2r\in \Delta_{A_0}$, by noting that  $2r=w_{s_1}(2s_2)$,  then by repeating above process to $2r$ in both cases, we get:
\begin{enumerate}
   \item if $n>0$ then $\exp(u)\in \prod_{\theta\in S} U_{G, [\theta]}$ where $S=\{mr\neq is_1+js_2, ks_1: j,\,k\geq 1, i\geq0,\,m\geq1\}\bigcap \Delta_{A_0}$;

   \smallskip
   \item if $n<0$, then $\exp(u)\in \prod_{\theta\in S} U_{G, [\theta]}$ where $S=\{mr\neq is_1+js_2, ks_1: j\geq 1, \,k\leq-1,\,i\leq0,\,m\geq1\}\bigcap \Delta_{A_0}$.
 \end{enumerate}
 Then we get the result if $g=\exp(u)$ with $u\in \mathfrak{g}_r$. Generally, for any $g\in U_{G, [r]}$ it can be written as a product of $g_1\cdots g_\ell$ with $\log g_i\in \mathfrak{g}_{\lambda_i r}$, $\lambda_i=\frac{1}{2},\,1,\,2$ for each $1\leq i\leq\ell$. Also note that $\lambda_i r=w_{s_1}(\lambda_i s_2)$ for each $1\leq i\leq\ell$. Then by the earlier argument, we get the result.

\end{proof}

Next, we state a simple fact which will be useful in subsequent discussions.
\begin{fact}\label{fact:4}
Suppose $s\in \Phi_{A,\rho}$, $r\in \Delta_{A_0}$ with $r\mid_A\neq0$, $x\in U_{\mathfrak{e},s,i}$, $i\in I$, and $X_r\in \mathfrak{g}_r$.  Also suppose $x_1=d\rho(X_r)x=[X_r,x]\neq 0$ (see Section \ref{sec:16}). Then
\begin{align*}
 \rho(\exp(X_{r})x)=x+\sum_{1\leq j\leq m} x_j
\end{align*}
where $0\neq x_j\in U_{\mathfrak{e}, [s+jr|_A],i}$ $2\leq j\leq m$.

Therefore:
\begin{align*}
 x_1=[\exp(X_{r}),x]-\sum_{2\leq j\leq m} x_j.
\end{align*}
The above computation shows:

\begin{enumerate}
  \item \label{for:48} $x_1$ is generated by elements in the subgroups $U_{G,[r]}$ and $U_{\mathfrak{e},[s+jr\mid_A],i}$, $j=0,\,2,\cdots, m$;

      \smallskip
  \item \label{for:54}if $s\neq0$ and $s+j_0r=0$ for some $1\leq j_0\leq m$, then
\begin{align*}
 x_{j_0}=[\exp(X_{r}),x]-\sum_{1\leq j\neq j_0\leq m} x_j.
\end{align*}
Then $x_{j_0}$ is generated by elements in the subgroups $U_{G,[r]}$ and $U_{\mathfrak{e},[s+jr\mid_A],i}$, $j\neq j_0$.
  \end{enumerate}
  \end{fact}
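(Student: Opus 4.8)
The proof will be a short, elementary computation with the exponential of the representation, followed by two purely formal rearrangements. First I would record that, since $\rho$ (and, on the extension, $\rho'$) is a Lie group homomorphism, $\rho(\exp(X_r)) = \exp(d\rho(X_r))$ as an automorphism of $\mf e$; by \eqref{for:11} this is just the conjugation $v \mapsto \exp(X_r)\,v\,\exp(X_r)^{-1}$ restricted to the $\mf e$-directions. Writing $x = \exp(v)$ with $v \in \mf e_s \cap \RR^{N_i}$, one has
\[ \rho(\exp(X_r))x \;=\; \exp\Big(\sum_{j\ge 0}\tfrac{1}{j!}(d\rho(X_r))^j v\Big), \]
and I set $x_j$ to be the factor arising from the summand $\tfrac{1}{j!}(d\rho(X_r))^j v$, so $x_0 = x$. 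Since $X_r \in \mf g_r$ and $\mf g_r$ is a genuine $A_0$-eigenspace, the operator $d\rho(X_r)$ carries each restricted-weight space $\mf e_\mu$ into $\mf e_{\mu+r|_A}$ and preserves each irreducible summand $\RR^{N_i}$ (Section \ref{sec:16}), whence $x_j \in U_{\mf e,[s+jr|_A],i}$ for $j \ge 1$; and since $r|_A \ne 0$ while $\Phi_{A,\rho}$ is finite, $(d\rho(X_r))^j v = 0$ as soon as $s+jr|_A \notin \Phi_{A,\rho}$, so $d\rho(X_r)$ is nilpotent on the submodule generated by $v$. Letting $m$ be the largest index with $(d\rho(X_r))^m v \ne 0$, one has $m \ge 1$ by the hypothesis $x_1 \ne 0$, and $x_j \ne 0$ for every $1 \le j \le m$ since the kernel filtration of a nilpotent operator is nondecreasing. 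This is the first displayed identity of the statement.

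The ``Therefore'' and the two numbered consequences are then immediate. Since $[\exp(X_r),x] = \big(\rho(\exp(X_r))x\big)x^{-1}$ and, in the additive notation of the statement, $\rho(\exp(X_r))x = x + x_1 + \dots + x_m$, subtracting $x$ gives $x_1 + \dots + x_m = [\exp(X_r),x]$; isolating $x_1$ yields $x_1 = [\exp(X_r),x] - \sum_{2\le j\le m}x_j$, which proves (1): the right-hand side involves only $\exp(X_r)\in U_{G,[r]}$ together with $x = x_0$ and the $x_j$, which lie in $U_{\mf e,[s+jr|_A],i}$ for $j = 0,2,\dots,m$. For (2), if $s\ne 0$ and $s+j_0 r|_A = 0$ for some $1\le j_0\le m$ (necessarily unique, as $r|_A\ne 0$), isolating $x_{j_0}$ instead gives $x_{j_0} = [\exp(X_r),x] - \sum_{1\le j\ne j_0\le m}x_j$; here none of the weights $s+jr|_A$ with $j\ne j_0$ vanishes, so the zero-weight component $x_{j_0}$ is generated by $U_{G,[r]}$ and the $U_{\mf e,[s+jr|_A],i}$ with $j\ne j_0$.

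The only points that call for genuine care --- and the closest this argument comes to an obstacle --- are bookkeeping. First, one must keep track of the passage between the $A_0$-grading, in which $X_r$ and $\mf g_r$ are defined, and the $A$-grading, in which the subgroups $U_{\mf e,[\mu],i}$ are indexed, so that the shift really is by $r|_A$; this uses the conventions set up in \seref{se:3}. Second, in the symplectic case $\mf e = \RR^N \oplus \mf z$ is nonabelian, so composing $x_0 x_1 \cdots x_m$ could a priori produce $\mf z$-valued corrections; but these are the $\mf g$-invariant brackets $[v_j,v_k] \in \mf z$, which vanish unless the weights $s+jr|_A$ and $s+kr|_A$ are opposite (as $\mf z$ carries the zero weight), and even then they remain generated by the listed $U_{\mf e,[\cdot],i}$, so the stated conclusions are unaffected. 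Finally, to read the identity inside $\widetilde{\mc G}_A/\mc S_A$ rather than in $\mc G_0$, one transfers it by Lemma \ref{lem:detected-comm}, the relevant weights lying in an $A$-admissible set.
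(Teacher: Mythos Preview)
Your proof is correct and follows the same approach the paper has in mind: expand $\rho(\exp(X_r)) = \exp(d\rho(X_r))$ as a finite sum using nilpotency of $d\rho(X_r)$ on the weight filtration, identify $x_j = \tfrac{1}{j!}(d\rho(X_r))^j x \in U_{\mf e,[s+jr|_A],i}$, and rearrange. The paper labels this a ``Fact'' and embeds the computation in the statement itself rather than giving a separate proof, so your write-up is simply a more explicit version of what is intended.

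One comment on your final paragraph: the concerns you raise there are largely unnecessary \emph{for this Fact as stated}. The statement lives entirely in $G_\rho$ (indeed in $\RR^N$, which is abelian), so there are no $\mf z$-valued corrections to track here; and the transfer to $\widetilde{\mc G}_A/\mc S_A$ via Lemma~\ref{lem:detected-comm} is not part of the Fact but is carried out later when the Fact is applied (e.g.\ in Lemma~\ref{le:6} and Corollary~\ref{prop:zero-weight-group-struct}). You are right that these issues matter downstream, but they do not belong in the proof of the Fact itself.
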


As a consequence of the above lemmas, we have
\begin{lemma}\label{le:6}
For any $\mu \in \Phi_{A,\rho}$ and any $x\in U_{\mathfrak{e},[\mu]}$, we can write $x = x_1 \cdot \dots \cdot x_n$
such that $x_i \in \left( \displaystyle\prod_{\theta\in S_i} U_{[\theta_i]}\right)\bigcap U_{\mathfrak{e},[\mu]}$  where $S_i\subset \Lambda_A$. Furthermore, each $S_i$ contains no element proportional to $\mu$, and is contained in an $A$-admissible subset containing $\mu$.
\end{lemma}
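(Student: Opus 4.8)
The plan is to reduce an arbitrary $x\in U_{\mathfrak e,[\mu]}$ to ``atoms'' of the form $\exp(d\rho(X_r)y)$ and to read the required decomposition of an atom directly off Fact \ref{fact:4}. Since $\mathfrak e=\RR^N$ is abelian, $U_{\mathfrak e,[\mu]}$ is an abelian group and $\exp$ restricts to a vector space isomorphism $\bigoplus_{\lambda>0}\bigoplus_{i\in I}\mathfrak e_{\lambda\mu,i}\to U_{\mathfrak e,[\mu]}$; hence writing $x$ as a product amounts to writing $\log x$ as a sum, and since $[\lambda\mu]=[\mu]$ it suffices to treat $x=\exp(v)$ with $v\in\mathfrak e_{\mu,i}$ for a single $i\in I$ with $U_{\mathfrak e,[\mu],i}\neq\{0\}$. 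I would then invoke Lemma \ref{lem:gen-zero-weights}: it produces $r_j\in\Delta_{A_0}$, $X_j\in\mathfrak g_{r_j}$, $\mu_j\in\Phi_{A,\rho}$ and $y_j\in U_{\mathfrak e,[\mu_j],i}\setminus\{0\}$ with $r_j|_A$ not proportional to $\mu_j$, such that the homogeneous vectors $d\rho(X_j)y_j$ span $U_{\mathfrak e,[\mu],i}$; retaining only the $j$ for which $d\rho(X_j)y_j$ has weight exactly $\mu$ gives a spanning set of $\mathfrak e_{\mu,i}$, so after discarding vanishing terms and absorbing scalars into the $X_j$ one gets $x=\prod_j\exp(d\rho(X_j)y_j)$. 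For each such $j$ one has $\mu_j=\mu-r_j|_A$ (as $d\rho(X_j)y_j$ has weight $\mu_j+r_j|_A$); since $r_j|_A$ is not proportional to $\mu_j$ it is not proportional to $\mu$, and hence neither is $\mu_j$. Thus it is enough to handle one atom $\exp(d\rho(X_r)y)$ with $r\in\Delta_{A_0}$, $r|_A\neq 0$, $X_r\in\mathfrak g_r$, $y\in U_{\mathfrak e,[s],i}\setminus\{0\}$, where $s:=\mu-r|_A$ and both $r|_A$ and $s$ are non-proportional to $\mu$.

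For the atom I would apply Fact \ref{fact:4} with this $r$, $s$ and $x=y$ (assuming, as we may, that $x_1:=d\rho(X_r)y\neq 0$). It shows that the nonzero weights $s+jr|_A$ that occur run over $j=1,\dots,m$ (with $j=1$ giving $\mu$), and that $x_1$ is generated by elements of $U_{G,[r]}$ and of the subgroups $U_{\mathfrak e,[s+jr|_A],i}$ for $j\in\{0,2,\dots,m\}$. Hence $\exp(x_1)$ lies in the subgroup generated by the $U_{[\theta]}$, $\theta\in S_i$, where
\begin{align*}
 S_i\;=\;\{\,r|_A,\ s\,\}\,\cup\,\{\,s+jr|_A:2\le j\le m\,\}\;\subset\;\Lambda_A .
\end{align*}
Because $s=\mu-r|_A$ and $s+jr|_A=\mu+(j-1)r|_A$ while $r|_A$ is not proportional to $\mu$, no element of $S_i$ is proportional to $\mu$; and $\exp(x_1)\in U_{\mathfrak e,[\mu]}$ by construction. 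So $\exp(x_1)\in\big(\prod_{\theta\in S_i}U_{[\theta]}\big)\cap U_{\mathfrak e,[\mu]}$ with $S_i$ free of $\mu$-proportional exponents, and multiplying the decompositions of the finitely many atoms yields $x=x_1\cdots x_n$ as asserted.

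It then remains to place $S_i\cup\{\mu\}$ inside one $A$-admissible set. Every element of $S_i$ is a non-negative combination of the linearly independent functionals $r|_A$ and $s$, so $S_i$ is contained in $C(r|_A,s)\cup\{r|_A,s\}$, which is $A$-admissible (by the observation following Definition \ref{def:Aadmissible} together with Definition \ref{de:1}) and contains $\mu=r|_A+s$. Care is needed only when $r|_A$ (or a positive multiple of it) cannot itself lie in an $A$-admissible set because of a non-reduced configuration of restricted roots: there I would first rewrite $\exp(X_r)\in U_{G,[r]}$ via Lemma \ref{fact:10} as a product of elements of root subgroups $U_{G,[\theta]}$ with $\theta$ not proportional to $r|_A$ over $A$ and with $\{\theta\}$ contained in an $A$-admissible set, and re-run the computation with these in place of $r|_A$; a finite induction (on the number of proportionality classes still in play, which strictly decreases) terminates this. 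The degenerate case $\mu=0$, where a zero weight is present, is treated by the same computation using part \eqref{for:54} of Fact \ref{fact:4}.

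The main obstacle is precisely this bookkeeping: one must keep $S_i$ free of exponents proportional to $\mu$ while simultaneously fitting $S_i\cup\{\mu\}$ into a single $A$-admissible set, and these two requirements interact badly with non-reduced root and weight behaviour — which is what Lemma \ref{fact:10} is designed to absorb. A secondary technical point is that Lemma \ref{lem:gen-zero-weights} only spans the \emph{whole} group $U_{\mathfrak e,[\mu],i}$, so one must isolate the correct graded piece and check that the base weights $\mu_j$ it returns are genuinely non-proportional to $\mu$ before Fact \ref{fact:4} becomes applicable.
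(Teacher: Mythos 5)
Your proposal follows the same route as the paper's own (terser) proof: reduce via Lemma \ref{lem:gen-zero-weights} to atoms of the form $\exp(d\rho(X_r)y)$ and then read the decomposition off part \eqref{for:48} of Fact \ref{fact:4}, keeping track of the weights $r|_A$, $s$, $s+jr|_A$ that occur. Your derivation that $r_j|_A$ and $\mu_j$ are non-proportional to $\mu$ from the weight identity $\mu_j+r_j|_A\in\{\lambda\mu:\lambda>0\}$ is sound and in fact clarifies the phrasing of Lemma \ref{lem:gen-zero-weights}. One small but genuine slip: the set $C(r|_A,s)\cup\{r|_A,s\}$ is in general \emph{not} $A$-admissible. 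If $2r|_A\in\Lambda_A$ (a non-reduced restricted root), then $r|_A+r|_A$ is a sum of two members of your set lying in $\Lambda_A$ but not in the set, violating condition (i) of Definition \ref{def:Aadmissible}; the observation after that definition applies only to the open cone $C(r|_A,s)$ and does not transfer to its closure by adjoining the two generators alone. The correct container is the full closed cone $\{t_1r|_A+t_2s:t_1,t_2\ge 0,\,(t_1,t_2)\neq(0,0)\}\cap\Lambda_A$: it is closed under addition, salient because $r|_A$ and $s$ are linearly independent (which your own computation verifies), contains $S_i$, and contains $\mu=r|_A+s$. With this fix the extra induction via Lemma \ref{fact:10} that you propose to absorb "non-reduced configurations'' is unnecessary for this lemma — the closed cone already handles it.
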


\begin{proof}
We assume that $x\neq e$ otherwise the conclusion is obvious. We can write $x=y_1\cdots y_m$, where $y_i\in U_{\mathfrak{e},[\mu],i}$, $i\in I$. By Lemma \ref{lem:gen-zero-weights} we can write
\begin{align*}
 y_i=d\rho(X_1)(y_{i,1}) + \dots + d\rho(X_l)(y_{i,l})
\end{align*}
where $X_j\in \mathfrak{g}_{r_j}$, $r_j\in \Delta_{A_0}$ and $y_{i,j}\in U_{\mathfrak{e},[\mu_j],i}$ such that $r_j|_A$ and $\mu_j$ are not proportional to $\mu$ for any $ 1\leq j\leq l$.

By using \eqref{for:48} of Fact \ref{fact:4} we
see that $d\rho(X_j)(y_{i,j})\in \prod_{\theta_{i,j}\in S_{i,j}}U_{[\theta_{i,j}]}$, where $S_{i,j}=\{\mu_j+kr_j,\,\lambda r_j: \lambda>0,\,k=0,2,\cdots\}\bigcap\Lambda_A$. It is clear that any $\theta \in S_{i,j}$ is not proportional to $\mu$ and that
$S_{i,j}$ is contained in an $A$-admissible set.
\end{proof}

\begin{lemma}\label{le:2}
Suppose $r\in\Delta_A\cup \Phi^*_{A,\rho}$ and $\widetilde{p}$ is inside $\left\langle \tilde{U}_{[r]},\tilde{U}_{[-r]}\right\rangle$. Then for any $\mu\in \Delta_A\cup \Phi^*_{A,\rho}$ with $\mu$ not proportional to $r$ and $\tilde{y}\in \tilde{U}_{[\mu]}$, we have: $[\widetilde{p},\widetilde{y}]\in \prod_{\theta\in S}\tilde{U}_{[\theta]}$,
where $S=\{kr+j\mu:k\in\R,j\geq1\}\bigcap\Lambda_A$.
\end{lemma}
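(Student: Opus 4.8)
The plan is to reduce the statement to an assertion purely inside the subgroup $\tilde U_{[\mu]}$ together with its images under conjugation by a fixed element, by exploiting the commutator relations $R_{\cdot,\cdot,A}$ that define $\mc S_A$. First I would observe that $\widetilde p \in \langle \tilde U_{[r]}, \tilde U_{[-r]}\rangle$ can be written as a finite product $\widetilde p = \widetilde p_1 \cdots \widetilde p_k$ where each $\widetilde p_j$ lies in $\tilde U_{[r]}$ or $\tilde U_{[-r]}$. Using the standard commutator identity $[ab,c] = {}^{a}[b,c]\,[a,c]$ (with ${}^{a}x := axa^{-1}$), it suffices to prove the claim when $\widetilde p$ itself lies in $\tilde U_{[\pm r]}$, and to check that the set $S = \{kr + j\mu : k \in \R, j \ge 1\}\cap \Lambda_A$ is stable under the conjugations that arise, i.e. that $\prod_{\theta \in S}\tilde U_{[\theta]}$ is normalized by $\tilde U_{[\pm r]}$. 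That normalization is where I would invoke the commutator relations: for $x \in \tilde U_{[\pm r]}$ and $\tilde z \in \tilde U_{[\theta]}$ with $\theta \in S$, the element $x \tilde z x^{-1}$ lies in $\tilde z \cdot \prod_{\chi \in C(\pm r, \theta)}\tilde U_{[\chi]}$ modulo $\mc S_A$, and every $\chi \in C(\pm r,\theta)$ is again of the form $k'r + j'\mu$ with $j' \ge j \ge 1$, hence in $S$. Here one must be slightly careful that the cones $C(\pm r,\theta)$ and $C(\pm r, \mu)$ are $A$-admissible so that Lemma \ref{lem:detected-comm} and \eqref{eq:comm-eq} apply; this holds because $\mu$ and $r$ are nonproportional over $A$ and every element of $S$ has positive $\mu$-coefficient, so no negatively proportional pair appears.

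With this normalization in hand, the core case is $\widetilde p \in \tilde U_{[r]}$ (the case $\tilde U_{[-r]}$ is identical, replacing $r$ by $-r$ throughout, noting $S$ is defined with $k$ ranging over all of $\R$). By \eqref{eq:comm-eq} applied in $\widetilde{\mc G}_A/\mc S_A$, for $\widetilde p \in \tilde U_{[r]}$ and $\tilde y \in \tilde U_{[\mu]}$ we have $[\widetilde p, \tilde y] \in \prod_{\chi \in C(r,\mu)} \tilde U_{[\chi]}$, and $C(r,\mu) = \{t_1 r + t_2 \mu : t_1, t_2 \in \R_+\}\cap \Lambda_A \subset S$ since every such $\chi$ has strictly positive $\mu$-coefficient. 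That already gives the conclusion for $\widetilde p$ in a single factor. For a general product $\widetilde p = \widetilde p_1 \cdots \widetilde p_k$, iterate the commutator expansion $[\widetilde p_1 \cdots \widetilde p_k, \tilde y] = {}^{\widetilde p_1}[\widetilde p_2\cdots\widetilde p_k, \tilde y]\cdot [\widetilde p_1, \tilde y]$: by induction the inner commutator lies in $\prod_{\theta \in S}\tilde U_{[\theta]}$, conjugating by $\widetilde p_1 \in \tilde U_{[\pm r]}$ keeps it there by the normalization step, and $[\widetilde p_1, \tilde y]$ lies there by the single-factor case; the product of two elements of $\prod_{\theta \in S}\tilde U_{[\theta]}$ is again in $\prod_{\theta \in S}\tilde U_{[\theta]}$ because $S$ is contained in an $A$-admissible set and these subgroups satisfy closed commutator relations among themselves.

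The main obstacle I anticipate is purely bookkeeping rather than conceptual: making sure that all the cones and intermediate sets appearing ($C(r,\mu)$, $C(-r,\mu)$, $C(\pm r,\theta)$ for $\theta \in S$, and $S$ itself) are simultaneously contained in a single $A$-admissible subset of $\Lambda_A$, so that Lemma \ref{lem:detected-comm} can be used to legitimately manipulate the product $\prod_{\theta \in S}\tilde U_{[\theta]}$ as an honest group with well-defined commutator relations. The potential subtlety is the presence of undetected roots or of roots $\theta$ with $-\theta$ also proportional to something in $S$; but since $\mu$ is nonproportional to $r$ over $A$ and every element of $S$ carries a positive $\mu$-coefficient, no two elements of $S$ are negatively proportional over $A$, so $S$ lies in an $A$-admissible set and condition (ii) of Definition \ref{def:Aadmissible} is automatic. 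Once this is verified the argument is a routine induction on the length of $\widetilde p$.
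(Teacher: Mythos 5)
Your proposal is correct and follows essentially the same route as the paper's proof: both reduce to the two facts that (a) $[\tilde y,\tilde x]\in\prod_{\theta\in S}\tilde U_{[\theta]}$ for $\tilde x$ a single factor in $\tilde U_{[\pm r]}$, and (b) $\prod_{\theta\in S}\tilde U_{[\theta]}$ is preserved by commutation with $\tilde U_{[\pm r]}$, and then induct on the length of the word for $\widetilde p$. The only presentational difference is that you carry out the induction via the identity $[ab,c]={}^{a}[b,c][a,c]$ while the paper pushes $\tilde y$ through the word one factor at a time collecting $\tilde z$-terms on the left; you are also somewhat more explicit than the paper about why the cones and the set $S$ sit inside an $A$-admissible set so that Lemma~\ref{lem:detected-comm} applies.
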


\begin{proof}
For any $\tilde{x}$ in $\tilde{U}_{[r]}$ or in $\tilde{U}_{[-r]}$, it is clear that

\[ \text{(a)}\;[\tilde{y},\tilde{x}]\in \prod_{\theta\in S}\tilde{U}_{[\theta]}.\]

Also, for any $\tilde{u}\in \prod_{\theta\in S}\tilde{U}_{[\theta]}$ we also have

\[ \text{(b)}\;[\tilde{u},\tilde{x}]\in\prod_{\theta\in S}\tilde{U}_{[\theta]},\]
We now proceed with the general case as stated in the Lemma. Write $\widetilde{p}=\widetilde{p}_1\cdots \widetilde{p}_m$, where $\widetilde{p}_i\in \tilde{U}_{[(-1)^ir]}$, $1\leq i\leq m$. By applying above discussion repeatedly we have
\begin{align*}
  \widetilde{p}_1\cdots \widetilde{p}_m\tilde{y}=\tilde{z}_{m,1}\widetilde{p}_1\cdots \widetilde{p}_{m-1}\tilde{y}\widetilde{p}_m,
\end{align*}
where $\tilde{z}_{m,1}\in \prod_{\theta\in S}\tilde{U}_{[\theta]}$. At each step, we apply the equality $(a)$ to get $z_{m,k}'\tilde y \tilde p_k = \tilde p_k \tilde y$, then move $\tilde z_{m_k}'$ to the left using $(b)$.

Inductively we can show that
\begin{align*}
  \widetilde{p}_1\cdots \widetilde{p}_m\tilde{y}=\tilde{z}_{m,m}\tilde{y}\widetilde{p}_1\cdots \widetilde{p}_{m-1}\widetilde{p}_m,
\end{align*}
where $\tilde{z}_{m,m}\in \prod_{\theta\in S}\tilde{U}_{[\theta]}$. Then we get the result.
\end{proof}

Next, we are in a position to obtain some information on negatively propositional weights or roots.

\begin{lemma}

\label{lem:1dim-centrality}
Suppose $r\in\Delta_A\bigcup\Phi^*_{A,\rho}$ and $\pi_{G_\rho}(\widetilde{p})=e$ where $\widetilde{p}$ is inside $\left\langle \tilde{U}_{[r]},\tilde{U}_{[-r]} \right\rangle$.
Then $\widetilde{p} \in Z(\widetilde{\mathcal{G}}_A/ \mc S_A)$.
\end{lemma}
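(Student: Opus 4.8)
The plan is to show that $\widetilde p$ commutes with every generating subgroup $\tilde U_{[\mu]}$, $\mu \in \Lambda_A$, of $\widetilde{\mathcal G}_A/\mc S_A$; since these generate the whole group, this gives $\widetilde p \in Z(\widetilde{\mathcal G}_A/\mc S_A)$. By Corollary \ref{cor:det-gen} (applied in the twisted setting, where the analogous statement should hold: the subgroups $U_{[\mu]}$ with $\mu$ not proportional to $r$ over $A$ generate $\mathcal G_\rho$), it suffices to treat $\mu \in \Lambda_A$ that is \emph{not} proportional to $r$ over $A$, provided one first handles the subgroups proportional to $r$ separately. For $\mu$ not proportional to $r$, Lemma \ref{le:2} tells us that $[\widetilde p, \tilde y] \in \prod_{\theta \in S}\tilde U_{[\theta]}$ where $S = \{kr + j\mu : k \in \R, j \ge 1\} \cap \Lambda_A$. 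The key observation is that $S$ is an $A$-admissible set which does \emph{not} contain any element proportional to $r$ (since every $\theta \in S$ has a strictly positive $\mu$-component, and $\mu \not\parallel r$), so by Lemma \ref{lem:detected-comm}, $\pi_1$ restricted to $U_S = \prod_{\theta\in S}\tilde U_{[\theta]}$ is injective. Since $\pi_{G_\rho}(\widetilde p) = e$, we get $\pi_{G_\rho}([\widetilde p, \tilde y]) = e$, and injectivity on $U_S$ forces $[\widetilde p,\tilde y] = e$. Hence $\widetilde p$ commutes with $\tilde U_{[\mu]}$ for all $\mu$ not proportional to $r$.

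It remains to deal with the subgroups $\tilde U_{[\mu]}$ with $\mu$ proportional to $r$ over $A$ — this is where I expect the main obstacle to lie, since $\tilde U_{[r]}$ and $\tilde U_{[-r]}$ are exactly the generators from which $\widetilde p$ itself is built, so centrality there is not automatic. The strategy is: once we know $\widetilde p$ centralizes all $\tilde U_{[\mu]}$ with $\mu \not\parallel r$, and since by Corollary \ref{cor:det-gen} (twisted version) these already generate $\mathcal G_\rho$ — indeed they generate a subgroup surjecting onto $\mathcal G_\rho$ — we can express a generator of $\tilde U_{[r]}$ (or $\tilde U_{[-r]}$) in terms of such elements modulo the kernel of $\pi_1$. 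Concretely, using Lemma \ref{fact:10} (for root contributions $U_{G,[r]}$) and Lemma \ref{le:6} (for weight contributions $U_{\mathfrak e,[r]}$), any $g \in \tilde U_{[r]}$ can be written as a product of elements lying in subgroups $U_{[\theta]}$ with $\theta$ not proportional to $r$ over $A$; each such factor is centralized by $\widetilde p$ by the previous paragraph, hence so is $g$. The same argument applies to $\tilde U_{[-r]}$. Therefore $\widetilde p$ centralizes all generators, so $\widetilde p$ is central.

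One subtlety to check carefully: Lemmas \ref{fact:10} and \ref{le:6} decompose elements of $U_{[r]}$ into products of elements of subgroups $U_{[\theta]}$, but one must verify that these identities, which hold in $\mathcal G_\rho$ (or $G$), lift to the extension $\widetilde{\mathcal G}_A/\mc S_A$ — this is where the $A$-admissibility hypotheses in those lemmas are used, combined with Lemma \ref{lem:detected-comm} to transfer the identity upstairs. A second point: for the very first reduction I invoked a twisted analogue of Corollary \ref{cor:det-gen}; if that has not been established at this point in the paper, one instead argues directly that the subgroups $\tilde U_{[\theta]}$ with $\theta \not\parallel r$ together with the decompositions from Lemmas \ref{fact:10} and \ref{le:6} generate all of $\widetilde{\mathcal G}_A/\mc S_A$, which is exactly the content needed. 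I expect the bookkeeping around negatively-proportional weights (the case $\theta + \theta' = 0$ handled via \eqref{for:54} of Fact \ref{fact:4}) to be the fiddliest part, but it is contained in the already-proven Lemmas \ref{le:6} and \ref{fact:10}, so no genuinely new idea should be required here.
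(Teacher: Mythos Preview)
Your proposal is correct and follows essentially the same route as the paper: first use Lemma~\ref{le:2} together with Lemma~\ref{lem:detected-comm} to show $\widetilde p$ centralizes every $\tilde U_{[\mu]}$ with $\mu$ not proportional to $r$, then for $\mu$ proportional to $r$ decompose elements of $\tilde U_{G,[\mu]}$ and $\tilde U_{\mathfrak e,[\mu]}$ into products of non-proportional pieces (you cite Lemmas~\ref{fact:10} and~\ref{le:6}; the paper handles the root part by direct Weyl conjugation $\tilde y = \tilde w_{s_1}\tilde y_1\tilde w_{s_1}^{-1}$ via Lemma~\ref{lem:det-gen}, which is the same mechanism underlying Lemma~\ref{fact:10}). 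Your caution about the twisted analogue of Corollary~\ref{cor:det-gen} is well placed---that corollary (Corollary~\ref{cor:twisted-generate}) indeed appears only \emph{after} this lemma---but your self-correction is exactly right: the decompositions from Lemmas~\ref{fact:10} and~\ref{le:6} already do the job, and the admissibility hypotheses there let Lemma~\ref{lem:detected-comm} lift the identities to the extension.
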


\begin{proof}
To prove the result it suffices to show that $\widetilde{p}$ in the centralizer of $\tilde{U}_{[s]}$, for any $s\in \Delta_A\cup \Phi^*_{A,\rho}$.

First, we consider the case $s$ is nonproportional to $r$. For any $\tilde{y}\in \tilde{U}_{[s]}$, by using Lemma \ref{le:2} we see that $[\tilde{y},\tilde{p}]\in \Pi_{\theta\in S}\tilde{U}_{[\theta]}$, where $S=\{ir+js:j\geq1\}\bigcap \Lambda_A$. Since $S$ is $A$-admissible and $[\tilde{y},\tilde{p}]$ projects to $e\in \mathcal{G}$, Lemma \ref{lem:detected-comm} implies that $[\tilde{y},\tilde{p}]=e$. This shows that $\widetilde{p}$ is in the centralizer of $\tilde{U}_{[s]}$ with $s$ nonproportional to $r$.

Now we consider the case $s$ is proportional to $r$. Note that $\tilde{U}_{[s]}$ is generated by $\tilde{U}_{G,[s]}$ and $\tilde{U}_{\mathfrak{e},[s]}$.
Also note that $\tilde{U}_{G,[s]}$, $s\in \Delta_A$ is generated by $\tilde{U}_{G,[\mu]}$, $\mu\in \Delta_{A_0}$ with $\mu|_A$ positively proportional to $s|_A$. 

For any $\tilde{y}\in \tilde{U}_{G,[\mu]}$, $\mu\in \Delta_{A_0}$ with $\mu|_A$ positively proportional to $s|_A$,  \eqref{for:16} of  Lemma \ref{lem:det-gen} shows that
we can write $\tilde{y}=\widetilde w_{s_1}\tilde{y_1}\widetilde w_{s_1}^{-1}$, with $s_1,\,s_2\in \Delta_{A_0}$ $\tilde{y_1}\in \tilde{U}_{G,[s_2]}$, $w_{s_1}(s_2) =s$ and $s_1\mid_A$, $s_2\mid_A$ and $s$ are pairwise nonproportional. Thus, previous argument shows that $\tilde{y}$ is a product of elements which commute with $\widetilde{p}$. Hence we get $[\tilde{y},\tilde{p}]=e$. This shows that $\widetilde{p}$ is in the centralizer of $\tilde{U}_{G,[s]}$ with $s$ proportional to $r$.

For any $\tilde{y}\in \tilde{U}_{\mathfrak{e},[s]}$  by Lemma \ref{le:6} and previous argument we see that $\tilde{y}$ is a product of elements which commute with $\widetilde{p}$. This also shows that $\widetilde{p}$ is in the centralizer of $\tilde{U}_{\mathfrak{e},[s]}$ with $s$ proportional to $r$.
Then we finish the proof.
\end{proof}

As an immediate corollary of the above proposition we have

\begin{corollary}
\label{cor:weight-comm}
If $\mu_1,\mu_2 \in \Phi^*_{\rho,A}$ and $\widetilde{x_i} \in\tilde{U}_{\mathfrak{e},[\mu_i]}$, $i=1,\,2$, then if $\mu_1$ is not negatively proportional to $\mu_2$, $[\widetilde{x}_1,\widetilde{x}_2]=e$. If $\mu_1$ is negatively proportional to $\mu_2$, then $[\widetilde{x}_1,\widetilde{x}_2]\in Z(\widetilde{\mathcal{G}}_A/ \mc S_A)$ and projects to identity in $G_\rho$.
\end{corollary}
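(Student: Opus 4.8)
The plan is to dichotomize on whether $\mu_1$ and $\mu_2$ are negatively proportional over $A$.

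\textbf{Negatively proportional case.} If $\mu_1=-c\mu_2$ with $c>0$, then since positively proportional weights determine the same weight subgroup we have $\tilde{U}_{\mf{e},[\mu_2]}=\tilde{U}_{\mf{e},[-\mu_1]}$, so $\tilde{x}_1\in\tilde{U}_{[\mu_1]}$, $\tilde{x}_2\in\tilde{U}_{[-\mu_1]}$, and therefore $[\tilde{x}_1,\tilde{x}_2]\in\langle\tilde{U}_{[\mu_1]},\tilde{U}_{[-\mu_1]}\rangle$. Projecting to $G_\rho$ and using that $\pi_{G_\rho}(\tilde{x}_1),\pi_{G_\rho}(\tilde{x}_2)$ lie in the abelian group $\RR^N\subset G_\rho$ gives $\pi_{G_\rho}([\tilde{x}_1,\tilde{x}_2])=e$. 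Lemma \ref{lem:1dim-centrality} applied with $r=\mu_1$ then shows $[\tilde{x}_1,\tilde{x}_2]\in Z(\widetilde{\mathcal{G}}_A/\mc S_A)$, and the same computation shows it projects to the identity in $G_\rho$. This is the claimed conclusion in this case.

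\textbf{Not negatively proportional case.} We produce an $A$-admissible subset $\Phi\subset\Lambda_A$ with $\tilde{x}_1,\tilde{x}_2\in U_\Phi$. If $\mu_1,\mu_2$ are nonproportional over $A$ we take $\Phi=C(\mu_1,\mu_2)$, which is $A$-admissible and contains both $\mu_1$ and $\mu_2$, so $\tilde{x}_i\in\tilde{U}_{[\mu_i]}\subset U_\Phi$. If $\mu_1,\mu_2$ are positively proportional, then $\tilde{U}_{\mf{e},[\mu_1]}=\tilde{U}_{\mf{e},[\mu_2]}$; using Lemma \ref{le:6} we write $\tilde{x}_1=y_1\cdots y_n$ with $y_k\in\bigl(\prod_{\theta\in S_k}\tilde{U}_{[\theta]}\bigr)\cap\tilde{U}_{\mf{e},[\mu_1]}$, where each $S_k$ lies in an $A$-admissible set $\Phi_k$ containing $\mu_1$; since $\tilde{x}_2\in\tilde{U}_{[\mu_1]}\subset U_{\Phi_k}$ and $y_k\in U_{\Phi_k}$, it suffices to show $[y_k,\tilde{x}_2]=e$ for each $k$, i.e. to treat $\Phi=\Phi_k$. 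In either situation $\pi_1$ restricted to $U_\Phi$ is injective by Lemma \ref{lem:detected-comm}, and $[\tilde{x}_1,\tilde{x}_2]\in U_\Phi$ (it is a subgroup), so it suffices to check $[\pi_1\tilde{x}_1,\pi_1\tilde{x}_2]=e$ in $\mathcal{G}_0=G_\rho'$. Here $\pi_1\tilde{x}_j=\exp_{G_\rho'}(v_j)$ with $v_j$ a sum of $\RR^N$-vectors of weights in $\R_{>0}\mu_j$; since the radical $E'=\exp(\RR^N\oplus\mf{z})$ of $G_\rho'$ is $2$-step nilpotent, $[\pi_1\tilde{x}_1,\pi_1\tilde{x}_2]=\exp_{G_\rho'}([v_1,v_2])$ with $[v_1,v_2]\in[\RR^N,\RR^N]_{\mf{g}_\rho'}=\mf{z}$. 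But $\mf{z}$ is central and hence consists entirely of weight-$0$ vectors, whereas $[v_1,v_2]$ is a sum of vectors of weights $\lambda_1\mu_1+\lambda_2\mu_2$ with $\lambda_i>0$, which is nonzero since $\mu_1$ is not negatively proportional to $\mu_2$. Hence $[v_1,v_2]=0$ and $[\tilde{x}_1,\tilde{x}_2]=e$.

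The one place where a genuine choice is required is the positively proportional subcase: one must invoke Lemma \ref{le:6} to break $\tilde{x}_1$ into pieces each of which, together with $\mu_1$, fits into an $A$-admissible set, because the subgroup $\tilde{U}_{\mf{e},[\mu_1]}$ itself need not sit inside any $U_\Phi$ with $\Phi$ admissible once $\mu_1$ has a negatively proportional partner among the weights or roots. Everything else is a direct application of Lemma \ref{lem:1dim-centrality}, Lemma \ref{lem:detected-comm}, and the fact that $\mf{z}$ carries only the zero weight.
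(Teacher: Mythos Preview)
Your argument is correct and matches the paper's intended approach (the paper gives no proof, labeling this an immediate consequence of Lemma \ref{lem:1dim-centrality}). One simplification: the detour through Lemma \ref{le:6} in the positively proportional subcase is unnecessary, since the singleton $\{[\mu_1]\}$ is already $A$-admissible---condition (ii) of Definition \ref{def:Aadmissible} forbids only negatively proportional \emph{pairs} in $\Phi$, not elements whose negative happens to lie in $\Lambda_A$---so Lemma \ref{lem:detected-comm} and your weight argument in $\mf g_\rho'$ apply directly to $\tilde U_{[\mu_1]}$ itself. (Also, in the nonproportional subcase, the open cone $C(\mu_1,\mu_2)$ as defined in Definition \ref{de:1} does not literally contain $\mu_1,\mu_2$; you want its closure, which is equally $A$-admissible.)
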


Corollary \ref{cor:weight-comm} will allow us to freely rearrange elements of the subgroup $\Pi_{s\in \Phi^*_{\rho,A}}\tilde{U}_{\mathfrak{e},[s]}$ (at the cost of elements in the center).

\subsection{Recovering zero weight} \label{fact:2}
Next we will show how to recover zero weight space in $G_\rho$, if it exists.

\begin{definition}
\label{def:0-weight-gen}
For any $r\in\Phi^*_{A,\rho}$, let $\tilde{U}_{r,[0],i}$, $i\in I$ be the group generated by elements in $\widetilde{\mathcal{G}}_A/ \mc S_A$ that project to $U_{\mathfrak{e},[0],i}$ and can be written in the form:

\[ [\widetilde{g},\widetilde{x}]\widetilde{x}_1\cdots\widetilde{x}_{m},\]

where $\tilde{g}\in \tilde{U}_{G,[\gamma]}$, $\gamma\in \Delta_{A}$ with $\gamma$ negatively proportional to $r$, $\tilde{x}\in \tilde{U}_{\mathfrak{e},[r],i}$ and $\tilde{x_j}\in \tilde{U}_{\mathfrak{e},[s],i}$ with $s\in \Phi^*_{A,\rho}$ proportional to $r$, $1\leq j\leq m$.
\end{definition}

Lemma \ref{fact:4} and \eqref{for:54} if Fact \ref{fact:4} show that the $\pi_{G_\rho}(\tilde U_{r,[0],i})$ generate $U_{\mf e,[0],i}$ as they vary over $r$.
We now make a detailed study of how the $\tilde{U}_{r,[0],i}$ sit inside $\widetilde{\mc G}_A / \mc S_A$.

\begin{lemma}\label{le:3}
Suppose $\mu_i \in\Delta_A$ and $r_i\in\Phi^*_{A,\rho}$, $i=1,\,2$, with $\mu_i$ negatively proportional to $r_i$ and $\mu_1$ nonproportional to $\mu_2$.
Then for any $g_i\in \tilde{U}_{G,[\mu_i]}$ and $x_i\in\tilde{U}_{\mathfrak{e},[r_i]}$, $i=1,\,2$:
\begin{enumerate}
  \item \label{for:21}$\left[\widetilde g_1 \widetilde x_1 {\widetilde g_1}^{-1},\widetilde x_2\right] = e$ and

  \medskip
  \item \label{for:22}$\left[\widetilde{g_1}\widetilde{x_1}\widetilde{g_1}^{-1},\widetilde{g_2}\widetilde{x_2}\widetilde{g_2}^{-1}\right]\in Z(\widetilde{\mathcal{G}}_A/ \mc S_A)$ and projects to identity in $G_\rho$.
\end{enumerate}
\end{lemma}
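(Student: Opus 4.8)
The strategy is to reduce both assertions to the tools already assembled in Section~\ref{sec:new}, principally Lemma~\ref{le:2}, Lemma~\ref{lem:detected-comm}, Corollary~\ref{cor:weight-comm}, and Lemma~\ref{lem:1dim-centrality}. Throughout, write $S_i \subset \Lambda_A$ for an $A$-admissible cone built out of $r_i$ and $\mu_i$; since $r_i$ and $\mu_i$ are negatively proportional over $A$, the group $\langle \tilde U_{[r_i]},\tilde U_{[-r_i]}\rangle$ (which contains $\tilde g_i \tilde x_i \tilde g_i^{-1}$, since $\tilde U_{G,[\mu_i]} \subset \tilde U_{[\mu_i]} = \tilde U_{[-r_i]}$ and $\tilde x_i \in \tilde U_{[r_i]}$) is the relevant ambient object.

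For part~\eqref{for:21}: the key observation is that $\widetilde g_1 \widetilde x_1 \widetilde g_1^{-1}$ lies in the subgroup $\langle \tilde U_{[r_1]},\tilde U_{[-r_1]}\rangle$, and $\widetilde x_2 \in \tilde U_{[r_2]} = \tilde U_{[\mu_2]}$ with $\mu_2$ (equivalently $r_2$) nonproportional to $r_1$. Hence Lemma~\ref{le:2} applies directly with $\widetilde p = \widetilde g_1 \widetilde x_1 \widetilde g_1^{-1}$, $\widetilde y = \widetilde x_2$, giving $[\widetilde g_1 \widetilde x_1 \widetilde g_1^{-1},\widetilde x_2] \in \prod_{\theta \in S}\tilde U_{[\theta]}$ where $S = \{k r_1 + j r_2 : k \in \R, j \ge 1\}\cap\Lambda_A$. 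This $S$ is $A$-admissible because $r_1$ and $r_2$ are linearly independent over $A$; and the commutator projects to the identity in $G_\rho$, because in $G_\rho$ the conjugate $g_1 x_1 g_1^{-1}$ lies in the abelian unipotent-type group generated by weight spaces proportional to $r_1$ (or more simply: $\rho(g_1)x_1$ and $x_2$ live in weight spaces that commute in $\mf e$, as $r_1 \not\parallel r_2$ forces $[\mf e_{\lambda r_1},\mf e_{\mu r_2}] = 0$ since there is no weight $\lambda r_1 + \mu r_2$ that is a sum of positively-proportional pieces — or by Corollary~\ref{cor:weight-comm} after expanding). Therefore Lemma~\ref{lem:detected-comm} forces the commutator to be $e$.

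For part~\eqref{for:22}: the idea is to peel off the conjugations one at a time using part~\eqref{for:21} as the base case. Write $C = [\widetilde g_1 \widetilde x_1 \widetilde g_1^{-1}, \widetilde g_2 \widetilde x_2 \widetilde g_2^{-1}]$. Using part~\eqref{for:21} (and its mirror with indices swapped), the elements $\widetilde g_1 \widetilde x_1 \widetilde g_1^{-1}$ and $\widetilde x_2$ commute, and likewise $\widetilde x_1$ and $\widetilde g_2 \widetilde x_2 \widetilde g_2^{-1}$ commute; moreover, by Lemma~\ref{le:2} applied with nonproportional roots, any commutator of the form $[\widetilde g_1^{\pm}, \widetilde g_2^{\pm}]$, $[\widetilde g_1^{\pm},\widetilde x_2]$, etc., lies in a $\prod_{\theta\in S}\tilde U_{[\theta]}$ over an $A$-admissible cone on which $\pi_{G_\rho}$ is injective (Lemma~\ref{lem:detected-comm}), hence is controlled by its image in $G_\rho$. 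One then conjugates $C$ successively by $\widetilde g_1^{-1}$ on the left and expands, using these partial commutation relations to show that $C$ equals a product of terms each lying in $\langle \tilde U_{[r]},\tilde U_{[-r]}\rangle$ for some $r$ and each projecting to the identity in $G_\rho$ (the projection statement follows because in $G_\rho$ the two conjugated elements genuinely commute — this is the non-symplectic-fiber computation, where $[\mf e,\mf e]=0$, so $[\rho(g_1)x_1,\rho(g_2)x_2]_{G_\rho}=e$). Finally, to upgrade "projects to identity" to "central," invoke Lemma~\ref{lem:1dim-centrality}: each factor is an element of some $\langle\tilde U_{[r]},\tilde U_{[-r]}\rangle$ that projects to $e$ in $G_\rho$, hence lies in $Z(\widetilde{\mathcal G}_A/\mc S_A)$, and $C$, being a product of such, is itself central and projects to the identity.

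The main obstacle I expect is the bookkeeping in part~\eqref{for:22}: organizing the repeated conjugate-and-expand step so that at each stage the "error terms" that appear land inside an $A$-admissible cone (so Lemma~\ref{lem:detected-comm} is applicable) and simultaneously inside a single $\langle\tilde U_{[r]},\tilde U_{[-r]}\rangle$ (so Lemma~\ref{lem:1dim-centrality} is applicable). The subtlety is that conjugating by $\widetilde g_i$ can move an element out of a rank-one subgroup $\langle\tilde U_{[r]},\tilde U_{[-r]}\rangle$, so one must be careful to apply part~\eqref{for:21} to cancel those conjugations \emph{before} trying to invoke Lemma~\ref{lem:1dim-centrality}, rather than after. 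A clean way to handle this is to first prove the weaker statement $C \in \prod_{\theta\in S}\tilde U_{[\theta]}$ for the $A$-admissible cone $S$ generated by $r_1$ and $r_2$, deduce $C = e$ when $r_1 \not\parallel r_2$... wait — but here $r_1$ and $r_2$ are only assumed nonproportional, so in fact this already gives $C = e$ outright in the generic case, and the genuinely central (nontrivial) case only arises when one allows $r_1$ and $r_2$ proportional, which is treated in a companion lemma; so the present statement with $\mu_1\not\parallel\mu_2$ should in fact collapse to $C=e$, consistent with the first displayed conclusion being the governing one.
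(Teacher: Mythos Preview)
Your treatment of part~\eqref{for:21} is correct and matches the paper: apply Lemma~\ref{le:2} with $\widetilde p = \widetilde g_1 \widetilde x_1 \widetilde g_1^{-1} \in \langle \tilde U_{[r_1]},\tilde U_{[-r_1]}\rangle$ and $\widetilde y = \widetilde x_2$, land in an $A$-admissible cone, observe the projection to $G_\rho$ is trivial (since $[g_1x_1g_1^{-1},x_2]=e$ in $G_\rho$), and conclude via Lemma~\ref{lem:detected-comm}.

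Part~\eqref{for:22} contains a real gap, and your concluding sentence is wrong: the commutator $C$ need \emph{not} equal $e$, only be central. Your ``clean'' approach of placing $C$ inside $\prod_{\theta\in S}\tilde U_{[\theta]}$ for a single $A$-admissible cone $S$ cannot work, because the second factor $\widetilde g_2 \widetilde x_2 \widetilde g_2^{-1}$ is a word in both $\tilde U_{[\mu_2]}$ and $\tilde U_{[r_2]}$, which point in \emph{opposite} directions over $A$. Iterating Lemma~\ref{le:2} against each letter therefore produces cones $\{kr_1+j\mu_2:j\ge1\}$ and $\{kr_1+jr_2:j\ge1\}$ whose union is not $A$-admissible, so Lemma~\ref{lem:detected-comm} is unavailable. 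Your earlier, vaguer plan of factoring $C$ into pieces in various $\langle\tilde U_{[r]},\tilde U_{[-r]}\rangle$ and invoking Lemma~\ref{lem:1dim-centrality} is closer in spirit but is not carried out, and the lemma you want at the end is actually Corollary~\ref{cor:weight-comm}.

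What the paper does is a direct six-step rearrangement. Move $\widetilde g_2$ past $\widetilde g_1 \widetilde x_1 \widetilde g_1^{-1}$ via Lemma~\ref{le:2}, picking up a weight-space error $\tilde u \in \prod_{\theta\in S}\tilde U_{\mf e,[\theta]}$ with $S=\{t\mu_1+j\mu_2: t\in\R,\, j\ge 1\}$; swap $\widetilde g_1\widetilde x_1\widetilde g_1^{-1}$ past $\widetilde x_2$ using part~\eqref{for:21}; move $\widetilde g_2^{-1}$ past similarly, picking up $\tilde u_1$ in the same cone $S$; then swap $\widetilde x_2$ with $\tilde u_1$. Here Corollary~\ref{cor:weight-comm} gives $[\widetilde x_2,\tilde u_1]\in Z(\widetilde{\mc G}_A/\mc S_A)$ --- possibly nontrivial, since $\tilde u_1$ can have a component in $\tilde U_{\mf e,[j\mu_2]}$ (coming from a zero-weight piece of $\rho(g_1)x_1$ acted on by $g_2$), and $j\mu_2$ \emph{is} negatively proportional to $r_2$. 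The leftover factor $\tilde u\,[\widetilde g_2,\tilde u_1]\,\tilde u_1$ lies in the single admissible cone $S$ and projects to $e$, so Lemma~\ref{lem:detected-comm} kills it. The net result is $C=[\widetilde x_2,\tilde u_1]$, central and projecting to $e$.
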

\begin{proof}
From Lemma \ref{le:2} and the fact that $[g_1x_1g_1^{-1},x_2]=e$, we see that $[\widetilde{g_1}\widetilde{x_1}\widetilde{g_1}^{-1},\widetilde{x_2}]=e$. Then we proved \eqref{for:21}.

Using Lemma \ref{le:2} and the fact that $[g_1x_1g_1^{-1},g_2]\in \RR^N$ we see that
\begin{align*}
  \widetilde{g_1}\widetilde{x_1}\widetilde{g_1}^{-1}\widetilde{g_2}=\tilde{u}\widetilde{g_2}\widetilde{g_1}\widetilde{x_1}\widetilde{g_1}^{-1},
\end{align*}
where $\tilde{u}\in \{\tilde{U}_{\mathfrak{e},[\theta]}:\theta\in S\}$ with $S=\{t\mu_1+j\mu_2:t\in\R,j\geq1\}$ projects to $\left[g_1{x_1}{g_1}^{-1},g_2\right]$. Note that $S$ is $A$-admissible. In the following computations, we use parentheses to emphasize where in the expression we apply commutation relations. Then we have
\begin{align*}
\left(\widetilde{g}_1\widetilde{x}_1\widetilde{g}_1^{-1}\right) \left(\widetilde g_2\right) \widetilde x_2 {\widetilde g_2}^{-1} & \overset{(a)}{=}
  \tilde{u}\widetilde g_2 \left(\widetilde g_1 \widetilde x_1 {\widetilde g_1} ^{-1}\right)\left(\widetilde x_2 \right){\widetilde g_2} ^{-1} \\
  &\overset{(b)}{=}\widetilde{u}\widetilde g_2 \widetilde x_2 \left(\widetilde g_1 \widetilde x_1 {\widetilde g_1} ^{-1}\right)\left({\widetilde g_2}^{-1}\right)\\
  &\overset{(c)}{=}\tilde{u}\widetilde g_2 \left(\widetilde x_2 \right)\left(\widetilde u_1 \right){\widetilde g_2}^{-1}\widetilde g_1 \widetilde x_1 \widetilde g_1 ^{-1}\\
  &\overset{(d)}{=}[\widetilde x_2 ,\widetilde u_1 ]\widetilde{u}\left(\widetilde g_2 \right)\left(\widetilde{u}_1\right)\widetilde x_2 {\widetilde g_2}^{-1}\widetilde g_1
  \widetilde x_1 {\widetilde g_1}^{-1}\\
  &\overset{(e)}{=}[\widetilde{x}_2,\widetilde{u}_1]\left(\widetilde{u}[\widetilde g_2 ,\widetilde u_1 ]\widetilde u_1 \right)
  \widetilde g_2 \widetilde x_2 {\widetilde g_2}^{-1}\widetilde g_1 \widetilde x_1{ \widetilde g_1}^{-1}\\
  &\overset{(f)}{=}[\widetilde x_2 ,\widetilde u_1 ]\widetilde g_2 \widetilde x_2 {\widetilde g_2}^{-1}\widetilde g_1 \widetilde x_1 {\widetilde g_1}^{-1}
\end{align*}
where $\widetilde u_1 =[\widetilde g_1 \widetilde x_1 {\widetilde g_1}^{-1},{\widetilde g_2}^{-1}]\in \{\tilde{U}_{\mathfrak{e},[\theta]}:\theta\in S\}$. $(a)$ follows from the definition of $\widetilde{u}$. (b) follows from part (1) of the Lemma. We get $(c)$ by using Lemma \ref{le:2} and the fact that $[g_1x_1g_1^{-1},g_2^{-1}]\in \RR^N$. In $(d)$ we used $[\widetilde{x_2},\widetilde{u_1}]\in Z(\widetilde{\mathcal{G}}_A/ \mc S_A)$ (see Corollary \ref{cor:weight-comm}). In $(e)$ note that $[\widetilde{g_2},\widetilde{u_1}]\in \{\tilde{U}_{\mathfrak{e},[\theta]}:\theta\in S\}$.
In $(f)$ we see that $\tilde{u}[\widetilde{g_2},\widetilde{u_1}]\widetilde{u_1}\in \{\tilde{U}_{\mathfrak{e},[\theta]}:\theta\in S\}$ and projects to identity in $G_\rho$, so by
Lemma \ref{lem:detected-comm} $\tilde{u}[\widetilde{g_2},\widetilde{u_1}]\widetilde{u_1}=e$. Hence we get $(f)$ and thus finish the proof.
\end{proof}
\begin{corollary}\label{prop:zero-weight-group-struct}
\begin{enumerate}
  \item \label{for:23} For any $\widetilde{y}\in \tilde{U}_{\mu,[0],i}$, $i\in I$, $\mu\in\Phi^*_{A,\rho}$ any $\tilde{x}\in \tilde{U}_{\mathfrak{e},[\nu]}$, $r\in \Phi^*_{A,\rho}$, $\tilde{y}\tilde{x}=\tilde{x}\tilde{y}$;

      \medskip

  \item \label{for:41} For any $\widetilde{y}\in \widetilde{U}_{s,[0],i}$, $i\in I$, $s\in\Phi^*_{A,\rho}$ and $\tilde{g}\in \tilde{U}_{G,[r]}$ with detected $r\in\Delta_{A_0}$, then $[\tilde{y},\tilde{g}]\in \tilde{U}_{\mathfrak{e},[r]}$.

      \medskip

  \item \label{for:55} For any $\widetilde{y}\in \widetilde{U}_{s,[0],i}$, $i\in I$, $s\in\Phi^*_{A,\rho}$ and $\tilde{g}\in \tilde{U}_{G,[r]}$, $r\in\Delta_{A}$, then $[\tilde{y},\tilde{g}]\in \tilde{U}_{\mathfrak{e},[r]}$.

  \medskip
  \item \label{for:24} Suppose $\tilde{y}=\widetilde y_1 \widetilde y_2 \cdots \widetilde y_n$, where $\widetilde y_i \in \tilde{U}_{s_i,[0],\ell_i}$ is a generator for $1\leq i\leq n$, $\ell_i\in I$ and $\tilde{y}$ projects to
$e$ in $G_\rho$. Then $\widetilde{y}\in Z(\widetilde{\mathcal{G}}_A/ \mc S_A)$. In particular, $[\widetilde y_1, \widetilde y_2] \in Z(\widetilde{\mc G}_A / \mc S_A)$.
\end{enumerate}

\end{corollary}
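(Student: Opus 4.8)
\emph{Plan.} The plan is to reduce every assertion to the case in which $\widetilde y$ (and, in \eqref{for:24}, each $\widetilde y_i$) is one of the distinguished generators $\widetilde y = [\widetilde g,\widetilde x]\,\widetilde x_1\cdots\widetilde x_m$ of $\tilde U_{\mu,[0],i}$ from Definition \ref{def:0-weight-gen}, where $\widetilde g\in \tilde U_{G,[\gamma]}$ with $\gamma$ negatively proportional to $\mu$, $\widetilde x\in \tilde U_{\mathfrak e,[\mu],i}$, and $\widetilde x_j\in \tilde U_{\mathfrak e,[s_j],i}$ with $s_j$ proportional to $\mu$. Rewriting $[\widetilde g,\widetilde x]=(\widetilde g\widetilde x\widetilde g^{-1})\widetilde x^{-1}$ exhibits $\widetilde y$ as a product of the single element $\widetilde g\widetilde x\widetilde g^{-1}\in\langle\tilde U_{[\mu]},\tilde U_{[-\mu]}\rangle$, which is controlled by Lemma \ref{le:3}, together with elements of weight subgroups in directions proportional to $\mu$. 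The recurring tools are: Lemma \ref{le:2} and Corollary \ref{cor:weight-comm}, which push commutators of such elements with things in directions \emph{not} proportional to $\mu$ into products over $A$-admissible sets; Lemma \ref{le:6} and Fact \ref{fact:4}, which split an arbitrary element of a weight subgroup into pieces supported on $A$-admissible families of non-proportional directions; and Lemma \ref{lem:detected-comm}, applied in the form ``an element lying in $U_\Phi$ for some $A$-admissible $\Phi$ and projecting to $e$ in $\mathcal G_0$ is $e$''. One also uses repeatedly that the projections to $G_\rho$ which must be killed are commutators of elements of the abelian normal subgroup $\RR^N$, hence automatically trivial there.

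\emph{The commutativity statement \eqref{for:23}.} Given $\widetilde x'\in \tilde U_{\mathfrak e,[\nu]}$: if $\nu$ is not proportional to $\mu$, then Lemma \ref{le:2} (applied to $\widetilde g\widetilde x\widetilde g^{-1}$) and Corollary \ref{cor:weight-comm} (applied to $\widetilde x^{-1}$ and the $\widetilde x_j$) place $[\widetilde y,\widetilde x']$ in a product over an $A$-admissible set, and it projects to $e$, hence vanishes by Lemma \ref{lem:detected-comm}. If $\nu$ is proportional to $\mu$, I would use Lemma \ref{le:6} to write $\widetilde x'=\widetilde x'_1\cdots\widetilde x'_k$ with each $\widetilde x'_l$ inside $\prod_{\theta\in S_l}\tilde U_{[\theta]}$ for an $A$-admissible $S_l$ containing no $\theta$ proportional to $\nu$ — hence none proportional to $\mu$ — and then commute $\widetilde y$ past each $\widetilde x'_l$ factorwise; the resulting commutator again lands in a product over an $A$-admissible set and projects to $e$, so Lemma \ref{lem:detected-comm} forces it to vanish, and summing over $l$ gives $[\widetilde y,\widetilde x']=e$.

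\emph{The bracket statements \eqref{for:41} and \eqref{for:55}.} For \eqref{for:41} with $\widetilde g_0\in \tilde U_{G,[r]}$, $r\in\Delta_{A_0}$ detected: the computation in $G_\rho$ gives $[v,g]=v-\rho(g)v\in U_{\mathfrak e,[r]}$ when $v$ lies in the zero-weight space of the abelian $\RR^N$, and I would mirror this by expanding $[\widetilde y,\widetilde g_0]$, recording the weight-$kr$ corrections with Fact \ref{fact:4}, and discarding every term outside $\tilde U_{\mathfrak e,[r]}$ — each of these lies in a product over an $A$-admissible set and projects to $e$, hence is $e$ by Lemma \ref{lem:detected-comm}; what remains is in $\tilde U_{\mathfrak e,[r]}$. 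Then \eqref{for:55} follows by writing $\tilde U_{G,[r]}$ ($r\in\Delta_A$) as generated by the $\tilde U_{G,[\mu']}$ with $\mu'\in\Delta_{A_0}$ detected and $\mu'|_A$ positively proportional to $r$, expanding $[\widetilde y,\widetilde g_0]$ over such a factorization, and absorbing the cross-terms via \eqref{for:41} and Fact \ref{fact:4}\eqref{for:48} — all of them landing in $\tilde U_{\mathfrak e,[r]}$ because the relevant weights restrict to positive multiples of $r$.

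\emph{The centrality statement \eqref{for:24}, and the obstacle.} For $\widetilde y=\widetilde y_1\cdots\widetilde y_n$ with $\pi_{G_\rho}(\widetilde y)=e$: since the subgroups $\tilde U_{G,[r]}$ and $\tilde U_{\mathfrak e,[r]}$, $r\in\Lambda_A$, generate $\widetilde{\mathcal G}_A/\mc S_A$, it suffices to show $\widetilde y$ centralizes each. Commutation with weight subgroups is \eqref{for:23} applied to each $\widetilde y_i$; for a root subgroup, \eqref{for:55} gives $[\widetilde y_i,\widetilde g]\in\tilde U_{\mathfrak e,[r]}$, and since each $\widetilde y_i$ commutes with $\tilde U_{\mathfrak e,[r]}$ by \eqref{for:23}, expanding $[\widetilde y,\widetilde g]$ shows it lies in $\tilde U_{\mathfrak e,[r]}$, projects to $e$, and so equals $e$ by Lemma \ref{lem:detected-comm}. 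For the ``in particular'', I would move $\widetilde y_1$ through the generator $\widetilde y_2$ using \eqref{for:23} and \eqref{for:55} to rewrite $[\widetilde y_1,\widetilde y_2]$ as $[\widetilde u,\widetilde g\widetilde x\widetilde g^{-1}]$ with $\widetilde u,\widetilde g,\widetilde x$ all inside $\langle\tilde U_{[\rho_2]},\tilde U_{[-\rho_2]}\rangle$ (where $\widetilde y_2\in\tilde U_{\rho_2,[0],\ell_2}$); this commutator projects to $e$ in $G_\rho$, so Lemma \ref{lem:1dim-centrality} puts it in $Z(\widetilde{\mathcal G}_A/\mc S_A)$. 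The hard part throughout is the proportional-direction bookkeeping in \eqref{for:23} and \eqref{for:41}: commutators of weight elements with root or weight elements in a proportional direction are not directly governed by Lemmas \ref{le:2}, \ref{le:3} or Corollary \ref{cor:weight-comm}, so one must first separate such elements into non-proportional pieces via Lemma \ref{le:6} and Fact \ref{fact:4}, and then repeatedly verify $A$-admissibility of the auxiliary cones and translates that appear in order to discard the leftover terms with Lemma \ref{lem:detected-comm}.
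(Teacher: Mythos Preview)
Your overall strategy matches the paper's: reduce to generators, split into proportional and non-proportional cases, and kill leftover terms via Lemma~\ref{lem:detected-comm} on $A$-admissible sets. Parts \eqref{for:23}, \eqref{for:55} and \eqref{for:24} are essentially what the paper does (your ``in particular'' argument is more elaborate than needed --- since $[\widetilde y_1,\widetilde y_2]$ projects to $e$ in the abelian $\RR^N$ and is itself a product of generators and their inverses, the main statement applies directly).

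The gap is in \eqref{for:41} when $s$ is proportional to $r|_A$. Your plan is to ``separate into non-proportional pieces via Lemma~\ref{le:6} and Fact~\ref{fact:4}'', but both of those tools act on the \emph{weight} side: they decompose elements of $U_{\mathfrak e,[\mu]}$. The generator $\widetilde y = [\widetilde g,\widetilde x]\widetilde x_1\cdots\widetilde x_m$ contains the \emph{root} element $\widetilde g\in\tilde U_{G,[\gamma]}$ with $\gamma$ proportional to $r|_A$, and $\widetilde g_0\in\tilde U_{G,[r]}$ is also a root element in that same line. Commutators involving $\widetilde g$ and $\widetilde g_0$ then live in $\langle\tilde U_{[\mu]},\tilde U_{[-\mu]}\rangle$, which is not $A$-admissible, so Lemma~\ref{lem:detected-comm} does not apply and your ``discard every term outside $\tilde U_{\mathfrak e,[r]}$'' step fails. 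What the paper does instead is decompose the \emph{root} element $\widetilde g_0$ using Lemma~\ref{fact:10}: that lemma writes any $g\in U_{G,[r]}$ as a product inside $\prod_{\theta\in S}U_{G,[\theta]}$ with $S$ contained in an $A$-admissible set and no $\theta$ proportional to $r$ over $A$. This reduces the proportional case of \eqref{for:41} to the non-proportional case already handled by Lemma~\ref{le:2}, with the final identification in $\tilde U_{\mathfrak e,[r]}$ coming from Lemma~\ref{lem:detected-comm} applied to the admissible set containing $S$. So the missing ingredient is precisely the root-side analogue of Lemma~\ref{le:6}, namely Lemma~\ref{fact:10}.
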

\begin{proof}
Proof of \eqref{for:23}: If $\mu$ is nonproportional to $\nu$, Corollary \ref{cor:weight-comm} and \eqref{for:21} of Lemma \ref{le:3} imply the result immediately.  If $\mu$ is proportional to $\nu$, then by Lemma \ref{le:6} and previous argument we see that $\tilde{x}$ is a product of elements which commute with $\widetilde{y}$. Hence we finish the proof.

\smallskip
Proof of \eqref{for:41}:  If $s$ is nonproportional to $r$, then $\widetilde{y}\widetilde{g}=\tilde{u}\widetilde{g}\widetilde{y}$, where $\widetilde{u}\in \Pi_{\theta\in S}\tilde{U}_{[\theta]}$, $S=\{is+jr:j\geq1\}$ is $A$-admissible (see Lemma \ref{le:2}). Since $[\widetilde{y},\widetilde{g}]$ projects to an element in $U_{\mathfrak{e},[r]}$, by Lemma \ref{lem:detected-comm} we see that $[\widetilde{y},\widetilde{g}]\in \tilde{U}_{\mathfrak{e},[r]}$.

If $s$ is proportional to $r$, by using Lemma \ref{fact:10} $\tilde{g}\in\prod_{\theta\in S}\tilde{U}_{G,[\theta]}$, where $S$ is inside an $A$-admissible and any $\theta\in S$ is not proportional to $r$ over $A$, then by previous argument $[\tilde{y},\tilde{g}]\in \prod_{\theta\in S}\tilde{U}_{\mathfrak{e},[\theta]}$.
Since $S$ is inside an $A$-admissible set and $[\tilde{y},\tilde{g}]$ projects to an element inside $U_{\mathfrak{e},[r]}$, then we still get $[\widetilde{y},\widetilde{g}]\in \tilde{U}_{\mathfrak{e},[r]}$.

\smallskip
Proof of \eqref{for:55}: We can write $\tilde{g}=\tilde{g_1}\cdots \tilde{g_m}$, where $\tilde{g_i}\in \tilde{U}_{G,[r_i]}$, $r_i\in \Delta_{A_0}$ such that $r_i\mid_A=\lambda_i r$, $\lambda_i>0$. Then by \eqref{for:41}, $[\widetilde{y},\widetilde{g_i}]\in \tilde{U}_{\mathfrak{e},[r]}$ for each $i$. Then we get the conclusion.

\smallskip
Proof of \eqref{for:24}: From \eqref{for:23} we see that for any $r\in\Phi^*_{A,\rho}$, any $\widetilde{x}\in \widetilde{U}_{\mathfrak{e},[r]}$, $\widetilde{x}\widetilde{y}=\widetilde{y}\widetilde{x}$.
Next, we want to show for any $r\in\Delta_A$, any $\widetilde{g}\in \widetilde{U}_{G,[r]}$ and $\widetilde{g}\widetilde{y}=\widetilde{y}\widetilde{g}$.  From \eqref{for:41} we see that
\begin{align*}
  \widetilde{y_1}\widetilde{y_2}\cdots \widetilde{y_n}\widetilde{g}&=\widetilde{y_1}\widetilde{y_2}\cdots\widetilde{y_{n-1}} [\widetilde{y_n},\widetilde{g}]\widetilde{y_n}=[\widetilde{y_n},\widetilde{g}]\widetilde{y_1}\widetilde{y_2}\cdots\widetilde{y_{n-1}} \widetilde{g}\widetilde{y_n}.
\end{align*}
We get the last equality because $[\widetilde{y_n},\widetilde{g}]\in \tilde{U}_{\mathfrak{e},[r]}$ which commutes with each $\widetilde{y}_i$, $1\leq i\leq n-1$ by \eqref{for:23}. Then inductively we see that
\begin{align*}
  \widetilde{x_1}\widetilde{x_2}\cdots \widetilde{x_n}\widetilde{g}&=\widetilde{u}\widetilde{g}\widetilde{x}_1\widetilde x_2 \cdots \widetilde x_n
\end{align*}
where $\widetilde{u}\in \tilde{U}_{\mathfrak{e},[r]}$, Since $\widetilde{u}$ projects to $e$ in $G_\rho$ then Lemma \ref{lem:detected-comm} implies that $\widetilde{u}=e$. This implies that $\widetilde{g}\widetilde{x}=\widetilde{x}\widetilde{g}$. This concludes the proof.
\end{proof}

\begin{proposition}
\label{prop:semidirect-path-struct}
Any $\widetilde{p} \in \widetilde{\mathcal{G}}_A/ \mc S_A$ can be written as $\widetilde{g} \widetilde{x}\widetilde{y}\widetilde{z}$ satisfying:
\begin{enumerate}
  \item $\widetilde{g}\in \Pi_{r\in \Delta_A}\tilde{U}_{G,[r]}$;

  \smallskip
  \item $\widetilde{x} = \widetilde x_{r_1}\widetilde x_{r_2}\dots\widetilde x_{r_n}$, where $\widetilde x_{r_i}\in \tilde{U}_{\mathfrak{e},[r_i]}$, $r_i\in \Phi^*_{\rho,A}$, and the product if given in a fixed, perscribed order.

      \smallskip

  \item $\widetilde{y}$ is a product of elements in $\tilde{U}_{r,[0],i}$, $r\in \Phi^*_{A,\rho}$, $i\in I$;

  \smallskip
  \item $\widetilde{z} \in Z(\widetilde{\mathcal{G}}_A/ \mc S_A)$ with $\widetilde{z}$ projected to $e$ in $G_\rho$.
\end{enumerate}
\end{proposition}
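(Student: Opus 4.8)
The plan is to build the decomposition of $\widetilde{p}$ step by step, peeling off one type of generator at a time and using the commutation lemmas of Sections \ref{sec:new} and \ref{fact:2} to push the ``lower'' pieces to the right while keeping track of the central corrections. First I would write $\widetilde{p}$ as an arbitrary word in the free product generators, i.e.\ a product of elements of the $\tilde{U}_{[r]}$, $r \in \Lambda_A$. Using \eqref{eq:weightroot-decomp} (which is an instance of Lemma \ref{lem:detected-comm}), each $\tilde{U}_{[r]}$ factor splits as $\tilde U_{G,[r]} \cdot \prod_{i} \tilde U_{\mf e,[r],i}$, so without loss of generality $\widetilde{p}$ is a product whose letters are either in some $\tilde U_{G,[r]}$, $r \in \Delta_A$, or in some $\tilde U_{\mf e,[s],i}$, $s \in \Phi^*_{A,\rho}$.

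The first task is to collect all the semisimple-part letters $\tilde U_{G,[r]}$ to the far left. Moving a $\tilde U_{G,[r]}$ letter past a $\tilde U_{\mf e,[s],i}$ letter to its left produces, by Lemma \ref{le:2} (with the roles of the two groups appropriately chosen), a correction lying in $\prod_{\theta \in S}\tilde U_{[\theta]}$ for a cone $S$; the $\tilde U_{G,\cdot}$ parts of such corrections get absorbed into $\widetilde g$ and the $\tilde U_{\mf e,\cdot}$ parts stay to the right. Moving a $\tilde U_{G,[r]}$ letter past another $\tilde U_{G,[r']}$ letter produces a commutator relation $R_{\mu,\nu,A}$ which again lands in $\prod_{\theta}\tilde U_{G,[\theta]}$; since the relevant cones are $A$-admissible, Lemma \ref{lem:detected-comm} guarantees these manipulations are consistent and that $\widetilde g$ can be presented in the fixed order $\prod_{r \in \Delta_A}\tilde U_{G,[r]}$. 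After this step $\widetilde{p} = \widetilde g \cdot \widetilde q$ with $\widetilde q$ a product of $\tilde U_{\mf e,[s],i}$ letters.

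Next I would separate the zero-weight contributions inside $\widetilde q$. The point is that, by Definition \ref{def:0-weight-gen} and Lemma \ref{fact:4}\eqref{for:54}, every element of $U_{\mf e,[0],i}$ is a product of projections of generators of the $\tilde U_{r,[0],i}$; so whenever a letter of $\widetilde q$ lies in $\tilde U_{\mf e,[0],i}$ I can replace it by a product of such generators times a central correction (central because the ambiguity projects to $e$ in $G_\rho$; here one invokes Lemma \ref{lem:1dim-centrality} or Corollary \ref{cor:weight-comm}). Now using Corollary \ref{prop:zero-weight-group-struct}\eqref{for:23} the zero-weight generators commute with all the $\tilde U_{\mf e,[\nu]}$ letters, so I can slide them all to the right of the nonzero-weight letters, picking up only central, $G_\rho$-trivial corrections from Corollary \ref{prop:zero-weight-group-struct}\eqref{for:24}. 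This produces $\widetilde q = \widetilde x' \cdot \widetilde y \cdot \widetilde z_1$, where $\widetilde x'$ is a product of $\tilde U_{\mf e,[r]}$ letters with $r \in \Phi^*_{\rho,A}$, $\widetilde y$ a product of zero-weight generators, and $\widetilde z_1$ central and trivial in $G_\rho$. Finally, Corollary \ref{cor:weight-comm} lets me rearrange the letters of $\widetilde x'$ into the prescribed order $\widetilde x_{r_1}\cdots \widetilde x_{r_n}$ at the cost of more central, $G_\rho$-trivial factors, which get collected into $\widetilde z$. Commuting $\widetilde z_1$ (central) past $\widetilde y$ is free.

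I expect the main obstacle to be the bookkeeping in the second step: one must check that whenever a $\tilde U_{G,[r]}$ letter is commuted to the left past a mixed stretch of $\tilde U_{\mf e,\cdot}$ and other $\tilde U_{G,\cdot}$ letters, the accumulated corrections genuinely stay inside subgroups of the form $\prod_{\theta \in S}\tilde U_{[\theta]}$ for $S$ contained in a single $A$-admissible set, so that Lemma \ref{lem:detected-comm} actually applies and no inconsistency (beyond the recorded central terms) arises. The subtle case is when $r$ is proportional over $A$ to some weight appearing among the $\tilde U_{\mf e,\cdot}$ letters; there one cannot directly use a cone, and instead must first apply Lemma \ref{fact:10} to rewrite the $\tilde U_{G,[r]}$ letter as a product of $\tilde U_{G,[\theta]}$ with $\theta$ \emph{non}proportional to $r$ over $A$ (inside an $A$-admissible set), and similarly Lemma \ref{le:6} for the weight letters, reducing to the nonproportional situation where Lemma \ref{le:2} is available. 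Once all the reductions to $A$-admissible cones are in place, the rest is a routine, if lengthy, induction on the word length of $\widetilde{p}$.
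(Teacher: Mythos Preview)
Your outline matches the paper's two-phase strategy (push the $\tilde U_{G,[r]}$ letters to the left, then arrange the weight part), but there is a genuine gap in how you handle the negatively proportional case, and this is exactly the place where the $\widetilde y$ term is born.  When $\tilde g\in\tilde U_{G,[r]}$ meets $\tilde x\in\tilde U_{\mf e,[s],i}$ with $s$ negatively proportional to $r$, the paper does \emph{not} dodge the collision via Lemma~\ref{fact:10} or Lemma~\ref{le:6}; it computes the commutator head-on and splits into two subcases.  If $s+ir\neq 0$ for all $i\ge 1$ then $[g,x]\in\langle U_{\mf e,[s]},U_{\mf e,[-s]}\rangle$, so Lemma~\ref{lem:1dim-centrality} gives $[\tilde g,\tilde x]=\tilde x_1\tilde z$ with $\tilde z$ central.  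If $s+ir=0$ for some $i$, then $[\tilde g,\tilde x]\tilde x_1\cdots\widehat{\tilde x_i}\cdots\tilde x_m$ is \emph{by definition} a generator of $\tilde U_{s,[0],i}$ (Definition~\ref{def:0-weight-gen}); this is precisely how the $\widetilde y$ factor arises.  Your Step~3 sentence ``whenever a letter of $\widetilde q$ lies in $\tilde U_{\mf e,[0],i}$'' cannot happen: $0\notin\Lambda_A$, so there are no zero-weight letters among the free-product generators.  The zero-weight contribution is created during the commutation in Step~2, not extracted afterwards.

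Your proposed workaround---rewriting $\tilde g$ via Lemma~\ref{fact:10} into $\tilde U_{G,[\theta]}$'s with $\theta$ nonproportional to $r$---does not close the gap.  First, the new $\theta$'s can be proportional to weights appearing later in the word (or to weights produced by earlier commutations), so the ``reduce to nonproportional'' step is not obviously terminating.  Second, invoking Lemma~\ref{le:6} on the weight letters reintroduces $\tilde U_{G,\cdot}$ factors (the $S_i$ there index full $U_{[\theta]}$'s, not just $U_{\mf e,[\theta]}$'s), undoing Step~2.  Third, and decisively: if your Step~2 really produced $\widetilde p=\widetilde g\cdot\widetilde q$ with $\widetilde q$ a word solely in nonzero-weight $\tilde U_{\mf e,[s],i}$'s, then $\pi_{G_\rho}(\widetilde q)\in\bigoplus_{\mu\neq 0}\mf e_\mu$, which would force every $\widetilde p$ to project into $G\cdot\bigoplus_{\mu\neq 0}\mf e_\mu$---false whenever $0\in\Phi_{A,\rho}$.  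The fix is to follow the paper: treat the proportional commutators directly with Lemma~\ref{lem:1dim-centrality} and Definition~\ref{def:0-weight-gen}, and then use Corollary~\ref{prop:zero-weight-group-struct}\eqref{for:23} and Corollary~\ref{cor:weight-comm} to put the resulting $\tilde U_{\mf e,\cdot}$ and $\tilde U_{r,[0],i}$ letters into the prescribed order modulo central terms.
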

\begin{proof}
We first wish to push elements of each $\tilde{U}_{G,[r]}$ to the left, and do so by computing commutators.  For any $\widetilde{g}\in \tilde{U}_{G,[r]}$, $r\in \Delta_A$ and $\widetilde{x}\in \tilde{U}_{\mathfrak{e},[s],i}$, $s\in \Phi^*_{\rho,A}$, $i\in I$, if $s$ and $r$ not negatively nonproportional, then $[\widetilde{g},\widetilde{x}]\in \Pi_{\gamma=is+jr, i,j>0}\widetilde{U}_{\mathfrak{e},[\gamma]}$.

If $s$ and $r$ are negatively proportional, we divide into two subcases. If $s + ir \not= 0$ for all $i \ge 1$, we see that
$[g,x]=x_1\in \langle U_{\mf e,[s]},U_{\mf e,[-s]}\rangle$. Hence $[\widetilde{g},\widetilde{x}]=\widetilde{x}_1\widetilde{z}$ where $\widetilde{z} \in Z(\widetilde{\mathcal{G}}_A/ \mc S_A) \cap \ker \pi_{G_\rho}$ and $\widetilde{x}_1 \in \langle \tilde{U}_{\mf e,[s]},\tilde{U}_{\mf e,[-s]}\rangle$ by Lemma \ref{lem:1dim-centrality}.

In this second case, there $s + ir = 0$ for some $i$. Then $[g,x]+\sum_{1\leq j\neq i\leq m} x_j=x_{i}$, where $x_j\in U_{\mathfrak{e},[jr+s]}$. So we produce an element of the zero weight space using $[\widetilde{g},\widetilde{x}]\widetilde{x}_1\cdots \widetilde{x}_{i-1}\widetilde{x}_{i+1}\cdots \widetilde{x}_m\in \tilde{U}_{s,[0],i}$ (see Definition \ref{def:0-weight-gen}). Thus, $[\widetilde{g},\widetilde{x}]$ is a product of elements in $\tilde{U}_{s,[0],i}$, $r \in \Phi^*_{\rho,A}$ and $\tilde{U}_{\mathfrak{e},[\mu]}$, $\mu\in \Phi^*_{\rho,A}$.

Thus we conclude that
$\widetilde{p}$ can be written as $\widetilde{g} \widetilde{u}\widetilde{z}_0$, where $\widetilde{g}\in \prod_{r\in \Delta_A}\tilde{U}_{G,[r]}$, $\widetilde{u}$ is inside the subgroup generated by
$\tilde{U}_{\mathfrak{e},[s]}$, and $\tilde{U}_{r,[0],i}$, $r,s\in \Phi^*_{A,\rho}$, $i\in I$, and $\widetilde{z}_0 \in Z(\widetilde{\mathcal{G}}_A/ \mc S_A) \cap \ker \pi_{G_\rho}$.
Furthermore, \eqref{for:23} of Corollary \ref{prop:zero-weight-group-struct} show that $\widetilde{u}=\widetilde{x}\widetilde{y}\widetilde{z_1}$, where $\widetilde{x}$ and $\widetilde{y}$ are as described in the statement of the Proposition and $\widetilde{z_1} \in Z(\widetilde{\mathcal{G}}_A/ \mc S_A) \cap \ker \pi_{G_\rho}$. We thus arrive at the canonical form described.
\end{proof}

\begin{corollary}
\label{cor:twisted-generate}
For any $r\in \Delta_{A}\bigcup \Phi^*_{\rho,A}$,  the subgroups $U_{[s]}$, $s\in\Delta_{A}\bigcup \Phi^*_{\rho,A}$ with $s$ not proportional to $r$ generate the group $G_\rho$.
\end{corollary}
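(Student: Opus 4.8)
The plan is to prove that the subgroup $H$ of $G_\rho$ generated by the $U_{[s]}$ with $s \in \Delta_A \cup \Phi^*_{\rho,A}$ not proportional to $r$ (proportionality always read over $A$) is all of $G_\rho$. Since $G_\rho = G \cdot \R^N$, it suffices to show $G \subseteq H$ and $\R^N \subseteq H$; these are the two main steps, and the second carries the real content.

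First I would recover the semisimple part, essentially repeating the argument of Corollary \ref{cor:det-gen}. As $U_{G,[s]} \subseteq U_{[s]}$ for $s \in \Delta_A$, the group $H$ contains $U_{G,[s]}$, and hence the $\Delta_{A_0}$-root subgroup $U_{[t]}$ (via $U_{[t]} \subseteq U_{G,[t|_A]}$) for every detected $t \in \Delta_{A_0}$ with $t|_A \not\propto r$. If some detected root $t_0$ already restricts to a multiple of $r$, then Corollary \ref{cor:det-gen} applied with $t_0$ shows that these subgroups generate $G$ — the ones it excludes are precisely those $U_{[t]}$ with $t|_A \in \R t_0|_A = \R r$, all of which are already among the generators of $H$ — so $G \subseteq H$. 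Otherwise every detected root restricts to something not proportional to $r$, $H$ contains $U_{[t]}$ for all detected $t$, and I would recover the undetected root subgroups by the Weyl trick used in Corollary \ref{cor:det-gen}: for undetected $t$, the second part of Lemma \ref{lem:det-gen} gives a detected $s_1$, nonproportional to $t$ over $A$ and with $s_1|_A \not\propto r$, so that $w_{s_1}(t)$ is detected; since $w_{s_1}$ is constructed from $U_{[\pm s_1]} \subseteq H$ and $w_{s_1}(t)|_A \propto s_1|_A \not\propto r$, we get $U_{[t]} = w_{s_1} U_{[w_{s_1}(t)]} w_{s_1}^{-1} \subseteq H$. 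In either case $H$ contains all $\Delta_{A_0}$-root subgroups, hence $G$.

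Next I would recover $\R^N$ using $G$-irreducibility of the fibers. Once $G \subseteq H$, and because $\R^N \triangleleft G_\rho$ with conjugation given by $\rho$ (that is, $gvg^{-1} = \rho(g)v$), the intersection $H \cap \R^N$ is a $\rho(G)$-invariant subspace of $\R^N$, hence by complete reducibility a partial sum $\bigoplus_{i \in I'} \R^{N_i}$. It then remains to check $H \cap \R^{N_i} \neq 0$ for every $i$, and this follows as soon as there is a weight $s \in \Phi^*_{\rho,A}$ with $s \not\propto r$ and $U_{\mathfrak{e},[s],i} \neq 0$, since then $0 \neq U_{\mathfrak{e},[s],i} = U_{\mathfrak{e},[s]} \cap \R^{N_i} \subseteq U_{[s]} \cap \R^{N_i} \subseteq H$. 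Such an $s$ exists: were every nonzero $A$-weight of $\R^{N_i}$ proportional to $r$, then every root of $\Delta_{A_0}$ acting nontrivially between weight spaces of $\R^{N_i}$ would restrict to a multiple of $r$, so the simple factors of $G$ genuinely entering $\R^{N_i}$ would have all their restricted roots on the line $\R r$; for the $A$-action those factors would then carry coarse Lyapunov distributions generated by a single rank-one subgroup, i.e. $\alpha_A$ would possess a rank one factor, contradicting \starref. (This is essentially the input already used in Lemma \ref{lem:gen-zero-weights}.) Hence $\R^{N_i} \subseteq H$ for all $i$, so $\R^N \subseteq H$ and $H = G \cdot \R^N = G_\rho$.

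The step I expect to be the real obstacle is the last point above — excluding an irreducible fiber $\R^{N_i}$ whose $A$-weights all lie on the line $\R r$ — since this is exactly where the genuinely higher rank hypothesis enters, and it relies on understanding the restricted weight system of $\R^{N_i}$ rather than on any formal manipulation. Everything else is bookkeeping: translating between $\Delta_{A_0}$-root subgroups and their $A$-restricted (possibly undetected) images, and observing that the zero weight space $\mathfrak{e}_0$, though not among the generators, is automatically absorbed the moment a whole fiber $\R^{N_i}$ lies in $H$.
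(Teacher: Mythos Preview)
Your argument is correct, and for the semisimple part $G\subseteq H$ it is essentially the paper's own (Corollary~\ref{cor:det-gen} plus the Weyl trick from Lemma~\ref{lem:det-gen}). For the fiber $\R^N$, however, you take a genuinely different route. The paper simply invokes Lemma~\ref{le:6}: for any weight $\mu$ proportional to $r$, that lemma expresses each element of $U_{\mathfrak e,[\mu]}$ as a product of elements from $U_{[\theta]}$ with $\theta$ not proportional to $\mu$ (hence not proportional to $r$), and the zero weight space is then absorbed by a second application with $\mu=0$. You instead use that $G\subseteq H$ forces $H\cap\R^N$ to be $\rho(G)$-stable, and then reduce via irreducibility of each $\R^{N_i}$ to producing a single nonzero $A$-weight of $\R^{N_i}$ not proportional to $r$, which you obtain from~\starref.

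Both approaches are valid. Yours is more conceptual and avoids the Poincar\'e--Birkhoff--Witt bookkeeping behind Lemmas~\ref{lem:gen-zero-weights} and~\ref{le:6}, at the cost of invoking~\starref\ explicitly at this point. The paper's route, on the other hand, reuses Lemma~\ref{le:6}, whose stronger conclusion (the $A$-admissibility of the sets $S_i$) is needed anyway elsewhere (e.g.\ in Lemma~\ref{lem:1dim-centrality} and Lemma~\ref{fact:9}), so for the paper it is free. One small phrasing issue: you assert that $H\cap\R^N$ is a $\rho(G)$-invariant \emph{subspace}, but a priori it is only a $G$-invariant \emph{subgroup}. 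This is harmless here, since what you actually use is that $H\cap\R^{N_i}$ contains the nonzero vector subspace $U_{\mathfrak e,[s],i}$, and the additive subgroup generated by its $G$-translates $\rho(g)U_{\mathfrak e,[s],i}$ is already the linear span, which equals $\R^{N_i}$ by irreducibility.
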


\begin{proof}
By Lemma \ref{le:6} we see that any weight subspace with its weight proportional to $r$ can be generated by
elements inside the subgroups $U_{[s]}$, with $s$ not proportional to $r$. Lemma \ref{lem:det-gen} also shows that $U_{G,[\pm r]}$ can also be generated by elements inside the subgroups $U_{[s]}$, with $s$ not proportional to $r$. Hence we finish the proof.
\end{proof}

\subsection{Proof of Theorem \ref{thm:central} when $\mathcal{G}=G_\rho$}
Now we are ready to prove Theorem \ref{thm:central} when $\mathcal{G}=G_\rho$. Suppose $\widetilde{p} \in \widetilde{\mathcal{G}}_A/ \mc S_A$ projects to $e\in G_\rho$. By Proposition \ref{prop:semidirect-path-struct}, we can write $\widetilde{p}=\widetilde{g} \widetilde{x}\widetilde{y}\widetilde{z}$ as decried in Proposition \ref{prop:semidirect-path-struct}. It is clear that $\widetilde{g}$, $\widetilde{x}$, $\widetilde{y}$ and $\widetilde{z}$ all project to $e\in G_\rho$. By \eqref{for:24}
of Corollary \ref{prop:zero-weight-group-struct} we see that $\widetilde{y}\in Z(\widetilde{\mathcal{G}}_A/ \mc S_A)$. We also see that $\widetilde{x}=e$ since each $\widetilde x_{r_i}$ projects to $e\in G_\rho$ and hence $\widetilde x_{r_i}=e$ for each $i$ by Lemma \ref{lem:detected-comm}.
Next, we need to show that $\widetilde{g}\in Z(\widetilde{\mathcal{G}}_A/ \mc S_A)$. We first prove that ``good'' elements of $\prod_{r \in \Delta_A} \tilde{U}_{G,[r]}$ preserve the groups making up the product $\prod_{\mu \in \Phi_{A,\rho}^*} \tilde{U}_{\mf e,[r]}$:

\begin{lemma}\label{fact:9}
For any $\widetilde{h}\in \widetilde{H}_\mu$ (resp. $\widetilde{h}\in \widetilde{\textbf{H}}_\mu$) for some $\mu \in \Delta_{A_0}$, then for any $\widetilde{u}\in \widetilde{U}_{\mathfrak{e},[s]}$, $s\in \Phi^*_{A,\rho}$, we have $\widetilde{h}\widetilde{u}\widetilde{h}^{-1}\in  \widetilde{U}_{\mathfrak{e},[s]}$.
\end{lemma}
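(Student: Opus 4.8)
The statement asserts that the elements $\widetilde h_\mu(x,x_1)$ (for detected $\mu$) and $\widetilde{\mathbf h}_\mu(x,x_1)$ (for undetected $\mu$) normalize each group $\widetilde U_{\mathfrak e,[s]}$, $s\in\Phi^*_{A,\rho}$. The natural approach is to reduce everything to the detected case and then to a direct computation on individual weight spaces. First I would recall that $\widetilde h_\mu(x,x_1)=\widetilde w_\mu(x)\widetilde w_\mu(x_1)^{-1}$, so it suffices to understand the conjugation action of a single Weyl element $\widetilde w_\mu(x)$ with $\mu$ detected (and then, for the undetected case, to unfold the defining formula \eqref{for:43}, which writes $\widetilde{\mathbf h}_\mu$ as a product $\widetilde w_{r_2}\bigl(\widetilde w_{r_1}(\cdot)\widetilde w_{r_1}(\cdot)\bigr)\widetilde w_{r_2}^{-1}$ of conjugations by detected Weyl elements $\widetilde w_{r_1},\widetilde w_{r_2}$). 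So the whole lemma follows once we know: for detected $\mu\in\Delta_{A_0}$ and any $s\in\Phi^*_{A,\rho}$, conjugation by $\widetilde w_\mu$ carries $\widetilde U_{\mathfrak e,[s]}$ into $\widetilde U_{\mathfrak e,[w_\mu(s)]}$ — and since $\widetilde h_\mu$ and $\widetilde{\mathbf h}_\mu$ project to the identity of $G_\rho$ under $\pi_{G_\rho}$ (they lie in the kernel, cf. Corollaries \ref{le:1} and \ref{cor:3}), in fact $\widetilde h_\mu\widetilde U_{\mathfrak e,[s]}\widetilde h_\mu^{-1}$ lies in $\widetilde U_{\mathfrak e,[s]}$ itself because the two successive conjugations by $\widetilde w_\mu$ return to the weight $w_\mu^2(s)=s$ while the overall element is central-mod-$\pi_{G_\rho}$.

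The key technical step, then, is the claim: \emph{if $\mu\in\Delta_{A_0}$ is detected and $v\in\widetilde U_{\mathfrak e,[s]}$, then $\widetilde w_\mu\widetilde v\widetilde w_\mu^{-1}\in\widetilde U_{\mathfrak e,[w_\mu(s)]}$}. This is the exact weight-space analogue of Lemma \ref{lem:weyl-conj}; its proof follows the same template. One first treats the case where $s$ and $\mu|_A$ are nonproportional: then $\Phi'=\{(is+j\mu)|_A:i\ge 1,j\in\Z\}\cap\Lambda_A$ together with the finitely many roots $j\mu|_A$ is $A$-admissible, the commutator relations $R_{\mu,\nu,A}$ place $\widetilde w_\mu\widetilde v\widetilde w_\mu^{-1}$ inside $\prod^*_{r\in\Phi'}\widetilde U_{[r]}$, and $\widetilde U_{\mathfrak e,[w_\mu(s)]}$ also sits inside that product since $w_\mu(s)|_A=(s+n\mu)|_A$; then Lemma \ref{lem:detected-comm} forces the image into the single weight subgroup $\widetilde U_{\mathfrak e,[w_\mu(s)]}$. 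When $s$ and $\mu|_A$ are proportional one uses Lemma \ref{lem:gen-zero-weights} (or Lemma \ref{le:6}) to write $v$ as a product of elements each of which is a commutator $[\exp X_r, y]$ with $r|_A$ nonproportional to $s$ (hence to $\mu|_A$ after a further application of Lemma \ref{lem:det-gen} to replace $\mu$ if necessary), reducing to the nonproportional case. This is essentially the same dévissage used repeatedly in Section \ref{sec:new} (Lemmas \ref{le:2}, \ref{lem:1dim-centrality}, Corollary \ref{prop:zero-weight-group-struct}).

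With the claim in hand, the lemma is assembled as follows. For $\widetilde h\in\widetilde H_\mu$ with $\mu$ detected, write $\widetilde h=\widetilde w_\mu(x)\widetilde w_\mu(x_1)^{-1}$; applying the claim twice gives $\widetilde h\,\widetilde U_{\mathfrak e,[s]}\,\widetilde h^{-1}\subset\widetilde U_{\mathfrak e,[s]}$ because the net effect on the weight is $w_\mu^2=\mathrm{id}$, and the possible central discrepancy lands in $Z(\widetilde{\mathcal G}_A/\mathcal S_A)\cap\ker\pi_{G_\rho}$, which must already be trivial inside a single weight subgroup by Lemma \ref{lem:detected-comm}. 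For $\widetilde h\in\widetilde{\mathbf H}_\mu$ with $\mu$ undetected, I expand \eqref{for:43}: the outer conjugation by $\widetilde w_{r_2}$ ($r_2$ detected) sends $\widetilde U_{\mathfrak e,[s]}$ to $\widetilde U_{\mathfrak e,[w_{r_2}(s)]}$ by the claim; the middle factor lies in $\widetilde H_{r_1}$ with $r_1$ detected, so by the detected case it preserves $\widetilde U_{\mathfrak e,[w_{r_2}(s)]}$; conjugating back by $\widetilde w_{r_2}^{-1}$ returns to $\widetilde U_{\mathfrak e,[s]}$. The main obstacle is the proportional subcase of the key claim: since the Weyl group does \emph{not} act transitively and $w_\mu$ need not preserve the relevant weight space when $\mu|_A$ is proportional to $s$, one cannot argue by a single reflection and must instead generate $\widetilde U_{\mathfrak e,[s]}$ from nonproportional data via the PBW/irreducibility argument of Lemma \ref{lem:gen-zero-weights}; getting the bookkeeping of admissible cones right there — and checking that the auxiliary roots $j\mu|_A$ do not spoil $A$-admissibility — is where the real work lies.
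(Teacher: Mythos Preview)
There are two genuine gaps in your plan.

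First, your assertion that $\widetilde h_\mu$ and $\widetilde{\mathbf h}_\mu$ project to the identity of $G_\rho$ is incorrect. Theorem \ref{le:1}(3) and Corollary \ref{cor:3} say only that $\ker(\pi_G)\subset\tilde H$, not the reverse containment. The element $h_\mu(x,x_1)=w_\mu(x)w_\mu(x_1)^{-1}$ is in general a nontrivial element of $C(S)_k$, the centralizer of the split torus. This is precisely the fact that drives the argument: because $h_\mu$ centralizes $A_0$, conjugation by it preserves each $A_0$-weight space of $\rho$, and hence each restricted $A$-weight subgroup $U_{\mathfrak e,[s]}$ in $G_\rho$. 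Your ``central discrepancy'' reasoning collapses without this.

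Second, your ``key claim'' that $\widetilde w_\mu\,\widetilde U_{\mathfrak e,[s]}\,\widetilde w_\mu^{-1}\subset\widetilde U_{\mathfrak e,[w_\mu(s)]}$ is ill-posed. Here $s\in\Phi^*_{A,\rho}$ is a \emph{restricted} weight with respect to $A$, while $w_\mu$ acts on $A_0$-weights; it does not normalize $p(A)$ in general. The subgroup $U_{\mathfrak e,[s]}$ is a product of $A_0$-weight spaces $\mathfrak e_\lambda$ with $\lambda|_A$ positively proportional to $s$, and $w_\mu$ sends these to $\mathfrak e_{w_\mu(\lambda)}$ with $w_\mu(\lambda)|_A=\lambda|_A-\langle\lambda,\mu^\vee\rangle\mu|_A$; the integer $\langle\lambda,\mu^\vee\rangle$ varies with $\lambda$, so the image is spread over several restricted weight classes $[s+j\mu|_A]$, not a single one.

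The paper sidesteps both issues by never splitting $\widetilde h_\mu$ into individual Weyl elements. In Case (a) ($\mu$ detected, $\mu|_A$ nonproportional to $s$) one observes directly that $\widetilde h\in\langle\widetilde U_{G,[\mu']},\widetilde U_{G,[-\mu']}\rangle$, so the commutator relations force $\widetilde h\,\widetilde u\,\widetilde h^{-1}\in\prod_{j\in\Z}\widetilde U_{\mathfrak e,[j\mu'+s]}$; since this set is $A$-admissible and the image under $\pi_{G_\rho}$ lies in $U_{\mathfrak e,[s]}$ (because $h_\mu\in C(S)$), Lemma \ref{lem:detected-comm} finishes. Case (b) ($\mu|_A$ proportional to $s$) is reduced to (a) via Lemma \ref{le:6}, as you anticipated. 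Case (c) ($\mu$ undetected) chooses $r_1,r_2$ detected with $w_{r_2}(r_1)=\mu$ and $r_i|_A$ nonproportional to $s$ (Lemma \ref{lem:det-gen}), so that the expression \eqref{for:43} again involves only root subgroups nonproportional to $s$, and Case (a) applies.
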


\begin{proof}
We divide the proof into three cases:

\textbf{Case $(a)$}: If $\mu$ is detected and $\mu'=\mu|_A$ is not proportional to $s$, then $\widetilde{h}\widetilde{u}\widetilde{h}^{-1}\in \prod_{j\in \ZZ}\tilde{U}_{\mathfrak{e},[j\mu'+s]}$.
Note that set $S=\{j\mu'+s:j\in\ZZ\}$ is $A$-admissible and $\widetilde{h}\widetilde{u}\widetilde{h}^{-1}$ projects to an element inside $\tilde{U}_{\mathfrak{e},[s]}$. Then by Lemma \ref{lem:detected-comm} we get the result for this case.

\textbf{Case $(b)$}: Suppose $\mu$ is detected and $\mu'=\mu|_A$ is proportional to $s$. We can write $\widetilde{u}=\prod_{i\in I}\widetilde{u_i}$, where $\widetilde{u_i}\in \widetilde{U}_{\mathfrak{e},[s],i}$. By Lemma \ref{le:6} each $\widetilde{u_i}$ is inside $\prod_{\theta\in S_i}\widetilde{U}_{[\theta]}$, where $S_i$ is contained in an $A$-admissible set and any $\theta\in S_i$ is not proportional to $s$.
Then by previous argument, $\widetilde{h}\widetilde{u_i}\widetilde{h}^{-1}\in  \prod_{\theta\in S_i}\widetilde{U}_{[\theta]}$, $i\in I$. Note that each $S_i$ is contained in an $A$-admissible set and $\widetilde{h}\widetilde{u_i}\widetilde{h}^{-1}$ projects to an element inside
$\widetilde{U}_{\mathfrak{e},[s],i}$. Then by Lemma \ref{lem:detected-comm} $\widetilde{h}\widetilde{u_i}\widetilde{h}^{-1}\in \widetilde{U}_{\mathfrak{e},[s],i}$ for each $i$. This shows that $\widetilde{h}\widetilde{u}\widetilde{h}^{-1}\in \widetilde{U}_{\mathfrak{e},[s]}$.

\textbf{Case $(c)$}: Suppose $\mu$ is undetected. Then there exists detected $r_1,\,r_2\in \Delta_{A_0}$, such that $w_{r_2}(r_1)=\mu$ and $r_i|_A$ is linearly independent with $s$ (see Lemma \ref{lem:det-gen}). From the construction of \eqref{for:43} we see because $r_1|_A$ is nonproportional to $s$, we reduce this case to Case $(a)$.
\end{proof}

\begin{proposition}
\label{cor:semisimple-centrality}
If $\widetilde{g}\in \Pi_{r\in \Delta_A}\widetilde{U}_{G,[r]}$ and projects to $e$ in $G_\rho$
then $\widetilde g \in Z(\widetilde{\mathcal{G}}_A/ \mc S_A)$.

\end{proposition}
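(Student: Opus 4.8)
The strategy is to combine the structural information we have assembled about the restricted root system together with the tools already developed for the semisimple factor $G$ (Section~\ref{sec:alg2}) and for the weight spaces (Sections~\ref{sec:new} and~\ref{fact:2}). The key point is that $\widetilde g \in \prod_{r \in \Delta_A} \widetilde U_{G,[r]}$ lies entirely ``in the semisimple direction'', so its projection to $G_\rho$ lands in $G$ itself. Since $\widetilde g$ projects to $e$, it lies in $\ker(\pi_{G})$ where $\pi_G$ is the restriction to the subgroup generated by the $\widetilde U_{G,[r]}$. The first step is to recognize that the subgroup $\langle \widetilde U_{G,[r]} : r \in \Delta_A \rangle$ inside $\widetilde{\mathcal G}_A / \mc S_A$ is canonically a quotient of (a group of the form) $\widetilde{\mathcal G}_{A}^{(G)} / \mc S_{A}^{(G)}$ built only from the root subgroups of $G$, and that by Theorem~\ref{thm:central} for the semisimple case (proved in Section~\ref{cor:simple-central}) together with Corollary~\ref{cor:3}, we have $\ker(\pi_G) \subset \widetilde H$, where $\widetilde H$ is generated by the $\widetilde H_r$ (detected $r$) and $\widetilde{\mbf H}_r$ (undetected $r$). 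So I may assume $\widetilde g \in \widetilde H$, and moreover $\widetilde g$ is central in $\langle \widetilde U_{G,[r]} : r \in \Delta_A\rangle$.

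\textbf{The remaining work: centrality against the fiber directions.} Having placed $\widetilde g$ in $\widetilde H$ and established its centrality against all $\widetilde U_{G,[r]}$, $r \in \Delta_A$, it remains to show $\widetilde g$ commutes with $\widetilde U_{\mf e,[s]}$ for every $s \in \Phi^*_{A,\rho}$ and with the zero-weight generators $\widetilde U_{r,[0],i}$. For the weight subgroups $\widetilde U_{\mf e,[s]}$, the mechanism is Lemma~\ref{fact:9}: each generator $\widetilde h_\mu(v,v_1)$ or $\widetilde{\mbf h}_r(x,x_1)$ of $\widetilde H$ normalizes each $\widetilde U_{\mf e,[s]}$ (sending it to itself, not to a product over a cone). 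Therefore $\widetilde g \widetilde u \widetilde g^{-1} \in \widetilde U_{\mf e,[s]}$ for $\widetilde u \in \widetilde U_{\mf e,[s]}$. But $\widetilde g$ projects to $e$, so $\widetilde g \widetilde u \widetilde g^{-1}$ and $\widetilde u$ project to the same element of $G_\rho$; since $\widetilde U_{\mf e,[s]}$ projects isomorphically onto $U_{\mf e,[s]}$ (a special case of Lemma~\ref{lem:detected-comm}, as $\{s\}$ is $A$-admissible), we conclude $\widetilde g \widetilde u \widetilde g^{-1} = \widetilde u$. So $\widetilde g$ centralizes $\bigcup_{s \in \Phi^*_{A,\rho}} \widetilde U_{\mf e,[s]}$.

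\textbf{The zero-weight directions, and conclusion.} For the zero-weight generators $\widetilde U_{r,[0],i}$, recall from Definition~\ref{def:0-weight-gen} that such an element has the form $[\widetilde g', \widetilde x]\widetilde x_1 \cdots \widetilde x_m$ with $\widetilde g' \in \widetilde U_{G,[\gamma]}$ and $\widetilde x, \widetilde x_j$ in weight subgroups. Since $\widetilde g$ now centralizes both all the $\widetilde U_{G,[\gamma]}$ and all the weight subgroups $\widetilde U_{\mf e,[s]}$, it centralizes any element built as a word in these, in particular every generator of every $\widetilde U_{r,[0],i}$. Thus $\widetilde g$ centralizes the subgroup generated by all $\widetilde U_{G,[r]}$, all $\widetilde U_{\mf e,[s]}$, and all $\widetilde U_{r,[0],i}$; by Proposition~\ref{prop:semidirect-path-struct} (or directly, since these subgroups and the central kernel generate $\widetilde{\mathcal G}_A / \mc S_A$) this is all of $\widetilde{\mathcal G}_A / \mc S_A$. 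Hence $\widetilde g \in Z(\widetilde{\mathcal G}_A / \mc S_A)$, as desired. The main obstacle is the first paragraph: correctly transferring the kernel description from the purely semisimple extension $\widetilde{\mathcal G}_A^{(G)} / \mc S_A^{(G)}$ into the twisted group $\widetilde{\mathcal G}_A / \mc S_A$ and verifying that centrality against the $\widetilde U_{G,[r]}$ really does follow from the semisimple case (Section~\ref{cor:simple-central}) applied to the subgroup they generate --- here one must be careful that the relations $\mc S_A$ restricted to the $G$-generators are exactly those defining the semisimple extension, which uses that commutators of $U_{G,[\mu]}$ with $U_{G,[\nu]}$ stay inside the $G$-part (a consequence of \eqref{for:9}).
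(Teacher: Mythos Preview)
Your proof is correct and follows essentially the same approach as the paper: invoke Corollary~\ref{cor:3} to place $\widetilde g$ in $\widetilde H$ and obtain centrality against the $\widetilde U_{G,[r]}$ from the semisimple case (Section~\ref{cor:simple-central}), then use Lemma~\ref{fact:9} together with Lemma~\ref{lem:detected-comm} to get centrality against each $\widetilde U_{\mf e,[s]}$. Your third paragraph on the zero-weight generators is correct but redundant---$\widetilde{\mathcal G}_A/\mc S_A$ is generated by the $\widetilde U_{[r]}$ for $r\in\Lambda_A$, each of which factors as $\widetilde U_{G,[r]}\cdot\prod_i\widetilde U_{\mf e,[r],i}$ (equation~\eqref{eq:weightroot-decomp}), so centrality against root and nonzero-weight subgroups already suffices; the paper accordingly omits this step.
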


\begin{proof}
By Corollary \ref{cor:3}, we can write $\widetilde{g}=\prod_{\mu\in \Delta}\widetilde{h}_{\mu}$, where $\widetilde{h}_{\mu}\in \widetilde{H}_\mu$ (resp. $\widetilde{h}_{\mu}\in \widetilde{\textbf{H}}_\mu$). By iteratively appling Lemma \ref{fact:9}, we get that for any $\widetilde{u}\in \widetilde{U}_{\mathfrak{e},[s]}$, $s\in \Phi^*_{A,\rho}$, $\widetilde{g}\widetilde{u}\widetilde{g}^{-1}\in \widetilde{U}_{\mathfrak{e},[s]}$. Since $\widetilde{g}\widetilde{u}\widetilde{g}^{-1}$ projects to $e\in G_\rho$, $\widetilde{g}\widetilde{u}\widetilde{g}^{-1}=\widetilde{u}$. So $\widetilde{g}$ is in the centralizer of $\prod_{s\in \Phi^*_{A,\rho}}\widetilde{U}_{\mathfrak{e},[s]}$. Since $\widetilde{g}$ is also in the centralizer of $\Pi_{r\in \Delta_{A}}\widetilde{U}_{G,[r]}$ (see Section \ref{cor:simple-central}) we get that $\widetilde{g}\in Z(\widetilde{\mathcal{G}}_A/ \mc S_A)$.
\end{proof}

Combining Proposition \ref{cor:semisimple-centrality} and \ref{prop:semidirect-path-struct}, we immediately get that $\widetilde{\mathcal{G}}_A/ \mc S_A$ is a central extension of $G_\rho$. So to prove Theorem \ref{thm:central}, we only need to show that $\widetilde{\mc G}_A / \mc S_A$ is perfect:

\begin{proof}
We note that any $\widetilde{g}\in \prod_{r\in \Delta_A}\widetilde{U}_{G,[r]}$ is contained in the commutator group of $\widetilde{\mathcal{G}}_A/ \mc S_A$ (see Section \ref{cor:simple-central}). Then we just need to show that any element inside $\widetilde{U}_{\mathfrak{e},[s],i}$, $i\in I$ and $s\in \Phi^*_{\rho,A}$ is contained in the commutator group of $\widetilde{\mathcal{G}}_A/ \mc S_A$.

Suppose that $x \in U_{\mf e, [s],i}$. Then since $[s] \not= [0]$, there exists some $a \in p(A)$ such that $\lambda = e^{s(a)} \not= 1$. Let $x' = x/(\lambda-1)$ (where we write $x$ additively. Since $\mbf{H}$ projects to the centralizer of $A_0$ in $G$ (see Proposition $1.8$ \cite{deodhar78}), we can choose some element $\widetilde{h} \in \mbf{H}$ (see Section \ref{sec:kernel-sec}) such that $\pi_{\mc G}(\widetilde{h}) = a$. Then by Lemma \ref{fact:9}:

\[ [\widetilde{h},\widetilde{x}'] = \widetilde{\lambda x'}\cdot \widetilde{-x'} = \widetilde{(\lambda-1)x'} = \widetilde{x}. \]

\end{proof}

\section{Topological Properties of $\widetilde{\mc G}_A$ and its subgroups and factors}\label{sec:17}

To prove Theorem \ref{th:3}, we first need to study certain topological properties of the groups $\widetilde{\mc G}_A$ and $\mc C_A$ with the topology described in Section \ref{sec:7}. We first prove a few technical Lemmas:

\begin{lemma}
\label{lem:extension-diagram}
Assume that $1 \to \Upsilon \to X_2 \xrightarrow{p} X_1 \to 1$ is a topological extension of $X_1$. Also, assume that the $X_i$ have a common topological extension $1 \to Z_i \to X \xrightarrow{\pi_i} X_i \to 1$ for $i = 1,2$ satisfying $\pi_1 = p \of \pi_2$. Then there is a topological extension:

\[ 1 \to Z_2 \to Z_1 \to \Upsilon \to 1 \]
\end{lemma}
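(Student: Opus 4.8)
The plan is to produce the map $Z_2 \to Z_1$ and identify its cokernel with $\Upsilon$ by a diagram chase. First I would observe that the composite $\pi_1 = p \of \pi_2 : X \to X_1$ is surjective with kernel $Z_1$, while $\pi_2 : X \to X_2$ is surjective with kernel $Z_2$. Since $\pi_1 = p \of \pi_2$, any element of $\ker \pi_2 = Z_2$ is killed by $\pi_1$, so $Z_2 \subseteq Z_1$; this gives the inclusion map $\iota : Z_2 \hookrightarrow Z_1$, which is a continuous homomorphism and a topological embedding because $Z_2$ carries the subspace topology from $X$ and $Z_1$ does too. Moreover $Z_2$ is normal in $Z_1$ (indeed central-type normality is inherited, but in any case $Z_2 = \ker \pi_2$ is normal in $X$, hence in the subgroup $Z_1$).

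Next I would build the surjection $q : Z_1 \to \Upsilon$. Given $z \in Z_1 = \ker \pi_1$, consider $\pi_2(z) \in X_2$; since $p(\pi_2(z)) = \pi_1(z) = e$, we have $\pi_2(z) \in \ker p = \Upsilon$. Define $q(z) = \pi_2(z)$. This is a continuous homomorphism $Z_1 \to \Upsilon$. Its kernel is exactly $\{ z \in Z_1 : \pi_2(z) = e\} = Z_1 \cap \ker \pi_2 = Z_1 \cap Z_2 = Z_2$ (using $Z_2 \subseteq Z_1$). For surjectivity: given $\upsilon \in \Upsilon \subseteq X_2$, pick $x \in X$ with $\pi_2(x) = \upsilon$ (possible since $\pi_2$ is onto); then $\pi_1(x) = p(\upsilon) = e$, so $x \in Z_1$ and $q(x) = \upsilon$. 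Hence $q$ is onto, and we obtain an exact sequence of groups
\[
1 \to Z_2 \xrightarrow{\iota} Z_1 \xrightarrow{q} \Upsilon \to 1.
\]

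Finally I would check that this is a \emph{topological} extension, i.e.\ that $q$ is a quotient map and $\iota$ is a closed embedding onto the kernel with the subspace topology. The map $\iota$ is the restriction of the closed embedding $Z_2 \hookrightarrow X$, intersected with the closed subgroup $Z_1$, so $Z_2$ is closed in $Z_1$ with the subspace topology. For $q$: since $\pi_2$ is a quotient map and $Z_1 = \pi_1^{-1}(e)$ is a saturated-enough closed subgroup, the restriction $\pi_2|_{Z_1} : Z_1 \to \Upsilon$ is again a quotient map — this is the one point that needs a small argument, using that $\Upsilon \subseteq X_2$ has the subspace topology and that $\pi_2$ restricted to the preimage of a subgroup is still open/quotient onto that subgroup (open maps restrict to open maps over subspaces). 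The main obstacle, then, is this last topological point: verifying that restricting the quotient map $\pi_2$ to the closed subgroup $Z_1$ still yields a quotient (equivalently open) map onto $\Upsilon$; for topological groups with $\pi_2$ open this follows since $\pi_2(Z_1 \cap V)$ for $V$ open in $X$ exhausts a neighborhood basis of $e$ in $\Upsilon$, but one should state the hypotheses (e.g.\ that all maps are open, which is automatic for the $k_\omega$ / CW situation of $\widetilde{\mc G}_A$) under which the argument is clean.
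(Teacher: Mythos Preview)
Your proof is correct and follows essentially the same route as the paper: show $Z_2 \subset Z_1$ via $\pi_1 = p \circ \pi_2$, note normality of $Z_2$ in $Z_1$ as a subgroup of $X$, define $q = \pi_2|_{Z_1}$, and verify image $\Upsilon$ and kernel $Z_2$ by the same diagram chase. The paper's own proof is in fact purely algebraic and does not address the topological point you flag at the end (that $\pi_2|_{Z_1}$ is a quotient map onto $\Upsilon$); your extra paragraph on this is more than the paper provides, and your caution there is well placed rather than a gap.
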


\begin{proof}
First, observe that if $z \in Z_2$, then $\pi_1(z) = p(\pi_2(z)) = p(e) = e$, so $Z_2 \subset Z_1 = \ker \pi_1$. Furthermore, $Z_2$ is normal in $Z_1$ since it is normal in $X$, which contains $Z_1$. We claim that $\pi_2(Z_1) = \Upsilon$. Indeed, if $z \in Z_1$, then $p(\pi_2(z)) = \pi_1(z) = e$, so $\pi_2(z) \in \ker p = \Upsilon$. It is surjective since if $u \in \Upsilon$, we may choose an element $x \in X$ such that $\pi_2(x) = u$. Then $\pi_1(x) = p(\pi_2(x)) = p(u) = e$, so $x \in Z_1$. To see that $Z_2$ is the kernel, observe that if $z \in Z_1$, then $\pi_2(z) = e$ if and only if $z \in \ker \pi_2 = Z_2$. Since $Z_2 \subset Z_1$, we conclude the result.
\end{proof}

\begin{lemma}
\label{lem:discrete-ext}
Suppose that $1 \to Z \to Y \xrightarrow{p} X \to 1$ is a topological extension of groups with $X$ discrete. Then $Z$ is locally path-connected if and only if $Y$ is.
\end{lemma}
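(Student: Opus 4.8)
\textbf{Proof plan for Lemma \ref{lem:discrete-ext}.} The statement is a local assertion: since $X$ is discrete, the projection $p : Y \to X$ is a covering map, so every point of $Y$ has a neighborhood mapped homeomorphically onto an (open, hence also closed) point $\{x\} \subset X$; that is, $p^{-1}(x)$ is open in $Y$ and is homeomorphic to $Z$ via left translation by any chosen preimage of $x$. In particular $Y$ is, as a topological space, the disjoint union $\bigsqcup_{x \in X} p^{-1}(x)$ of open sets each homeomorphic to $Z$. The plan is to exploit this decomposition together with the fact that local path-connectedness is a purely local property preserved by homeomorphisms and inherited by open subspaces, and that a disjoint union of locally path-connected spaces is locally path-connected.

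For the forward direction, suppose $Y$ is locally path-connected. Then the open subspace $p^{-1}(e) \subset Y$ is locally path-connected, and $p^{-1}(e) = Z$ as a topological group (it is the kernel, with the subspace topology, which by the covering property agrees with the topology making the extension a topological extension). Hence $Z$ is locally path-connected. For the converse, suppose $Z$ is locally path-connected. For each $x \in X$ choose $y_x \in Y$ with $p(y_x) = x$; then left translation $L_{y_x} : Z \to p^{-1}(x)$ is a homeomorphism onto the open subset $p^{-1}(x)$ of $Y$, so each $p^{-1}(x)$ is locally path-connected. Since $Y$ is the topological disjoint union of the open sets $\{p^{-1}(x)\}_{x \in X}$, and a topological disjoint union of locally path-connected spaces is locally path-connected, $Y$ is locally path-connected.

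The only point requiring a little care — and the main (mild) obstacle — is justifying that $p$ is a covering map and that the subspace topology on each fiber coincides with the given topology on $Z$ (rather than something coarser or finer). This is where the hypothesis that the sequence is a \emph{topological} extension, not merely an exact sequence of groups, is used: by definition $Z \to Y$ is a topological embedding onto a closed subgroup with $Y/Z \cong X$ as topological groups, and since $X$ is discrete the cosets of $Z$ are open, giving the local triviality of $p$. Once this is in place the rest is the routine topology summarized above; no properties specific to the groups $\widetilde{\mc G}_A$ or $\mc C_A$ are needed.
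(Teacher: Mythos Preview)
Your proof is correct and follows essentially the same approach as the paper: both hinge on the observation that $Z = p^{-1}(\{e\})$ is open in $Y$ because $X$ is discrete, so the local topology of $Y$ at the identity coincides with that of $Z$. The paper's version is more terse, invoking homogeneity of topological groups to reduce to a single point, while you spell out the disjoint-union-of-cosets picture explicitly; but the underlying idea is identical.
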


\begin{proof}
Note that $\set{e} \subset X$ is open in $X$ since $X$ is discrete, so $p^{-1}(\set{e}) = \ker p = Z$ is open in $Y$. That is $Z$ is an open subgroup, so $e$ has an open neighborhood in $Z$ if and only if it has an open neighborhood in $Y$.
\end{proof}

\begin{proposition}\label{le:5}
With the free group topology, $\widetilde{\mathcal{G}}_A$ and $\mc C_{A}$ are are locally path-connected.
\end{proposition}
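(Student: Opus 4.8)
The goal is to show that $\widetilde{\mc G}_A$ and $\mc C_A$ are locally path-connected in the free product topology. The plan is to first establish local path-connectedness of $\widetilde{\mc G}_A$ directly from the explicit description of the topology given in Proposition \ref{prop:topology-def}, and then deduce the statement for $\mc C_A$ by combining the preceding lemmas with the fact that $\widetilde{\mc G}_A / \mc S_A$ and related quotients are (topological) central extensions of the Lie group $\mc G$ (Theorem \ref{thm:central}), so that the ``errors'' live in central subgroups amenable to Lemmas \ref{lem:extension-diagram} and \ref{lem:discrete-ext}.

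First I would treat $\widetilde{\mc G}_A$. By Proposition \ref{prop:topology-def}, the topology on $\widetilde{\mc G}_A$ is the quotient topology induced by the map $\iota : X \to \widetilde{\mc G}_A$, where $X = \bigsqcup_{n} \bigsqcup_{\mbf r \in \Lambda_A^n} X_{\mbf r}$ and each $X_{\mbf r} = U_{[r_1]} \times \cdots \times U_{[r_n]}$ is a product of manifolds, hence locally path-connected. A continuous open surjection from a locally path-connected space has locally path-connected image, and in fact any quotient of a locally path-connected space is locally path-connected (this is standard: if $q : Y \to Z$ is a quotient map and $Y$ is locally path-connected, then for any open $V \subseteq Z$ the path components of $V$ pull back to open saturated sets, hence are open). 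Since $X$ is a disjoint union of manifolds, it is locally path-connected, so $\widetilde{\mc G}_A$ is locally path-connected. This handles the first assertion.

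For $\mc C_A$, the point is that $\mc C_A$ is an open subgroup issue in disguise once we pass through $\mc S_A$. I would argue as follows. Consider the quotient map $q : \widetilde{\mc G}_A \to \widetilde{\mc G}_A / \mc S_A$. By Theorem \ref{thm:central}, $\widetilde{\mc G}_A / \mc S_A$ is a perfect central extension of $\mc G = G$ or $G_\rho$, and the kernel of the composite $\pi_1 : \widetilde{\mc G}_A \to \mc G_0$ is exactly $\mc C_A$, while the kernel of $\widetilde{\mc G}_A / \mc S_A \to \mc G$ is $\mc C_A / \mc S_A$, an abelian central (hence discrete, by minimal almost periodicity / Theorem \ref{th:3}, or at least a topological group) subgroup. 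The plan is to apply Lemma \ref{lem:extension-diagram} with $X_2 = \widetilde{\mc G}_A$, $X_1 = \widetilde{\mc G}_A / \mc S_A$, $\Upsilon = \mc S_A$, and common extension-style data relating the two to $\mc G$, to produce a topological extension expressing $\mc C_A$ in terms of $\mc S_A$ and $\mc C_A / \mc S_A$; then apply Lemma \ref{lem:discrete-ext} using that $\mc C_A / \mc S_A$ sits inside $\widetilde{\mc G}_A / \mc S_A$ in a way that is ``discrete over the Lie group'' to transfer local path-connectedness from $\mc S_A$ (equivalently from $\widetilde{\mc G}_A$, already shown) up to $\mc C_A$. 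The cleanest route may actually be more direct: $\mc S_A$ is a normal subgroup of $\widetilde{\mc G}_A$ and $\mc C_A / \mc S_A$ is a subgroup of the Lie-type object $\widetilde{\mc G}_A/\mc S_A$; since $\widetilde{\mc G}_A$ is locally path-connected and $\mc C_A$ is obtained as a preimage under a quotient map of a subgroup that is open in its closure (or discrete over $\mc G$), one shows $\mc C_A$ is itself a quotient of a locally path-connected space.

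\textbf{Expected main obstacle.} The delicate point is the topological bookkeeping: one must verify that the relevant sequences $1 \to \mc S_A \to \mc C_A \to \mc C_A/\mc S_A \to 1$ and $1 \to \mc C_A/\mc S_A \to \widetilde{\mc G}_A/\mc S_A \to \mc G \to 1$ really are \emph{topological} extensions (i.e., the quotient maps are open, the subspace topologies agree with the intrinsic ones), so that Lemmas \ref{lem:extension-diagram} and \ref{lem:discrete-ext} genuinely apply; the free product topology is pathological (not first countable, not metrizable, as noted after Proposition \ref{prop:topology-def}), so standard facts about topological groups cannot be cited blindly. In particular one must confirm that $\mc C_A/\mc S_A$, carrying the quotient topology from $\widetilde{\mc G}_A$, is discrete — or at least locally path-connected — inside $\widetilde{\mc G}_A/\mc S_A$; this is where Theorem \ref{th:3} (minimal almost periodicity) and the centrality statement of Theorem \ref{thm:central} do the real work, and reconciling the various quotient topologies is the step I expect to require the most care.
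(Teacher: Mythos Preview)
Your argument for $\widetilde{\mc G}_A$ is fine and matches the paper exactly: it is a quotient of a disjoint union of manifolds, hence locally path-connected.

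Your argument for $\mc C_A$, however, has a genuine gap. You propose to invoke Theorem \ref{th:3} (minimal almost periodicity of $\mc C_A/\mc S_A$) to conclude that $\mc C_A/\mc S_A$ is discrete, and then apply Lemma \ref{lem:discrete-ext}. This fails on two counts. First, it is circular: the proof of Theorem \ref{th:3} in Section \ref{sec:17} explicitly \emph{uses} Proposition \ref{le:5} (to conclude that the quotient $P$ is locally path-connected, so that the Gleason--Palais theorem applies). Second, even setting circularity aside, minimal almost periodicity does not imply discreteness; quite the opposite, a nontrivial minimally almost periodic abelian group cannot be locally compact. So the hypothesis of Lemma \ref{lem:discrete-ext} (that the quotient $X$ is discrete) is simply not available for $\mc C_A/\mc S_A$, and nothing in Theorem \ref{thm:central} gives it to you either.

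The paper's route is entirely different and avoids $\mc S_A$ altogether. When $\mc G$ is algebraic and $A$ is an algebraic subgroup, each multiplication map $\iota_{\mbf r} : X_{\mbf r} \to \mc G$ is a polynomial morphism, so its fiber over the identity is a real algebraic variety, hence locally path-connected; since the $k_\omega$ topology on $\widetilde{\mc G}_A$ is determined by these pieces, $\mc C_A$ is locally path-connected. For general $\mc G$, one passes to the center-free (adjoint, hence algebraic) form $\underline{\mc G}$: here Lemmas \ref{lem:extension-diagram} and \ref{lem:discrete-ext} \emph{are} applied, but with $X_1 = \underline{\mc G}$, $X_2 = \mc G$, and $\Upsilon$ the (genuinely discrete) center, yielding $1 \to \mc C_{A,\mc G} \to \mc C_{A,\underline{\mc G}} \to \Upsilon \to 1$ and transferring local path-connectedness from the algebraic case. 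The key idea you are missing is that local path-connectedness of $\mc C_A$ comes from \emph{algebraic geometry} (varieties are locally path-connected), not from the central-extension machinery surrounding $\mc S_A$.
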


\begin{proof}
From Section \ref{sec:7}, we know that the topology on $\widetilde{\mc G}_A$ is the quotient topology after identifying elements of each combinatorial pattern. Since we know that the quotient of a locally path-connected space is locally path-connected, we conclude that $\widetilde{\mc G}_A$ is locally path-connected.

To consider the groups $\mc C_A$, we first assume that $\mc G$ is algebraic and $A$ is an algebraic subgroup of $\mc G$ (and generalize to nonalgebraic groups afterwards). In this case, each $U_{[r]}$ is also an algebraic subgroup, so the natural maps $\iota_{\mbf r} : X_{\mbf r} \to \mc G$ (see Section \ref{sec:15} and \ref{sec:7}) are algebraic morphisms. Thus the preimage of the identity must be an subvariety of $U_{\mbf r}$. Local path-connectedness of $\mc C_A$ then follow from local path-connectedness of algebraic varieties.

Now we extend to the case of non-algebraic groups. Since $\mc G$ is perfect and has discrete center in both the semisimple and twisted cases, it always has a center-free factor $\underline{\mc G}$ with the same Lie algebra (via the adjoint representation). So $\mc G$ fits into a short exact sequence $1 \to \Upsilon \to \mc G \to \underline{\mc G} \to 1$.

Note that $\mc C_{A,\underline{\mc G}}$ is locally-path connected by our previous argument in the case of algebraic groups. Furthermore, $\widetilde{\mc G}_A$ covers both $\mc G$ and $\underline{\mc G}$. So we are in the setup of Lemma \ref{lem:extension-diagram}, and conclude that $1 \to \mc C_{A,\mc G} \to \mc C_{A,\underline{\mc G}} \to \Upsilon \to 1$. So by Lemma \ref{lem:discrete-ext}, we conclude the result.
\end{proof}
Recall that $\mc G_0$ is either a simply connected, semisimple Lie group, or a maximal Lie central extension of a semidirect product $G_\rho$. Then:

\begin{proposition}\label{prop:central-splits}
If $1 \to Z \to H \xrightarrow{\eta}\mc G_0 \to 1$ is a central topological extension of $\mc G_0$ by a Lie group $Z$ with a local section (in the sense that $H$ a principal bundle over $\mc G_0$). Then $H$ is a Lie group and $H \cong \mc G_0 \times Z$.
\end{proposition}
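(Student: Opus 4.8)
The plan is to verify that the extension $1 \to Z \to H \xrightarrow{\eta}\mc G_0 \to 1$ splits, and then observe that a splitting together with centrality forces $H \cong \mc G_0 \times Z$ as a Lie group. First I would note that the hypothesis that $H$ is a principal $Z$-bundle over $\mc G_0$ with a local section means that $H$ carries a natural smooth manifold structure in which $\eta$ is a submersion and the $Z$-action is smooth; combined with the group structure, this makes $H$ a Lie group with Lie algebra $\mf h$ fitting into a central extension of Lie algebras $0 \to \mf z \to \mf h \to \mf g_0 \to 0$ where $\mf g_0 = \Lie(\mc G_0)$ and $\mf z = \Lie(Z)$. The key algebraic input is that $\mf g_0$ is a \emph{perfect} Lie algebra admitting a universal central extension: in the semisimple case this is classical, and in the twisted case it follows from the construction of $\mf g_\rho'$ in Definition \ref{def:univ-lie} together with Proposition \ref{prop:universal-property}, where in fact $\mc G_0$ is \emph{taken} to be the universal Lie central extension $G_\rho'$ (or $\tilde G$). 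In either case $\mc G_0$ is its own universal Lie central extension, so $H^2(\mf g_0;\mf z) = 0$ for every abelian $\mf z$ with trivial module structure, hence the Lie algebra extension splits: $\mf h \cong \mf g_0 \oplus \mf z$ as Lie algebras.

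Next I would promote the Lie algebra splitting to a group splitting. Since $\mc G_0$ is simply connected, the inclusion $\mf g_0 \hookrightarrow \mf h$ corresponding to the splitting integrates to a Lie group homomorphism $\sigma : \mc G_0 \to H$ with $d\eta \of d\sigma = \id$, hence $\eta \of \sigma = \id_{\mc G_0}$ by connectedness and simple connectedness of $\mc G_0$. This $\sigma$ is the desired section. Then the map $\mc G_0 \times Z \to H$ given by $(g,z) \mapsto \sigma(g)z$ is a bijection: it is injective because $\sigma(g)z = \sigma(g')z'$ forces $g = \eta(\sigma(g)z) = \eta(\sigma(g')z') = g'$ and then $z = z'$; it is surjective because for $h \in H$ we have $h\sigma(\eta(h))^{-1} \in \ker \eta = Z$. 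Because $Z$ is central, this bijection is a group homomorphism: $\sigma(g)z \cdot \sigma(g')z' = \sigma(g)\sigma(g') z z' = \sigma(gg') z z'$. Finally it is a diffeomorphism since $\sigma$ is smooth and multiplication in $H$ is smooth, with smooth inverse $h \mapsto (\eta(h), \sigma(\eta(h))^{-1}h)$.

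The one point requiring care — and the step I expect to be the main obstacle — is the passage from the \emph{topological} principal-bundle hypothesis to a genuine \emph{smooth} structure on $H$ making it a Lie group, so that the Lie-algebra cohomology argument applies. The hypothesis gives a local section $s : U \to H$ over a neighborhood $U$ of $e \in \mc G_0$; I would use this to transport the manifold structure of $U \times Z$ to a neighborhood of $e$ in $H$, check that left translations by elements of $H$ are compatible so that $H$ becomes a topological manifold with continuous group operations, and then invoke the solution to Hilbert's fifth problem (or, more elementarily, the fact that a continuous section of a central extension of a Lie group which is locally a product is automatically smooth after a continuous change of coordinates, using that $\mc G_0$ and $Z$ are already Lie groups) to conclude $H$ is a Lie group and $\eta$ is smooth. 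Once $H$ is known to be a Lie group with $\eta$ a morphism of Lie groups, differentiating yields the central extension of Lie algebras and the rest is as above. I would also remark that the conclusion $H \cong \mc G_0 \times Z$ is exactly what is needed to guarantee that the universal Lie central extension $\mc G_0$ genuinely dominates all the topological central extensions arising from the web of coarse Lyapunov foliations, which is the role this Proposition plays in Section \ref{sec:17}.
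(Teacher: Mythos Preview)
Your proof is correct and reaches the same conclusion, but by a genuinely different route from the paper. The paper first establishes that $H$ is a Lie group by a direct local-compactness argument: it builds a compact neighborhood of a point from the local section and a compact set in $Z$ via a net argument, then invokes Gleason--Montgomery--Zippin. For the splitting, the paper works entirely at the group level: it observes $H = [H,H]\cdot Z$ by perfectness of $\mc G_0$, takes $H'$ to be the identity component of $[H,H]$, and shows $H' \cap \ker\eta = \{e\}$ using simple connectedness in the semisimple case and Proposition~\ref{prop:universal-property} in the twisted case, concluding $H \cong H' \times Z \cong \mc G_0 \times Z$. Your approach instead passes to Lie algebras, uses that $\mf g_0$ is its own universal central extension (Whitehead's lemma in the semisimple case, the Lie-algebra content of Proposition~\ref{prop:universal-property} in the twisted case) to get $H^2(\mf g_0;\mf z)=0$, and then integrates the resulting algebra splitting via simple connectedness of $\mc G_0$. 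Your route is cleaner cohomologically and makes the ``maximality'' of $\mc G_0$ more transparent; the paper's route has the advantage of sidestepping the passage from a merely \emph{topological} principal-bundle structure to a genuine smooth Lie group structure on $H$ --- the step you flagged as delicate --- since Gleason--Montgomery--Zippin requires only local compactness and local path-connectedness, both of which follow immediately from the local product description.
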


\begin{proof}
We first show that $H$ is locally compact. Suppose $\sigma : L \to H$ is a local section at $g_0\in \mc G_0$ on a small compact neighborhood $L$ of $g_0$. Then $\eta(\sigma(g))=g$ for any $g\in L$.  Set $h=\sigma(g_0)$. Choose a compact neighborhood $K \subset Z$ of identity. We can form the map $\eta : K \times L \to H$ defined by $(k,l) \mapsto k\sigma(l)$. Then $\eta$ is a homeomorphism by definition. We claim it contains a neighborhood of $h\in H$. Indeed, it suffices to show that for any net $x_\alpha \to h$, $x_\alpha \in \eta(K\times L)$ for sufficiently large $\alpha$. Indeed, we may first project $x_\alpha$ to $y_\alpha = \eta(x_\alpha)$. Then since $x_\alpha \to h \in H$, $y_\alpha \to g\in \mc G_0$ by continuity of $\eta$. Hence there exists $\alpha_0$ such that if $\alpha \ge \alpha_0$, $y_\alpha \in L$. Now, let $z_\alpha =  x_\alpha \cdot \sigma(y_\alpha)^{-1}$. Then $\eta(z_\alpha) = \eta(x_\alpha\cdot \sigma(y_\alpha)^{-1}) = e$, so $z_\alpha \in Z$, which implies that $y_\alpha \to g_0$ and $x_\alpha \to h$, $z_\alpha \to e$ by continuity of the section and group multiplication. Then there exists $\alpha_1$ such that if $\alpha \ge \alpha_1$, $z_\alpha \in K$ by closeness of $K$. By construction, $x_\alpha = \sigma(y_\alpha)z_\alpha = \eta(z_\alpha,y_\alpha)$, so if $\alpha \ge \max\{\alpha_0,\,\alpha_1\}$, we conclude that $x_\alpha \in \eta(K \times L)$.

Thus we conclude that $H$ is locally compact and locally path-connected (since the compact neighborhood constructed above can also obviously be made path-connected). This implies it is a Lie group by the classical theorem of Gleason-Montgomery-Zippin.

Let $[H,H]$ be the commutator group of $H$. Then $H=[H,H]\ker \eta=[H,H]Z$ and $[H,H]$ is perfect (see pg $75$ of \cite{Steinberg2}). Let the connected component of $[H,H]$ be $H'$. Then $\eta(H')=\mc G_0$ since $\mc G_0$ is connected. Therefor, $H=H'Z$. If $\mc G_0$ is simply connected, then it is clear that $H'\bigcap Z=\{e\}$; if $\mc G_0$ has non-trivial radical, then Proposition \ref{prop:universal-property} also implies that $H'\bigcap \ker\eta=\{e\}$. This further shows that $H'$ is isomorphic to $L$. Hence we prove the result.
\end{proof}

We document a result of Gleason and Palais which will be used in the main proof of this section:

\begin{theorem}[Corollary 7.4,\cite{gleason57}]\label{th:2}
If $S$ is a locally path-connected topological group in which some neighborhood of the identity admits a continuous one-to-one map into a finite dimensional metric space, then $S$ is a Lie group.
\end{theorem}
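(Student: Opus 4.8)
The statement is Corollary 7.4 of \cite{gleason57}, so strictly the ``proof'' is a reference; what follows is a sketch of how one reconstructs it and of where the genuine difficulty lies. The plan is to reduce to the classical structure theory for locally compact groups (Gleason, Yamabe, Montgomery--Zippin), for which the only ingredient not handed to us is \emph{local compactness} of $S$.

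It helps to first dispatch the routine consequences of local compactness, so that the hard step stands alone. Suppose we have shown that some identity neighborhood $W$ has compact closure $\overline W$ contained in the domain $V$ of the continuous injection $\phi : V \to M$. Then $\phi|_{\overline W}$ is a continuous injection of a compact Hausdorff space into a metric space, hence a homeomorphism onto its image, so $\overline W$ is homeomorphic to a compact subset of the finite-dimensional space $M$. Given a point $p \in S$, the map $\phi \circ L_{p^{-1}}$ (left translation composed with $\phi$) injects the neighborhood $p\overline W$ of $p$ into $M$, so $p$ has a compact neighborhood of dimension at most $\dim M$; using that the identity component of $S$ is open and $\sigma$-compact, the countable sum theorem for covering dimension gives $\dim S \le \dim M < \infty$. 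Since $S$ is also locally connected, the Montgomery--Zippin theorem (a finite-dimensional, locally connected, locally compact group is a Lie group) finishes the argument. Alternatively one invokes Gleason--Yamabe to realize an open subgroup as an inverse limit of Lie groups and notes that $\phi$ forces the small compact normal subgroups in that system to be trivial, as they would have to embed into $M$.

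Thus the only real content is the topological assertion: a locally path-connected topological group admitting a continuous one-to-one map of some identity neighborhood into a finite-dimensional metric space is locally compact. Here the approach is by contradiction, using local path-connectedness to turn any failure of local compactness into a violation of the finite-dimensional injection. If no identity neighborhood has compact closure, one can produce, inside a path-connected neighborhood $W \subset V$, a sequence $x_j$ with no convergent subsequence together with short arcs in $W$ joining $e$ to each $x_j$; transporting these arcs and suitable translates of them forward by $\phi$ crowds infinitely much ``independent'' arc-data into a bounded region of the finite-dimensional metric space $M$, which contradicts injectivity of $\phi$ via a covering-dimension / Baire-category estimate. Finding the correct way to quantify this ``independent arc-data'', so that finite-dimensionality of $M$ genuinely obstructs it, is the delicate heart of \cite{gleason57} and is the step I would expect to occupy essentially all of the work; everything downstream of local compactness is bookkeeping with the Hilbert's-fifth-problem machinery.
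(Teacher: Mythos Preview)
The paper does not prove this theorem at all: it is simply quoted from the literature as Corollary~7.4 of \cite{gleason57}, with no argument given. Your proposal correctly recognizes this and offers instead a sketch of how the Gleason--Palais argument runs, which is a reasonable outline (reduce to local compactness via the arc-data/dimension argument, then invoke Montgomery--Zippin or Gleason--Yamabe); since the paper supplies no proof of its own, there is nothing to compare your approach against.
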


\subsection{Proof of Theorem \ref{th:3}}
Suppose $f:\mc C_A/\mc S_A\rightarrow S$ is a homomorphism  and $S$ is a Lie group. We wish to show that $f$ is trivial. Then $f$ induces an injection from $P=\left. (\mc C_A/\mc S_A) \big/ \ker(f) \right.$. By Lemma \ref{le:5} $P$ is locally path connected, then Theorem \ref{th:2} implies that $P$ is a Lie group. Note that since $\ker f \subset \mc C_A / \mc S_A$ and $\mc C_A / \mc S_A$ is central in $\widetilde{\mc G}_A / \mc S_A$, $\ker f$ is a normal subgroup of $\widetilde{\mc G}_A / \mc S_A$. So we may form a group $\overline{\mc G}_A = \left(\widetilde{\mc G}_A / \mc S_A\right) \big/ \ker f$, and we get a short exact sequence:

\[ 1 \to P \to \overline{\mc G}_A \to \mc G_0 \to 1 \]
\begin{lemma}\label{le:7}
The projection $\overline{\mc G}_A \to \mc G_0$ has a local section at some point $g_0\in \mc G_0$.
\end{lemma}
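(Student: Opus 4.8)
The plan is to exhibit an explicit continuous local section of the projection $\overline{\mc G}_A \to \mc G_0$ near a regular point, using the canonical-form decompositions established in Sections \ref{sec:alg2} and \ref{sec:alg3}. First I would recall that $\mc G_0$ is built from the coset foliations of the coarse Lyapunov subgroups $U_{[r]}$: by the Bruhat-type decomposition (or, more elementarily, by the fact that the product map from a suitable ordered string of root/weight subgroups and their opposites is a local diffeomorphism onto a neighborhood of the identity in $\mc G_0$), there is a finite ordered combinatorial pattern $\mbf r = (r_1,\dots,r_n)$ with $r_i \in \Lambda_A$ such that the multiplication map $m : U_{[r_1]} \times \cdots \times U_{[r_n]} \to \mc G_0$ restricts to a homeomorphism from an open set $V$ onto an open neighborhood $W$ of some point $g_0 \in \mc G_0$. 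Since $\pi_1|_{U_{[r]}}$ is injective for each $r$ (Lemma \ref{lem:detected-comm}, applied to the singleton admissible sets $\{r\}$) and the free product topology makes each inclusion $U_{[r]} \hookrightarrow \widetilde{\mc G}_A$ a homeomorphism onto its image, the assignment $(u_1,\dots,u_n) \mapsto \tilde u_1 \cdots \tilde u_n \in \overline{\mc G}_A$ is continuous on $X_{\mbf r}$ (Proposition \ref{prop:topology-def}), and its composite with the projection to $\mc G_0$ is exactly $m$. Precomposing with $m^{-1} : W \to V$ then gives a continuous map $\sigma : W \to \overline{\mc G}_A$ with $\eta \of \sigma = \id_W$, which is the desired local section at $g_0$.

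The one genuine point to verify is continuity of the map $V \to \overline{\mc G}_A$, $(u_1,\dots,u_n) \mapsto \tilde u_1 \cdots \tilde u_n$. I would argue this by noting that $\overline{\mc G}_A = (\widetilde{\mc G}_A / \mc S_A)/\ker f$ carries the quotient topology from $\widetilde{\mc G}_A$, that multiplication in a topological group is continuous, and that each coordinate projection composed with the inclusion $U_{[r_i]} \hookrightarrow \widetilde{\mc G}_A \to \overline{\mc G}_A$ is continuous by the very definition of the free product topology (the inclusions $U_{[r]} \to \widetilde{\mc G}_A$ are topological embeddings). Hence the product map is continuous on $X_{\mbf r}$ and in particular on the open subset $V \subset X_{\mbf r}$ identified with $W$ via $m$.

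The main obstacle — really the only nontrivial ingredient — is producing the ordered combinatorial pattern $\mbf r$ for which $m : X_{\mbf r} \to \mc G_0$ is a local homeomorphism near a regular point. In the semisimple case this is standard (Bruhat decomposition / big cell), but since $\mc G_0$ may be a nontrivial perfect central extension and, in the twisted case, a semidirect product $G_\rho'$ with nontrivial radical, I would instead pick the pattern running over all $r \in \Lambda_A$ (positive and negative coarse Lyapunov exponents) together with a transversal to the centralizer $N = C_{\mc G}(p(A))$: concretely, the derivative of $m$ at the identity string spans $\Lie(\mc G_0)$ because $\sum_{r} \mf g^{(r)}$ plus a complement of $\mf g_0$ exhausts the Lie algebra (by the decompositions \eqref{for:6} and its twisted analog), so the inverse function theorem applies. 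Alternatively — and this is cleaner — one can sidestep even that by using Lemma \ref{lem:discrete-ext} ideology: since $\widetilde{\mc G}_A \to \mc G_0$ is a continuous surjection of a locally path-connected group (Proposition \ref{le:5}) and $\mc G_0$ is a Lie group, the descent to $\overline{\mc G}_A \to \mc G_0$ is an open map, and openness at the identity together with local path-connectedness of $\overline{\mc G}_A$ gives a continuous local section by a standard lifting argument; but the explicit string construction is more self-contained and I would carry that one out.
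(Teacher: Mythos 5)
There is a genuine gap in your main construction. You propose to choose the combinatorial pattern $\mbf r$ so that $\iota_{\mbf r} : X_{\mbf r} \to \mc G_0$ is a local diffeomorphism near the identity string, arguing that the derivative there spans $\Lie(\mc G_0)$. This fails: the derivative of $\iota_{\mbf r}$ at the identity string is exactly $\sum_i \mf g^{(r_i)}$, and by the decomposition \eqref{for:6} (and its twisted analogue) this lies entirely in the complement of $\mf g_0 = \Lie(N)$. The neutral directions $\mf g_0$ are tangent to no coarse Lyapunov subgroup, so they can never lie in the image of this derivative, no matter how the pattern is chosen --- the entries of $\mbf r$ must lie in $\Lambda_A$, so there is no ``transversal to $N$'' available to insert into the string. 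Consequently $\iota_{\mbf r}$ is \emph{never} a submersion at the identity string, and the inverse function theorem does not apply there. The phrase ``$\sum_{r}\mf g^{(r)}$ plus a complement of $\mf g_0$ exhausts the Lie algebra'' is circular: $\sum_r \mf g^{(r)}$ \emph{is} a complement of $\mf g_0$, and $\mf g_0$ itself is what is missing.

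What the paper actually uses is weaker and cheaper: by Corollary \ref{cor:det-gen} (resp.\ Corollary \ref{cor:twisted-generate}) the subgroups $U_{[r]}$ generate all of $\mc G_0$, so for a sufficiently long pattern $\mbf r$ the smooth map $\iota_{\mbf r}$ is \emph{surjective}. Surjectivity together with Sard's theorem forces the existence of a regular value $g_0$, i.e.\ a point where $d\iota_{\mbf r}$ is surjective at every preimage. That point is necessarily not $\iota_{\mbf r}(e,\dots,e)$, and Sard's theorem is precisely the device that produces it without exhibiting it by hand. The rank theorem then gives a smooth local section $B(g_0)\to X_{\mbf r}$, and composing with the continuous map $X_{\mbf r}\to\overline{\mc G}_A$ --- which you correctly argue is continuous via Proposition \ref{prop:topology-def} --- yields the desired local section. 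Your ``cleaner'' alternative is also unsupported: open mapping theorems for topological groups require hypotheses (local compactness, Polishness, Baire category) which $\widetilde{\mc G}_A$ and its quotients do not satisfy, being $k_\omega$ but typically non-metrizable; and openness of a continuous surjection does not by itself produce a local section in any case.
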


\begin{proof}
From Lemma \ref{cor:det-gen} and Corollary \ref{cor:twisted-generate} implies that there is a combinatorial pattern $\mbf{r} = (r_1,\dots,r_n)$ such that the map $\iota_{\mbf r} : X_{\mbf r} \to \mc G_0$ is onto. Then there exists a $g_0 \in \mc G_0$ such that $g_0$ is a regular value of the projection (by Sard's Theorem). This implies that there is a (smooth) section $i_{g_0} : B(g_0) \to X_{\mbf r}$, where $B(g_0)$ is a small neighbourhood of $g_0$ in $\mc G_0$, which induces a local section at $g_0$.
\end{proof}


Now by Proposition \ref{prop:central-splits}, $\overline{\mc G}_A \cong \mc G_0 \times P$. By noting that $\overline{\mc G}_A$ is a factor of a perfect group $\widetilde{\mc G}_A / \mc S_A$ and is hence perfect itself and $P$ is central, we get immediately that $P$ is central $P = \set{e}$, which implies that $\ker(f)=\mc C_A/\mc S_A$.  Hence we finish the proof.

\section{Trivialization of small lattice homomorphisms}
\label{sec:cocyle-rigid}

Recall the notations of \seref{se:1} and \seref{se:3} of Section \ref{sec:14}, so that $\Gamma$ is a lattice in $G$, $\Gamma_\rho$ is a lattice in $G_\rho$, $\Gamma'$ (or $\Gamma_\rho'$) is a lattice in the group $\mc G_0$ and $\widetilde{\Gamma}$ is the product of the lattice and center $\exp(\mathfrak{z})$ in the case of groups with symplectic contributions.

The following lemma treats cases which occur in the proof of main theorems
when one studies the Lyapunov-cycles of different homotopy classes.
\begin{lemma}
\label{cor:no-lattice-morphisms}
\hspace{1cm}

\begin{enumerate}
\smallskip
  \item \label{for:29}For any homomorphism $f : \Gamma' \to N$, $f(\Gamma')$ is finite and the order is bounded by a universal number only dependent on $G$. Especially, if $f$ is sufficiently small then $f$ is trivial.

  \smallskip
  \item \label{for:30.5} If $f : \Gamma_\rho' \to N$ is a sufficiently small homomorphism, then $f$ is trivial.

  \smallskip
  \item \label{for:31} If $f : \widetilde{\Gamma} \to N$ is a sufficiently small homomorphism, then $f$ is trivial.
\end{enumerate}

\end{lemma}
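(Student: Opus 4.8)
The three parts share the same philosophy: a homomorphism from a lattice in a (central extension of a) higher-rank semisimple or twisted group into the nilpotent-by-compact group $N=C_{\mc G}(p(A))$ must be very restricted, and smallness forces triviality. I would reduce each statement to Margulis superrigidity together with the fact that $N$ has no interesting finite-dimensional unipotent or compact quotients that a higher-rank lattice can hit nontrivially in a small way.

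\emph{Part \eqref{for:29}.} First observe that $\Gamma'$ is a lattice in $\tilde G$, the universal cover of the semisimple group $G$ (without compact or rank-one factors, by the genuinely higher rank hypothesis). Write $\Lie(N)=\mf n$; by Corollary \ref{cor:1}, $\Ad$ of $A$ on $\mf n$ has all eigenvalues of modulus $1$, so $\mf n$ decomposes as $\mf n = \mf l \oplus \mf u$ where $\mf l = \Lie(Z_G(A_0))$-type reductive part (which, modulo its center, is a product of compact and split pieces meeting $N$ compactly) and $\mf u$ a nilpotent ideal; more concretely $N$ is a nilpotent group extended by a compact group times a central vector group. Compose $f$ with the projection $N \to N/[N,N]N^{\circ}_{\mathrm{nil}}$-type abelianization to land in a compact abelian Lie group and a vector group. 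A higher-rank lattice has finite abelianization (Margulis normal subgroup theorem / Kazhdan's property (T)), so the image in the vector group part is trivial and the image in the compact part is finite with order bounded in terms of $G$ alone (bound on the abelianization of lattices in a fixed $\tilde G$). Peeling off the nilpotent lower central series one step at a time, each successive graded piece is again a target for a homomorphism from $\Gamma'$ into a vector group or compact group, hence finite/bounded. This gives finiteness of $f(\Gamma')$ with universal bound. If $f$ is $C^0$-small, the image lies in a small neighbourhood of $e$ which contains no nontrivial finite subgroup of the Lie group $N$ (small neighbourhoods of $e$ contain no nontrivial subgroups at all in the vector/nilpotent directions, and only the trivial element of each finite subgroup in the compact directions), so $f$ is trivial.

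\emph{Part \eqref{for:30.5}.} Here $\Gamma_\rho' = \Gamma' \ltimes \exp_{G_\rho'}(\mathcal Z)$ sits inside $G_\rho' = \tilde G \ltimes_{\rho'} E'$, and $N = C_{G_\rho}(p_\rho(A))$ is again nilpotent-by-compact by Corollary \ref{cor:2}. Restricting $f$ to the lattice $\Gamma' \subset \tilde G$, Part \eqref{for:29} (smallness) makes $f|_{\Gamma'}$ trivial. It remains to handle $f$ on $\exp(\mathcal Z) \cong \Z^N$ (the fibre lattice). Conjugation identity in $\Gamma_\rho'$ shows $f(\exp(\mathcal Z))$ is $\Ad(f(\Gamma'))$-invariant, but $f(\Gamma') = e$, so that gives no constraint directly; instead use that $\Gamma'$ acts on $\Z^N$ via $\rho'$ with no trivial subrepresentation (the defining assumption on $\rho$ plus the structure of $E'$), so for each fixed $\gamma$, $f(\exp(v)) = f(\gamma \exp(v)\gamma^{-1}) = f(\exp(\rho'(\gamma)v))$ forces $f$ to be constant on $\rho'(\Gamma')$-orbits; since $\Z^N$ is generated by elements of the form $v - \rho'(\gamma)v$ up to finite index (no invariant vectors, together with the fact that $\Gamma'$ is finitely generated and acts $\Q$-irreducibly on the pieces $\R^{N_i}$), and $f$ of such an element is $f(\exp v)f(\exp \rho'(\gamma) v)^{-1} = e$, we get $f(\exp(\mathcal Z))$ is finite; smallness then kills it. Hence $f \equiv e$.

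\emph{Part \eqref{for:31}.} Now $\widetilde\Gamma = \Gamma_\rho' \cdot \exp(\mathfrak z)$ (the symplectic-contribution case), where $\exp(\mathfrak z) \cong \T^{\dim \mf z}$ is a central torus in $G_\rho'$. Restrict $f$ to $\Gamma_\rho'$ and apply Part \eqref{for:30.5}: $f|_{\Gamma_\rho'}$ is trivial when $f$ is small. For the central torus $\exp(\mathfrak z)$: a small homomorphism from a torus to $N$ must land in the identity component near $e$, and a connected compact group maps to $N$ (nilpotent-by-compact) with image a compact connected abelian subgroup; but a small such subgroup must be trivial, since small neighbourhoods of $e$ in the nilpotent directions contain no nontrivial subgroups, and mapping to the maximal compact of $N$ a small image of a torus is trivial as well (a continuous homomorphism $\T^m \to \T^n$ small enough to be homotopic to a constant is constant). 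So $f(\exp(\mathfrak z)) = e$, and since $\widetilde\Gamma$ is generated by $\Gamma_\rho'$ and $\exp(\mathfrak z)$, $f$ is trivial.

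\textbf{Main obstacle.} The delicate point is Part \eqref{for:30.5}: I must make precise that $\Z^N$ is, up to finite index, spanned by the ``coboundary'' vectors $v-\rho'(\gamma)v$. This is where one needs the irreducibility of $\rho$ over $\R$ on each $\R^{N_i}$, the absence of a trivial summand, and a Zariski-density / $\Q$-rationality argument (the vectors $v - \rho(\gamma)v$ span a $\Gamma$-invariant $\Q$-subspace, which by irreducibility must be all of $\Q^{N_i}$ unless $\rho$ has invariants on that factor, contradicting the hypothesis). Getting the index bound uniform, and ensuring the interaction with the central extension $\mf z$ does not introduce extra invariant directions, is the part requiring genuine care; everything else is an application of property (T)/Margulis finiteness of abelianizations plus the elementary ``no small subgroups'' property of Lie groups.
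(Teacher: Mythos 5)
\textbf{Part \eqref{for:29} has a genuine gap.} Your proof rests on the claim that ``$N$ is a nilpotent group extended by a compact group times a central vector group,'' so that abelianization plus property~(T) plus peeling off the lower central series kills any homomorphism from $\Gamma'$ into $N$. This description of $N$ is incorrect in exactly the setting the paper cares about. Here $N = C_{G}(p(A))$ is the centralizer of the semisimple part $p(A)$ of $A$, and when $p(A)$ is a \emph{proper} subgroup of the split Cartan $A_0$ (the nongeneric case which is the point of the paper), $N$ is a general reductive-by-unipotent group and can contain noncompact simple factors. Example~\ref{sec:6} is explicit: for the stated $A \subset SL(4,\R)$ one has $N \cong SL(2,\R) \times \R^2$. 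A perfect factor like $SL(2,\R)$ is invisible to abelianization and to the lower central series, so your argument gives no control on a putative homomorphism $\Gamma' \to SL(2,\R) \subset N$. (The only thing you cite, Corollary~\ref{cor:1}, concerns the $A$-action on $\Lie(N)$ and says nothing about the internal structure of $N$.)

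To close this gap one genuinely needs Margulis superrigidity, and that is what the paper does: it takes the Zariski closure of $f(\Gamma')$ (a semisimple $\Q$-group), composes $f$ with a Galois automorphism to make the image noncompact if it were infinite, extends the resulting homomorphism continuously to $\tilde G$ by superrigidity, and then derives a contradiction from the genuinely higher rank condition via the fact that $\Lie(N)\cap\mf g_i$ is strictly smaller than $\mf g_i$ for every simple factor $\mf g_i$. The quantitative bound then comes from Jordan's theorem applied to the linear group $N$, not from a bound on abelianizations. In short: the abelianization/nilpotent-peeling strategy does not suffice; the superrigidity step is an essential ingredient you are missing.

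\textbf{Parts \eqref{for:30.5} and \eqref{for:31}} are essentially the paper's argument. For~\eqref{for:30.5} you and the paper both reduce to showing the ``coboundary'' vectors $v - \rho(\gamma)v$ span a finite-index sublattice of $\Z^N$; the paper gets this from Zariski density of $\rho(\Gamma)n$ in each $\R^{N_i}$ via the Borel density theorem and $\Q$-rationality of $\rho$, which is exactly the mechanism you flag as the delicate point. For~\eqref{for:31} both proofs restrict to $\Gamma_\rho'$ and then use that there are no small nontrivial homomorphisms from a torus into a Lie group. So only Part~\eqref{for:29} needs correction.
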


\begin{proof}
Proof of \eqref{for:29}: First we consider the case $N\subset G$.  We can assume
that $N\subset GL(\ell,\CC)$ is a linear algebraic group since the center of $N$ is finite. Since $A$ is genuinely higher rank, $0\neq \Lie(N)\bigcap \mathfrak{g}_i\neq \mathfrak{g}_i$, for each simple factor $\mathfrak{g}_i$ of $\Lie (\mathcal{G}_0)$. By \cite[3'Theorem]{margulis91}, Zariski closure
$\overline{f(\Gamma')}^z$ of $f(\Gamma')$ is a semisimple $\QQ$-algebraic group. We first
show that $f(\Gamma')$ is finite. Suppose it is not finite. Since
$\overline{f(\Gamma')}^z/(\overline{f(\Gamma')}^z)^0$ is finite, we can assume
$\overline{f(\Gamma')}^z$ is connected inside $N$. Compose $f$ with a Galois
automorphism $\tau$ of $\CC$ over $\QQ$ to matrix coefficients of
elements from $f(\Gamma')$, then $f(\Gamma')$ is a non-compact subgroup
of $\overline{f(\Gamma')}^z=\sigma\overline{f(\Gamma')}^z$. We can assume $\tau f$ is from $\Gamma'$ to
$\overline{f(\Gamma')}^z/Z(\overline{f(\Gamma')}^z)$ since $Z(\overline{f(\Gamma')}^z)$ is finite.
By Margulis lattice
superrigidity Theorem \cite[2' Theorem]{margulis91}, $\tau f$ can
be extended to a continuous homomorphism $\widetilde{f}$ from
$\tilde{G}_0$ to $\overline{f(\Gamma')^z}/Z(\overline{f(\Gamma')}^z)$.
Then $\ker(\widetilde{f})$ restricted to each simple factor $\widetilde{G}_i$ is either $\widetilde{G}_i$ or inside $Z(\widetilde{G}_i)$. Since the complexification of each
$\mathfrak{g}_i$ is identical (see $4.1$ of Chapter IX, \cite{margulis91}) and $\text{Lie}(N)\bigcap \mathfrak{g}_i$ is strictly inside $\mathfrak{g}_i$ for each $i$,
the above discussion shows that $\ker(\widetilde{f})|_{\widetilde{G}_i}=\widetilde{G}_i$ for each $i$. 
Hence we proved that $f(\Gamma')$ is finite.

Next, we show this order is bounded by a universal number only dependent on $G$.
By Jordan's theorem which claims that any finite group inside $GL(\ell,\CC)$ contains a normal abelian subgroup whose
index is at most $j(\ell)$. We let the biggest normal abelian
subgroup in $f(\Gamma')$ be $Q$. Consider the restriction of $f$ from
$f^{-1}(Q)$ to $Q$. The index of $[\widetilde{\Gamma}:f^{-1}(Q)]$ is bounded by
$j(\ell)$. There are only finitely many sublattices $L'$ in $\Gamma'$ with
the index smaller than $j(\ell)$ since
$\Gamma'$ is finitely generated (as a group)\cite{margulis91}. Hence we show that $f(\Gamma')$ is finite and the order is bounded by a universal number.

Now suppose $N\subset G_\rho$. By previous argument $f: \Gamma' \to N \to N \cap G$ is finite and the order is bounded by a universal number (note that $N \cap G$ is a factor of $N$). The trivialization of $f: \Gamma' \cap G \to N\bigcap \RR^N$ follows directly from by the Margulis Normal Subgroup Theorem \cite[4' Theorem]{margulis91}. Hence we also get the result for
$N\subset G_\rho$. Hence we finish the proof.

\smallskip
\noindent Proof of \eqref{for:30.5}: By previous argument, $f$ is trivial on $\Gamma'$ which implies that $[\gamma, \widetilde{n}]\in \ker(f)$ for any $\gamma \in \Gamma_\rho'$ and $\widetilde{n}\in \mathcal{G}_0$ of the form $\widetilde{n}$ for $\widetilde{n} \in \mc Z$ (see the end of Section \ref{sec:16}). Our goal will be to show that the group generated by $[\gamma,\widetilde{n}]$ generates a finite index subgroup of $\Gamma_\rho' \cap E'$. Suppose that we show it for $\Gamma_\rho \cap E$. Then since multiplication on $\Gamma_\rho'$ covers multiplication on $\Gamma_\rho$ and $\exp(Z)$ generates $\Gamma_\rho' \cap E'$ by construction, we conclude that it suffices to show that $\set{[\gamma,n] : n \in \Z^n}$ generates a finite index subgroup of $\Z^n$.


Suppose $\RR^N=\bigoplus_{i\in I} \RR^{N_i}$ such that $\rho$ is irreducible on each $\RR^{N_i}$. For any $0\neq n\in \ZZ^{N_i}$, the set $\rho(\Gamma)(n)$ is Zariski dense in $\RR^{N_i}$ since $\rho$ is rational and $\Gamma$ is Zariski dense in $G$ by the Borel density theorem (see 3.2.5 of \cite{zimmer}). This shows that $\rho(\Gamma)(n)$ is not inside any subspace of $\RR^{N_i}$.
Fix $\ell=[\gamma,n]\neq 0$, $\gamma\in \Gamma$. Then we can choose finite $\gamma_{j,i}\in\Gamma$, $j\in J$ such that $\rho(\gamma_{j,i})(\ell)$ span a basis of $\RR^{N_i}$. Then $[\gamma_{j,i},\ell]$, $j\in J$ and $\ell$ also a span a basis for $\RR^{N_i}$. This shows that $[\Gamma,n]$ span a basis of each $\RR^{N_i}$ for any $0\neq n\in\ZZ^{N_i}$. Hence we see that the set
\begin{align*}
  \{[\gamma,n]:\gamma\in \widetilde{\Gamma}, \, n\in\ZZ^N\}
\end{align*}
span a sublattice with finite index in $\Gamma_\rho'$. This shows that $f(\Gamma_\rho')$ is finite. Since $f$ is small then it is trivial.

\smallskip
\noindent Proof of \eqref{for:31}: By previous argument $f$ is trivial on $\Gamma_\rho'$. Since $\Gamma_\rho'$ contains a lattice of $\exp(\mathfrak{z})$ and $f$ is small, $f$ induces a small homomorphism from a torus. There are no small homomorphisms from tori to a fixed Lie group, so we get trivialization on $\widetilde{\Gamma}$ as well.
\end{proof}

\section{Lyapunov Structures of Perturbations}
\label{sec:perturbed}
\subsection{Coarse Lyapunov foliations for perturbations}\label{sec:19}
Now we consider the family of Lyapunov foliations for perturbations which correspond
to the unipotent foliations $\mathcal{U}_\mu$, $\mu\in \Lambda_A$ (see \eqref{se:3} of Section \ref{sec:14}) for $\alpha_A$.
Recall notations in \ref{sec:12}. We denote the family of coarse Lyapunov foliations for $\widetilde{\alpha}_A$ by $\widetilde{\mathcal{F}}$. Since
$\overline{\alpha}_A$ and $\widetilde{\alpha}_A$ are conjugate via a H\"older map, the stable and unstable, and
thus the corresponding coarse Lyapunov foliations $\overline{\mathcal{F}}$, are dynamically defined for
the action $\overline{\alpha}_A$.  Notice that we do not know whether the leaves of those foliations
are smooth manifolds. However, every foliation in $\widetilde{\mathcal{F}}$ is integrable with
$\mathcal{N}$ and the resulting center-Lyapunov foliation does have smooth leaves
since it coincides with the center-Lyapunov foliation for $\alpha_A$.

More precisely, let $a_i\in I$
be different elements within the Weyl chambers and
away from the Weyl chamber walls such that:
\begin{enumerate}
  \item for any $\mu\in \Lambda_A$, there is subset $I_\mu\subset I$ such that $\mathcal{U}_\mu =
\bigcap_{a_i\in I_\mu} \mathcal{W}^s_{a_i}$ (i.e., such that $\mu(a_i)<0$, where $\mathcal{W}^s_{a_i}$
is the stable foliation for
$\alpha_A(a_i,\cdot)$);

  \medskip
  \item  if $\mu$ and $\nu$ are not negatively propositional in $\Lambda_A$, then the foliations $\mathcal{U}_\mu$ and $\mathcal{U}_\nu$ are included in the stable foliations of $\alpha_A(a,\cdot)$ for some $a\in I_\mu\bigcup I_\nu$.
\end{enumerate}
We then
denote the corresponding intersections of stable foliation for $\widetilde{\alpha}_A$ by $\widetilde{\mathcal{F}}_\mu$
i.e., $\widetilde{\mathcal{F}}_\mu=\bigcap_{a_i\in I_\mu} \widetilde{\mathcal{W}}^s_{a_i}$
are the corresponding coarse Lyapunov foliations for the
action $\widetilde{\alpha}_A$. We also denote coarse Lyapunov foliations $h^{-1}\widetilde{\mathcal{F}}_\mu$
for $\overline{\alpha}_A$ by $\overline{\mathcal{F}}_\mu$.

\subsection{Canonical projections between $\overline{\mathcal{F}}$ and $\mathcal{U}$}

In this section, we link the coarse Lyapunov foliations of $\alpha_{\tilde{A}}$ (resp. $\alpha_{A'}$) (see \seref{se:5} and \seref{se:7} of Section \ref{sec:14}) with those of $\hat{\alpha}_{A}$, which are also denoted by $\mathcal{U}$ and $\overline{\mathcal{F}}$ respectively.

The foliations $\mathcal{N}$ and $\mathcal{U}_\mu$ on $\mathcal{G}_0$ integrate to an invariant foliation $\mathcal{W}_\mu$ for $\alpha_{\tilde{A}}$ (resp. $\alpha_{A'}$) with the product
structure. This foliation is also invariant for $\hat{\alpha}_{A}$. Moreover, every leaf of $\overline{\mathcal{F}}_\mu$ inside this foliation intersects every leaf of $\mathcal{N}$ at a unique point since $\overline{\mathcal{F}}_\mu$ is $\kappa$-H\"older close to $\mathcal{U}_\mu$. More precisely, for any $x'\in \mathcal{U}_\mu(x)$ and $y\in \mathcal{N}(x)$ we can define a map $P^{x,y}_\mu : x'\rightarrow \overline{\mathcal{F}}_\mu(y)\bigcap\mathcal{N}(x')$, which is a well defined and continuous map taking $\mathcal{U}_\mu(x)$ to $\overline{\mathcal{F}}_\mu(y)$ (see \cite{dk2011}). Also we can define a continuous map: $\widehat{P}^{x,y}_\mu:x'\rightarrow \mathcal{U}_\mu(y)\bigcap\mathcal{N}(x')$, for any $x'\in \overline{\mathcal{F}}_\mu(x)$ and $y\in \mathcal{N}(x)$.

We then extend the projections to paths in the obvious way:

\begin{definition}
Let $\mathfrak{c}:x_1,\cdots,x_n=x_1$ be a $\mathcal{U}$-path with initial
point $x_1$ and $x_{k+1}\in \mathcal{U}_{\mu_i}$, $1\leq k\leq n-1$. Then the canonical projection of this path at a point $y\in \mathcal{N}(x_1)$ is an $\overline{\mc F}$-path $P^{x_1,y}(\mathfrak{c}): y=y_1,y_2\cdots,y_n$ such that for
$y_{k+1}:=P^{x_k,y_k}_{\mu_k}$, $1\leq k\leq n-1$.

Similarly we define the reverse projection $\widehat{P}^{x_1,y}(\mathfrak{c})$ of an $\overline{\mathcal{F}}$-path $\mathfrak{c}$
with an initial point $x_1$ to a $\mathcal{U}$-path starting at $y$,
using canonical projections $\widehat{P}^{x_1,y}(\mathfrak{c})$.
\end{definition}

\begin{remark}\label{re:5}
Canonical projections are mutually inverse:
\begin{align*}
  \widehat{P}^{y,x}_\mu\big(P^{y,x}_\mu(x')\big)=x',\qquad P^{y,x}_\mu\big(\widehat{P}^{y,x}_\mu(x')\big)=x'.
\end{align*}
\end{remark}

For any $h\in \mathcal{G}_0$ and $\widetilde x= \widetilde u_1\widetilde u_{2}\cdots \widetilde u_{n}\in \widetilde{\mathcal{G}}_A$ (see Section \ref{sec:7}), $\widetilde u_i\in \widetilde U_{[\mu_i]}$, $\mu_i\in \Lambda_A$, let $h_i=u_{n-i+1}\cdots u_nh$. We define a corresponding $\mathcal{U}$-path $\widetilde{x}_h$ on $\mathcal{G}_0$ based at $h$: $h=h_1, h_2,\cdots,h_n$. We define $\mathcal{P}(\widetilde{x},h)$ to be the endpoint of the $\overline{\mathcal{F}}$-path $P^{h,h}(\widetilde{x}_h)$ (ie, $\mc P(\widetilde x,h) = \pi_{\mc F}(P^{h,h}(\widetilde x_h)$). Note that $\beta$ is the lifted cocycle for $\hat{\alpha}_A$ (see Section \ref{sec:20}).
\begin{remark}\label{re:6}
For any $\widetilde{x}\in \mathcal{C}_A$ and $h\in \mathcal{G}_0$, $\widetilde{x}_h$ is a cycle with initial point $h$ in the sense defined in Section \ref{sec:5}. Also, if $\widetilde x, \widetilde y \in \mathcal{C}_A$, $(\widetilde{x}\widetilde{y})_h=\widetilde{x}_{h}*\widetilde{y}_h$. Since $\widetilde{x}\rightarrow \widetilde{x}_h$ is one-to-one, hence we can identify $\mathcal{C}_A$ with the $\mathcal{U}$-cycle group at $h$. Then we also call $\mathcal{C}_A$ the cycle group in $\widetilde{\mathcal{G}}_A$. We call the subgroup $\mathcal{S}_A$ the \emph{stable cycle group}. The reason is for any $h\in \mathcal{G}_0$ and any relation defined by $R_{\mu,\nu,A}$ (see Section \ref{sec:15}) in $\mathcal{G}_0$, we can choose $a_i\in I$ (see Section \ref{sec:19}) such that the $\mathcal{U}$-cycle defined by $R_{\mu,\nu,A}$ with initial point at $h$ is inside a stable leaf for $\alpha_{\widetilde{A}}(a_i,\cdot)$ (resp. $\alpha_{A'}(a_i,\cdot)$).
\end{remark}

\begin{proposition}\label{po:3}
For any $h\in \mathcal{G}_0$ $\mathcal{P}(\cdot,h)$ is continuous on $\widetilde{\mathcal{G}}_A$ and descends to a continuous map on $\widetilde{\mathcal{G}}_A/\mathcal{S}_A$.
\end{proposition}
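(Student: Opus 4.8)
The statement to prove is Proposition~\ref{po:3}: for fixed $h \in \mathcal{G}_0$, the map $\mathcal{P}(\cdot,h) : \widetilde{\mathcal{G}}_A \to \mathcal{G}_0$ (sending $\widetilde{x}$ to the endpoint of the projected $\overline{\mathcal{F}}$-path $P^{h,h}(\widetilde{x}_h)$) is continuous, and descends to $\widetilde{\mathcal{G}}_A/\mathcal{S}_A$. By Proposition~\ref{prop:topology-def}, continuity on $\widetilde{\mathcal{G}}_A$ with the free product topology is equivalent to continuity of $\mathcal{P}(\cdot,h) \circ \iota_{\mbf r}$ on each $X_{\mbf r} = U_{[r_1]} \times \cdots \times U_{[r_n]}$ for every combinatorial pattern $\mbf r \in \Lambda_A^n$. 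So the first step is to reduce to this finite-dimensional statement: given a tuple $(u_1,\dots,u_n) \in X_{\mbf r}$, the path $\widetilde{x}_h$ is the $\mathcal{U}$-path $h = h_1, h_2, \dots, h_n$ with $h_i = u_{n-i+1}\cdots u_n h$, which depends continuously (indeed smoothly) on the $u_j$, and then $\mathcal{P}(\widetilde x, h)$ is obtained by iterating the projection maps $P^{x,y}_\mu$.

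The heart of the argument is therefore the continuity of the elementary canonical projections $P^{x,y}_\mu : x' \mapsto \overline{\mathcal{F}}_\mu(y) \cap \mathcal{N}(x')$ as a function of all three arguments $x, y, x'$ (with $x' \in \mathcal{U}_\mu(x)$, $y \in \mathcal{N}(x)$). This is essentially built into the construction recalled from \cite{dk2011}: because $\overline{\mathcal{F}}_\mu$ is a continuous foliation that is leafwise $C^r$-close and $\kappa$-H\"older-close to $\mathcal{U}_\mu$ (Theorem~\ref{th:1}), and because $\mathcal{U}_\mu$ and $\mathcal{N}$ integrate to a product foliation $\mathcal{W}_\mu$, the intersection $\overline{\mathcal{F}}_\mu(y) \cap \mathcal{N}(x')$ is a single point depending continuously on the data; I would invoke this (with the transversality and product structure) rather than reprove it. Concatenating: $\mathcal{P}(\cdot,h)\circ\iota_{\mbf r}$ is a finite composition of such continuous elementary projections interleaved with the continuous maps $(u_1,\dots,u_n)\mapsto h_i$, hence continuous on $X_{\mbf r}$. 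Since this holds for every $\mbf r$, Proposition~\ref{prop:topology-def} gives continuity of $\mathcal{P}(\cdot,h)$ on $\widetilde{\mathcal{G}}_A$.

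For the descent to $\widetilde{\mathcal{G}}_A/\mathcal{S}_A$, I would show $\mathcal{P}(\widetilde{x}\widetilde{s},h) = \mathcal{P}(\widetilde{x},h)$ for every generator $\widetilde{s}$ of $\mathcal{S}_A$, i.e.\ for every element of the form $[u,v]R_{\mu,\nu,A}(u,v)^{-1}$ and its conjugates. By Remark~\ref{re:6}, $(\widetilde{x}\widetilde{y})_h = \widetilde{x}_h * \widetilde{y}_h$, so it suffices to check that the $\overline{\mathcal{F}}$-projection of the $\mathcal{U}$-cycle associated to a stable relation $R_{\mu,\nu,A}$ is a closed loop (returns to its starting point). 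The key point, again from Remark~\ref{re:6}: for any such relation one can choose a regular element $a_i \in I$ (Section~\ref{sec:19}) so that the entire $\mathcal{U}$-cycle defined by $R_{\mu,\nu,A}$ based at $h$ lies in a single stable leaf $\mathcal{W}^s_{a_i}$ for $\alpha_{\widetilde A}(a_i,\cdot)$ (resp.\ $\alpha_{A'}$). Since the canonical projections respect the ambient leaf structure and the projected path then lies in the corresponding stable leaf for $\hat\alpha_A(a_i,\cdot)$ where it must still close up (a closed $\mathcal{U}$-cycle in a contracting leaf projects to a closed $\overline{\mathcal{F}}$-cycle, by uniqueness of the intersection points and contraction), the endpoint is unchanged; one should also verify invariance under conjugation of the relation, which follows by the same leaf argument applied to the conjugated cycle. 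Combined with continuity, $\mathcal{P}(\cdot,h)$ factors through $\widetilde{\mathcal{G}}_A/\mathcal{S}_A$, and the induced map is continuous because $\widetilde{\mathcal{G}}_A/\mathcal{S}_A$ carries the quotient topology.

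\textbf{Main obstacle.} The routine parts are the reduction via Proposition~\ref{prop:topology-def} and the bookkeeping of compositions; the genuinely delicate step is controlling the elementary projection $P^{x,y}_\mu$ uniformly enough to conclude that the finite composition defining $\mathcal{P}(\cdot,h)\circ\iota_{\mbf r}$ is continuous on all of $X_{\mbf r}$ (not just locally), and — tied to this — confirming that when a stable relation's $\mathcal{U}$-cycle sits inside one contracting leaf, its $\overline{\mathcal{F}}$-projection genuinely closes up rather than merely returning near the base point. Both hinge on the product structure of $\mathcal{W}_\mu$ and the H\"older closeness from Theorem~\ref{th:1}; I expect these to be handled exactly as the analogous statements in \cite{dk2011,vinhage15}, so the work is in checking the adaptation to the twisted setting and to general $\Lambda_A$ rather than in a new idea.
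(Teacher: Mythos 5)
Your proposal matches the paper's own argument: continuity is established leg-by-leg on each combinatorial pattern and glued via the quotient (free-product) topology of Proposition~\ref{prop:topology-def}, and the descent to $\widetilde{\mathcal{G}}_A/\mathcal{S}_A$ is obtained by checking that the stable relations $R_{\mu,\nu,A}$ and their conjugates project to closed $\overline{\mathcal F}$-cycles, exactly as in Remark~\ref{re:6}. The only difference is that you spell out the elementary-projection continuity and the single-stable-leaf closing argument more explicitly, whereas the paper cites them compactly.
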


\begin{proof}
To see continuity, from Section \ref{sec:7} it suffices to show that $\mc P(\cdot,h)$ is continuous in each combinatorial pattern. This follows from continuity of the canonical projection of each leg (as the projection of paths is defined inductively based on the length of the path).

Next we show that $\mathcal{P}(\sigma ,h)=h$ for any $\sigma \in\mathcal{S}_A$. Suppose that $\textbf{r}$ is defined by a relation of the form $R_{\mu,\nu,A}$ in $\widetilde{\mathcal{G}}_A$. By Remark \ref{re:6} we see that $\mathcal{P}(\textbf{r} ,h)=h$. We also have $\mathcal{P}(\widetilde{y}\sigma\widetilde{y}^{-1} ,h)=h$ for any $\widetilde{y}\in\widetilde{\mathcal{G}}_A$.
Note that products of elements having the form $\widetilde{y}\sigma\widetilde{y}^{-1}$ generate $\mathcal{S}_A$. Hence we get the result.
This proves that $\mathcal{P}$ can descend to $\widetilde{\mathcal{G}}_A/\mathcal{S}_A$. Hence we finish the proof.
\end{proof}

\subsection{Isomorphism between $\overline{\mathcal{F}}$ and $\mathcal{U}$ cycle groups}
Let $\mathcal{C}^{\mathcal{T}}_x$ denote the group of $\mathcal{T}$-cycles with with initial point
$x\in \mathcal{G}_0$ (see Section \ref{sec:5}), where $\mathcal{T}$ stands for $\mathcal{U}$ or $\mathcal{\overline{F}}$). The next theorem is crucial to the proof of main Theorem:

\begin{theorem}
\label{thm:perturbed-proj}
$P^{h,h} : \mathcal{C}^{\mathcal{U}}_h \to \mathcal{C}^{\mathcal{\overline{F}}}_h$ is a topological isomorphism for $h\in \mathcal{G}_0$ which takes the stable cycles for $\alpha_A$ to the stable cycles for $\overline{\alpha}_A$.
\end{theorem}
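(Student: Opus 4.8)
The plan is to prove Theorem \ref{thm:perturbed-proj} by leveraging the algebraic structure established in Theorem \ref{thm:central} together with the geometric machinery of canonical projections developed in Section \ref{sec:19}. First I would use the identification of $\mathcal{C}^{\mathcal{U}}_h$ with the cycle group $\mathcal{C}_A \subset \widetilde{\mathcal{G}}_A$ described in Remark \ref{re:6}: the map $\widetilde{x} \mapsto \widetilde{x}_h$ gives a canonical bijection between $\widetilde{\mathcal{G}}_A$ and $\mathcal{U}$-paths based at $h$, under which $\mathcal{C}_A$ corresponds to $\mathcal{U}$-cycles and $\mathcal{S}_A$ to the stable cycle group. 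Analogously, every $\overline{\mathcal{F}}$-path based at $h$ arises from such a combinatorial pattern, and one obtains a corresponding presentation of $\mathcal{C}^{\overline{\mathcal{F}}}_h$. The map $P^{h,h}$ is then, under these identifications, induced leg-by-leg by the canonical projections $P^{x,y}_\mu$.

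The key steps would be: (1) show $P^{h,h}$ is a homomorphism — this follows from the concatenation property $(\widetilde{x}\widetilde{y})_h = \widetilde{x}_h * \widetilde{y}_h$ of Remark \ref{re:6} together with the fact that canonical projections respect concatenation of paths (one projects the second path starting at the endpoint of the projection of the first, which is exactly how $\mathcal{P}(\cdot,h)$ was shown to be well-behaved in Proposition \ref{po:3}); (2) show $P^{h,h}$ is a bijection — here I would invoke Remark \ref{re:5}, which says the canonical projection $P^{x,y}_\mu$ and the reverse projection $\widehat{P}^{x,y}_\mu$ are mutually inverse; extending this leg-by-leg shows that the reverse projection $\widehat{P}^{h,h}$ on $\overline{\mathcal{F}}$-cycles is the two-sided inverse of $P^{h,h}$, so both are bijective, and the reverse projection is also a homomorphism by the same argument as in (1); (3) continuity in both directions — this is exactly the content already recorded in Proposition \ref{po:3} (applied both to $\mathcal{P}(\cdot,h)$ and its reverse), combined with Proposition \ref{prop:topology-def}, which reduces continuity to continuity on each combinatorial pattern; (4) the stable-cycle correspondence — a $\mathcal{U}$-cycle lies in a stable leaf $\mathcal{W}^s_{a_i}$ for $\alpha_{\widetilde{A}}$ precisely when its canonical projection lies in the corresponding stable leaf $\overline{\mathcal{W}}^s_{a_i} = h^{-1}\widetilde{\mathcal{W}}^s_{a_i}$ for $\overline{\alpha}_A$, since the conjugacy $\mathbf{h}$ and the Hirsch-Pugh-Shub theory (Theorem \ref{th:1}) match up the coarse Lyapunov foliations; applying this to the generating relations $R_{\mu,\nu,A}$ (which Remark \ref{re:6} places inside single stable leaves) and their conjugates shows $P^{h,h}(\mathcal{S}_A) = \overline{\mathcal{S}}_A$.

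I expect the main obstacle to be step (4), specifically verifying carefully that the canonical projection sends stable $\mathcal{U}$-cycles exactly onto stable $\overline{\mathcal{F}}$-cycles rather than merely into them. The subtlety is that the stable cycle group $\mathcal{S}_A$ is defined algebraically (the normal closure of the commutator-relation elements $[x,y]R_{\mu,\nu,A}(x,y)^{-1}$), whereas on the perturbed side one wants a purely dynamical characterization of ``stable cycle''. One must check that the relations $R_{\mu,\nu,A}$ — whose $\mathcal{U}$-realizations lie in a single stable leaf by the argument in Remark \ref{re:6} — have canonical projections that are genuinely contractible cycles for the matching perturbed element $\overline{\alpha}_A(a_i)$, and conversely that every stable $\overline{\mathcal{F}}$-cycle pulls back to an element of $\mathcal{S}_A$. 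The forward direction uses that $P^{x,y}_\mu$ commutes appropriately with the flow (it intertwines $\alpha_A$-dynamics along $\mathcal{U}_\mu$ with $\overline{\alpha}_A$-dynamics along $\overline{\mathcal{F}}_\mu$, up to the base-point shift within $\mathcal{N}$), so a cycle that contracts under $\alpha_A(a_i)$ projects to one that contracts under $\overline{\alpha}_A(a_i)$; the reverse direction uses the same for $\widehat{P}^{x,y}_\mu$. Once these two inclusions are in place, the mutual-inverse property from Remark \ref{re:5} upgrades them to the claimed equality, and the theorem follows.
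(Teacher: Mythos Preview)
Your proposal has a genuine gap at its foundation: you treat as automatic the one fact that carries the entire weight of the theorem, namely that $P^{h,h}$ sends $\mathcal{U}$-\emph{cycles} to $\overline{\mathcal{F}}$-\emph{cycles}. The mutual-inverse property of Remark~\ref{re:5} only tells you that $P^{h,h}$ and $\widehat{P}^{h,h}$ are inverse to each other as maps on \emph{paths}. It does not say that if a $\mathcal{U}$-path closes up at $h$, then its projection closes up at $h$. A priori the endpoint $\mathcal{P}(\widetilde{x},h)$ of the projected path lies somewhere in the neutral leaf $\mathcal{N}(h)$, and showing $\mathcal{P}(\widetilde{x},h)=h$ for every $\widetilde{x}\in\mathcal{C}_A$ is exactly the hard content. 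Your steps (1) and (2) both presuppose this: the concatenation argument for the homomorphism property already requires that the projection of the inner cycle ends at $h$ (otherwise the outer projection begins at the wrong base point), and the bijectivity argument is vacuous until you know the map even lands in $\mathcal{C}^{\overline{\mathcal{F}}}_h$.

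The paper's proof addresses this head-on and this is where the algebraic input is actually spent. One shows that $\mathbf{S}_h=\{\widetilde{x}\in\mathcal{C}_A/\mathcal{S}_A:\mathcal{P}(\widetilde{x},h)=h\}$ is a subgroup, so that $\mathcal{P}(\cdot,h)$ descends to a continuous injection of the abelian quotient $(\mathcal{C}_A/\mathcal{S}_A)/\mathbf{S}_h$ into $\mathcal{G}_0$; by Gleason--Palais (Theorem~\ref{th:2}) this quotient is then a Lie group, and minimal almost periodicity (Theorem~\ref{th:3}) forces it to be trivial, i.e.\ $\mathbf{S}_h=\mathcal{C}_A/\mathcal{S}_A$. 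You cite Theorem~\ref{thm:central} in your opening paragraph but never use it, and you never invoke Theorem~\ref{th:3} at all --- yet these are precisely what make the projection close up. The reverse inclusion $\widehat{P}^{h,h}(\mathcal{C}^{\overline{\mathcal{F}}}_h)\subset\mathcal{C}^{\mathcal{U}}_h$ is likewise not formal; it requires a separate discreteness-plus-homotopy argument (using simple connectedness of $\mathcal{G}_0$ and smallness of the perturbation), since there is no analogue of Theorem~\ref{th:3} available on the perturbed side.
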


\begin{proof}
We proceed in two steps. First we will show  {\bf (a)} $P^{h,h}\left(\mc C^{\mc U}_h,h\right) \subset \mc C^{\overline{\mc F}}_h$ and second, we will show that {\bf (b)} $\widehat{P}^{h,h}\left(\mc C^{\overline{\mc F}}_h\right) \subset \mc C^{\mc U}_h$.

\textbf{Step (a)}: In this step we show that for any $\widetilde{x}\in \mathcal{C}_A$, $\mathcal{P}(\textbf{x}, h)=h$. By Proposition \ref{po:3}
$\mathcal{P}(\cdot, h)$ is a continuous map from $\mathcal{C}_A/\mathcal{S}_A$ to $\mathcal{G}_0$. Set
\begin{align*}
 \textbf{S}_h=\{\widetilde{x}\in \mathcal{C}_A/\mathcal{S}_A: \mathcal{P}(\textbf{x}, h)=h\}.
\end{align*}
Then $\textbf{S}_h$ is a subgroup of $\mathcal{C}_A/\mathcal{S}_A$. Note that $\mathcal{C}_A/\mathcal{S}_A$ is abelian (see Theorem \ref{thm:central} and Remark \ref{re:3}). Hence $\mathcal{P}(\cdot, h)$ descends to a continuous and injective map on the quotient group $\left. (\mathcal{C}_A/\mathcal{S}_A ) \big/ \textbf{S}_h \right.$. Note that $\left. (\mathcal{C}_A/\mathcal{S}_A ) \big/ \textbf{S}_h \right.$ is locally path connected (see Lemma \ref{le:5}). Then by Theorem \ref{th:2}
$\left. (\mathcal{C}_A/\mathcal{S}_A ) \big/ \textbf{S}_h \right.$ is a Lie group. Then the projection from $\mathcal{C}_A/\mathcal{S}_A $ to $\left. (\mathcal{C}_A/\mathcal{S}_A ) \big/ \textbf{S}_h \right.$ is trivial by Theorem \ref{th:3}. This implies that $\textbf{S}_h=\mathcal{C}_A/\mathcal{S}_A$. Then we prove the the result. This shows that $P^{h,h}$ is a injective from $\mathcal{C}^{\mathcal{U}}_h$ to $\mathcal{C}^{\mathcal{\overline{F}}}_h$.

\medskip
\textbf{Step (b)}: In this part we show that $\widehat{P}^{h,h}(\mathcal{C}^{\mathcal{\overline{F}}}_h)\subset \mathcal{C}^{\mathcal{U}}_h$. In fact, if we get {\bf (a)}, then
the argument proving the opposite direction is standard (see \cite[Subsection 6.4] {dk2011} or \cite[Proposition 6.7]{vinhage15})). To make the proof complete, we briefly outline it here. For any $\mathfrak{c}\in \mathcal{C}^{\mathcal{\overline{F}}}_h$, the end point of $\overline{P}^{h,h}(\mathfrak{c})$ has the expression $nh$, $n\in N$.
Let $\phi(\mathfrak{c})=n$. This define a map $\phi:\mathcal{C}^{\mathcal{\overline{F}}}_h\to N$. Then $\phi(\mathcal{C}^{\mathcal{\overline{F}}}_h)$ is a subgroup of $N$  by noting that the end point of $\widehat{P}^{h,nh}(\mathfrak{c})$ is $n\phi(\mathfrak{c})$ for any $n\in N$ and $\mathfrak{c}\in \mathcal{C}^{\mathcal{\overline{F}}}_h$. The subgroup must be discrete. Otherwise there is $\mathfrak{c}\in \mathcal{C}^{\mathcal{\overline{F}}}_h$ such that $d(\phi(\mathfrak{c}),e)=1$. Write $\phi(\mathfrak{c})=u_1\cdots u_{n_0}$ with $u_{n_i}\in U_{\textbf{r}_i}$ and $d(u_{n_i},e)<\epsilon_1$. We can connect $\phi(\mathfrak{c})$ with $e$ with a uniformly bounded $\mathcal{U}$-path $\mathfrak{r}$. Then $\mathfrak{r}*\widehat{P}^{h,h}(\mathfrak{c}) $ is a $\mathcal{U}$-cycle at $h$ hence projects back to a $\mathcal{\overline{F}}$-cycle at $h$ by {\bf (a)}. Since $\widehat{P}^{h,h}(\mathfrak{c})$ projects back to the $\mathcal{\overline{F}}$-cycle $\mathfrak{c}$ (canonical projections are mutually inverse, see Remark \ref{re:5}), $\mathfrak{r}$ projects to a $\mathcal{\overline{F}}$-cycle at $h$. But by insisting the  perturbation is sufficiently small, $\mathfrak{r}$ cannot be projected to a $\mathcal{\overline{F}}$-cycle. This is a contradiction.

To pass from discrete to trivial, one uses a homotopy argument. For any $\mathfrak{c}\in \mathcal{C}^{\mathcal{\overline{F}}}_h$, since $\mathcal{G}_0$ is simply connected, we may create a homotopy contracting the perturbed $\mathcal{C}$ to the point $h$. Since $\mathcal{\overline{F}}$-foliations are locally transitive,  we can construct a finite sequence of $\mathcal{\overline{F}}$-cycles: $h=\mathfrak{c}_0, \mathfrak{c}_1,\cdots,\mathfrak{c}_n=\mathfrak{c}$ such that the $C^0$ distance between $\mathfrak{c}_k$ and $\mathfrak{c}_{k+1}$ are sufficiently small such that $\phi(\mathfrak{c}_k)=\phi(\mathfrak{c}_{k+1})$, $0\leq k\leq n-1$. Since $\phi(\mathfrak{c}_0)=e$, we see that $\phi(\mathfrak{c})=e$, which implies the result.

Since $P^{h,h}\circ \widehat{P}^{h,h}=I$ and $\widehat{P}^{h,h}\circ P^{h,h}=I$, then the result follows immediately from conclusions in {\bf (a)} and {\bf (b)}.
\end{proof}

\subsection{Stability of (local) transitivity}
In \cite{pesion}  Brin and Pesin show that
property of local transitivity of stable and unstable foliations of a partially
hyperbolic diffeomorphism persists under $C^2$ small perturbations. Because of this, most previous results using the geometric method required control on the second derivative. In this part by using algebraic structure we show (local) transitivity for  $C^1$ small perturbations of $\alpha_A$, which is sufficient.
\begin{definition}
Foliations $\mathcal{T}_1, \dots, \mathcal{T}_r$ are locally
transitive on $\mathcal{G}_0$ if there exists $N\in\NN$ such that for any $\epsilon
> 0$ there exists $\delta> 0$ such that for every $x\in \mathcal{G}_0$ and for every
$x\in \mathcal{G}_0$ with $d(x,y)<\epsilon$, there is a $\mathcal{T}$-path $\mathfrak{c} : x =
x_1, x_2, \dots, x_{N-1}, x_N = y$ such that $x_{k+1}\in \mathcal{T}_{i_k}(x_k)$ and
$d_{\mathcal{T}_{i_k}(x_k)}(x_{k+1}, x_k) < 2\epsilon$, where $d_{\mathcal{T}_{i_k}(x_k)}$ is the induced metric on the foliation $\mathcal{T}_{i_k}$, $1\leq i_k\leq r$.
\end{definition}
\begin{proposition}
\label{prop:transitive-persist}
If $\widetilde{\alpha}_A$ is a sufficiently $C^1$-small perturbation of $\alpha_0$ then the foliations $\overline{\mc F}$ for $\hat{\alpha}_{A}$ are transitive on $\mathcal{G}_0$.
\end{proposition}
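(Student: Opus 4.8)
The plan is to transfer the (local) transitivity of the coarse Lyapunov foliations $\mathcal{U}$ of $\alpha_A$ on $\mathcal{G}_0$ to the perturbed foliations $\overline{\mathcal{F}}$ of $\hat{\alpha}_A$, using the canonical projections $P^{x,y}_\mu$ of Section \ref{sec:perturbed} together with the fact that $\overline{\mathcal{F}}_\mu$ is $\kappa$-H\"older close to $\mathcal{U}_\mu$ along leaves. First I would record that the unperturbed foliations $\mathcal{U}$ are (locally) transitive on $\mathcal{G}_0$: this is classical and follows from Corollary \ref{cor:det-gen} and Corollary \ref{cor:twisted-generate} together with the fact that the $U_{[\mu]}$ generate $\mathcal{G}_0$, so that any two nearby points $x,y$ can be joined by a $\mathcal{U}$-path of uniformly bounded combinatorial length with legs of size controlled by $d(x,y)$ (one writes the small group element $g$ with $y = gx$ as a bounded product of elements of the various $U_{[\mu_i]}$ depending continuously on $g$, which is possible by the surjectivity of a fixed combinatorial pattern $\iota_{\mbf r}$ as in Lemma \ref{le:7}).

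Next I would use Theorem \ref{thm:perturbed-proj}, or more directly the projection maps $P^{x,y}_\mu : \mathcal{U}_\mu(x) \to \overline{\mathcal{F}}_\mu(y)$ and $\widehat{P}^{x,y}_\mu$, to convert a $\mathcal{U}$-path between two nearby points into an $\overline{\mathcal{F}}$-path. Given $x', y'\in \mathcal{G}_0$ with $d(x',y')$ small, let $x,y$ be the points on the leaves of $\mathcal{N}$ through $x',y'$ obtained by applying the Pugh-Shub-Wilkinson leaf conjugacy (so $x',y'$ lie on $\overline{\mathcal{F}}$-relevant leaves through $x,y$); connect $x$ to $y$ by a bounded $\mathcal{U}$-path $\mathfrak{c}$ with small legs, and push it forward by $\widehat{P}$ or $P$ to an $\overline{\mathcal{F}}$-path. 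The key point is the quantitative estimate: because $\mathbf{h}$ is bi-$\varrho$-H\"older with norm tending to $1$ and the foliations $\overline{\mathcal{F}}_\mu$ are leafwise $C^r$-close to $\mathcal{U}_\mu$, the projection $P^{x,y}_\mu$ moves points by at most $C e^{C_1 d(x,y)} d(x,x')^{\kappa} + d(x,y)$ (this is exactly the content of the commented-out Fact \ref{fact:7} in the excerpt), so each leg of the projected path has length $< 2\epsilon$ once the original legs are small enough and the perturbation is close enough to $\alpha_A$. Since the combinatorial length $N$ of $\mathfrak{c}$ is uniformly bounded (independent of $x,y$), the projected $\overline{\mathcal{F}}$-path realizes local transitivity with the same $N$.

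Finally I would pass from local transitivity to transitivity on all of $\mathcal{G}_0$: since $\mathcal{G}_0$ is connected, any two points are joined by a finite chain of small steps, and concatenating the locally transitive $\overline{\mathcal{F}}$-paths along such a chain gives an $\overline{\mathcal{F}}$-path connecting them, so the $\overline{\mathcal{F}}$ foliations are transitive. The main obstacle I expect is the quantitative control of the projection maps $P^{x,y}_\mu$ when $y$ is not on the same $\mathcal{N}$-leaf as $x$ — i.e. making precise the estimate that the H\"older constant $\varrho$ close to $1$ plus leafwise $C^1$-closeness of $\overline{\mathcal{F}}$ to $\mathcal{U}$ forces the projected legs to stay within $2\epsilon$; this requires carefully tracking how the conjugation by elements of $N$ interacts with the H\"older norm of $\mathbf{h}$ and the uniform bounds on the path in the fixed combinatorial pattern, and is the only place where the smallness of the perturbation (and the fact that $C^1$-closeness suffices, as opposed to $C^2$) is genuinely used.
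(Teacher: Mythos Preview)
Your outline has a genuine gap at the step where you ``push forward'' a $\mathcal{U}$-path from $x$ to $y$ to obtain an $\overline{\mathcal{F}}$-path from $x'$ to $y'$. The canonical projection $P^{x,x'}$ applied to a $\mathcal{U}$-path $\mathfrak{c}$ with endpoints $x$ and $y$ produces an $\overline{\mathcal{F}}$-path that starts at $x'$ but ends at some point of $\mathcal{N}(y)$ which is \emph{not} prescribed; in particular there is no reason it should land at $y'$. Since the neutral foliation $\mathcal{N}$ is not among the $\overline{\mathcal{F}}_\mu$, you cannot simply close up along the $\mathcal{N}$-leaf. Your H\"older estimates on the legs control the size of the projected path, but they do not pin down its terminal point, so local transitivity of $\overline{\mathcal{F}}$ does not follow from this alone.

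The paper closes exactly this gap with a degree-theory argument that is not present in your sketch. One defines, for each base point $x_1$, the map $\tau_{x_1}(y)$ sending $y$ to the endpoint of $P^{x_1,x_1}(\mathfrak{c})$ for a chosen $\mathcal{U}$-path $\mathfrak{c}$ from $x_1$ to $y$. The crucial point is that $\tau_{x_1}$ is \emph{well defined}, i.e.\ independent of the choice of $\mathfrak{c}$: this is precisely the content of Theorem~\ref{thm:perturbed-proj} (that $P^{x_1,x_1}$ sends $\mathcal{U}$-cycles to $\overline{\mathcal{F}}$-cycles), and it is where the algebraic machinery of Sections~\ref{sec:alg1}--\ref{sec:alg3} enters. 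Once $\tau_{x_1}$ is a continuous map $C^0$-close to the identity on a small ball, its image contains a neighborhood of $x_1$ by elementary degree theory, and that gives local transitivity of $\overline{\mathcal{F}}$. Without invoking Theorem~\ref{thm:perturbed-proj} (or some substitute giving surjectivity of the endpoint map), your argument does not reach the conclusion.
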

\begin{proof}
Lemma \ref{cor:det-gen} and Corollary \ref{cor:twisted-generate} implies that $\mathcal{U}$-foliations are locally transitive (see Section 4 of \cite{kk96}).
For any $\mathcal{U}$-path $\mathfrak{c}$ with an initial point $x_1$, let $\pi^{x_1}_{\mathcal{U}}$ denote the endpoint of $P^{x_1,x_1}(\mathfrak{c})$. For any $y\in \mathcal{G}_0$, choose a $\mathcal{U}$-path $\mathfrak{c}$ with an initial point $x_1$ connecting $y$ and $x_1$ and define $\tau_{x_1}(y)=\pi^{x_1}_{\mathcal{U}}(\mathfrak{c})$. Theorem \ref{thm:perturbed-proj} shows that $\tau_{x_1}$ is well-defined; and the local transitivity of $\mathcal{U}$-foliations implies that on a small neighborhood $B(x_1)$ of $x_1$, $\tau_{x_1}$ is close to identity. Then $\tau_{x_1}(B(x_1))$ contains a small neighborhood $x_1$ by degree theory. This shows that the $\overline{\mc F}$-foliations are locally transitive, which implies that $\overline{\mc F}$-foliations are also transitive by connectedness of $\mathcal{G}_0$.

\end{proof}
\subsection{Periodic cycle functional and cocycle rigidity for perturbations}
From \eqref{for:33} we see that even though elements in $\hat{\alpha}_A$ are non-aeblian, but they preserve stable/unstable of each other, the setting for the general abelian partially hyperbolic actions in Section \ref{sec:PCF} can be adapted to that for $\hat{\alpha}_A$. Then all the results in Section \ref{sec:PCF} hold for $\hat{\alpha}_A$ over lifted $\beta$ defined accordingly. We prove the following (recall notations in Proposition \ref{po:1}):
\begin{lemma}\label{le:4}
Fix a small number $\delta>0$. For any $a\in\mathbb{A}$ and $x,\,y$ in a stable (resp. unstable) leaf of $\hat{\alpha}_A(a,\cdot)$ with $d(x,y)<\delta$, $d(p_{\beta,a}(x,y),e)$ can be made arbitrarily small if $\widetilde{\alpha}_A$ is sufficiently $C^1$-close to $\alpha_A$.
\end{lemma}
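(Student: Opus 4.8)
The plan is to make explicit the dependence, inside the proof of Proposition \ref{po:1}, of the bound for $d(p_{\beta,a}(x,y),e)$ on the H\"older size of the correction term $\beta$, and then to note that this size goes to zero as $\widetilde{\alpha}_A\to\alpha_A$ in $C^1$. Recall that for $x,y$ on a common stable leaf of $\hat{\alpha}_A(a,\cdot)$ with $d(x,y)$ below a threshold $\delta_1>0$, the telescoping estimates \eqref{for:4} and \eqref{for:5} hold for \emph{every} $n$ (not just large $n$) and give
\[
d\big(p_{\beta,a}(x,y),e\big)\ \le\ C(\beta,a)'\, d(x,y)^{\kappa},\qquad
C(\beta,a)'=\frac{c_1^{-2}c_2}{1-\lambda^{\kappa/3}}\,C(\beta,a),
\]
where $C(\beta,a)$ is the $\kappa$-H\"older norm of $\beta(a,\cdot)$ and $c_1,c_2$ and $\lambda<1$ depend only on the fixed norm on $\Lie(N)$, on the metric $d$, and on the contraction/expansion rates of $\hat{\alpha}_A(a,\cdot)$. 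The first thing I would do is record that $\delta_1$ and the constants $c_1,c_2,\lambda$ can be chosen \emph{uniformly} over all $\widetilde{\alpha}_A$ sufficiently $C^1$-close to $\alpha_A$ (they only involve uniform bounds on the hyperbolicity rates of the perturbed elements and the standing choices of Section \ref{sec:PCF}), and that for such perturbations the standing hypothesis \eqref{for:58} of Proposition \ref{po:1} is automatically verified.

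Next I would invoke the quantitative content of Theorem \ref{th:1}: as $\widetilde{\alpha}_A\to\alpha_A$ in $C^1$, the Pugh--Shub--Wilkinson conjugacy $\mathbf{h}$ tends to $\id_M$ and the H\"older exponent $\varrho$ tends to $1$, so $\kappa=\varrho^2\to1$, hence $\overline{\alpha}_A\to\alpha_A$ and the correction $\beta(a,x)=\overline{\alpha}_A(a,x)\alpha_A(a,x)^{-1}$ tends uniformly to the identity with $\kappa$-H\"older norm $C(\beta,a)\to0$, uniformly in $a\in\mathbb{A}$ (this was already observed after \eqref{for:10}). Consequently $C(\beta,a)'\to0$. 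If the fixed $\delta$ is $\le\delta_1$ we are then done directly, since $d(p_{\beta,a}(x,y),e)\le C(\beta,a)'\,\delta^{\kappa}$. For a general small $\delta$ I would chain the estimate along the leaf: from \eqref{eq:pcf-fund} and the fact that each $\psi_{a^{-n}}$ is an automorphism, $p_{\beta,a}$ is multiplicative in the middle variable, $p_{\beta,a}(x,y)=p_{\beta,a}(x,z)\,p_{\beta,a}(z,y)$ whenever $x,z,y$ lie on a common stable leaf of $\hat{\alpha}_A(a,\cdot)$. Since the leaves of $\widehat{W}^s_a$ are leafwise $C^r$-close to the homogeneous stable leaves of $\alpha_A$, there is an integer $m_0$, again uniform over sufficiently small perturbations, so that any $x,y$ on a common stable leaf with $d(x,y)<\delta$ are joined by a chain $x=z_0,z_1,\dots,z_m=y$ along that leaf with $m\le m_0$ and $d(z_{i-1},z_i)<\delta_1$; using right-invariance of $d$ and that each $p_{\beta,a}(z_{i-1},z_i)$ is close to $e$, one gets $d(p_{\beta,a}(x,y),e)\le K\,m_0\,C(\beta,a)'\,\delta_1^{\kappa}$ for an absolute constant $K$, which tends to $0$ as $\widetilde{\alpha}_A\to\alpha_A$. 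The unstable case is identical, taking $\delta=-$ in \eqref{eq:pcf-fund}.

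The step I expect to require the most care — the "main obstacle" in the sense that it must be checked rather than computed — is the uniformity of the auxiliary constants ($\delta_1$, $m_0$, $c_1,c_2,\lambda$, and the validity of \eqref{for:58}) across the \emph{whole family} of perturbations converging to $\alpha_A$, as opposed to for a fixed perturbation; this is what makes the quantitative vanishing of $C(\beta,a)$ translate into the desired uniform smallness of $p_{\beta,a}$. Everything else is bookkeeping inside the proof of Proposition \ref{po:1}.
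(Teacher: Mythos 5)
Your argument takes a genuinely different route from the paper's, and the difference matters. You bound the entire telescoping sum at once by $C(\beta,a)'\,d(x,y)^\kappa$ and then make the whole expression small by asserting that the $\kappa$-H\"older seminorm $C(\beta,a)$ tends to $0$ as $\widetilde\alpha_A\to\alpha_A$ in $C^1$. The paper's proof is deliberately more cautious: it \emph{splits} the sum $\sum_{j\ge 1}d(\gamma_j,\gamma_{j-1})$ into a head $\sum_{j=1}^n$ and a tail $\sum_{j>n}$. The tail is controlled by $C(\beta,a)'\lambda^{n\kappa/3}d(x,y)^\kappa$, which is made small by choosing $n$ large, using only that $C(\beta,a)$ is \emph{uniformly bounded} (this is the phrase the paper uses in the proof) and that $\lambda<1$. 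The head, for that now-fixed $n$, is bounded by a constant times $\norm{\beta(a,\cdot)}_{C^0}$ and is made small by shrinking the perturbation. The crucial structural point is that the paper nowhere needs $C(\beta,a)\to 0$; the decay in $n$ comes entirely from the contraction factor, and the smallness comes entirely from the $C^0$ norm of $\beta$.

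This matters because the step you flag as routine bookkeeping — $C(\beta,a)\to 0$, which you attribute to the remark after \eqref{for:10} — is actually the delicate point and is not obviously correct. The H\"older exponent $\kappa=\varrho^2$ is not fixed but tends to $1$ together with $\varrho$ as the perturbation shrinks. The only quantitative control coming from Theorem \ref{th:1} is that $\norm{\mathbf h-\operatorname{id}}_{C^0}\to 0$ while the $\varrho$-H\"older norm of $\mathbf h$ stays bounded; to deduce smallness of the $\kappa$-H\"older seminorm you would need an interpolation of the form $\norm{\cdot}_{\kappa}\lesssim\norm{\cdot}_{0}^{1-\kappa/\varrho}\norm{\cdot}_{\varrho}^{\kappa/\varrho}$, but here $1-\kappa/\varrho=1-\varrho\to 0$ simultaneously with $\norm{\cdot}_0\to 0$, so the limit of $\norm{\cdot}_0^{1-\varrho}$ is indeterminate without a rate. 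The paper's own wording reveals the tension: after \eqref{for:10} it informally calls the H\"older norm ``small,'' but in the proof of Lemma \ref{le:4} it carefully asserts only uniform boundedness and arranges the estimate so that boundedness suffices. You should not lean on the informal remark; either redo the estimate with the head/tail splitting as in the paper, or supply an honest quantitative argument for $C(\beta,a)\to 0$ in the degenerating-exponent regime. Your chaining step for $\delta>\delta_1$ and your remarks about uniformity of $\delta_1$, $c_1,c_2$, $\lambda$, and condition \eqref{for:58} are fine and consistent with the paper.
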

\begin{proof}
If $\widetilde{\alpha}_A$ is sufficiently $C^1$-close to $\alpha_A$, then $\sup_{a\in \mathbb{A}}\norm{\beta(a,\cdot)})_{C^0}$ is sufficiently small. Suppose $x$ and $y$ are on a stable leaf of $\hat{\alpha}_A(a,\cdot)$. Theorem \ref{th:1} shows that the coefficient  Hence the condition \eqref{for:58} of Proposition \ref{po:1} is satisfied.   From \eqref{for:7} we see that
\begin{align*}
  d\big(\gamma_{n+1}(x,y), \gamma_{n}(x,y)\big)&\leq \Pi_{j=0}^{n-1}\big(\norm{\text{Ad}(\iota(a))^{-1}}\cdot\norm{\text{Ad}(\beta(a,a^{j}\cdot x))}\big)\norm{\mathfrak{c}_n}\notag\\
  &\leq \epsilon^{n\kappa/3}\cdot\epsilon^{n\kappa/3}\norm{\mathfrak{c}_n}\leq 2c_1^{-1}\epsilon^{2n\kappa/3}\norm{\beta(a,\cdot)^\delta}_{C^0}.
\end{align*}
Using \eqref{for:5} and above estimates, we get
\begin{align*}
 &d(p_{\beta,a}(x,y), e)\leq\sum_{j=1}^\infty d\big(\gamma_{j}(x,y), \gamma_{j-1}(x,y)\big)\\
 &\leq\sum_{j=1}^n d\big(\gamma_{j}(x,y), \gamma_{j-1}(x,y)\big)+\sum_{j=n+1}^\infty d\big(\gamma_{j}(x,y), \gamma_{j-1}(x,y)\big)\\
 &\leq 2c_1^{-1}n\epsilon^{2n\kappa/3}\norm{\beta(a,\cdot)^\delta}_{C^0}+\sum_{j=n+1}^\infty c_1^{-2}c_2C(\beta,a) \lambda^{(j-1)\kappa/3}d(x,y)^\kappa\\
 &\leq 2c_1^{-1}n\epsilon^{2n\kappa/3}\norm{\beta(a,\cdot)^\delta}_{C^0}+C(\beta,a)' \lambda^{n\kappa/3}d(x,y)^\kappa.
 \end{align*}
 This shows the smallness of $d(p_{\beta,a}(x,y), e)$ when $\norm{\beta(a,\cdot)^\delta}_{C^0}$ is sufficiently
small, by first choosing n so that the second term is very small (note that the H\"{o}lder
norm $C(\beta,a)$ of $\beta$ is uniformly bounded, see Theorem \ref{th:1}), then limiting the size
of $\beta$ by limiting the size of the perturbation $\widetilde{\alpha}_A$. Similarly, we get the result if $x$ and $y$ are on a unstable leaf of $a$. Then we finish the proof.
\end{proof}

\section{Proof of Theorem \ref{thm:main} and \ref{thm:main:1}}
\label{sec:proofs}

 First, we  show the trivialization of the lifted cocycle $\beta$ over the perturbed action $\hat{\alpha}_A$ on $\mathcal{G}_0$. By Proposition \ref{prop:PCF-nec-suff}, it suffices to show that the periodic cycle functional (PCF) vanishes on all $\overline{\mc F}$-cycles.
By using Theorem \ref{thm:perturbed-proj} and Proposition \ref{po:3} we see that the function $F_\beta \of P^{h,h}$ is a continuous injective homomorphism from $\mathcal{C}_A/\mathcal{S}_A$ to $N$.
Then by Theorem \ref{th:3} $F_\beta \of P^{h,h}$ is trivial, which implies that $\beta$ is cohomologous to a constant cocycle via a transfer function on $\mathcal{G}_0$.
Then for any $\gamma$ inside $\Gamma'$ (resp. $\Gamma_\rho'$ or $\widetilde{\Gamma}$) and any $\mathcal{\overline{F}}$-path $\mathfrak{c}$ connecting $h$ and $h\gamma$, the value of $F_\beta(\mathfrak{c})$ only depends on the on the element of $\gamma$ this $\mathcal{\overline{F}}$-path represents by noting that $\hat{\alpha}_A$ commuting with the right $\Gamma'$ (resp. $\Gamma_\rho'$ or $\widetilde{\Gamma}$) actions. Therefore, these values provide a homomorphism from $\Gamma'$ (resp. $\Gamma_\rho'$ or $\widetilde{\Gamma}$) to $N$. Since $\Gamma'$ (resp. $\Gamma_\rho'$ or $\widetilde{\Gamma}$) is compactly generated (see $6.18$ of \cite{Raghunathan}), Lemma \ref{le:4} implies the smallness of the homomorphism. Then it follows from Lemma \ref{cor:no-lattice-morphisms} that such homomorphisms are trivial.  This show that $\beta$ on $\mathcal{G}_0$ takes constant values on those points that project to the same point on $M$, which implies the the trivialization of the PCF for $\beta$ on $M$ since the value of the PCF for $\beta$ on $M$ depending
only on the element $\gamma$ of $\Gamma'$ (resp. $\Gamma_\rho'$ or $\widetilde{\Gamma}$) this cycle represents.

Hence by Proposition \ref{prop:PCF-nec-suff}, $\beta$ is cohomologous to a small
constant twisted cocycle $i:A\rightarrow N$ via a
continuous transfer map $T:M\rightarrow N$ which can be chosen
close to identity in $C^0$ topology if the perturbation
$\widetilde{\alpha}_A$ small in $C^1$ topology.  Set $h'(x) = T(x)^{-1}\cdot x$ for any $x\in M$. Then we have

\begin{align*}
 h'(\bar{\alpha}_A(a,x)) & = T(\bar{\alpha}_A(a,x))^{-1}\beta(a,x)\alpha_A(a,x) \\
 & = T(\bar{\alpha}_A(a,x))^{-1}\left( T(\bar{\alpha}_A(a,x))i(a)\psi_a(T(x))^{-1}\right) a\cdot x \\
 & = i(a)aT(x)^{-1}x \\
 & = \alpha_{\mathcal{A}}(a, h'(x)),
\end{align*}
where $\alpha_{\mathcal{A}}(a,x):=i(a)a \cdot x$.

Next, we show that the map $h'$ constructed above is a homeomorphism and hence provides a topological conjugacy between $\overline{\alpha}_A$ and $\alpha_{A}$.
Since $h'$ is $C^0$ close to the identity on $M$ it is surjective and thus the action $\overline{\alpha}_A$ is semi-conjugate to the standard perturbation $\alpha_{A}$. Since the
map $h'$ is $C^0$ close to the identity, it can be lifted to continuous map close to the identity map on $\mathcal{G}_0$ commuting with the right $\Gamma'$ (resp. $\Gamma_\rho'$ or $\widetilde{\Gamma}$)  which we denote by $\hat{h}$ (using the same methods of Section \ref{sec:20}).

Furthermore, if we show that the lifted map is injective on the
cover it will follow that $h'$ on $M$ is injective and hence
a homeomorphism. On $\mathcal{G}_0$, $\alpha_{\mathcal{A}}$ is lifted to an action of $\alpha_{\tilde{\mathcal{A}}}$ or $\alpha_{\mathcal{A}'}$ (see \eqref{se:5} and \eqref{se:7} of Section \ref{sec:14}) for which we denote by $\alpha$ for simplicity. That $\hat{h}$ is a lift of a semiconjugacy implies that:
\begin{align*}
  \hat{h}(\hat{\alpha}_A(a,g))=\alpha(a, \hat{h}(g))\cdot c(g),\qquad g\in \mathcal{G}_0,
\end{align*}
where $c : \mc G_0 \to \exp(\mathfrak{z})$ is $C^0$ close to identity (hence this step may be skipped if $\mathfrak{z}= \set{0}$).

Set $\hat{\alpha}'_A(a,g)=\hat{\alpha}_A(a,g)c(g)$. Then $\hat{h}$ semi-conjugates the dynamics of $\hat{\alpha}'_A$ and $\alpha$, which implies that elements inside $\hat{\alpha}'_A$ preserve stable/unstable foliations of each other. Then we can also construct isomorphism between coarse Lyapunov cycles of $\hat{\alpha}'_A$ and $\alpha$ by canonical projections as before. Now, assume that $\hat{h}(g_1) = \hat{h}(g_2)$. Then $g_2 \in \mathcal{N}(g_1)$. Thus, we get that $\hat{h}(g_2)$ is the end point of $\widehat{P}^{x,\hat{h}(x)}(\mathfrak{p})$, where $\mathfrak{p}$ is any coarse Lyapunov path (of $\hat{\alpha}'_A$) from $g_1$ to $g_2$.
Then $\widehat{P}^{x,\hat{h}(x)}(\mathfrak{p})$ is a coarse Lyapunov cycle (of $\alpha$). This contradicts the fact that the canonical projections map coarse Lyapunov cycles of $\hat{\alpha}'_A$ to those of $\alpha$ and vice verse (see Theorem \ref{thm:perturbed-proj}). This
contradiction proves that $\hat{h}$ (and hence $h'$) is injective.

Now by letting $h:=h' \textbf{h}^{-1}$ (where $\mbf{h}$ is the Pugh-Shub-Wilkinson leaf conjugacy) we have a topological conjugacy between $\widetilde{\alpha}_A$ and $\alpha_A$. We need a Lemma:

\begin{lemma}
\label{lem:isometric}
If $i : \R^k \times \Z^l \to \mc G$ is a homomorphism, then sufficiently close homomorphisms $i'$ have $\Ad_{i'(a)}$ semisimple for every $a \in \R^k \times \Z^l$ if $A = i(\R^k \times \Z^l)$ detects every root of $\Delta_{A_0}$ for a corresponding Cartan subalgebra and $0 \not\in \Phi_{A,\rho}$ in the case of twisted symmetric space examples.
\end{lemma}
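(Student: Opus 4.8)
The plan is to reduce the statement to a structural fact about the neutral centralizer $N=C_{\mc G}(p(A))$ (resp.\ $C_{G_\rho}(p_\rho(A))$ in the twisted case) and then to observe that the relevant perturbed homomorphism factors through $N$. Recall from the Lyapunov decomposition \eqref{for:6} (and its twisted analogue) that $\Lie(N)=\mf g_0$ is the sum of $C_{\mf g}(A_0)=\Lie(A_0)\oplus\mf m$ — where $\mf m$ is the Lie algebra of the compact part of $C_G(A_0)$ — together with the root spaces $\mf g_r$ for the \emph{undetected} $r\in\Delta_{A_0}$, and, in the twisted case, the zero weight space $\mf e_0$. The hypotheses ``$A$ detects every root of $\Delta_{A_0}$'' and ``$0\notin\Phi_{A,\rho}$'' say precisely that neither family of extra summands occurs, so $\Lie(N)=\Lie(A_0)\oplus\mf m$.

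The first step is the elementary remark that $\Lie(A_0)\oplus\mf m$ contains no nonzero nilpotent element of $\mf g$: on $\Lie(A_0)$ the operator $\ad$ is $\RR$-diagonalizable (split torus), while $\Ad_G(M)$ is a compact subgroup of $GL(\mf g)$, so $\ad X$ is skew, hence $\C$-diagonalizable, for every $X\in\mf m$; since $\Lie(A_0)$ and $\mf m$ commute, every $H+X$ in the sum has $\ad(H+X)$ simultaneously diagonalizable over $\C$, i.e.\ is semisimple, and is nonzero unless $H=X=0$. From this I would deduce that \emph{every element of $N$ is semisimple in $\mc G$}: if $n\in N$ had nontrivial unipotent part $\exp Y$, then, exactly as in the proof of Proposition \ref{prop:diagonalize} (the Jordan components of $n$ commute with anything commuting with $n$), $Y$ would commute with $p(A)$, hence lie in $C_{\mf g}(p(A))=\Lie(N)=\Lie(A_0)\oplus\mf m$, and being nilpotent it must vanish. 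In the twisted case the same argument applies verbatim; the role of $0\notin\Phi_{A,\rho}$ is precisely to prevent such a $Y$ from having a component along the (unipotent) fibre direction $\RR^N$. Therefore $\Ad_n$ is semisimple for every $n\in N$.

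Finally I would note that $A\subseteq N$: each $a\in A$ commutes with every $b\in A$, hence with the semisimple part $p(b)$ of $b$, hence with all of $p(A)$. The perturbation produced by the cocycle trivialization is the homomorphism $i':a\mapsto i(a)\,a$ with $i(a)\in N$ (and, more generally, every standard perturbation maps into $N$ by definition), so $i'(a)\in N$ for all $a$; combined with the previous step, $\Ad_{i'(a)}$ is semisimple for every $a$, which is the claim. The only point requiring care is not dynamical but algebraic — the identification $\Lie(N)=\Lie(A_0)\oplus\mf m$ and, in the twisted case, the check that a would‑be unipotent part cannot be absorbed into the fibre — and this is exactly what the two detection hypotheses are built to guarantee; everything else is soft. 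It is worth recording, as in the remark following Theorem \ref{thm:main}, that once some root or weight is undetected the algebra $\Lie(N)$ acquires genuine nilpotent directions, and then homomorphisms into $N$ arbitrarily close to the inclusion can have non‑semisimple image, so the hypotheses are sharp.
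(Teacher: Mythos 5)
Your argument is correct and shares the same algebraic core as the paper's proof, but you organize it differently in a way worth noting. The paper first observes that the hypotheses (all roots of $\Delta_{A_0}$ detected, $0\notin\Phi_{A,\rho}$) survive small perturbations and reduces to proving semisimplicity of $i(a)$ itself; it then Jordan-decomposes $i(a)=s(a)n(a)$ as in Proposition \ref{prop:diagonalize}, uses the fact that $s(A)$ contains a regular element to force $n(a)$ into $C_G(A_0)=A_0M$, and concludes. You instead promote the key algebraic observation from a statement about individual elements $n(a)$ to a structural one about all of $N$: detection of all roots and $0\notin\Phi_{A,\rho}$ kill every nilpotent direction in $\Lie(N)$, so $N=C_{\mc G}(p(A))$ consists entirely of semisimple elements; and since $A\subset N$ and any standard perturbation $i'$ lands in $N$ by definition, semisimplicity of $\Ad_{i'(a)}$ is immediate. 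Your route avoids the ``assumptions survive perturbation'' step (which is stated loosely in the paper and would require some care about continuity of the Jordan decomposition), and it buys a cleaner application in the proof of Theorem \ref{thm:main}, where the relevant homomorphism is precisely of the form $a\mapsto i(a)a$ with $i$ valued in $N$.

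One caveat: as stated, the Lemma quantifies over \emph{arbitrary} sufficiently close homomorphisms $i'$, while your argument covers only those with image in $N$ (i.e., standard perturbations). This is the only case needed in Section \ref{sec:proofs}, so the restriction is harmless for the paper's purposes, but if you wanted to match the full generality of the Lemma you would still need the paper's observation that detection and $0\notin\Phi_{A,\rho}$ persist under small perturbations of the homomorphism. Your closing remark that the hypotheses are sharp—once a root or weight goes undetected, $\Lie(N)$ gains genuine nilpotent directions—is a nice addition that the paper only gestures at in the remark following Theorem \ref{thm:main}.
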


\begin{proof}
One easily sees that the assumptions survive small perturbations, so it suffices to show it for the original homomorphism $i$. As in the proofs of Propositions \ref{prop:diagonalize} and \ref{prop:diagonalize1}, after applying a necessary conjugation, we may write each $i(a) = s(a)n(a)$, where $s : A \to A_0$ is a homomorphism to some Cartan subgroup $A_0$, $n(a)$ is quasi-unipotent (has all $\Ad$-eigenvalues of modulus 1) and $[s(a),n(b)] = e$ for every $a,b \in \R^k \times \Z^l$. The condition that every root of $A_0$ is detected implies that $s(A)$ contains a regular element. Since the centralizer of a regular element in a semisimple group coincides with the centralizer of the Cartan subgroup, in the case of symmetric space examples, we are done.

For twisted symmetric space examples $\mc G = G \ltimes \R^N$, we apply the previous argument to conclude that $n(a) \in C \ltimes \R^N$, where $C$ is the compact part of the centralizer of $A_0$. If $n(a)$ contains a nontrivial $\R^N$ component, then it may be decomposed according the the weights of $\rho$. If $E_\mu$ is a weight space, and $v \in E_\mu$, then $[s(a),v] = (e^{\mu(a)} - 1) v$. Since there are no zero weights by assumption, the fact that $[s(a),n(b)] = e$ simplies that $n(b) \in C$. In particular, $i(a) = s(a)n(a)$ is always semisimple.

\end{proof}

The smoothness of $h$ along the coarse Lyapunov foliations
 of $\alpha_A$ follows as in \cite{ks97} by the general Katok-Spatzier theory of
non-stationary normal forms for partially hyperbolic abelian
actions, provided the homogeneous perturbation acts semisimply. Furthermore, the isometric distributions of $\alpha_A$ span the tangent space at every point. Then global smoothness follows from the subelliptic regularity theory \cite{Spatzier2}.

\bigskip

\noindent {\bf Acknowledgements.} The authors would like to thank their mutual advisor Anatole Katok for his introduction of the problem and valuable input and encouragement, as well as Aaron Brown, Ralph Spatzier and Federico Rodriguez-Hertz for helpful discussions on the subject.

\end{document}